\date{14 March 2024}
\title[Perfect matching modules]
{Perfect matching modules, dimer partition functions and cluster characters}
\author{\.{I}lke \c{C}anak\c{c}\i}
\address{\.{I}lke \c{C}anak\c{c}\i\\Department of Mathematics\\Vrije Universiteit Amsterdam\\De Boelelaan 1111\\1081 HV Amsterdam\\The Netherlands}
\email{i.canakci@vu.nl}
\author{Alastair King}
\address{Alastair King\\Department of Mathematical Sciences \\
 University of Bath \\ Claverton Down \\ Bath BA2 7AY \\ United Kingdom}
 \email{a.d.king@bath.ac.uk}
\author{Matthew Pressland}
\address{Matthew Pressland\\School of Mathematics \& Statistics\\University of Glasgow\\University Place\\Glasgow G12 8QQ\\United Kingdom}
\email{matthew.pressland@glasgow.ac.uk}
\newcommand{\strandcolor}{teal}
\newcommand{\graphcolor}{black}
\newcommand{\quivcolor}{red}
\newcommand{\bdrycolor}{gray}
\tikzset{strand/.style={\strandcolor}, boundary/.style={thick, \bdrycolor},  bipedge/.style={\graphcolor, thick},
 quivarrow/.style={\quivcolor, -latex, thick} }
\newcommand{\strarrow}{\arrow{angle 60}}
\pgfmathsetmacro{\bstart}{130} \pgfmathsetmacro{\seventh}{360/7}
\newcommand{\dotrad}{0.9pt}
\theoremstyle{plain}
\newtheorem{theorem*}{Theorem}
\newtheorem{theorem}{Theorem}[section]
\newtheorem{proposition}[theorem]{Proposition}
\newtheorem{lemma}[theorem]{Lemma}
\newtheorem{corollary}[theorem]{Corollary}
\newtheorem{claim}[theorem]{Claim}
\theoremstyle{definition}
\newtheorem{definition}[theorem]{Definition}
\newtheorem{remark}[theorem]{Remark}
\numberwithin{equation}{section}
\numberwithin{figure}{section}
\newcommand{\Gm}{(\CC^\times)}
\renewcommand{\leq}{\leqslant}
\renewcommand{\geq}{\geqslant}
\renewcommand{\subset}{\subseteq}
\newcommand{\dual}{^\vee}
\newcommand{\op}{^{\mathrm{op}}}
\newcommand{\isom}{\cong}
\newcommand{\compo}{\circ}
\newcommand{\setmin}{\smallsetminus}
\newcommand{\restr}[1]{\rho(#1)}
\newcommand{\bdry}{\partial}
\newcommand{\pmbdry}[1]{\partial{#1}}
\newcommand{\lra}[1]{\stackrel{#1}{\longrightarrow}}
\newcommand{\add}{\operatorname{add}}
\newcommand{\Hom}{\operatorname{Hom}}
\newcommand{\stHom}{\operatorname{\underline{Hom}}}
\newcommand{\End}{\operatorname{End}}
\newcommand{\Ext}{\operatorname{Ext}}
\newcommand{\CM}{\operatorname{CM}}
\newcommand{\GP}{\operatorname{GP}}
\newcommand{\proj}{\operatorname{proj}}
\newcommand{\md}{\operatorname{mod}}
\newcommand{\wt}[1][]{\operatorname{wt}^{#1}}
\newcommand{\fgmod}{\operatorname{mod}}
\renewcommand{\top}{\operatorname{top}}
\newcommand{\gldim}{\operatorname{gldim}}
\newcommand{\grass}[2]{\operatorname{Gr}_{#1}^{#2}}
\newcommand{\C}{\mathcal{C}}
\newcommand{\CC}{\mathbb{C}}
\newcommand{\ZZ}{\mathbb{Z}}
\newcommand{\Ftil}{\widetilde{F}}
\renewcommand{\epsilon}{\varepsilon}
 \newcommand{\clus}{\mathfrak{C}} \newcommand{\posid}{\mathfrak{P}}
\newcommand{\openposidvar}{\Pi^\circ}
\newcommand{\openposidcone}{\widetilde{\Pi}^\circ}
\newcommand{\cluschar}[1]{\Phi_{#1}}
\newcommand{\clucha}[2]{\Phi_{#1}(#2)}
\newcommand{\jkscluschar}{\Psi}
\newcommand{\jksclucha}[1]{\Psi(#1)}
\newcommand{\cluva}{x}
\newcommand{\cto}{T}
\newcommand{\plabic}{\Gamma}
\newcommand{\projcov}{P}
\newcommand{\canprojcov}[1][]{\mathbf{P}^{#1}}
\newcommand{\syz}{\Omega}
\newcommand{\cansyz}[1][]{\mathbf{\Omega}^{#1}}
\newcommand{\compl}[1]{\widehat{#1}}
\newcommand{\powser}[2]{#1[\hspace{-0.1em}[#2]\hspace{-0.1em}]}
\newcommand{\fracpowser}[2]{#1(\hspace{-0.1em}(#2)\hspace{-0.1em})}
\newcommand{\im}{\operatorname{im}}
\newcommand{\id}{\mathrm{id}}
\newcommand{\Gr}{\operatorname{Gr}}
\DeclareMathOperator{\rk}{rk}
\newcommand{\Qcyc}{Q_{\text{cyc}}}
\newcommand{\jksalg}{C}
\newcommand{\Pluck}[1]{\varphi_{#1}}
\newcommand{\twPluck}[1]{\overleftarrow{\varphi_{#1}}}
\newcommand{\twist}[1]{\overleftarrow{#1}}
\DeclareMathOperator{\MuSptw}{tw}
\newcommand{\isoto}{\stackrel{\sim}{\longrightarrow}}
\newcommand{\Kgp}{\mathrm{K}_0}
\newcommand{\comp}{^{\mathrm{c}}}
\newcommand{\smallF}{F'}
\newcommand{\black}{\bullet}
\newcommand{\white}{\circ}
\newcommand{\res}[1]{\xi_{#1}}
\newcommand{\K}{\mathrm{K}}
\renewcommand{\H}{\mathrm{H}}
\renewcommand{\d}{\mathrm{d}}
\newcommand{\openclustalg}[1]{\mathcal{A}_{#1}}
\newcommand{\clustalg}[1]{\overline{\mathcal{A}}_{#1}}
\newcommand{\MSform}[2][]{\operatorname{MS}#1(#2)}
\newcommand{\MSmat}{\mathfrak{m}}
\begin{document}
\begin{abstract}

Cluster algebra structures for Grassmannians and their (open) positroid strata 
are controlled by a Postnikov diagram $D$ or, equivalently, a dimer model on the disc, 
as encoded by either a bipartite graph or the dual quiver (with faces).
The associated dimer algebra $A$, determined directly by the quiver with a certain potential,
can also be realised as the endomorphism algebra of a cluster-tilting object in an
associated Frobenius cluster category.

In this paper, we introduce a class of $A$-modules corresponding to perfect matchings 
of the dimer model of $D$
and show that, when $D$ is connected, the indecomposable projective $A$-modules are in this class. 
Surprisingly, this allows us to deduce that the cluster category associated to $D$ 
embeds into the cluster category for the appropriate Grassmannian. 
We show that the indecomposable projectives correspond to certain matchings which have appeared previously in work of Muller--Speyer. This allows us to identify 
the cluster-tilting object associated to $D$, by showing that it is determined by 
one of the standard labelling rules constructing a cluster of Pl\"ucker coordinates from $D$. 
By computing a projective resolution of every perfect matching module, 
we show that Marsh--Scott's formula for twisted Pl\"ucker coordinates, 
expressed as a dimer partition function, is a special case of the general cluster character formula, and thus observe that the Marsh--Scott twist can be categorified by
a particular syzygy operation in the Grassmannian cluster category.
\end{abstract}

\maketitle

\section{Introduction} \label{sec:intro}

A key example of a cluster algebra (with frozen variables) is given by Scott's cluster structure \cite{Sco} on the homogeneous coordinate ring $\CC[\grass{k}{n}]$ of the Grassmannian of $k$-planes in $\CC^n$. 
The Pl\"ucker coordinates $\Pluck{I}$, for $I$ a $k$-subset of $\{1,\dotsc,n\}$ 
(written $I\in\binom{n}{k}$ below), are cluster variables of this cluster algebra, 
and a set $\{\Pluck{I}:I\in\clus\}$ of Pl\"ucker coordinates is a cluster if and only if
 $\clus\subset\binom{n}{k}$ is maximal with respect to the property that its elements are pairwise non-crossing \cite[Def.~3]{Sco} (or weakly separated \cite{LZ}). 
The frozen variables, which appear in every cluster, are the Pl\"ucker coordinates 
$\Pluck{I}$ for $I=\{i,\dotsc,i+k-1\}$ a cyclic interval in $\{1,\dotsc,n\}$,
considered modulo $n$.

Recently, it has been shown by Galashin--Lam \cite{GL} (see also \cite{SSBW}) that the coordinate rings of open positroid varieties $\openposidvar(\posid)$ in $\grass{k}{n}$ also have cluster algebra structures. 
These varieties are defined from a positroid $\posid\subset\binom{n}{k}$, and consist of those points in $\grass{k}{n}$ on which the Pl\"ucker coordinates $\Pluck{I}$ with $I\notin\posid$ vanish, while another set of Pl\"ucker coordinates depending on $\posid$
(the frozen variables in the cluster algebra structure) do not vanish. 
Again, the cluster algebra has a cluster $\{\Pluck{I}:I\in\clus\}$ of (restricted) Pl\"ucker coordinates for each 
maximal non-crossing subset $\clus$ of $\posid$ containing the indices of these frozen variables.

Both for the full Grassmannian and for open positroid varieties, the quivers of these clusters of Pl\"ucker coordinates are described via Postnikov (alternating strand) diagrams, which are given by a collection of $n$ strands in a disc with $n$ marked points on its boundary, satisfying various consistency conditions, as we recall in Section~\ref{sec:clucats-dimers}.
Such a diagram $D$ is also equivalent to the data of a bipartite graph $\Gamma(D)$ in the interior of the disc, joined to the $n$ marked points on the boundary by `half-edges'. 
To describe a cluster in $\CC[\grass{k}{n}]$, each strand in $D$ should go from the $i$-th point to the $(i+k)$-th point.
We call this a \emph{uniform} strand permutation 
and any corresponding diagram is a \emph{uniform} Postnikov diagram. 
Diagrams with non-uniform strand permutations define clusters for more general open positroid varieties.

The cluster algebra structure on $\CC[\grass{k}{n}]$ has a categorical model given by the Frobenius cluster category $\CM(\jksalg)$, introduced by Jensen--King--Su \cite{JKS}, whose objects are Cohen--Macaulay modules over a Gorenstein order $\jksalg=\jksalg_{k,n}$, that is, modules free over a central subalgebra $Z=\powser{\CC}{t}$. Each Pl\"ucker coordinate $\Pluck{I}$ of $\CC[\grass{k}{n}]$, for $I\in\binom{n}{k}$, corresponds to a certain `rank $1$' $\jksalg$-module $M_I$. The indecomposable projective-injective objects of $\CM(\jksalg)$ are $M_I$ for $I$ a cyclic interval, and so these objects are in bijection with the frozen variables of the cluster algebra. A direct relationship between the category $\CM(\jksalg)$ and uniform Postnikov diagrams is given by Baur--King--Marsh \cite{BKM}.

More recently, Pressland \cite{Pre3} has shown that the cluster algebra describing the coordinate ring of any open positroid variety associated to a connected Postnikov diagram admits a similar categorification, given by a category $\GP(B)$ of Gorenstein projective modules over an Iwanaga--Gorenstein ring $B$ (depending on the positroid).

Our goal here is to further develop the dictionary between these combinatorial and categorical constructions, with a focus on labelling rules (for example, those determining which collection of Pl\"ucker coordinates forms the initial cluster attached to a Postnikov diagram) and on twist maps as described by Marsh--Scott \cite{MaSc} and Muller--Speyer \cite{MuSp}. While many of our results are new even for uniform diagrams, we also extend some results of Baur--King--Marsh to the case of general connected Postnikov diagrams. Most notably, this applies to \cite[Thm.~10.3]{BKM}, which realises the dimer algebra of a Postnikov diagram as an endomorphism algebra in $\CM(C)$. Our main results in more detail are as follows.

\subsection{Vertex labelling and projective modules}
\label{ss:labels}
The set $\clus$, indexing an initial cluster of Pl\"ucker coordinates, is obtained from a Postnikov diagram $D$ via the following rule. For each boundary marked point $i$, write $i$ on each vertex of the quiver $Q=Q(D)$ (equivalently, each alternating region of $D$) to the left of the strand starting at $i$. At the end of this process, each vertex $j\in Q_0$ has a label $I_j\in\binom{n}{k}$, and we take $\clus=\{I_j:j\in Q_0\}$.

One of our main results is that this labelling rule is a natural one from a representation-theoretic point of view, at least when the Postnikov diagram $D$ is connected. From the quiver $Q$, 
one can define a frozen Jacobian algebra $A=A_D$, and the boundary algebra $B=eAe$, where $e$ is a suitable `boundary' idempotent, is then used to define the categorification $\GP(B)$ from \cite{Pre3}. We show here that $B$ contains Jensen--King--Su's algebra $\jksalg$ as a subalgebra, and further that $\GP(B)\subset\CM(B)\subset\CM(\jksalg)$, 
where the second inclusion is strictly the fully faithful embedding given by restriction
(Proposition ~\ref{p:restriction}). The labelling rule to produce $\clus$ from $D$ is then explained with reference to the rank $1$ modules $M_I\in\CM(\jksalg)$, as follows.

\begin{theorem*}[Theorem~\ref{t:labels}]
\label{t:projective-labelling}
Let $D$ be a connected Postnikov diagram with quiver $Q(D)$ and dimer algebra $A=A_D$. For each vertex $j\in Q_0$, consider the $B$-module $eAe_j$ as a $\jksalg$-module via restriction, as above. Then there is an isomorphism
\[eAe_j\isom M_{I_j}.\]
\end{theorem*}

Pressland \cite{Pre3} has shown that
\begin{equation}\label{eq:TDdef}
T_D = eA = \bigoplus_{j\in Q_0} eAe_j
\end{equation}
is a cluster-tilting object in the categorification $\GP(B)$, and furthermore that $A\isom \End_B(T_D)\op$. It follows by Theorem~\ref{t:projective-labelling} that restricting $T_D$ to $\jksalg$ produces the object
\begin{equation}\label{eq:TCdef}
T_\clus = \bigoplus_{I\in\clus} M_I = \bigoplus_{j\in Q_0} M_{I_j}
\end{equation}
and thus we also have
\begin{equation}
A\isom \End_C(T_\clus)\op.
\end{equation}
This generalises a result of Baur--King--Marsh \cite[Thm.~10.3]{BKM} from the uniform case to arbitrary connected Postnikov diagrams.  The labelling rule to produce $\clus$ has several natural variations, by replacing `left' by `right' and / or `source' by `target'. We discuss the representation-theoretic analogues of these in Remarks~\ref{r:target-labels} and \ref{r:BKM-comparison}.

By using similar ideas to those behind Theorem~\ref{t:projective-labelling}, we may further show (Proposition~\ref{pro:positroid}) that the rank 1 modules $M_I\in\CM(\jksalg)$ lying in 
the subcategory $\CM(B)$ are precisely those with $I\in\posid$.

\subsection{Perfect matching modules}
The main new ingredient in this paper is the introduction of a \emph{perfect matching module} $N_\mu\in\CM(A)$ attached to a perfect matching on the bipartite graph $\Gamma(D)$, or  equivalently (using Definition~\ref{d:pm}) on the quiver $Q(D)$. These provide a combinatorial description of the rank $1$ modules in $\CM(A)$, analogous to that of the rank $1$ modules $M_I\in\CM(C)$ (Corollary~\ref{c:proj-pm-mods}). The restriction of $N_\mu$ to the boundary algebra $B$, and further to $C$, is encoded combinatorially by the boundary value $\bdry\mu$ (Definition~\ref{d:bdry-value}) of the matching $\mu$; see Proposition~\ref{p:res-pm-mods} for a precise statement.

The consistency conditions for Postnikov diagrams imply that 
the projective $A$-modules $Ae_j$ are rank 1, and hence must be perfect matching modules. 
One key result of the paper, and the main step in the proof of Theorem~\ref{t:projective-labelling}, is to identify the corresponding matchings explicitly. 
\begin{theorem*}[Corollary~\ref{c:MSmatch}]
\label{t:MSmatch-intro}
Let $D$ be a connected Postnikov diagram with dimer algebra $A=A_D$. Then for each vertex $j\in Q_0$ we have
\[Ae_j\isom N_{\MSmat_j},\]
where $\MSmat_j$ is an explicit perfect matching defined by Muller--Speyer~\cite{MuSp}, see \eqref{eq:MSmat}.
\end{theorem*} 
The matching $\MSmat_j$ has boundary value $\pmbdry\MSmat_j=I_j\in\clus$ by construction, so combining Theorem~\ref{t:MSmatch-intro} and Proposition~\ref{p:res-pm-mods} leads directly to Theorem~\ref{t:projective-labelling}.

The core technical result used to prove Theorem~\ref{t:MSmatch-intro} and other main results in the paper is the determination 
of a projective resolution of every perfect matching module $N_\mu$ (Theorem~\ref{thm:match-res})
and consequently its class $[N_\mu]$ in the Grothendieck group $\Kgp(\proj A)$ 
(Proposition~\ref{prop:Nmu-class}).

\subsection{Cluster characters and twists}
An important use of perfect matchings is to define partition functions for dimer models.
For example, Marsh--Scott~\cite{MaSc} defined
\begin{equation}
\label{eq:MS1-intro}
  \MSform[^\white]{I} = x^{-\wt(D)}\sum_{\mu:\pmbdry{\mu}=I} x^{\wt[\white](\mu)}, 
\end{equation}
for $I\in\binom{n}{k}$, where $\wt(D)$ and $\wt[\white](\mu)$ are certain elements of $\Kgp(\proj A)$,
so this expression is a (formal) Laurent polynomial
in the cluster algebra associated to the diagram $D$.
Strictly speaking, Marsh--Scott only studied the uniform case, but their partition function \eqref{eq:MS1-intro} 
makes sense in the general case.
In the uniform case, 
under the substitution $x^{[Ae_j]} \mapsto \Pluck{I_j}$,
they showed that $\MSform[^\white]{I}$ is a twisted Plucker coordinate 
$\twPluck{I}\in\CC[\grass{k}{n}]$.

A further application of perfect matching modules in this paper is to give a representation-theoretic interpretation of the Marsh--Scott formula \eqref{eq:MS1-intro}.
Our first step in this direction (Proposition~\ref{prop:MS2}, Theorem~\ref{thm:MS3})
is to reformulate \eqref{eq:MS1-intro} in terms of modules:
\begin{equation} \label{eq:MS3-intro}
  \MSform[^\white]{I} = x^{[P_I^\white]}\sum_{\mu:\pmbdry{\mu}=I} x^{-[N_\mu]}
  = x^{[F\canprojcov[\white] M_I]}\sum_{\substack{N\leq F M_I \\eN=M_I}} x^{-[N]},
 \end{equation}
where $F=\Hom_B(T_D,-)\colon \CM(B)\to\CM(A)$ is the right adjoint functor to boundary restriction 
$N\mapsto eN$.
In addition $P_I^\white$ is a certain projective $A$-module constructed combinatorially from $I$, whereas $\canprojcov[\white] M_I$ is a (non-minimal) projective cover of $M_I$ in $\CM(B)$ with the property that $F\canprojcov[\white] M_I=P_I^\white$.
To get from \eqref{eq:MS1-intro} to \eqref{eq:MS3-intro}, we use Proposition~\ref{prop:Nmu-class}, 
to show that $[P_I^\white]-[N_\mu]=\wt[\white](\mu)-\wt(D)$, and Proposition~\ref{p:matchings-to-modules} (see also Remark~\ref{r:combinatorics}) to rewrite the summation set in terms of modules.

By a further transformation of \eqref{eq:MS3-intro} we obtain the following.
\begin{theorem*}[Theorem~\ref{thm:MS=CC}]
\label{t:MS=CC-intro}
If $M_I\in\CM(B)$ for some $I\in\binom{n}{k}$, then
\[\MSform[^\white]{I} =x^{[F\cansyz[\white] M_I]}\sum_{E\leq G\cansyz[\white] M_I}x^{-[E]},\]
where $G=\Ext^1_B(T_D,-)\colon \CM(B)\to\CM(A)$ and 
$\cansyz[\white] M = \ker (\canprojcov[\white] M\to M)$. That is, $\MSform[^\white]{I}$ is given by Fu--Keller's version \cite{FK} of the Caldero--Chapoton formula \cite{CC} 
for the cluster character of $\cansyz[\white] M_I$, using the cluster-tilting object $T_D$ in $\GP(B)$.
\end{theorem*}

In the uniform case, the cluster character of $M_I$ is the Pl\"ucker coordinate $\Pluck{I}$, 
under the same substitution $x^{[Ae_j]} \mapsto \Pluck{I_j}$.
Thus (see~\eqref{eq:CC=twPluck}) the Marsh--Scott twist on Pl\"ucker coordinates is categorified 
by the syzygy $\cansyz[\white]$ on rank 1 modules in $\CM(C)$. While Fu--Keller's formula makes sense as a function on $\CM(B)$, it is only a cluster character when restricted to $\GP(B)$. However, we may show (Lemma~\ref{lem:syzygy}) that in fact $\cansyz[\white] M_I\in\GP(B)$, justifying the language used in stating Theorem~\ref{t:MS=CC-intro}.

Muller--Speyer~\cite{MuSp} define a slightly different twist, including in the non-uniform case, 
which we also relate to a syzygy via a cluster character formula (Theorem~\ref{t:MuSp=CC}).
This relationship is used in \cite[Thm.~7.2]{Pre4} to prove both that this twist is a quasi-cluster morphism in the sense of Fraser \cite{Fraser}, and the related fact \cite[Thm.~6.16]{Pre4} that the source and target labelling conventions for a Postnikov diagram (see \S\ref{ss:labels}) lead to quasi-coincident cluster algebra structures for the open positroid variety.
This second statement was originally conjectured by Muller--Speyer \cite[Rem.~4.7]{MuSp} (see also \cite[Conj.~1.1]{FSB}).
Alternative proofs of these facts, using different methods, can be found in \cite[Thm.~B, Cor.~8.1]{CLSBW}.

\subsection{Structure of the paper}
In Section~\ref{sec:clucats-dimers}, we describe the notion of a consistent dimer model on a disc, 
in terms of a Postnikov diagram $D$, a bipartite graph $\plabic(D)$ or a quiver with faces $Q(D)$.
We also describe the dimer algebra $A=A_D$, introduced in \cite{BKM}.
A fundamental invariant of a dimer model is its type $(k,n)$ (Definition~\ref{d:type}). In Section~\ref{sec:bdry-alg}, we describe the two algebras associated to the boundary of the dimer model.
The first is $B=eAe$, for $e$ a certain boundary idempotent in $A$, and the second is the algebra $\jksalg=\jksalg_{k,n}$ introduced in \cite{JKS} to categorify the Grassmannian cluster algebra $\CC[\grass{k}{n}]$.

In Section~\ref{sec:PM-mods}, we explain how a perfect matching on $Q(D)$ 
determines a module $N_\mu$ for the corresponding dimer algebra $A$. 
In the context of Postnikov diagrams, this allows us to prove Proposition~\ref{p:restriction}, showing that $\jksalg$ is canonically a subalgebra of $B$,
in such a way that the restriction map $\CM(B)\to \CM(C)$ is fully faithful.
Thus Cohen--Macaulay $B$-modules are effectively a special class of Cohen--Macaulay $C$-modules.

In Section~\ref{sec:ind-res}, we study the `induction-restriction' relationship between modules for the algebras $B$ and $A$. We recall results from elsewhere showing that $A$ 
is the endomorphism algebra of a cluster-tilting object $T\in\GP(B)$,
when the Postnikov diagram is connected.

In Section~\ref{sec:proj-res} we use the combinatorics of $Q(D)$ to write down a projective resolution of $N_\mu$. 
This projective resolution is used in Section~\ref{sec:MuSp} to show that Muller--Speyer matchings correspond to projective modules, yielding Theorem~\ref{t:MSmatch-intro}. 
In Section~\ref{sec:labels}, we deduce that the combinatorial labelling of Postnikov diagrams agrees with the categorical labelling arising from restricting projective $A$-modules to the boundary, thus proving Theorem~\ref{t:projective-labelling}.

Sections~\ref{sec:newMS} and \ref{sec:newCC} recall the Marsh--Scott formula and Fu--Keller's cluster character. In Section~\ref{sec:MS=CC} we relate these by proving Theorem~\ref{t:MS=CC-intro}, in particular showing that the twisted Pl\"ucker coordinate $\twPluck{I}$ 
is the cluster character $\clucha{T}{\cansyz[\white] M_I}$, 
where $\cansyz[\white] M_I$ is a particular syzygy of the $\jksalg$-module $M_I$. Finally, in Section~\ref{sec:MuSp=CC}, we relate Muller--Speyer's twist for more general open positroid varieties to the cluster character formula.

\subsection*{Acknowledgements}
We thank Bernt Tore Jensen, Xiuping Su, Chris Fraser and Melissa Sherman-Bennett for fruitful discussions, and are particularly grateful to Bernt Tore Jensen for the proof of Lemma~\ref{lem:syzygy}. 
Important progress on this project was made during visits to Universit\"at Bonn in 2016 and Universit\"at Stuttgart in 2018, and we thank Jan Schr\"oer and Steffen Koenig for facilitating these. 
We are also grateful for support and hospitality during
the \emph{20 Years of Cluster Algebras} conference at CIRM, Luminy in 2018, 
the cluster algebras programme in Kyoto in 2019, 
the Hausdorff School on stability conditions in Bonn, also in 2019,
and the \emph{Cluster algebras and representation theory} programme in 2021
at the Isaac Newton Institute for Mathematical Sciences (supported by EPSRC grant no EP/R014604/1).
We thank the anonymous referee for suggesting various improvements to the text.

At different stages of the project, the first author was supported by EPSRC grants EP/K026364/1, EP/N005457/1 and EP/P016014/1, while the third author was supported by a fellowship from the Max-Planck-Gesellschaft and the EPSRC postdoctoral fellowship EP/T001771/1.

\section{Grassmannian cluster categories and dimer models} \label{sec:clucats-dimers}

In this section we introduce the related notions of Postnikov diagrams and dimer models with boundary. Our exposition largely follows \cite[\S2]{BKM}, but at a slightly higher level of generality.

Let $\C=(\C_0,\C_1)$ be a circular graph with vertex set 
$\C_0$ and edge set $\C_1$, both of size~$n$.
We will often label the edges with $\{1,\dotsc,n\}$, in cyclic order, but will not explicitly label the vertices.
The case $n=7$ is illustrated in Figure~\ref{f:graphC}.

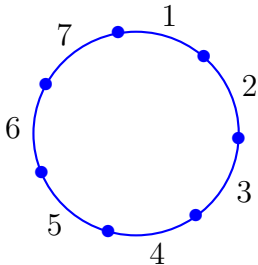
\begin{figure}[h]
\begin{tikzpicture} [scale=0.90]  
\pgfmathsetmacro{\seventh}{360/7}
\pgfmathsetmacro{\cstart}{100}
\pgfmathsetmacro{\crad}{1.5}
\pgfmathsetmacro{\labrad}{1.2*\crad}
\draw [blue,thick] (0,0) circle(\crad);
\foreach \n in {1,...,7}
{ \draw (\cstart-\seventh*\n:\crad) node [blue] {$\bullet$};
  \draw (\cstart+0.5*\seventh-\seventh*\n:\labrad) node [black] {\n}; }
\end{tikzpicture}
\caption{The circular graph $\C$.}
\label{f:graphC}
\end{figure}

\begin{definition} \label{d:asd}
Consider a disc with $n$ marked points on its boundary, identified with $\C_1$ in the same cyclic order.
A \emph{Postnikov} (or \emph{alternating strand}) \emph{diagram} $D$ consists of a set of $n$ oriented curves in the disc, 
called \emph{strands}, 
connecting the boundary marked points, such that each marked point is incident with one incoming and one outgoing strand. 
The following axioms must be satisfied.
\goodbreak
\emph{Local axioms:}
\begin{enumerate}[({a}1)]
\item Only two strands can cross at a given point and all crossings are
transverse.
\item There are finitely many crossing points.
\item
Proceeding along a given strand, the other strands crossing it alternate between crossing it left to right and right to left.
\end{enumerate}
\goodbreak
\emph{Global axioms:}
\begin{enumerate}[({b}1)]
\item A strand cannot cross itself.
\item If two strands cross at distinct points $U$ and $V$, then one strand is oriented from $U$ to $V$ and the other is oriented from $V$ to $U$.
\end{enumerate}

For axioms~(a3) and (b2), the two strands meeting at a marked point are regarded as crossing at this point in the
obvious way. We call $D$ \emph{connected} if the union of its strands is a connected set. An example of a connected Postnikov diagram is shown in Figure~\ref{f:postfree37}.
\end{definition}

\begin{figure}[h]
\begin{tikzpicture}[scale=3,baseline=(bb.base),yscale=-1]

\path (0,0) node (bb) {};

\draw [boundary] (0,0) circle(1.0);

\foreach \n/\m/\a in {1/4/0, 2/3/0, 3/2/5, 4/1/10, 5/7/0, 6/6/-3, 7/5/0}
{ \coordinate (b\n) at (\bstart-\seventh*\n+\a:1.0);
  \draw (\bstart-\seventh*\n+\a:1.1) node {$\m$}; }

\foreach \n/\m in {8/1, 9/2, 10/3, 11/4, 14/5, 15/6, 16/7}
  {\coordinate (b\n) at ($0.65*(b\m)$);}

\coordinate (b13) at ($(b15) - (b16) + (b8)$);
\coordinate (b12) at ($(b14) - (b15) + (b13)$);

\foreach \n/\x\y in {13/-0.03/-0.03, 12/-0.22/0.0, 14/-0.07/-0.03, 11/0.05/0.02, 16/-0.02/0.02}
  {\coordinate (b\n)  at ($(b\n) + (\x,\y)$); } 

\foreach \e/\f/\t in {2/9/0.5, 4/11/0.5, 5/14/0.5, 7/16/0.5, 
 8/9/0.5, 9/10/0.5, 10/11/0.5,11/12/0.5, 12/13/0.45, 8/13/0.6, 
 14/15/0.5, 15/16/0.6, 12/14/0.45, 13/15/0.4, 8/16/0.6}
{\coordinate (a\e-\f) at ($(b\e) ! \t ! (b\f)$); }

\draw [strand] plot[smooth]
coordinates {(b1) (a8-16) (a15-16) (b6)}
[postaction=decorate, decoration={markings,
 mark= at position 0.2 with \strarrow,
 mark= at position 0.5 with \strarrow, 
 mark= at position 0.8 with \strarrow }];
 
\draw [strand] plot[smooth]
coordinates {(b6) (a14-15) (a12-14)(a11-12) (a10-11) (b3)}
[postaction=decorate, decoration={markings,
 mark= at position 0.15 with \strarrow, mark= at position 0.35 with \strarrow,
 mark= at position 0.53 with \strarrow, mark= at position 0.7 with \strarrow,
 mark= at position 0.87 with \strarrow }];
 
\draw [strand] plot[smooth]
coordinates {(b3) (a9-10) (a8-9) (b1)}
[postaction=decorate, decoration={markings,
 mark= at position 0.2 with \strarrow,
 mark= at position 0.5 with \strarrow, 
 mark= at position 0.8 with \strarrow }];

\draw [strand] plot[smooth]
coordinates {(b2) (a9-10) (a10-11) (b4)}
 [postaction=decorate, decoration={markings,
 mark= at position 0.2 with \strarrow,
 mark= at position 0.5 with \strarrow, 
 mark= at position 0.8 with \strarrow }];

\draw [strand] plot[smooth]
coordinates {(b4) (a11-12) (a12-13) (a13-15) (a15-16) (b7)}
[postaction=decorate, decoration={markings,
 mark= at position 0.15 with \strarrow, mark= at position 0.35 with \strarrow,
 mark= at position 0.55 with \strarrow, mark= at position 0.7 with \strarrow,
 mark= at position 0.87 with \strarrow }];

\draw [strand] plot[smooth]
coordinates {(b7) (a8-16) (a8-13) (a12-13) (a12-14) (b5)}
[postaction=decorate, decoration={markings,
 mark= at position 0.15 with \strarrow, mark= at position 0.315 with \strarrow,
 mark= at position 0.5 with \strarrow, mark= at position 0.7 with \strarrow,
 mark= at position 0.88 with \strarrow }];

\draw [strand] plot[smooth]
coordinates {(b5) (a14-15) (a13-15) (a8-13) (a8-9) (b2)}
[postaction=decorate, decoration={markings,
 mark= at position 0.13 with \strarrow, mark= at position 0.33 with \strarrow,
 mark= at position 0.5 with \strarrow, mark= at position 0.7 with \strarrow,
 mark= at position 0.88 with \strarrow }];

\end{tikzpicture}

\caption{A Postnikov diagram.}
\label{f:postfree37}
\end{figure}
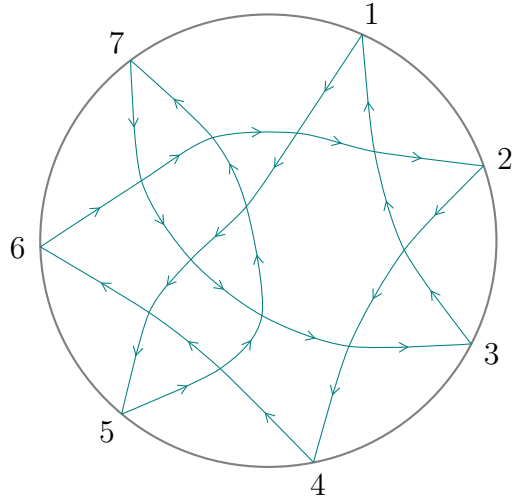

A Postnikov diagram divides the interior of the disc into regions, the connected components of the complement of the union of the strands. A region is \emph{alternating} if the strands incident with it alternate in orientation going around its boundary. 
It is \emph{oriented} if the strands around its boundary are all oriented clockwise, or all anticlockwise. 
It is easy to see that every region of a Postnikov diagram must be alternating or oriented.

An alternating region with an edge on the boundary is called a \emph{boundary} region, otherwise it is an \emph{internal} region.
The labelling of boundary points by $\C_1$ gives a canonical map from $\C_0$ onto the set of boundary regions.
This map is a bijection when the Postnikov diagram is connected, 
so that each boundary region meets the boundary in a single edge.

A Postnikov diagram $D$ determines a permutation $\pi_D$ of $\C_1$, with $\pi_D(i)=j$ when the strand starting at $i$ ends at $j$. If $\pi_D(i)=i+k$ (modulo $n$) for some fixed $k$, we call $D$ a $(k,n)$-diagram. These diagrams will play a special role for us, since they are related to the $(k,n)$-Grassmannian cluster algebra and its categorification by Jensen--King--Su \cite{JKS}. However, most of our results apply to more general diagrams, which describe cluster structures on more general positroid varieties \cite{GL,SSBW}, categorified in \cite{Pre3}.

\begin{definition}
A \emph{lollipop} in a Postnikov diagram $D$ is a strand starting and ending at the same point on the disc, corresponding to a fixed point of $\pi_D$.
\end{definition}

\begin{proposition}
\label{p:no-big-lollipops}
A lollipop has no crossings with other strands of $D$.
\end{proposition}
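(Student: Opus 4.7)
The plan is to argue by contradiction: assume the lollipop $L$ based at a boundary point $i$ has at least one crossing with another strand, and let $P_1$ be the first crossing along $L$ (starting from $i$), say with a strand $L'$. The loop $L$ bounds an interior region $R_{\mathrm{in}}$ in the disc, which we may take to lie on the left of $L$ after fixing orientations.

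Suppose first that $L'$ crosses $L$ from right to left at $P_1$, so $L'$ enters $R_{\mathrm{in}}$ at $P_1$. Then $L'$ must exit $R_{\mathrm{in}}$ at the next crossing $Q$ of $L'$ with $L$ along $L'$, giving $L' : P_1 \to Q$ on $L'$. Since $P_1$ is the first crossing of $L$, we also have $L : P_1 \to Q$ on $L$. This directly contradicts axiom~(b2) applied to the pair $\{P_1, Q\}$.

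Suppose instead that $L'$ crosses $L$ from left to right at $P_1$, so $L'$ exits $R_{\mathrm{in}}$. I would consider the region $R$ of $D$ whose closure contains $i$ on the interior side of $L$'s loop, and traverse $\partial R$ counter-clockwise (with $R$ on the left). The traversal begins with $L$'s outgoing arc from $i$ and ends with $L$'s incoming arc back to $i$; both of these segments are traversed in $L$'s forward direction. Being adjacent in the cyclic order of $\partial R$ (meeting at $i$) and both ``with $L$'', they prevent $R$ from being an alternating region.

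To finish, a local analysis at $P_1$ shows that the arc of $L'$ forming the next edge of $\partial R$ (after $L$'s outgoing arc) is traversed \emph{against} $L'$'s direction. Combined with the $L$-arcs being ``with $L$'', this rules out both possible orientations of $R$. Hence $R$ is neither alternating nor oriented, contradicting the observation noted earlier that every region of a Postnikov diagram must be of one of these two types. I expect the main obstacle is this local analysis: once the hypothesis that $L'$ exits $R_{\mathrm{in}}$ at $P_1$ is used to pin down the quadrant in which $R$ sits, the counter-clockwise traversal forces the $L'$-segment to be traversed against $L'$'s orientation.
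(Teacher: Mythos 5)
Your proof is correct, and your first case (where $L'$ crosses into the interior of the loop at $P_1$) is exactly the paper's argument: $L'$ must cross $L$ a second time to exit, and since $P_1$ was the first crossing along $L$, this second crossing comes later on both strands, contradicting (b2). Where you diverge from the paper is in how the other case is excluded. The paper dispatches it immediately by applying axiom (a3) at the marked point $i$: since the incoming and outgoing branches of the lollipop are ``regarded as crossing'' at $i$, the next crossing $P_1$ along $L$ must have the opposite orientation, which forces $L'$ to enter the interior, so this case simply cannot occur. You instead invoke the dichotomy stated just before the proposition (every region is alternating or oriented) and apply it to the region $R$ adjacent to $i$ inside the loop, showing that $R$ would be neither; once the local analysis at $P_1$ is actually carried out, this works. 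The two routes are closely related, since the alternating/oriented dichotomy for a region is precisely (a3) read off at that region's corners, here the corner at $i$ and the corner at $P_1$ --- so your argument is essentially an indirect application of (a3). The paper's version is more economical and avoids leaning on the region dichotomy, which the paper asserts without proof. Two small things worth tightening: make explicit that the choice of the interior lying to the left of $L$ is without loss of generality, since your subsequent claims that the two $L$-arcs of $\partial R$ are traversed ``with $L$'' use this normalisation (the clockwise case is symmetric, with both arcs traversed against $L$); and actually carry out the local analysis at $P_1$, which you only sketch, rather than leaving it as the acknowledged main obstacle.
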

\begin{proof}
Let $s$ be a lollipop, and consider its first crossing with another strand $s'$. By (b1), $s$ is a simple closed curve, and by (a3) strand $s'$ must cross into the inside of this curve. To end on the boundary $s'$ must therefore cross $s$ a second time, later on~$s$. But this violates (b2), hence we obtain a contradiction.
\end{proof}

As a result, connected Postnikov diagrams have no lollipops, except in the most degenerate case $n=1$.

The information in a Postnikov diagram may also be encoded in a reduced plabic (planar bicoloured) graph in the disc, 
as in~\cite[\S11--14]{postnikov}. 
For our purposes it is sufficient to assume that this graph is actually bipartite.

\begin{definition} \label{d:Post-to-bipartite}
To any Postnikov diagram $D$, there is an associated bipartite graph $\plabic(D)$ embedded into the disc, defined as follows.
The nodes correspond to the oriented regions of $D$ and are coloured black or white when the boundary of the region is oriented anticlockwise or clockwise, respectively, and the internal edges of $\plabic(D)$ correspond to the points of intersection of pairs of oriented regions. 
We call the nodes corresponding to regions meeting the boundary of the disc \emph{boundary nodes}, 
and the others \emph{internal nodes}. 
We also include in $\plabic(D)$ the data of \emph{half-edges}, which connect each boundary node to the marked points on the boundary that its corresponding region meets. 
We label the half-edges by $\C_1$, so that half-edge $i$ meets marked point $i$. 
The tiles of $\plabic(D)$, i.e.\ the connected components of its complement in the disc, 
correspond to the alternating regions of $D$.
\end{definition}

\begin{figure}[h]
\begin{tikzpicture}[scale=3,baseline=(bb.base),yscale=-1]

\path (0,0) node (bb) {};

\draw [boundary] (0,0) circle(1.0);

\foreach \n/\m/\a in {1/4/0, 2/3/0, 3/2/5, 4/1/10, 5/7/0, 6/6/-3, 7/5/0}
{ \coordinate (b\n) at (\bstart-\seventh*\n+\a:1.0);
  \draw (\bstart-\seventh*\n+\a:1.1) node {$\m$}; }

\foreach \n/\m in {8/1, 9/2, 10/3, 11/4, 14/5, 15/6, 16/7}
  {\coordinate (b\n) at ($0.65*(b\m)$);}

\coordinate (b13) at ($(b15) - (b16) + (b8)$);
\coordinate (b12) at ($(b14) - (b15) + (b13)$);

\foreach \n/\x\y in {13/-0.03/-0.03, 12/-0.22/0.0, 14/-0.07/-0.03, 11/0.05/0.02, 16/-0.02/0.02}
  {\coordinate (b\n)  at ($(b\n) + (\x,\y)$); } 

\foreach \h/\t in {1/8, 2/9, 3/10, 4/11, 5/14, 6/15, 7/16, 
 8/9, 9/10, 10/11,11/12, 12/13, 13/8, 14/15, 15/16, 12/14, 13/15, 8/16}
{ \draw [bipedge] (b\h)--(b\t); }

\foreach \n in {8,10,12,15} 
  {\draw [\graphcolor] (b\n) circle(\dotrad) [fill=white];} \foreach \n in {9,11,13, 14,16}  
  {\draw [\graphcolor] (b\n) circle(\dotrad) [fill=\graphcolor];} 

\foreach \e/\f/\t in {2/9/0.5, 4/11/0.5, 5/14/0.5, 7/16/0.5, 
 8/9/0.5, 9/10/0.5, 10/11/0.5,11/12/0.5, 12/13/0.45, 8/13/0.6, 
 14/15/0.5, 15/16/0.6, 12/14/0.45, 13/15/0.4, 8/16/0.6}
{\coordinate (a\e-\f) at ($(b\e) ! \t ! (b\f)$); }

\draw [strand] plot[smooth]
coordinates {(b1) (a8-16) (a15-16) (b6)}
[postaction=decorate, decoration={markings,
 mark= at position 0.2 with \strarrow,
 mark= at position 0.5 with \strarrow, 
 mark= at position 0.8 with \strarrow }];
 
\draw [strand] plot[smooth]
coordinates {(b6) (a14-15) (a12-14)(a11-12) (a10-11) (b3)}
[postaction=decorate, decoration={markings,
 mark= at position 0.15 with \strarrow, mark= at position 0.35 with \strarrow,
 mark= at position 0.53 with \strarrow, mark= at position 0.7 with \strarrow,
 mark= at position 0.87 with \strarrow }];
 
\draw [strand] plot[smooth]
coordinates {(b3) (a9-10) (a8-9) (b1)}
[postaction=decorate, decoration={markings,
 mark= at position 0.2 with \strarrow,
 mark= at position 0.5 with \strarrow, 
 mark= at position 0.8 with \strarrow }];

\draw [strand] plot[smooth]
coordinates {(b2) (a9-10) (a10-11) (b4)}
 [postaction=decorate, decoration={markings,
 mark= at position 0.2 with \strarrow,
 mark= at position 0.5 with \strarrow, 
 mark= at position 0.8 with \strarrow }];

\draw [strand] plot[smooth]
coordinates {(b4) (a11-12) (a12-13) (a13-15) (a15-16) (b7)}
[postaction=decorate, decoration={markings,
 mark= at position 0.15 with \strarrow, mark= at position 0.35 with \strarrow,
 mark= at position 0.55 with \strarrow, mark= at position 0.7 with \strarrow,
 mark= at position 0.87 with \strarrow }];

\draw [strand] plot[smooth]
coordinates {(b7) (a8-16) (a8-13) (a12-13) (a12-14) (b5)}
[postaction=decorate, decoration={markings,
 mark= at position 0.15 with \strarrow, mark= at position 0.315 with \strarrow,
 mark= at position 0.5 with \strarrow, mark= at position 0.7 with \strarrow,
 mark= at position 0.88 with \strarrow }];

\draw [strand] plot[smooth]
coordinates {(b5) (a14-15) (a13-15) (a8-13) (a8-9) (b2)}
[postaction=decorate, decoration={markings,
 mark= at position 0.13 with \strarrow, mark= at position 0.33 with \strarrow,
 mark= at position 0.5 with \strarrow, mark= at position 0.7 with \strarrow,
 mark= at position 0.88 with \strarrow }];

 \end{tikzpicture}

\caption{The bipartite graph corresponding to the Postnikov diagram in Figure~\ref{f:postfree37}.}
\label{f:bipartite37}
\end{figure}
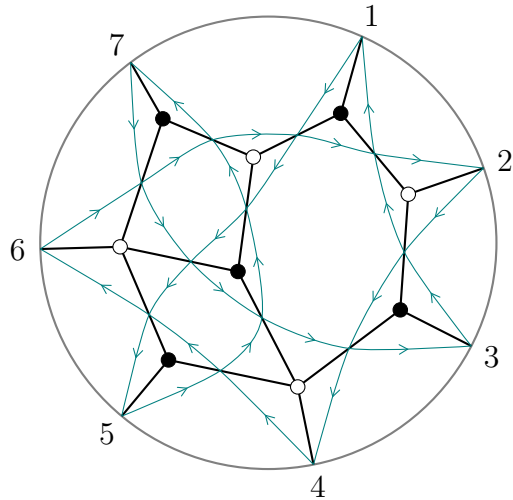

\begin{definition}[cf.~{\cite[\S3.1]{MuSp}}]
\label{d:type}
Let $D$ be a Postnikov diagram. The \emph{type} of $D$ is $(k,n)$, where
\begin{multline*}
k=\#\{\text{white nodes in $\plabic(D)$}\}-\#\{\text{black nodes in $\plabic(D)$}\}\\+\#\{\text{half-edges in $\plabic(D)$ incident with a black node}\},
\end{multline*}
and $n$ is the number of strands, or equivalently the number of half-edges in $\plabic(D)$.
\end{definition}

We will see in Section~\ref{sec:labels} that a $(k,n)$-diagram has type $(k,n)$. The Postnikov diagram in Figure~\ref{f:bipartite37} has type $(3,7)$, but it is not a $(3,7)$-diagram. In this example, each black boundary node is incident with a unique half-edge, so that
\[k=\#\{\text{white nodes}\}-\#\{\text{internal black nodes}\},\]
but this need not always be the case, as in the example in Figure~\ref{f:not-surj}.

We may also associate a quiver to any Postnikov diagram. Recall that a quiver $Q$ is a directed graph encoded by a tuple $Q=(Q_0,Q_1,h,t)$, where $Q_0$ is the
set of vertices, $Q_1$ is the set of arrows and $h,t\colon Q_1\to Q_0$,
so that each $\alpha\in Q_1$ is an arrow $t\alpha\to h\alpha$.
We will write $Q=(Q_0,Q_1)$, with the remaining data implicit, and we will also regard
it as an oriented 1-dimensional CW-complex. Given a quiver $Q$, we write $\Qcyc$ for the set of oriented cycles in $Q$
(up to cyclic equivalence).

\begin{definition}
\label{d:quiver-faces}
A \emph{quiver with faces} is a quiver $Q=(Q_0,Q_1)$, together with a set $Q_2$ of faces
and a map $\bdry\colon Q_2\to \Qcyc$,
which assigns to each $F\in Q_2$ its \emph{boundary} $\bdry F\in \Qcyc$.
\end{definition}

We shall often denote a quiver with faces by the same letter $Q$, regarded now as the
triple $(Q_0,Q_1,Q_2)$. We say that $Q$ is \emph{finite} if $Q_0$, $Q_1$
and $Q_2$ are all finite sets.
The number of times an arrow $\alpha\in Q_1$ appears in the boundaries of the faces in
$Q_2$ will be called the \emph{face multiplicity} of $\alpha$.
The (unoriented) \emph{incidence graph} of $Q$, at a vertex $i\in Q_0$,
has vertices given by the arrows incident with $i$. The edges between two
arrows $\alpha,\beta$ correspond to the paths of the form
\[ \lra{\alpha} i \lra{\beta} \]
occurring in the cycle $\bdry F$ for some face $F$.

\begin{definition} \label{d:dimermodel}
A (finite, connected, oriented) \emph{dimer model with boundary} is a finite connected
quiver with faces $Q=(Q_0,Q_1,Q_2)$, where $Q_2$ is written as disjoint union $Q_2=Q_2^+\cup Q_2^-$,
satisfying the following properties:
\begin{enumerate}[(a)]
\item the quiver $Q$ has no loops, i.e.\ no 1-cycles (but 2-cycles are allowed),
\item all arrows in $Q_1$ have face multiplicity $1$ (\emph{boundary} arrows) or $2$
(\emph{internal} arrows),
\item each internal arrow lies in a cycle bounding a face
in $Q_2^+$ and in a cycle bounding a face in $Q_2^-$,
\item the incidence graph of $Q$ at each vertex is non-empty and connected.
\end{enumerate}
Note that, by (b), each incidence graph in (d) must be either a line
(at a \emph{boundary} vertex) or an unoriented cycle (at an \emph{internal} vertex).
\end{definition}

In cluster algebras literature, internal vertices are usually called mutable and boundary vertices called frozen, terminology which is sometimes \cite{PreFJA} extended to internal and boundary arrows, but we opt here for the more geometric terms.

If we realise each face $F$ of a quiver with faces $Q$ as a polygon,
whose edges are labelled (cyclically) by the arrows in $\bdry F$,
then we may, in the usual way, form a topological space $|Q|$ by gluing together
the edges of the polygons labelled by the same arrows, in the manner indicated by the directions of the arrows.
If $Q$ is a dimer model with boundary then, arguing as in~\cite[Lemma 6.4]{bocklandt}, we see that conditions (b) and (d)
ensure that $|Q|$ is a surface with boundary, while (c) means that it can be oriented
by declaring the boundary cycles of faces in $Q_2^+$ to be oriented positive (or anticlockwise)
and those of faces in $Q_2^-$ to be negative (or clockwise).
Note also that each component of the boundary of $|Q|$ is (identified with) an
unoriented cycle of boundary arrows in $Q$.

On the other hand, suppose that we are given an embedding of a finite quiver $Q=(Q_0,Q_1)$
into a compact oriented surface $\Sigma$ with
boundary, such that the complement of $Q$ in $\Sigma$ is a disjoint union of discs, each of which
is bounded by a cycle in $Q$. Then we may make $Q$ into an oriented dimer model in the above
sense, for which $|Q|\isom \Sigma$, by setting $Q_2$ to be the set of connected components of the
complement of $Q$ in $\Sigma$, separated into $Q_2^+$ and $Q_2^-$
using the orientation of $\Sigma$.

\begin{definition} \label{d:quiver}
The quiver $Q(D)$ with faces of a Postnikov diagram $D$ has vertices $Q_0(D)$ given by the alternating regions of $D$. 
The arrows $Q_1(D)$ correspond to intersection points of two alternating regions, with orientation consistent with the strand orientation,
as in Figure~\ref{f:quiver37}.
We refer to the arrows between boundary vertices as \emph{boundary arrows}; these are naturally labelled by $\C_1$ in an analogous way to the half-edges of $\plabic(D)$. The faces $Q_2(D)$ are the cycles of arrows determined by an oriented region of $D$; these lie in $Q_2^+(D)$ if the region (equivalently the cycle) is oriented anticlockwise, and in $Q_2^-(D)$ if it is clockwise.
\end{definition}

As in \cite[Rem.~3.4]{BKM}, the quiver $Q(D)$ associated to a connected Postnikov diagram $D$ in a disc is naturally a dimer model in the disc as above---connectedness of $D$ is required for connectedness of incidence graphs as in Definition~\ref{d:dimermodel}(d). The Postnikov diagram is recovered as the collection of zig-zag paths of the dimer model; the global conditions (b1) and (b2) on the Postnikov diagram correspond to zig-zag consistency for the dimer model \cite[Thm.~5.5]{bocklandt12}, \cite[Defn.~3.5]{IU}.

We may also describe $Q(D)$, as a quiver with faces, directly and more combinatorially
as the dual of the bipartite graph $\plabic(D)$,
as in \cite[\S 2.1]{francopre12} for a general bipartite field theory.
In other words, $Q_0(D)$ is in bijection with the set of tiles of $\plabic(D)$ and $Q_1(D)$ with the set of edges,
with boundary arrows corresponding to half-edges.
An arrow joins the
two tiles in $\plabic(D)$ that share the corresponding edge and is oriented so that the black node is
on the left and/or the white node is on the right.
The faces (plaquettes in \cite{francopre12}) $F\in Q^+_2(D)$ correspond to the black nodes, while
those in $Q^-_2(D)$ correspond to the white nodes. For this reason, we will usually refer to the faces of a general dimer model with boundary as black, if they lie in $Q^+_2$, or white, if they lie in $Q^-_2$.
The boundary $\bdry F$ of a face $F$ is given by the arrows corresponding
to the edges incident with the node of $\plabic(D)$ corresponding to $F$,
ordered anticlockwise round black nodes and clockwise round white ones.
This duality is illustrated in Figure~\ref{f:quiver37}, for $D$ as in Figure~\ref{f:postfree37}.

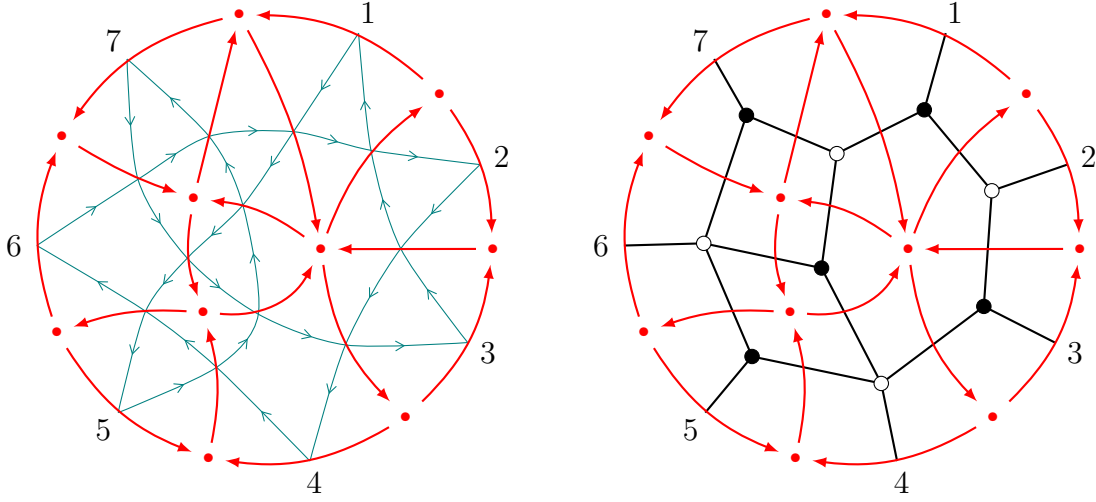
\begin{figure}[h]
\[
\begin{tikzpicture}[scale=3,baseline=(bb.base),yscale=-1]

\path (0,0) node (bb) {};

\foreach \n/\m/\a in {1/4/0, 2/3/0, 3/2/5, 4/1/10, 5/7/0, 6/6/-3, 7/5/0}
{ \coordinate (b\n) at (\bstart-\seventh*\n+\a:1.0);
  \draw (\bstart-\seventh*\n+\a:1.1) node {$\m$}; }

\foreach \n/\m in {8/1, 9/2, 10/3, 11/4, 14/5, 15/6, 16/7}
  {\coordinate (b\n) at ($0.65*(b\m)$);}

\coordinate (b13) at ($(b15) - (b16) + (b8)$);
\coordinate (b12) at ($(b14) - (b15) + (b13)$);

\foreach \n/\x\y in {13/-0.03/-0.03, 12/-0.22/0.0, 14/-0.07/-0.03, 11/0.05/0.02, 16/-0.02/0.02}
  {\coordinate (b\n)  at ($(b\n) + (\x,\y)$); } 

\foreach \e/\f/\t in {2/9/0.5, 4/11/0.5, 5/14/0.5, 7/16/0.5, 
 8/9/0.5, 9/10/0.5, 10/11/0.5,11/12/0.5, 12/13/0.45, 8/13/0.6, 
 14/15/0.5, 15/16/0.6, 12/14/0.45, 13/15/0.4, 8/16/0.6}
{\coordinate (a\e-\f) at ($(b\e) ! \t ! (b\f)$); }

\draw [strand] plot[smooth]
coordinates {(b1) (a8-16) (a15-16) (b6)}
[postaction=decorate, decoration={markings,
 mark= at position 0.2 with \strarrow,
 mark= at position 0.5 with \strarrow, 
 mark= at position 0.8 with \strarrow }];
 
\draw [strand] plot[smooth]
coordinates {(b6) (a14-15) (a12-14)(a11-12) (a10-11) (b3)}
[postaction=decorate, decoration={markings,
 mark= at position 0.15 with \strarrow, mark= at position 0.35 with \strarrow,
 mark= at position 0.53 with \strarrow, mark= at position 0.7 with \strarrow,
 mark= at position 0.87 with \strarrow }];
 
\draw [strand] plot[smooth]
coordinates {(b3) (a9-10) (a8-9) (b1)}
[postaction=decorate, decoration={markings,
 mark= at position 0.2 with \strarrow,
 mark= at position 0.5 with \strarrow, 
 mark= at position 0.8 with \strarrow }];

\draw [strand] plot[smooth]
coordinates {(b2) (a9-10) (a10-11) (b4)}
 [postaction=decorate, decoration={markings,
 mark= at position 0.2 with \strarrow,
 mark= at position 0.5 with \strarrow, 
 mark= at position 0.8 with \strarrow }];

\draw [strand] plot[smooth]
coordinates {(b4) (a11-12) (a12-13) (a13-15) (a15-16) (b7)}
[postaction=decorate, decoration={markings,
 mark= at position 0.15 with \strarrow, mark= at position 0.35 with \strarrow,
 mark= at position 0.55 with \strarrow, mark= at position 0.7 with \strarrow,
 mark= at position 0.87 with \strarrow }];

\draw [strand] plot[smooth]
coordinates {(b7) (a8-16) (a8-13) (a12-13) (a12-14) (b5)}
[postaction=decorate, decoration={markings,
 mark= at position 0.15 with \strarrow, mark= at position 0.315 with \strarrow,
 mark= at position 0.5 with \strarrow, mark= at position 0.7 with \strarrow,
 mark= at position 0.88 with \strarrow }];

\draw [strand] plot[smooth]
coordinates {(b5) (a14-15) (a13-15) (a8-13) (a8-9) (b2)}
[postaction=decorate, decoration={markings,
 mark= at position 0.13 with \strarrow, mark= at position 0.33 with \strarrow,
 mark= at position 0.5 with \strarrow, mark= at position 0.7 with \strarrow,
 mark= at position 0.88 with \strarrow }];

\foreach \n/\m\a in {1/124/0, 2/234/-1, 3/345/1, 4/456/10, 5/256/5, 6/267/0, 7/127/0}
{ \draw [\quivcolor] (\bstart+\seventh/2-\seventh*\n+\a:1) node (q\m) {\tiny $\bullet$}; }

\foreach \m/\a/\r in {247/\bstart/0.42, 245/10/0.25, 257/210/0.36}
{ \draw [\quivcolor] (\a:\r) node (q\m) {\tiny $\bullet$}; }

\foreach \t/\h/\a in {234/124/-20, 234/345/19, 456/345/-16, 456/256/22, 256/267/21, 127/267/-20,
 127/124/19, 124/247/13, 247/127/11, 245/234/18, 247/245/34, 257/247/14, 245/257/15,
 267/257/6, 257/256/0, 256/245/-9, 345/245/0, 245/456/-16}
{ \draw [quivarrow]  (q\t) edge [bend left=\a] (q\h); }

\end{tikzpicture}
\qquad \begin{tikzpicture}[scale=3,baseline=(bb.base),yscale=-1]

\path (0,0) node (bb) {};

\foreach \n/\m/\a in {1/4/0, 2/3/0, 3/2/5, 4/1/10, 5/7/0, 6/6/-3, 7/5/0}
{ \coordinate (b\n) at (\bstart-\seventh*\n+\a:1.0);
  \draw (\bstart-\seventh*\n+\a:1.1) node {$\m$}; }

\foreach \n/\m in {8/1, 9/2, 10/3, 11/4, 14/5, 15/6, 16/7}
  {\coordinate (b\n) at ($0.65*(b\m)$);}

\coordinate (b13) at ($(b15) - (b16) + (b8)$);
\coordinate (b12) at ($(b14) - (b15) + (b13)$);

\foreach \n/\x\y in {13/-0.03/-0.03, 12/-0.22/0.0, 14/-0.07/-0.03, 11/0.05/0.02, 16/-0.02/0.02}
  {\coordinate (b\n)  at ($(b\n) + (\x,\y)$); } 

\foreach \h/\t in {1/8, 2/9, 3/10, 4/11, 5/14, 6/15, 7/16, 
 8/9, 9/10, 10/11,11/12, 12/13, 13/8, 14/15, 15/16, 12/14, 13/15, 8/16}
{ \draw [bipedge] (b\h)--(b\t); }

\foreach \n in {8,10,12,15} 
  {\draw [\graphcolor] (b\n) circle(\dotrad) [fill=white];} \foreach \n in {9,11,13, 14,16}  
  {\draw [\graphcolor] (b\n) circle(\dotrad) [fill=\graphcolor];} 

\foreach \e/\f/\t in {2/9/0.5, 4/11/0.5, 5/14/0.5, 7/16/0.5, 
 8/9/0.5, 9/10/0.5, 10/11/0.5,11/12/0.5, 12/13/0.45, 8/13/0.6, 
 14/15/0.5, 15/16/0.6, 12/14/0.45, 13/15/0.4, 8/16/0.6}
{\coordinate (a\e-\f) at ($(b\e) ! \t ! (b\f)$); }

\foreach \n/\m\a in {1/124/0, 2/234/-1, 3/345/1, 4/456/10, 5/256/5, 6/267/0, 7/127/0}
{ \draw [\quivcolor] (\bstart+\seventh/2-\seventh*\n+\a:1) node (q\m) {\tiny $\bullet$}; }

\foreach \m/\a/\r in {247/\bstart/0.42, 245/10/0.25, 257/210/0.36}
{ \draw [\quivcolor] (\a:\r) node (q\m) {\tiny $\bullet$}; }

\foreach \t/\h/\a in {234/124/-20, 234/345/19, 456/345/-16, 456/256/22, 256/267/21, 127/267/-20,
 127/124/19, 124/247/13, 247/127/11, 245/234/18, 247/245/34, 257/247/14, 245/257/15,
 267/257/6, 257/256/0, 256/245/-9, 345/245/0, 245/456/-16}
{ \draw [quivarrow]  (q\t) edge [bend left=\a] (q\h); }

 \end{tikzpicture}
\]
\caption{The quiver and bipartite graph associated to the Postnikov diagram in Figure~\ref{f:postfree37}.}
\label{f:quiver37}
\end{figure}

\begin{remark}
The reverse of the above procedure can be used to exhibit an arbitrary dimer model with boundary $Q$ as the dual of a bipartite graph $\plabic$ in the surface $|Q|$, and it is this graph that is sometimes, more traditionally, called the dimer model \cite{HK}. When $Q=Q(D)$ is the quiver of a Postnikov diagram, the dual bipartite graph is precisely $\plabic(D)$ as in Definition~\ref{d:Post-to-bipartite}.
\end{remark}

\begin{remark}
\label{r:conventions}
Note that Marsh--Scott \cite{MaSc} associate white nodes of the bipartite graph to anticlockwise regions and black nodes to clockwise regions, 
whereas our convention is more consistent with the rest of the literature, e.g.\ \cite{fhkvw,MuSp}. 
Thus when quoting results from \cite{MaSc}, we will swap black and white, 
usually without further comment.
\end{remark}

\begin{definition} \label{d:dimeralgebra}
Given a dimer model with boundary $Q$,
we define the \emph{dimer algebra} $A_Q$
as follows.
For each internal arrow $\alpha\in Q_1$,
there are (unique) faces $F^+\in Q_2^+$ and $F^-\in Q_2^-$
such that $\bdry F^{\pm}=\alpha p^{\pm}_{\alpha}$,
for paths $p^+_{\alpha}$ and $p^-_{\alpha}$ from $h\alpha$ to $t\alpha$.
Then the dimer algebra $A_Q$ is the quotient of the complete path algebra
$\compl{\CC Q}$ by (the closure of) the ideal generated by relations
\begin{equation} \label{e:definingrelations}
p^+_{\alpha}=p^-_{\alpha},
\end{equation}
for internal arrows $\alpha\in Q_1$. When $D$ is a connected Postnikov diagram, so that $Q(D)$ is a dimer model with boundary, we abbreviate $A_D=A_{Q(D)}$.
\end{definition}

\begin{remark} \label{r:potential}
Note that the orientation is not strictly necessary to define $A_Q$;
we only need to know that $F^{\pm}$ are the two faces that contain the internal arrow
$\alpha$ in their boundaries, but not which is which.
On the other hand, given the orientation, we may also define a (super)potential $W_Q$ by the usual formula
(e.g.\ \cite[\S 2]{fhkvw})
\[
  W_Q=\sum_{F\in Q_2^+}\bdry F-\sum_{F\in Q_2^-}\bdry F.
\]
Then $A_Q$ may also be described as the quotient of
$\compl{\CC Q}$ by the so-called `F-term' relations
\[\partial_{\alpha}(W_Q)=0,\]
for each \emph{internal} arrow $\alpha$ in $Q$, where $\partial_{\alpha}$ is the usual cyclic derivative
(e.g.~\cite[\S1.3]{ginz} or \cite[\S3]{bocklandt}).
Thus the algebra $A_Q$
is a frozen Jacobian algebra
(e.g.~\cite[Defn.~5.1]{Pre}).
\end{remark}

\begin{definition} \label{d:centralelement}
Let $Q$ be a dimer model with boundary. Since the incidence graph of $Q$ at each vertex is connected, it follows from the defining relations of $A_Q$ that, for any vertex $i\in Q_0$,
the products in $A_Q$ of the arrows in any two cycles that start at $i$ and bound a face are the same. We denote such a product by $t_i$, and write
\begin{equation} \label{e:defu}
  t=\sum_{i\in Q_0(D)} t_i.
\end{equation}
It similarly follows from the relations that $t$ commutes with every arrow
and hence is in the centre of $A_Q$. Thus $A_Q$ is a $Z$-algebra for $Z=\powser{\CC}{t}$.
\end{definition}

A key property of dimer algebras that arise from Postnikov diagrams in the disc is the following.
It is the analogue of algebraic consistency \cite[\S5]{Bro} in this context.

\begin{definition}
\label{d:thin}
We say that a $Z$-algebra $A$ is \emph{thin} if $\Hom_A(P,Q)$ is a free rank one module over $Z$ for any indecomposable projective $A$-modules $P$ and $Q$.
\end{definition}

In practice, we will only consider $Z$-algebras $A$ defined via quivers, for which the indecomposable projectives are, up to isomorphism, $Ae_i$ for $i\in Q_0$. Such an algebra is thin if and only if $\Hom_A(Ae_j,Ae_i)=e_jAe_i$ is a free rank one module over $Z$ for each $i,j\in Q_0$, and in this case $A$ is free and finitely generated over $Z$.

It was shown in \cite[Cor.~9.4]{BKM} that the dimer algebra $A_D$ is thin when $D$ is a $(k,n)$-diagram. In fact, this is true for any connected Postnikov diagram $D$.

\begin{proposition}
\label{p:thin}
If $D$ is a connected Postnikov diagram in the disc, then $A_D$ is thin.
\end{proposition}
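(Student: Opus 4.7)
The plan is to observe that the proof of the uniform case by Baur--King--Marsh \cite[Cor.~9.4]{BKM} goes through, with no essential change, in the general connected setting.

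First, by Proposition~\ref{p:no-big-lollipops}, a connected $D$ that contains a lollipop must consist of only that lollipop, a degenerate case in which $Q(D)$ has a single vertex and $A_D\isom Z$, so thinness is trivial. Hence we may assume $D$ has no lollipops, so that $Q(D)$ is a non-degenerate dimer model with boundary on the disc.

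The core of the BKM argument is the following. For any pair $i,j\in Q_0(D)$, one exhibits a canonical path $p_{ij}$ from $i$ to $j$ in $Q(D)$, and shows that every path from $i$ to $j$ is equivalent, modulo the defining relations \eqref{e:definingrelations}, to $t^m p_{ij}$ for a unique $m\geq 0$; this gives a $Z$-basis $\{p_{ij}\}$ of $e_j A_D e_i$. The existence and uniqueness of this reduction rests on two local inputs: zig-zag consistency of the dimer model $Q(D)$, which holds by \cite[Thm.~5.5]{bocklandt12} as a consequence of the global axioms (b1) and (b2) for $D$; and the identity \eqref{e:defu} that each minimal cycle at a vertex $i$, bounding a face, equals $t_i$. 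Both ingredients are purely local and independent of the strand permutation~$\pi_D$.

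The main obstacle is simply to identify where, if anywhere, uniformity actually enters the BKM proof; a close inspection shows that uniformity is used only for the subsequent categorical identifications involving the Grassmannian order $\jksalg$, and not for the combinatorial proof of thinness itself. Consequently the BKM argument extends verbatim to yield that $e_j A_D e_i$ is free of rank one over $Z$ for all $i,j\in Q_0(D)$, so that $A_D$ is thin.
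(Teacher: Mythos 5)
Your proposal takes the same high-level route as the paper: thinness of $A_D$ was established in \cite[Cor.~9.4]{BKM} for $(k,n)$-diagrams, and the paper likewise proves the general statement by reusing BKM's machinery. The paper, however, is more precise about exactly which pieces of the BKM argument need rechecking. It singles out \cite[Cor.~4.4]{BKM} (face boundary paths have constant weight $1$ on $\C_0$) and \cite[Prop.~9.3]{BKM} (any two parallel paths are related by powers of $t$), observes that while these are \emph{stated} only for $(k,n)$-diagrams their \emph{proofs} depend only on axiom~(b2) and so apply in general, and then assembles the weight-grading argument: choose a path $p\colon j\to i$ of minimal total weight; by Prop.~9.3, $p=t^{N_p}r$ and $q=t^{N_q}r$ for any parallel path $q$; minimality forces $N_p=0$, so $q=t^{N_q}p$, and the elements $t^N p$ are linearly independent because they have distinct total weights. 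Your proposal gestures at exactly this structure (the canonical path $p_{ij}$ with everything a power of $t$ times it) but does not actually verify the underlying BKM lemmas or give the minimality/freeness step. Also, your diagnosis that uniformity enters BKM ``only for the subsequent categorical identifications'' is not quite right: Cor.~4.4 and Prop.~9.3 are combinatorial statements whose hypotheses are restricted to $(k,n)$-diagrams even though their proofs are not, and the entire point of the paper's careful re-examination is to make this distinction. Your lollipop reduction is a fine preliminary observation but is not needed; the paper's argument works directly for any connected $D$.
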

\begin{proof}

As in \cite[\S4]{BKM}, we may weight the arrows of $Q=Q(D)$ by elements of $\ZZ^{\C_0}$. A path in $Q$ is weighted by the sum of $w$ weights of its arrows, and its total weight is defined to be $\sum_{i\in \C_0}w(i)$, which is always at least $1$.

The proof of \cite[Cor.~4.4]{BKM}, stated for $(k,n)$-diagrams, remains valid in our more general setting to show that the path bounding any face of $Q$ has constant weight $w(i)=1$ for all $i\in \C_0$. If $p_+=p_-$ is an F-term relation, then there is an arrow $\alpha\in Q_1$ such that both $\alpha p_+$ and $\alpha p_-$ are such boundary cycles, from which it follows that the weights of $p_+$ and $p_-$ agree. Therefore the weight, and hence the total weight, is invariant under F-term equivalence, and thus descends to a grading of $A_D$.

Now let $i,j\in Q_0$. Since the disc is connected, there is some path from $i$ to $j$ in $Q$ \cite[Rem.~3.3]{BKM}, and we choose $p$ to be such a path with minimal total weight. If $q$ is any other path from $i$ to $j$, then \cite[Prop.~9.3]{BKM} applies to show that there is a path $r\colon i\to j$ and non-negative integers $N_p$ and $N_q$ such that
\[p=t^{N_p}r,\qquad q=t^{N_q}r\]
in $A_D$. As before, this proposition is stated only in the case that $D$ is a $(k,n)$-diagram, but its proof is still valid under our weaker assumptions---the key property of $D$ here is (b2). 

Since the total weight of $t$ is non-zero, and $p$ has minimal total weight among paths from $i$ to $j$, we must have $N_p=0$ and $p=r$. Thus $q=t^{N_q}p$ is a $Z$-multiple of $p$, showing that $e_jAe_i$ is a rank one $Z$-module. It is free since each element of $\{t^Np:N\geq0\}$ has a different total weight, which implies that these elements are linearly independent in $A_D$.
\end{proof}

\begin{remark}
\label{r:boundary-convention}
In some parts of the paper, particularly Section~\ref{sec:newMS} concerning the Marsh--Scott formula, it will be necessary to consider bipartite graphs such that all boundary nodes have the same colour. Any bipartite graph can be made into one with this property by introducing a bivalent node
on any half-edge incident with a boundary node of the wrong colour; up to isomorphism, adding this extra node does not affect $A_Q$, where $Q$ is the dual dimer model. In $Q$, this addition of a node corresponds to gluing a digon (i.e.\ a $2$-cycle bounding a face) onto the boundary arrow of a boundary face. If $Q=Q(D)$ for some Postnikov diagram $D$, then one can achieve the same effect by modifying $D$ via a twisting move \cite[Defn.~2.2]{BKM} at the boundary.

We will refer to bipartite graphs with only white boundary nodes as \emph{$\white$-standardised} and those with only black boundary nodes as \emph{$\black$-standardised}, and extend this terminology to the associated Postnikov diagrams and dimer models with boundary. Note for example that in a $\white$-standardised diagram $D$ of type $(k,n)$, the value $k$ is simply the number of white nodes minus the number of black nodes in $\plabic(D)$, whereas in a $\black$-standardised diagram the number of black nodes minus the number of white nodes is $n-k$.

\end{remark}

\begin{definition}\label{d:opposite}
Given a Postnikov diagram $D$, we denote by $D\op$ its opposite diagram, obtained by reversing the orientation of each strand.
\end{definition}
\begin{remark}\label{r:opposite}
The quiver, dimer algebra, bipartite graph, and type of $D\op$ are related to the corresponding objects associated to $D$ in the following way.
\begin{enumerate}
\item We have $Q(D\op)=Q(D)\op$, where the opposite $Q\op$ of a dimer model $Q$ with boundary is the opposite quiver with faces. This has the same set of vertices, arrows and faces as $Q$, but with $h\op(a)=t(a)$ and $t\op(a)=h(a)$ on arrows, with $\bdry\op F=(\bdry F)\op$ on faces, and with $(Q_2\op)^\pm=Q_2^\mp$.
\item It then follows directly from Definition~\ref{d:dimeralgebra} that $A_{D\op}=A_D\op$, that is, the identity map on vertices and arrows of $Q(D\op)=Q(D)\op$ induces an isomorphism of these algebras.
\item The bipartite graph $\plabic(D\op)$ is obtained from $\plabic(D)$ by swapping the colours of all nodes.
\item It then follows from Definition~\ref{d:type} that if $D$ has type $(k,n)$ then $D\op$ has type $(n-k,n)$, using that the total number of half-edges in either associated bipartite graph is $n$. 
\end{enumerate}
\end{remark}

\section{Boundary algebras}	\label{sec:bdry-alg}

In this section, we fix $1\leq k<n$, and explain how Postnikov diagrams of type $(k,n)$ are related to the categorification of the Grassmannian $\grass{k}{n}$ by Jensen--King--Su \cite{JKS}.

Consider again the $n$-vertex circular graph $\C=(\C_0,\C_1)$, as in Figure~\ref{f:graphC}.
We associate to $\C$ a quiver $Q=Q(\C)$ with vertex set $Q_0=\C_0$ and
arrow set $Q_1=\{x_i,y_i: i\in \C_1\}$ 
with $x_i$ clockwise and $y_i$ anticlockwise,
as illustrated in Figure~\ref{fig:mckayQ} in the case $n=7$.

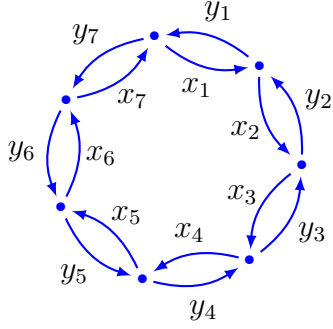
\begin{figure}[h]
\begin{tikzpicture} [scale=1.1,
qarrow/.style={-latex, blue, thick}]  
\pgfmathsetmacro{\cstart}{100}
\pgfmathsetmacro{\dstart}{\cstart+0.5*\seventh}
\pgfmathsetmacro{\estart}{\cstart+\seventh}
\pgfmathsetmacro{\qrad}{1.5}
\pgfmathsetmacro{\xrad}{0.9}
\pgfmathsetmacro{\yrad}{1.85}
\foreach \j in {1,...,7}
{ \path (\cstart-\seventh*\j:\qrad) node (w\j) {};
  \path (\estart-\seventh*\j:\qrad) node (v\j) {};
  \draw (w\j) node [blue] {\tiny $\bullet$};
  \path [qarrow] (v\j) edge [bend right=24] (w\j);
  \path [qarrow] (w\j) edge [bend right=24] (v\j);
  \draw (\dstart-\seventh*\j:\yrad) node[black] {$y_{\j}$};
  \draw (\dstart-\seventh*\j:\xrad) node[black] {$x_{\j}$}; 
}
\end{tikzpicture}
\caption{The double quiver $Q(\C)$.}
\label{fig:mckayQ}
\end{figure}

\begin{definition}
\label{d:preproj-Bkn}
Write $x=\sum_{i\in \C_1}x_i$ and $y=\sum_{i\in \C_1}y_i$. 
Then the \emph{(complete) preprojective algebra} $\Pi$ of $\C$ is the quotient of the complete path algebra of $Q(\C)$ by the closed ideal generated by $xy-yx$; multiplying this by the vertex idempotents produces one commutativity relation beginning at each vertex.

For our fixed $1\leq k < n$, we write $\jksalg$ for the quotient of $\Pi$ by the additional relation $y^k=x^{n-k}$. 
Again, this implies one relation of this kind beginning at each vertex. 
Writing $t=xy\in \jksalg$, the centre of $\jksalg$ is 
$Z=\powser{\CC}{t}$, and $\jksalg$ is a thin $Z$-algebra \cite[\S3]{JKS}.
\end{definition}

Since $\jksalg$ is free and finitely generated over $Z$, it is natural to consider the category
\[\CM(\jksalg)=\{X\in\fgmod{\jksalg}:\text{$X$ is free over $Z$}\}.\]
The notation here refers to (maximal) Cohen--Macaulay $\jksalg$-modules, meaning $C$-modules which are Cohen--Macaulay when restricted to the commutative (Gorenstein) ring $Z$; Auslander \cite[\S I.7]{Ausl} refers to these modules as $C$-lattices.
Note that $\CM(\jksalg)$ coincides with the category
\[\GP(\jksalg)=\{X\in\fgmod{\jksalg}:\text{$\Ext^i_{\jksalg}(X,\jksalg)=0$ for $i>0$}\}\]
of Gorenstein projective $\jksalg$-modules (see \cite[Cor.~3.7]{JKS} and \cite{KIWY,Pre}).

By Proposition~\ref{p:thin}, the dimer algebra $A$ of any Postnikov diagram $D$ is also free and finitely generated over $Z$ and, since $Z$ is a principal ideal domain, so is any subalgebra $B$ of $A$. Later, we will also consider the categories $\CM(A)$ and $\CM(B)$, but note that these do not usually coincide with $\GP(A)$ and $\GP(B)$.

The rank of $M\in\CM(\jksalg)$, when treated as a $Z$-module, is always divisible by $n=|Q_0|$, so we `normalise' by dividing out this constant. This normalised rank may also be computed as the length of $M\otimes_ZK$ over the simple algebra $\jksalg\otimes_ZK\isom M_n(K)$, where $K=\fracpowser{\CC}{t}$ is the field of fractions of $Z$ \cite[Defn.~3.5]{JKS}.

\begin{definition}[{\cite[Defn.~5.1]{JKS}}]
\label{d:rank1mod}
For any $I\subset \C_1$, we can define a $\Pi$-module $M_I$ as follows. 
For each $i\in \C_0$, set $e_iM_I=Z$. The arrows of $Q(\C)$ act by
\begin{equation*}
 x_i \cdot z = \begin{cases} tz & i\in I, \\ z & i\notin I, \end{cases}
 \qquad
 y_i \cdot z = \begin{cases} z & i\in I, \\ tz & i\notin I. \end{cases}
\end{equation*}
Then $xy$ and $yx$ both act as multiplication by $t$, and so $M_I$ is a $\Pi$-module. 

If $I$ is a $k$-subset, then $M_I$ is actually a $\jksalg$-module:
if the product of $n-k$ successive arrows $x_i$ acts by $t^s$, then the product of the remaining $k$ arrows $x_j$ acts by $t^{k-s}$. Hence the product of the corresponding $k$ arrows $y_j$ acts again by $t^{k-(k-s)}=t^s$, and so we conclude that $y^k$ and $x^{n-k}$ always have the same action.
By construction, $M_I$ is free and finitely generated as a $Z$-module, so it is in $\CM(\jksalg)$, and furthermore it has rank $1$.
\end{definition}

\begin{remark}
\label{r:JKS-comparison}
Note that we use complementary naming conventions to those in \cite{JKS}: our module $M_I$ would be denoted there by $M_{I\comp}$, that is, using the complementary subset of $\C_1$. It is explained in \cite{JKS} how the category
$\CM(\jksalg)$, for $C=\Pi/(y^k-x^{n-k})$, provides a categorification of Scott's cluster algebra structure \cite{Sco} on the Grassmannian $\Gr_{n-k}^n$ of $(n-k)$-planes in $\CC^n$; in particular, there is a cluster character $\CM(\jksalg)\to\CC[\Gr_{n-k}^n]$. Because of the difference in conventions, it takes our $C$-module $M_I$ to the Pl\"ucker coordinate $\Pluck{I\comp}$. However, by composing with the isomorphism $\CC[\Gr_{n-k}^n]\to\CC[\Gr_k^n]$ satisfying $\Pluck{I\comp}\mapsto\Pluck{I}$ for each $k$-subset $I\subset\C_1$, which relates Scott's cluster structures on these two isomorphic Grassmannians, we obtain a cluster character 
$\jkscluschar\colon\CM(\jksalg)\to\CC[\Gr_k^n]$ sending $M_I$ 
to the Pl\"ucker coordinate $\Pluck{I}$.
\end{remark}

Every rank $1$ module in $\CM(\jksalg)$ is isomorphic to $M_I$ for some $k$-subset $I\subset\C_1$ \cite[Prop.~5.2]{JKS}. Certain cluster-tilting objects in $\CM(\jksalg)$, all of which are mutation equivalent, have the property that all of their indecomposable summands have rank $1$, and the cluster character $\jkscluschar$ induces a bijection from the mutation class of these objects to the set of clusters of the Grassmannian cluster algebra \cite[Rem.~9.6]{JKS}.

Now let $D$ be any Postnikov diagram of type $(k,n)$, with dimer algebra $A_D$. 
We may define the boundary idempotent $e=\sum_{i\in \C_0}e_i\in A_D$, 
and consider the \emph{boundary algebra} $B=eA_De$. 
This algebra is quite closely related to the algebra $C$, which depends only on the type $(k,n)$, as we now explain.

\newcommand{\canPB}{\tilde\varepsilon}
\newcommand{\canCB}{\varepsilon}

Let $i\in\C_1$. If the boundary arrow of $Q(D)$ labelled by $i$ is clockwise, we name this arrow $\alpha_i$, and let $\beta_i$ be the (unique) path completing $\alpha_i$ to a boundary face. Conversely, if the boundary arrow labelled by $i$ is anticlockwise, then we call this arrow $\beta_i$, and write $\alpha_i$ for the path completing it to a face. Writing $\alpha=\sum_{i\in\C_1}\alpha_i$ and $\beta=\sum_{i\in\C_1}\beta_i$, we have $\alpha\beta= te = \beta\alpha$, and hence there is a canonical map 
\begin{equation*}
  \canPB\colon\Pi\to B, 
\end{equation*}
fixing the vertex idempotents $e_j$, for $j\in \C_0$, and with $\canPB(x_i)=\alpha_i$ and $\canPB(y_i)=\beta_i$ for each $i\in\C_1$. 
The existence of the map $\canPB$ can also be deduced from the description of $B$ as the boundary algebra of the frozen Jacobian algebra $A_D$, by \cite[Prop.~8.1]{Pre2}.

\begin{claim} \label{cm:C-to-B}
When $D$ is connected and has type $(k,n)$, the map $\canPB\colon\Pi\to B$ factors through a map $\canCB\colon\jksalg \to B$. In other words, $\canPB(y^k-x^{n-k})=0$.
\end{claim}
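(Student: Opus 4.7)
The goal is to show $\beta^k e_j = \alpha^{n-k} e_j$ in $A_D$ for each $j \in \C_0$, where both sides lie in $e_j A_D e_{j-k}$ (with indices taken modulo $n$). By Proposition~\ref{p:thin}, $A_D$ is thin, so this $Z$-module is free of rank one; let $p$ be a generator, and write $\beta^k e_j = t^a p$ and $\alpha^{n-k} e_j = t^b p$ for non-negative integers $a,b$. The plan is to use the boundary face relation $\alpha\beta = \beta\alpha = te$ to reduce the desired equality $a = b$ to a single identity for the clockwise boundary loop $\alpha^n$, and then identify the relevant $t$-exponent using the weight function together with the type-$(k,n)$ hypothesis.

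Multiplying on the right by the clockwise boundary path $\alpha^k e_{j-k} \in e_{j-k} A_D e_j$ (of length $k$ from $j$ to $j-k$) and using $\alpha\beta = \beta\alpha = te$ inductively, we compute $\beta^k e_j \cdot \alpha^k e_{j-k} = \beta^k \alpha^k e_{j-k} = t^k e_{j-k}$, while tautologically $\alpha^{n-k} e_j \cdot \alpha^k e_{j-k} = \alpha^n e_{j-k}$. Setting $\alpha^n e_{j-k} = t^N e_{j-k}$ (by thinness of $e_{j-k} A_D e_{j-k} \cong Z$) and $p \cdot \alpha^k e_{j-k} = t^c e_{j-k}$, we find $a = k - c$ and $b = N - c$, so $a = b$ holds if and only if $N = k$. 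A standard centrality argument, using that $\alpha^n$ commutes with both $\alpha$ and $\beta$ in $A_D$, shows that $N$ is independent of $j$, and this reduces the claim to the single identity $\alpha^n e = t^k e$ in $A_D$.

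To verify $N = k$, we use the weight function $w$ from the proof of Proposition~\ref{p:thin} (following \cite[\S 4]{BKM}), which assigns weight $(1,\ldots,1) \in \ZZ^{\C_0}$ to every face-bounding cycle, and in particular to $te$. Evaluating weights in $\alpha^n e_j = t^N e_j$ yields
\[ N\,(1,\ldots,1) \;=\; w(\alpha^n e_j) \;=\; \sum_{i\in\C_1} w(\alpha_i), \]
so it suffices to show $\sum_{i\in\C_1} w(\alpha_i) = (k,\ldots,k)$. The hard part will be precisely this combinatorial identity: one must unpack the geometric interpretation of the BKM weight function in terms of the strands of $D$ (splitting each $\alpha_i$ according to whether it is a single clockwise arrow at a white boundary node of $\plabic(D)$ or the path completing an anticlockwise boundary arrow $\beta_i$ around a black boundary face, and using $w(\alpha_i)+w(\beta_i)=(1,\ldots,1)$), and show that the resulting count at each position $p \in \C_0$ reproduces the type formula $k = \#\{\text{white nodes}\} - \#\{\text{black nodes}\} + \#\{\text{black half-edges}\}$ from Definition~\ref{d:type}. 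The reductions preceding this step are essentially formal consequences of thinness and the face relations; the heart of the proof is this combinatorial identification, where the type-$(k,n)$ hypothesis is crucially used.
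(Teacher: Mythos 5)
Your reduction of the claim to the single identity $\alpha^n e = t^k e$, via thinness and the boundary face relation $\alpha\beta = \beta\alpha = te$, is correct and is a genuinely different route from the paper's. The paper postpones this claim until perfect matching modules have been developed in Section~\ref{sec:PM-mods}: there $A_De_i$ is realised as a perfect matching module $N_\mu$ (Corollary~\ref{c:proj-pm-mods}), and the combinatorial input is Proposition~\ref{p:bdry-value-card}, showing $|\bdry\mu|=k$ by a counting argument on $\plabic(D)$. Your reduction uses only Proposition~\ref{p:thin} and the face relation, so it is closer to the ``direct algebraic proof'' that the paper's remark says would be desirable. The centrality argument for the $j$-independence of $N$ is also fine, since $\alpha^n$ commutes with $\alpha$ and $t$ acts injectively on $A_D$.

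However, you explicitly leave the crux unproved: the identity $\sum_{i\in\C_1} w(\alpha_i) = (k,\ldots,k)$, which you flag as ``the hard part,'' is indeed a genuine gap. Closing it would require unpacking the BKM weight function arrow by arrow along the boundary and matching the resulting count at each position $p\in\C_0$ to the type formula in Definition~\ref{d:type}; this is a real combinatorial argument, not appreciably lighter than the counting in Proposition~\ref{p:bdry-value-card}. Indeed the two computations have essentially the same content: both say the boundary cycle $\alpha^n$ has degree $k$, graded either by $\deg_\mu$ for a perfect matching $\mu$ (the paper) or by the $\ZZ^{\C_0}$-weight (you). Until this identity is established, what you have is a clean and correct reduction, not a complete proof. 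You do, however, correctly identify exactly where the type-$(k,n)$ hypothesis must enter, which the inconsistent example in Figure~\ref{f:inconsistent} shows is essential.
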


\begin{remark}
It would be nice to have a direct algebraic proof of Claim~\ref{cm:C-to-B}, 
but we currently use facts about perfect matching modules proved in Section~\ref{sec:PM-mods}, 
so the proof is postponed until after Proposition~\ref{p:pm-mods}.
The statement depends on consistency of the dimer model, 
as the example in Figure~\ref{f:inconsistent} shows.
Here the combinatorics tells us that $k=1$ and $n=3$, but the relation $x^2=y$ does not follow from the dimer relations.

\begin{figure} [h]
\begin{tikzpicture}[xscale=1.8,yscale=-1.8]
\pgfmathsetmacro{\brad}{1}
\pgfmathsetmacro{\bstart}{120}
\pgfmathsetmacro{\mrad}{0.75}
\pgfmathsetmacro{\srad}{0.5*\mrad}
\pgfmathsetmacro{\labsc}{1.2}
\begin{scope}
\draw [boundary] (0,0) circle(\brad);
\foreach \n\/\m in {1/2,2/1,3/3}
{ \coordinate (b\n) at (\bstart-120*\n:\brad);
  \draw ($\labsc*(b\n)$) node {\small $\m$}; }
\coordinate (b0) at (0,0);
\foreach \n in {1,2,3}
{ \coordinate (b1\n) at (\bstart-120*\n:\mrad);
  \coordinate (b2\n) at (\bstart+60-120*\n:\srad);
  \coordinate (c2\n) at (\bstart+60-120*\n:1.3*\srad); }
\foreach \h/\t in {1/11, 2/12, 3/13, 0/21, 0/22, 0/23, 11/21, 12/22, 13/23, 11/22, 12/23, 13/21}
{ \draw [bipedge] (b\h)--(b\t); }

\foreach \n in {11,12,13,0} 
  {\draw [\graphcolor] (b\n) circle(\dotrad) [fill=white];} \foreach \n in {21,22,23}  
  {\draw [\graphcolor] (b\n) circle(\dotrad) [fill=\graphcolor];} 

\foreach \e/\f in {0/21, 0/22, 0/23, 11/21, 12/22, 13/23, 11/22, 12/23, 13/21}
{\coordinate (a\e-\f) at ($(b\e) ! 0.5 ! (b\f)$); }

\draw [strand] plot[smooth]
coordinates {(b1) (a11-21) (a0-21) (a0-23) (a12-23) (b2)}
[postaction=decorate, decoration={markings,
 mark= at position 0.15 with \strarrow,  mark= at position 0.38 with \strarrow,
 mark= at position 0.52 with \strarrow,   mark= at position 0.65 with \strarrow,
 mark= at position 0.88 with \strarrow }];
 \draw [strand] plot[smooth]
coordinates {(b2) (a12-22) (a0-22) (a0-21) (a13-21) (b3)}
[postaction=decorate, decoration={markings,
 mark= at position 0.15 with \strarrow,  mark= at position 0.38 with \strarrow,
 mark= at position 0.52 with \strarrow,   mark= at position 0.65 with \strarrow,
 mark= at position 0.88 with \strarrow }];
 \draw [strand] plot[smooth]
coordinates {(b3) (a13-23) (a0-23) (a0-22) (a11-22) (b1)}
[postaction=decorate, decoration={markings,
 mark= at position 0.15 with \strarrow,  mark= at position 0.38 with \strarrow,
 mark= at position 0.52 with \strarrow,   mark= at position 0.65 with \strarrow,
 mark= at position 0.88 with \strarrow }];
 \draw [strand] (a11-22) to [out=-120, in=0] (a12-22)
[postaction=decorate, decoration={markings,
 mark= at position 0.52 with \strarrow}];
 \draw [strand] (a12-22) to [out=180, in=-60] (a12-23)
[postaction=decorate, decoration={markings,
 mark= at position 0.55 with \strarrow}];
\draw [strand] (a12-23) to [out=120, in=-120] (a13-23)
[postaction=decorate, decoration={markings,
 mark= at position 0.52 with \strarrow}];
 \draw [strand] (a13-23) to [out=60, in=180] (a13-21)
[postaction=decorate, decoration={markings,
 mark= at position 0.55 with \strarrow}];
 \draw [strand] (a13-21) to [out=0, in=120] (a11-21)
[postaction=decorate, decoration={markings,
 mark= at position 0.52 with \strarrow}];
 \draw [strand] (a11-21) to [out=-60, in=60] (a11-22)
[postaction=decorate, decoration={markings,
 mark= at position 0.55 with \strarrow}];
\end{scope}
\pgfmathsetmacro{\qrad}{0.9}
\pgfmathsetmacro{\qstart}{60}
\pgfmathsetmacro{\alp}{10}
\pgfmathsetmacro{\sqrad}{0.5*\qrad}
\begin{scope} [shift={(3,0)}]
\foreach \n/\m in {1/2,2/1,3/3}
{  \draw (\bstart-120*\n:1.1*\brad) node {\small $\m$}; 
  \coordinate (q\n) at (\qstart-120*\n:\qrad);
  \draw [\quivcolor] (q\n) node {\tiny $\bullet$}; 
 \coordinate (q1\n) at (\bstart-120*\n:\sqrad);   
  \draw [\quivcolor] (q1\n) node {\tiny $\bullet$}; }
\foreach \t/\h in {11/1, 1/12, 12/11, 12/2, 2/13, 13/12, 13/3, 3/11, 11/13}
  \draw [quivarrow] ($(q\t) ! 0.15 ! (q\h)$) to ($(q\t) ! 0.85 ! (q\h)$);
\foreach \a/\h in {-60, 60, 180}
  \draw [quivarrow] (\a+\alp:\qrad) arc (\a+\alp:\a+120-\alp:\qrad);
\end{scope}
\end{tikzpicture}
\caption{An inconsistent dimer model.}
\label{f:inconsistent}
\end{figure}
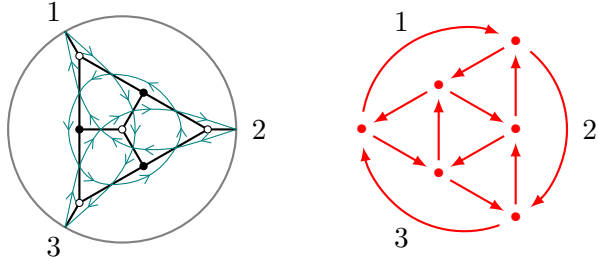
\end{remark}

Assuming Claim~\ref{cm:C-to-B} for the moment, we have the following.

\begin{proposition}
\label{p:restriction}
Let $B=eA_De$ be the boundary algebra of $A_D$,
for $D$ a connected Postnikov diagram of type $(k,n)$.
Then the canonical map $\canCB\colon \jksalg\to B$ is injective and the corresponding restriction functor
$\rho\colon\CM(B)\to\CM(\jksalg)$ is fully faithful.
\end{proposition}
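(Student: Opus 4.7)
The plan is to derive both the injectivity of $\canCB$ and the full faithfulness of $\rho$ from the stronger statement that $\canCB$ becomes an isomorphism after inverting $t$, i.e.\ after localising at the field $K = \fracpowser{\CC}{t}$ of fractions of $Z$.

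First, I would observe that $B$, like $\jksalg$, is a thin $Z$-algebra on the vertex set $\C_0$. Indeed, since $B = eA_De$ with $e = \sum_{i \in \C_0} e_i$ the boundary idempotent, we have $e_j B e_i = e_j A_D e_i$ for every $i,j \in \C_0$, which is a free rank-one $Z$-module by Proposition~\ref{p:thin}. As $|\C_0| = n$, it follows that $\jksalg \otimes_Z K$ and $B \otimes_Z K$ are $K$-algebras of the same dimension $n^2$.

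For injectivity, since $\canCB$ respects the decomposition $\jksalg = \bigoplus_{i,j \in \C_0} e_j \jksalg e_i$ into rank-one free $Z$-modules, it suffices to check componentwise; and because both $e_j \jksalg e_i$ and $e_j B e_i$ are free of rank one over $Z$, each component $e_j \jksalg e_i \to e_j B e_i$ is either zero or injective. A generator $r_{ij}$ of $e_j \jksalg e_i$ is represented by a minimal path in the double quiver $Q(\C)$, and by construction $\canCB(r_{ij})$ is a corresponding path in $Q(D)$ composed of the $\alpha_l$ and $\beta_l$. By the proof of Proposition~\ref{p:thin}, any path in $A_D$ equals $t^N$ times a chosen minimal-weight generator of $e_j A_D e_i$, and so is non-zero in the $Z$-torsion-free module $e_j A_D e_i$. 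Hence $\canCB(r_{ij}) \neq 0$ for every pair $(i,j)$, which yields componentwise, and thus global, injectivity.

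It now follows that $\canCB \otimes_Z K \colon \jksalg \otimes_Z K \to B \otimes_Z K$ is an injective $K$-algebra map between $K$-algebras of the same finite dimension $n^2$, hence an isomorphism. To deduce full faithfulness of $\rho$, let $M, N \in \CM(B)$; these are $Z$-free and so embed into $M_K := M \otimes_Z K$ and $N_K := N \otimes_Z K$, which are naturally $B \otimes_Z K$-modules. The restriction map $\Hom_B(M,N) \to \Hom_{\jksalg}(\rho M, \rho N)$ is clearly injective, as a $B$-linear map and its $\jksalg$-linear restriction coincide as set-theoretic maps. Given a $\jksalg$-linear $f \colon \rho M \to \rho N$, I extend $f$ uniquely to a $K$-linear $f_K \colon M_K \to N_K$, which is automatically $(\jksalg \otimes_Z K)$-linear, and hence $(B \otimes_Z K)$-linear by the isomorphism just established. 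Restricting back, for every $b \in B$ and $m \in M$ one has $f(bm) = f_K(bm) = b \cdot f_K(m) = b \cdot f(m)$, so $f$ is $B$-linear, proving surjectivity.

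The main obstacle is the injectivity of $\canCB$, which ultimately rests on the non-vanishing of paths in $A_D$. This is a genuinely non-trivial consistency property of dimer models arising from Postnikov diagrams, encoded in the thinness statement (Proposition~\ref{p:thin}). With thinness in hand, the rest of the proof is a formal localisation argument made possible by the coincidence of dimensions $n^2 = \dim_K (\jksalg \otimes_Z K) = \dim_K (B \otimes_Z K)$.
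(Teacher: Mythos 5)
Your proof is correct and takes essentially the same approach as the paper: injectivity of $\canCB$ via thinness and non-vanishing of paths in $A_D$, the isomorphism $\jksalg\otimes_Z K\isoto B\otimes_Z K$ over the fraction field $K=\fracpowser{\CC}{t}$, and $t$-torsion-freeness of modules in $\CM(B)$. The paper obtains the $K$-isomorphism componentwise (from rank-one pieces) and phrases full faithfulness by viewing $B$ as a subring of $\jksalg[t^{-1}]$ and noting that a $\jksalg$-linear map between $t$-torsion-free modules must commute with $t^{-1}$ and hence with all of $B$, but this differs from your dimension count and $K$-linear extension argument only in presentation.
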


\begin{proof}
It follows from Proposition~\ref{p:thin} and \cite[\S3]{JKS}
that both $B$ and $\jksalg$ are thin.
Hence when restricted to each piece $e_i C e_j$, for $i,j\in \C_0$,
the canonical map $\canCB\colon \jksalg\to B$ from Claim~\ref{cm:C-to-B}
becomes a map of free $Z$-modules of rank 1,
so it is either injective or zero.
The image of a generator of $e_i C e_j$ is a path in the dimer algebra $A_D$,
i.e.\ the F-term equivalence class of a path in the defining quiver.
It follows from the proof of Proposition~\ref{p:thin} that no path is zero in $A_D$, and
so $\canCB$ must be injective, as required.

Let $Z[t^{-1}]=\fracpowser{\CC}{t}$ be the field of formal Laurent series in $t$ and, for any $Z$-module $X$,
let $X[t^{-1}]=X\otimes_Z Z[t^{-1}]$.
In particular, if $M$ is a $B$-module, then $M[t^{-1}]$ is a $B[t^{-1}]$-module.
Because $B$ and $\jksalg$ are thin, the inclusion $\canCB\colon \jksalg\to B$ 
induces an isomorphism $\jksalg[t^{-1}]\isom B[t^{-1}]$ 
and so we may consider that $B\subset\jksalg[t^{-1}]$.

Thus any modules $M,N$ in $\CM(B)$ can be considered to be
$B$-submodules of the $\jksalg[t^{-1}]$-modules $M[t^{-1}], N[t^{-1}]$.
Now $t$ acts injectively on $M$ and $N$, so any map in $\Hom_\jksalg(\rho M,\rho N)$
commutes with $t^{-1}$ and so commutes with any element of $B$.
Thus $\Hom_\jksalg(\rho M,\rho N)=\Hom_B(M,N)$,
that is, $\rho$ is fully faithful, as required.
\end{proof}

Note that the canonical map $\canCB\colon \jksalg\to B$ is typically not surjective, and the more general restriction map $\md{B}\to\md{\jksalg}$ is typically not fully faithful.
An example is shown in Figure~\ref{f:not-surj}.
There every vertex of $Q(D)$ is on the boundary, so $B=A_D$,
but the map $\canCB\colon\jksalg\to B$ from Claim~\ref{cm:C-to-B} is not surjective,
since the internal arrow of $A_D$ is not in its image.

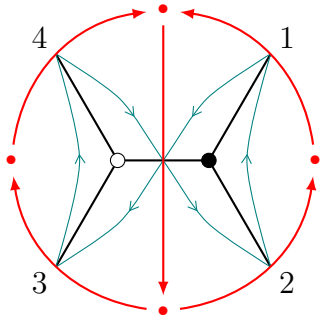
\begin{figure} [h]
\begin{tikzpicture}[scale=2,baseline=(bb.base)]
\renewcommand{\dotrad}{1.35pt}
\path (0,0) node (bb) {};
\foreach \n/\a in {1/0, 2/0, 3/0, 4/0}
{ \coordinate (b\n) at (135-90*\n+\a:1.0);
  \draw (135-90*\n+\a:1.15) node {$\n$}; }
\coordinate (b5) at (0:0.3);
\coordinate (b6) at (180:0.3);
\foreach \h/\t in {1/5,2/5,3/6,4/6,5/6}
{ \draw [bipedge] (b\h)--(b\t); }
\draw [\graphcolor] (b5) circle(\dotrad) [fill=\graphcolor]; \draw [\graphcolor] (b6) circle(\dotrad) [fill=white]; 

\foreach \e/\f in {5/6}
{\coordinate (a\e-\f) at ($(b\e) ! 0.5 ! (b\f)$); }

\draw [strand] plot[smooth]
coordinates {(b1) (55:0.5) (a5-6) (235:0.5) (b3)}
[postaction=decorate, decoration={markings,
 mark= at position 0.33 with \strarrow,  mark= at position 0.7 with \strarrow }];
 \draw [strand] plot[smooth]
coordinates {(b2) (0:0.55) (b1)}
[postaction=decorate, decoration={markings,
 mark= at position 0.53 with \strarrow}];
  \draw [strand] plot[smooth]
coordinates {(b3) (180:0.55) (b4)}
[postaction=decorate, decoration={markings,
 mark= at position 0.53 with \strarrow}];
 \draw [strand] plot[smooth]
coordinates {(b4) (125:0.5) (a5-6) (305:0.5) (b2)}
[postaction=decorate, decoration={markings,
 mark= at position 0.33 with \strarrow,  mark= at position 0.7 with \strarrow }];

\foreach \n/\m in {1/12,2/23,3/24,4/14}
{ \draw [\quivcolor] (180+90*\n:1) node (q\m) {\tiny $\bullet$}; }
\foreach \t/\h/\a in {12/23/-37, 23/24/-37, 14/24/37, 12/14/37, 24/12/0}
 { \draw [quivarrow] (q\t) edge [bend left=\a] (q\h); }
 \end{tikzpicture}
 \caption{A dimer algebra $A_D$ for a Postnikov diagram $D$ of type $(2,4)$.}
 \label{f:not-surj}
\end{figure}

\section{Perfect matching modules}  \label{sec:PM-mods}

Let $D$ be a Postnikov diagram of type $(k,n)$. In this section we associate a module for the dimer algebra $A_D$ to each perfect matching of the bipartite graph $\plabic(D)$.
To start with, we may consider an arbitrary quiver with faces $Q$ (see Definition~\ref{d:quiver-faces}).

\begin{definition}
\label{d:pm}
A \emph{perfect matching} on a quiver with faces $Q$ is a subset $\mu$ of $Q_1$ such that the boundary of each face in $Q_2$ contains precisely one arrow in $\mu$.
\end{definition}

An example of a perfect matching is given in Figure~\ref{f:pm}.

\begin{figure}[h]
\begin{tikzpicture}[scale=3,baseline=(bb.base),yscale=-1]

\path (0,0) node (bb) {};

\foreach \n/\m/\a in {1/4/0, 2/3/0, 3/2/5, 4/1/10, 5/7/0, 6/6/-3, 7/5/0}
{ \coordinate (b\n) at (\bstart-\seventh*\n+\a:1.0);
  \draw (\bstart-\seventh*\n+\a:1.1) node {$\m$}; }

\foreach \n/\m in {8/1, 9/2, 10/3, 11/4, 14/5, 15/6, 16/7}
  {\coordinate (b\n) at ($0.65*(b\m)$);}

\coordinate (b13) at ($(b15) - (b16) + (b8)$);
\coordinate (b12) at ($(b14) - (b15) + (b13)$);

\foreach \n/\x\y in {13/-0.03/-0.03, 12/-0.22/0.0, 14/-0.07/-0.03, 11/0.05/0.02, 16/-0.02/0.02}
  {\coordinate (b\n)  at ($(b\n) + (\x,\y)$); } 

\foreach \h/\t in {1/8, 2/9, 3/10, 4/11, 5/14, 6/15, 7/16, 
 8/9, 9/10, 10/11,11/12, 12/13, 13/8, 14/15, 15/16, 12/14, 13/15, 8/16}
{ \draw [bipedge] (b\h)--(b\t); }

\foreach \n in {8,10,12,15} 
  {\draw [\graphcolor] (b\n) circle(\dotrad) [fill=white];} \foreach \n in {9,11,13, 14,16}  
  {\draw [\graphcolor] (b\n) circle(\dotrad) [fill=\graphcolor];} 

\foreach \e/\f/\t in {2/9/0.5, 4/11/0.5, 5/14/0.5, 7/16/0.5, 
 8/9/0.5, 9/10/0.5, 10/11/0.5,11/12/0.5, 12/13/0.45, 8/13/0.6, 
 14/15/0.5, 15/16/0.6, 12/14/0.45, 13/15/0.4, 8/16/0.6}
{\coordinate (a\e-\f) at ($(b\e) ! \t ! (b\f)$); }

\foreach \n/\m\a in {1/124/0, 2/234/-1, 3/345/1, 4/456/10, 5/256/5, 6/267/0, 7/127/0}
{ \draw [\quivcolor] (\bstart+\seventh/2-\seventh*\n+\a:1) node (q\m) {\tiny $\bullet$}; }

\foreach \m/\a/\r in {247/\bstart/0.42, 245/10/0.25, 257/210/0.36}
{ \draw [\quivcolor] (\a:\r) node (q\m) {\tiny $\bullet$}; }

\foreach \t/\h/\a in {234/124/-20, 234/345/19, 456/345/-16, 456/256/22, 127/267/-20,
 127/124/19, 247/127/11, 245/234/18, 247/245/34, 245/257/15,
 267/257/6, 257/256/0, 245/456/-16}
{ \draw [quivarrow]  (q\t) edge [bend left=\a] (q\h); }

\foreach \t/\h/\a in {257/247/14, 124/247/13, 345/245/0, 256/267/21, 256/245/-9}
{ \draw [quivarrow, ultra thick, violet]  (q\t) edge [bend left=\a] (q\h); }
 \end{tikzpicture}
 \caption{A perfect matching of a dimer model with boundary. The perfect matching is indicated by the thicker arrows.}
 \label{f:pm}
\end{figure}
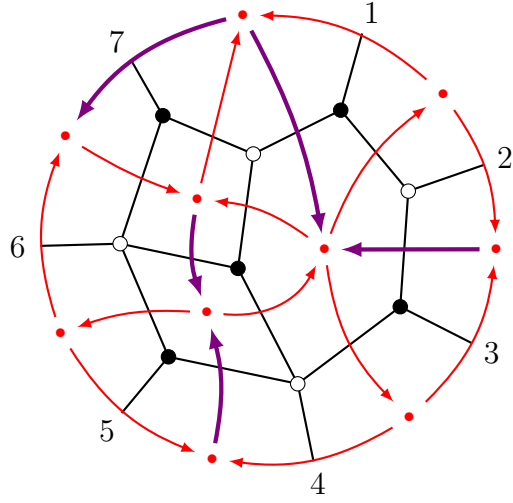

\begin{remark}
\label{r:pm-graph}
If $Q$ is a dimer model with boundary, its arrows are in bijection with the edges and half-edges of the dual bipartite graph $\plabic$, in such a way that the arrows incident with a given face correspond to the edges and half-edges incident with the dual node. A perfect matching of $Q$ is thus equivalent to the data of a subset $\mu$ of the edges and half-edges of $\plabic$ with the property that each node of $\plabic$ is incident with precisely one element of $\mu$. When $|Q|$ is closed, so that $\plabic$ is an honest bipartite graph, such a set $\mu$ is a perfect matching of $\plabic$ in the usual, graph-theoretic sense, hence the terminology. In general, a boundary node of $\plabic$ need not be matched with another node, but may instead be incident with a half-edge in $\mu$.
\end{remark}

Any perfect matching of a quiver $Q$ with faces determines a $\compl{\CC Q}$-module in the following way.

\begin{definition}
\label{d:n-mu}
To each perfect matching $\mu$ on $Q$, we associate a $\compl{\CC Q}$-module $N_\mu$ as follows. Let $e_iN_\mu=Z$ for all $i\in Q_0$. An arrow $\alpha$ acts as multiplication by $t$ if $\alpha\in\mu$, and as the identity otherwise.
\end{definition}

We may extend the quiver $Q(\C)$ from Section~\ref{sec:bdry-alg} to a quiver with $n$ faces, the boundaries of which are the $2$-cycles $x_iy_i$ for $i\in C_1$. Then, given any subset $I\subset C_1$, the set $\mu(I)=\{x_i:i\in I\}\cup\{y_j:j\notin I\}\subset Q_1$ is a perfect matching of $Q(\C)$, and the module $N_{\mu(I)}$ is precisely the $\Pi$-module $M_I$ from Definition~\ref{d:rank1mod}.

If $Q$ is a dimer model with boundary and $\mu$ is a perfect matching, $p^+_\alpha$ and $p^-_\alpha$ act on $N_\mu$ in the same way for any $\alpha\in Q_1$, and so $N_\mu$ is a module for the dimer algebra $A_Q$. Note that the central element $t\in A_Q$ from Definition~\ref{d:centralelement} acts on any $N_\mu$ as multiplication by $t\in Z$, justifying the abuse of notation.

\begin{definition}
\label{d:grading}
Any perfect matching $\mu$ of a quiver with faces $Q$ determines a grading of the path algebra $\compl{\CC Q}$ with
\[\deg_\mu{\alpha}=
\begin{cases}
1,&\alpha\in\mu,\\
0,&\alpha\notin\mu.
\end{cases}\]
If $Q$ is a dimer model with boundary, this descends to a grading of $A_Q$ since the defining relation 
$p_\alpha^+-p_\alpha^-$
has degree $0$ if $\alpha\in\mu$ and degree $1$ otherwise. For any $\mu$, we have $\deg_\mu t=1$. Grading $N_\mu$ by putting $\deg{1}=0$ for each generator $1\in e_jN_\mu= Z$ makes $N_\mu$ into a graded $\compl{\CC Q}$-module for the above grading on $\compl{\CC Q}$.
\end{definition}

\begin{proposition}
\label{p:pm-mods}
Let $Q$ be a quiver with faces such that $|Q|$ is simply connected. 
Let $N$ be a $\compl{\CC Q}$-module such that the vector space $e_jN$ is equipped with the structure of a free $Z$-module of rank $1$ for each $j\in Q_0$, in such a way that $\bdry F$ acts as multiplication by $t$ for every $F\in Q_2$.
Then there exists a unique perfect matching $\mu$ of $Q$ such that $N\isom N_\mu$.
\end{proposition}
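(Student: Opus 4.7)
My plan is to extract the perfect matching by reading off $t$-adic valuations of the arrow actions and then to trivialize the remaining unit-valued data via a cohomology vanishing coming from the simple connectedness of $|Q|$. The main obstacle will be the cohomological step; everything else is essentially a dictionary between perfect matchings, arrow valuations, and cellular cochains on $|Q|$.

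First I would fix a free $Z$-module generator $b_j$ of $e_jN$ for each vertex $j\in Q_0$. For each arrow $\alpha\colon i\to j$ of $Q$, $Z$-linearity together with $\alpha\cdot e_iN\subset e_jN=Zb_j$ yield a unique scalar $c_\alpha\in Z$ with $\alpha\cdot b_i=c_\alpha b_j$. Composing arrows around any face $F\in Q_2$, the hypothesis that $\bdry F$ acts as multiplication by $t$ translates to
\[
\prod_{\alpha\in\bdry F}c_\alpha=t \qquad\text{in }Z,
\]
where the order of the factors is immaterial since $Z$ is commutative. Writing $c_\alpha=t^{n_\alpha}v_\alpha$ uniquely with $n_\alpha\in\ZZ_{\geq 0}$ and $v_\alpha\in Z^\times$, the face identity forces $\sum_{\alpha\in\bdry F}n_\alpha=1$ and $\prod_{\alpha\in\bdry F}v_\alpha=1$ for every $F$. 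Hence exactly one arrow in each face has $n_\alpha=1$, so $\mu:=\{\alpha\in Q_1 : n_\alpha=1\}$ is a perfect matching of $Q$.

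To produce an isomorphism $N\cong N_\mu$, I would rescale the generators to $b_j'=u_j^{-1}b_j$ for units $u_j\in Z^\times$ to be determined; the coefficients transform as $c_\alpha\mapsto u_jc_\alpha u_i^{-1}$, so the target $c_\alpha'=t^{n_\alpha}$ is achieved precisely when $v_\alpha=u_ju_i^{-1}$ for every arrow $\alpha\colon i\to j$. The function $\alpha\mapsto v_\alpha$ is a cellular $1$-cocycle on the CW-complex $|Q|$ with values in the abelian group $Z^\times$, thanks to the face identity $\prod v_\alpha=1$, and the existence of the required $u_j$ is exactly the vanishing of its cohomology class in $H^1(|Q|;Z^\times)$. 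Since $|Q|$ is simply connected this group vanishes, so the $u_j$ exist. More concretely, one can fix a basepoint $j_0$, set $u_{j_0}=1$, and propagate $u_j$ along any path in the underlying graph of $Q$ via the rule $u_{h\alpha}=v_\alpha u_{t\alpha}$; simple connectedness guarantees that the result is independent of the path chosen, because any two paths differ by a combination of face boundaries on which $v$ restricts to the trivial cocycle.

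Uniqueness is then immediate: any isomorphism $N_\mu\cong N_{\mu'}$ of such modules is a collection of $Z$-linear isomorphisms $e_jN_\mu\to e_jN_{\mu'}$, i.e.\ multiplication by units $u_j\in Z^\times$, and compatibility with an arrow $\alpha\colon i\to j$ forces $u_jt^{[\alpha\in\mu]}=u_it^{[\alpha\in\mu']}$. The ratio $u_j/u_i$ being a unit then forces $[\alpha\in\mu]=[\alpha\in\mu']$ for every $\alpha$, whence $\mu=\mu'$.
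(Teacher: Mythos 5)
Your proof is correct and takes essentially the same approach as the paper's: extract the matching from the $t$-adic valuations of the arrow scalars, observe the unit parts form a $Z^\times$-valued $1$-cocycle, trivialize it using simple connectedness of $|Q|$, and deduce uniqueness from the isomorphism-invariance of which arrows act non-invertibly. The only difference is cosmetic — you additionally spell out the concrete path-propagation alternative to the cohomological vanishing, which the paper leaves implicit.
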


\begin{proof}
We associate to $N$, as a representation of $Q$, the set $\mu$ of arrows whose arrow maps are non-invertible. Choosing a $Z$-module generator for $e_jN$ for $j\in Q_0$, each arrow $\alpha$ acts, relative to these generators, as multiplication by $\lambda_\alpha t^{m_{\alpha}}$ with $\lambda_\alpha\in Z^\times$; we use here that $Z$ is a local ring with maximal ideal $(t)$. Since the boundary of any face acts by $t$, we must have $m_\alpha\in\{0,1\}$, equal to $1$ for exactly one arrow in each face. These are precisely the arrows in $\mu$, which is thus a perfect matching.

Moreover, the $\lambda_\alpha$ multiply to $1$ around each face, so $\lambda=(\lambda_\alpha)_{\alpha\in Q_1}$ is a $1$-cocycle for $Q$ with coefficients in $Z^\times$. As $|Q|$ is simply connected, $\lambda=\d \kappa$ for some $0$-cochain $\kappa$. Rescaling the generators by $\kappa$ sets $\lambda_\alpha=1$ for all $\alpha$, and thus $N\isom N_\mu$.

Uniqueness follows because $\mu$ is the set of arrows acting non-invertibly on $N_\mu$, and this set is an isomorphism invariant.
\end{proof}

When $N$ is an $A_D$-module for some connected Postnikov diagram $D$, we will always give $N$ (and hence the fibres $e_jN$ for $j\in Q_0$) the $Z$-module structure arising from the restriction to $Z\subset A_D$ (see Definition~\ref{d:centralelement}). In particular, this means that $\bdry F$ always acts on $N$ as multiplication by $t$ for every $F$ in $Q_2$, and so Proposition~\ref{p:pm-mods} simplifies as follows.

\begin{corollary}
\label{c:proj-pm-mods}
Let $D$ be a connected Postnikov diagram with dimer algebra $A=A_D$, and let $N$ be an $A$-module such that the $Z$-module $e_jN$ is free of rank $1$ for each $j\in Q_0$. Then $N\cong N_\mu$ for a unique perfect matching $\mu$ of $Q(D)$. This applies in particular when $N=Ae_i$ is an indecomposable projective $A$-module.
\end{corollary}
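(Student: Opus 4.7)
The plan is to obtain the corollary as an almost immediate consequence of Proposition~\ref{p:pm-mods} combined with Proposition~\ref{p:thin}. There is essentially nothing new to prove; the task is just to verify that the hypotheses of the earlier results are met in this setting.

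First, I would observe that since $D$ is a Postnikov diagram in the disc, the underlying surface $|Q(D)|$ is (homeomorphic to) the closed disc, which is simply connected. Moreover, by Definition~\ref{d:centralelement} the element $t\in A$ acts on any $A$-module $N$ as the sum of the boundary cycles $t_i$, so in particular the boundary $\bdry F$ of any face $F\in Q_2$ acts as multiplication by $t\in Z$ on $N$. Thus the hypotheses of Proposition~\ref{p:pm-mods} are satisfied for any $A$-module $N$ whose fibres $e_jN$ are free $Z$-modules of rank $1$, and that proposition produces the unique perfect matching $\mu$ with $N\isom N_\mu$.

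For the second assertion, I need only check that $Ae_i$ is such a module, i.e.\ that $e_j(Ae_i)=e_jAe_i$ is a free rank $1$ $Z$-module for every $j\in Q_0$. But this is precisely the statement that $A$ is thin (Definition~\ref{d:thin}), which holds for dimer algebras of connected Postnikov diagrams by Proposition~\ref{p:thin}. Hence the first part of the corollary applies to $N=Ae_i$, giving the desired perfect matching.

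I do not anticipate any genuine obstacle here: the corollary is essentially a packaging of Propositions~\ref{p:pm-mods} and~\ref{p:thin} in the geometric setting of the disc. The only mild point to check explicitly is the compatibility between the $Z$-module structure on $e_jN$ inherited from $Z\subset A$ and the action of face boundaries by $t$, which is immediate from the definition of $t$ as $\sum_i t_i$.
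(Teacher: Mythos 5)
Your proposal is correct and takes essentially the same route as the paper: apply Proposition~\ref{p:pm-mods} (noting the hypotheses on $|Q(D)|$ and on the action of $\bdry F$ hold automatically), then invoke thinness from Proposition~\ref{p:thin} for the projective case. The paper's proof is nearly word-for-word the same, merely referencing the remark preceding the corollary for the automatic verification of the $\bdry F$ condition rather than re-deriving it.
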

\begin{proof}
The first statement is just Proposition~\ref{p:pm-mods}, the condition on $\bdry F$ being automatic as above. For an indecomposable projective $Ae_i$, the fibre $e_jAe_i=\Hom_A(Ae_j,Ae_i)$ is free of rank one since $A$ is thin (Proposition~\ref{p:thin}).
\end{proof}

\begin{definition}
\label{d:bdry-value}
Let $D$ be a Postnikov diagram with quiver $Q(D)$. Then a perfect matching $\mu$ on $Q(D)$ has a \emph{boundary value} $\bdry\mu\subset\C_1$, defined as follows: $\bdry\mu$ consists of those $i\in\C_1$ such that either the boundary arrow of $Q(D)$ labelled by $i$ is clockwise and contained in $\mu$, or this arrow is anticlockwise and not contained in $\mu$.
\end{definition}

The boundary value of the perfect matching in Figure~\ref{f:pm} is $\{1,3,5\}$. Note that if $D$ is $\white$-standardised in the sense of Remark~\ref{r:boundary-convention}, then all boundary arrows are clockwise and so $\bdry\mu$ consists simply of the labels in $\C_1$ of the boundary arrows in $\mu$. Conversely, in a $\black$-standardised diagram all of the boundary arrows are anticlockwise, and $\bdry\mu$ consists of the labels of those boundary arrows not in $\mu$.

\begin{proposition}
\label{p:bdry-value-card}
If $D$ is a Postnikov diagram of type $(k,n)$ and $\mu$ is a perfect matching of $Q(D)$, then $\bdry\mu$ has cardinality $k$.
\end{proposition}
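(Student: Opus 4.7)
The plan is to translate the computation of $|\bdry\mu|$ into a count of half-edges of the dual bipartite graph $\plabic(D)$. Let $W$ and $B$ denote the numbers of white and black nodes of $\plabic(D)$, and let $H_b$ be the number of half-edges incident with a black node; by Definition~\ref{d:type}, $k = W - B + H_b$. Using Remark~\ref{r:pm-graph} to identify $\mu$ with a collection of edges and half-edges of $\plabic(D)$ covering each node once, I introduce $e(\mu)$ for the number of internal edges in $\mu$, and $h_w(\mu)$, $h_b(\mu)$ for the numbers of half-edges in $\mu$ incident with white and black nodes respectively. The perfect matching condition then gives
\[
e(\mu) + h_w(\mu) = W, \qquad e(\mu) + h_b(\mu) = B,
\]
so that $h_w(\mu) - h_b(\mu) = W - B$.

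The heart of the argument is the following orientation--colour correspondence: the boundary arrow of $Q(D)$ at marked point $i$ is clockwise if and only if the boundary node of $\plabic(D)$ incident with the corresponding half-edge is white. I would extract this from the convention recorded in Definition~\ref{d:quiver}, which orients each dual arrow so that its associated black node lies on the left and its white node on the right. For a boundary arrow the unique adjacent node --- which lies in the interior of the disc --- must therefore be on the left of the arrow if it is black and on the right if it is white; since traversing the boundary of the disc with the interior on the right is precisely clockwise motion, the correspondence follows.

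Applying this to Definition~\ref{d:bdry-value}, the condition $i \in \bdry\mu$ is equivalent to either the boundary node at $i$ being white and the corresponding half-edge belonging to $\mu$, or this node being black and the half-edge not belonging to $\mu$. Summing the two contributions and using the identity above yields
\[
|\bdry\mu| = h_w(\mu) + (H_b - h_b(\mu)) = (W - B) + H_b = k,
\]
as required.

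The step I expect to require the most care is establishing the orientation--colour correspondence from the conventions for $Q(D)$. If a direct geometric derivation proves delicate, a safer route is to reduce to the standardised case of Remark~\ref{r:boundary-convention}: passing to a $\white$-standardised form by adding bivalent white nodes preserves both $k$ and, under the evident bijection on matchings, the cardinality $|\bdry\mu|$. In the $\white$-standardised setting every boundary arrow has a single uniform orientation and every black node is internal, so the count collapses immediately to $|\bdry\mu| = h_w(\mu) = W - e(\mu) = W - B = k$.
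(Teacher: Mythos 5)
Your proof is correct and follows essentially the same approach as the paper: both translate to the bipartite graph $\plabic(D)$, use the orientation--colour correspondence to recast $\bdry\mu$ in terms of white/black half-edges, and then count using the perfect-matching condition. The paper phrases the final count as a bijection with a set $S\setminus S_\mu$ of cardinality $k$, while you make the same count explicitly via the identities $e(\mu)+h_w(\mu)=W$ and $e(\mu)+h_b(\mu)=B$; a small bonus of your write-up is that you actually justify the orientation--colour correspondence (clockwise boundary arrow $\Leftrightarrow$ white boundary node), which the paper uses without comment.
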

\begin{proof}
Since $k$ is defined in terms of the graph $\plabic(D)$, we view $\mu$ as a subset of the edges and half-edges of $\plabic(D)$ as in Remark~\ref{r:pm-graph}. In this language, the boundary value $\bdry\mu$ consists of those $I\in\C_1$ such that the corresponding half-edge of $\plabic(D)$ is either incident with a white node and contained in $\mu$, or is incident with a black node and not contained in $\mu$.

Now consider the disjoint union $S$ of the set of white nodes of $\plabic(D)$ with the set of half-edges of $\plabic(D)$ incident with a black node, and its subset $S_\mu$ consisting of white nodes joined to a black node by an edge of $\mu$, together with the half-edges in $\mu\cap S$. Since $\mu$ is a perfect matching, the cardinality of $S_\mu$ is equal to the number of black nodes, and so $S\setminus S_\mu$ has cardinality $k$ by a direct comparison with Definition~\ref{d:type}. On the other hand, $S\setminus S_\mu$ consists of those white nodes incident with a (necessarily unique) half-edge in $\mu$, together with the half-edges of $S$ which are not in $\mu$, and so its cardinality also agrees with that of $\bdry\mu$.
\end{proof}

The modules $N_\mu$ provide a convenient way to prove Claim~\ref{cm:C-to-B} and hence to complete the proof of Proposition~\ref{p:restriction}.

\begin{proof}[Proof of Claim~\ref{cm:C-to-B}]
We need only check that $\canPB(y^k-x^{n-k})=0$, or equivalently, that $\canPB(y^k)$ and $\canPB(x^{n-k})$ have the same action on any indecomposable projective $B$-module $Be_i$, for $i\in\C_0$. 
Now $Be_i=eA_De_i$ is a subspace of the projective $A_D$-module $A_De_i$ which, 
by Corollary~\ref{c:proj-pm-mods}, is isomorphic to $N_\mu$ for some perfect matching $\mu$. 
In fact, the elements $\canPB(y^k)$ and $\canPB(x^{n-k})$ of $B\subset A_D$ act in the same way on $N_\mu$ for any perfect matching $\mu$, as we now show.

Fix $i\in\C_1$. By construction, $\canPB(x_i)=\alpha_i$ acts on the relevant fibres of $N_\mu$ either by the identity or as multiplication by $t$, and $\canPB(y_i)=\beta_i$ acts complementarily. 
If the boundary arrow of $Q(D)$ labelled by $i$ is clockwise, then $\alpha_i$ is this arrow, which acts as $t$ on $N_\mu$ if and only if $i\in\bdry\mu$. 
On the other hand, if the boundary arrow $\beta_i$ labelled by $i$ is anticlockwise, then $\alpha_i$ is the path completing $\beta_i$ to a face, which acts as $t$ on $N_\mu$ if and only if $\beta_i$ acts as $1$, again if and only if $i\in\bdry\mu$. 
Thus by Proposition~\ref{p:bdry-value-card}, exactly $k=|\bdry\mu|$ of the $\alpha_i$ act as multiplication by $t$. 
Verifying that $\canPB(y^k)$ and $\canPB(x^{n-k})$ have the same action on $N_\mu$ is then straightforward (cf.~Definition~\ref{d:rank1mod}).
\end{proof}

\begin{proposition}
\label{p:res-pm-mods}
Let $D$ be a Postnikov diagram, let $\mu$ be a perfect matching for the associated quiver $Q(D)$ with corresponding $A_D$-module $N_\mu$, and let $e$ be the boundary idempotent of $A_D$. 
Then $\restr{eN_\mu}=M_{\pmbdry{\mu}}$, where $\rho\colon\CM(B)\to\CM(\jksalg)$ denotes the restriction functor from Proposition~\ref{p:restriction}.
\end{proposition}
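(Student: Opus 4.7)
The plan is to compute $\rho(eN_\mu)$ directly and identify it with $M_{\partial\mu}$ using the explicit description of $M_I$ in Definition~\ref{d:rank1mod}. Almost all of the required work is already contained in the proof of Claim~\ref{cm:C-to-B}; the task is to repackage that computation as a module isomorphism.

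First I would note that, as a $Z$-module,
\[ eN_\mu = \bigoplus_{i\in\C_0} e_i N_\mu = \bigoplus_{i\in\C_0} Z, \]
which is exactly the underlying $Z$-module of any rank $1$ $\jksalg$-module $M_I$. The $\jksalg$-action on $\rho(eN_\mu)$ is obtained by pulling back the $B$-action along $\canCB\colon\jksalg\to B$; concretely, the generators $x_i, y_i\in\jksalg$ act on $e_iN_\mu$ via the elements $\alpha_i,\beta_i\in B$ defined just before Claim~\ref{cm:C-to-B}. So it suffices to check that $\alpha_i$ acts on the relevant fibre of $N_\mu$ as multiplication by $t$ if $i\in\bdry\mu$, and as the identity otherwise, and that $\beta_i$ acts complementarily.

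The key step is then a case analysis on the colour/orientation of the boundary arrow labelled by $i$, together with the defining property of a perfect matching. If this boundary arrow is clockwise then by convention it equals $\alpha_i$, which is a single arrow acting on $N_\mu$ as $t$ precisely when $\alpha_i\in\mu$, which is exactly the condition $i\in\bdry\mu$ from Definition~\ref{d:bdry-value}. If instead this boundary arrow is anticlockwise then it equals $\beta_i$, and $\alpha_i$ is the complementary path completing $\beta_i$ to a boundary face. Since exactly one arrow of that face lies in $\mu$, and $\bdry F$ acts on $N_\mu$ by $t$, we find that $\alpha_i$ acts as $t$ iff $\beta_i\notin\mu$, and by Definition~\ref{d:bdry-value} this again matches $i\in\bdry\mu$. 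In either case $\beta_i$ then acts as $t$ precisely when $i\notin\bdry\mu$.

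Comparing with Definition~\ref{d:rank1mod}, the induced $\jksalg$-action on $\rho(eN_\mu)$ agrees arrow-for-arrow with the action defining $M_{\bdry\mu}$, so the identification of $Z$-modules is an isomorphism of $\jksalg$-modules. Note that $\bdry\mu$ is a $k$-subset by Proposition~\ref{p:bdry-value-card}, so $M_{\bdry\mu}$ is genuinely a $\jksalg$-module (and not merely a $\Pi$-module), consistent with $\rho(eN_\mu)$ lying in $\CM(\jksalg)$. I do not anticipate any real obstacle; the only subtlety is a careful bookkeeping of when a boundary arrow plays the role of $\alpha_i$ versus $\beta_i$, which is handled uniformly by Definition~\ref{d:bdry-value}.
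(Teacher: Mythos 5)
Your proof is correct and takes essentially the same approach as the paper: the paper's proof likewise computes the action of $x_i$ and $y_i$ on the fibres $e_iN_\mu$ by appealing to the case analysis already carried out in the proof of Claim~\ref{cm:C-to-B}, and then compares directly with Definition~\ref{d:rank1mod}.
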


\begin{proof}
Consider $eN_\mu$, which by definition has vertex components $e_iN_\mu=Z$ for each $i\in \C_0$, under our identification of $\C_0$ with the boundary vertices of $Q$. Exactly as in the proof of Claim~\ref{cm:C-to-B}, the arrow $x_i$ acts as multiplication by $t$ if $i\in \pmbdry{\mu}$, and as the identity otherwise. Since $x_iy_i$ bounds a face, it must act by multiplication by $t$. Hence the arrow $y_i$ acts as the identity when $x_i$ acts by $t$, i.e.\ when $i\in \pmbdry{\mu}$, and as multiplication by $t$ otherwise. Comparing to Definition~\ref{d:rank1mod}, we see that $\restr{eN_\mu}=M_{\pmbdry{\mu}}$.
\end{proof}

Since $\rho$ is fully faithful by Proposition~\ref{p:restriction}, we immediately have the following.
 
\begin{corollary}
\label{c:bdy-val-mod}
The boundary module $M=eN_\mu\in\CM(B)$ of a perfect matching $\mu$ is determined up to isomorphism by the boundary value $\pmbdry{\mu}$ of the matching.
\end{corollary}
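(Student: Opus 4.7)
The plan is to deduce this directly from the two immediately preceding results. By Proposition~\ref{p:res-pm-mods}, applying the restriction functor $\rho\colon\CM(B)\to\CM(\jksalg)$ to the boundary module $eN_\mu$ yields $\rho(eN_\mu) = M_{\pmbdry{\mu}}$, and the right-hand side depends on $\mu$ only through its boundary value $\pmbdry{\mu}$. Hence if $\mu$ and $\mu'$ are two perfect matchings with $\pmbdry{\mu} = \pmbdry{\mu'}$, then $\rho(eN_\mu)$ and $\rho(eN_{\mu'})$ are literally the same $\jksalg$-module, and in particular isomorphic in $\CM(\jksalg)$.

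The remaining step is to transfer this isomorphism back across $\rho$. By Proposition~\ref{p:restriction}, the functor $\rho$ is fully faithful, and any fully faithful functor reflects isomorphisms: given an isomorphism $\phi\colon\rho(eN_\mu) \isoto \rho(eN_{\mu'})$, fullness produces morphisms $f\colon eN_\mu\to eN_{\mu'}$ and $g\colon eN_{\mu'}\to eN_\mu$ with $\rho(f)=\phi$ and $\rho(g)=\phi^{-1}$, and then faithfulness applied to $\rho(gf)=\id$ and $\rho(fg)=\id$ forces $gf=\id$ and $fg=\id$. Thus $eN_\mu\isom eN_{\mu'}$ in $\CM(B)$, which is the claim.

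There is no real obstacle here; the statement is genuinely a formal consequence of Proposition~\ref{p:res-pm-mods} together with the full faithfulness of $\rho$ established in Proposition~\ref{p:restriction}. The only thing worth flagging explicitly in the write-up is the standard fact that fully faithful functors reflect isomorphisms, since this is exactly what converts an identification of the $\jksalg$-module restrictions into an isomorphism of the $B$-module boundary modules themselves.
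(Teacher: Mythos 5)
Your argument is correct and matches the paper's: Proposition~\ref{p:res-pm-mods} identifies $\rho(eN_\mu)$ with $M_{\pmbdry{\mu}}$, and the full faithfulness of $\rho$ from Proposition~\ref{p:restriction} then reflects the resulting isomorphism back to $\CM(B)$. Spelling out that fully faithful functors reflect isomorphisms is exactly the (implicit) step the paper summarises by ``we immediately have the following.''
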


\begin{definition}
\label{d:pmm}
We will refer to an $A$-module $N$ together with a preferred isomorphism $N_\mu\isoto N$ as \emph{a perfect matching module}. Specifying such an isomorphism is equivalent to choosing a preferred generator $g_j$ for each $e_jN$ (necessarily a rank one $Z$-module) in such a way that the arrows act by multiplication by a power of $t$, relative to these generators. The isomorphism is then given by mapping $1\in e_jN_\mu=Z$ to $g_j$, and the power of $t$ is necessarily $0$ or $1$, as in the proof of Proposition~\ref{p:pm-mods}.
\end{definition}

\begin{lemma}
\label{l:pm-submods}
Any submodule $N$ of a perfect matching module $M$ is canonically a perfect matching module. \end{lemma}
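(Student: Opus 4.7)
The strategy is to transfer the preferred generators of $M = N_\mu$ into preferred generators for $N$ by a canonical rescaling, and then to verify that the arrows of $Q(D)$ act on these new generators in the way prescribed by some (necessarily unique) perfect matching $\mu'$.

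First I would check that each fibre $e_j N$ is a free $Z$-module of rank one. Since $e_j M = Z\cdot g_j$ is free of rank one, any submodule is either $0$ or of the form $t^{n_j}Z \cdot g_j$ for a unique integer $n_j \geq 0$. The possibility $e_j N = 0$ is excluded as follows: every arrow $\alpha\colon i\to j$ acts on $M$ either as the identity or as multiplication by $t$, both of which are injective on the relevant fibre, so $e_jN = 0$ and $e_iN\neq 0$ is inconsistent with $\alpha\cdot e_iN \subseteq e_jN$. Invoking the path-theoretic argument in the proof of Proposition~\ref{p:thin}, which provides a directed path between any two vertices of the (connected) dimer model, this propagates to show that if any $e_jN = 0$ then $N = 0$. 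In the remaining case, set $g_j' := t^{n_j}g_j$; this is a canonical generator of $e_jN$, determined purely by the inclusion $N\subseteq M$ and the perfect matching module structure on $M$.

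Next I would analyse how the arrows act on the $g_j'$. For each arrow $\alpha\colon i\to j$, write $\epsilon_\alpha = 1$ if $\alpha\in\mu$ and $\epsilon_\alpha = 0$ otherwise, so that $\alpha\cdot g_i = t^{\epsilon_\alpha}g_j$. Then
\[
 \alpha\cdot g_i' = t^{n_i}\alpha\cdot g_i = t^{n_i + \epsilon_\alpha}g_j = t^{m_\alpha} g_j', \qquad m_\alpha := n_i + \epsilon_\alpha - n_j,
\]
and the condition that $N$ is closed under the $A$-action forces $m_\alpha \geq 0$ for every arrow.

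The key observation is then that, for any face $F$ of $Q(D)$ with boundary cycle $\alpha_1\cdots\alpha_r$ based at a vertex $j_0$, the exponents $n_{j}$ telescope to zero, so
\[
 \sum_{i=1}^r m_{\alpha_i} = \sum_{i=1}^r \epsilon_{\alpha_i} = 1,
\]
the latter equality because $\mu$ is a perfect matching. Since each $m_{\alpha_i}$ is a nonnegative integer summing to $1$, exactly one of them equals $1$ and the rest vanish. Hence $\mu' := \{\alpha\in Q_1(D) : m_\alpha = 1\}$ meets each face boundary in a unique arrow, and is therefore a perfect matching. Moreover, on the generators $g_j'$, the arrows of $\mu'$ act as multiplication by $t$ and the remaining arrows act as the identity, so the assignment $1\in e_jN_{\mu'} \mapsto g_j'$ defines the sought canonical isomorphism $N_{\mu'}\isoto N$; uniqueness of $\mu'$ follows from Corollary~\ref{c:proj-pm-mods}.

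The only genuinely delicate step is ensuring rank-one fibres everywhere, since the face-sum argument then forces $m_\alpha\in\{0,1\}$ essentially for free. I expect the connectivity input, rather than the combinatorial step, to be the real pressure point: it relies on the path-existence property from Proposition~\ref{p:thin} and on the injectivity of arrow actions on fibres of $M$.
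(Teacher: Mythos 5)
Your proof is correct and follows essentially the same approach as the paper's: rescale to the canonical generators $t^{n_j}g_j$ of the submodule's fibres and observe that, since $t$ is central, arrows still act by powers of $t$. The paper's version is much shorter because Definition~\ref{d:pmm} already reduces the check to "arrows act by powers of $t$" (with the fact that those powers lie in $\{0,1\}$ delegated to Proposition~\ref{p:pm-mods}); your telescoping face-sum and the strong-connectivity argument for non-vanishing fibres re-derive exactly what that definition packages up.
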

\begin{proof}
We have a generator $g_j$ for each $e_jM$ as in Definition~\ref{d:pmm}. Each $e_jN$ is a $Z$-submodule of $e_jM$ and is thus canonically generated by $t^mg_j$ for some $m$ (depending on $j$). Since $t$ is central in $A$, the arrows still act on these new generators by multiplication by a power of $t$, as required.
\end{proof}

\section{Induction and restriction} \label{sec:ind-res}

Let $D$ be a connected Postnikov diagram in the disc with quiver $Q=Q(D)$, and write $A=A_D$ for its dimer algebra, with boundary idempotent $e$. Write $B=eAe$ and $\cto =eA$. The restriction functor 
\[
e\colon  \md A \to\md B\colon L\mapsto eL = T\otimes_A L = \Hom_A(Ae,L)
\] 
has right and left adjoints $F,\Ftil \colon \md B \to\md A$ given by
\begin{align*}
  FM &=\Hom_B(\cto,M),\\
 \Ftil M &= Ae\otimes _B M.
\end{align*}
Since $eF$ and $e\Ftil$ are naturally isomorphic to the identity on $\md B$, there is a universal map 
\begin{equation}\label{eq:iota}
 \iota_M\colon \Ftil M\to FM.
\end{equation}  
We write $\smallF M=\im{\iota_M}$; this defines a functor $\smallF\colon\md B\to \md A$, sometimes called the intermediate extension associated to the idempotent $e$. See \cite{CBS,Kuhn} for some general discussion of this construction. We may also compute $F'M$ as the torsion-free part of $\Ftil M$ (as a $Z$-module), so that $\smallF$ becomes the honest left adjoint of $e$ upon its restriction to a functor $\CM(A)\to\CM(B)$.

Let $N\in\CM(A)$. Then, viewing $N$ as a quiver representation, the fibre of $e_iN$ over each $i\in Q_0$ is a free and finitely generated $Z$-module. Moreover, each $a\in Q_1$ begins a cycle bounding a face. Since the cycle acts on $N$ as multiplication by $t$, and so in particular injectively, $a$ must also act injectively on $M$, and so $\rk_Z(e_{ha}N)\leq\rk_Z(e_{ta}N)$. Since $Q$ is strongly connected, meaning any two vertices lie on some cycle, it follows that in fact $\rk_Z(e_iN)$ is constant in $i$. We define $\rk(N)$ to be this constant value. Observe that $\rk(N_\mu)=1$ for any perfect matching $\mu$ by construction, and that if $\rk(N)=1$ then $N\isom N_\mu$ for some perfect matching $\mu$ by Corollary~\ref{c:proj-pm-mods}. 

\begin{lemma}
\label{l:cm-lift}
When $M$ is in $\CM(B)$, both $F M$ and $\smallF M$ are in $\CM(A)$.
\end{lemma}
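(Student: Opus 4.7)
The plan is to verify directly that both $FM$ and $F'M$ are free and finitely generated as $Z$-modules, which is the defining property of the subcategory $\CM(A)\subset\md A$. The essential ingredients will be the thinness of $A$ (Proposition~\ref{p:thin}), which ensures that each $eAe_j$ is a finitely generated free $Z$-module, together with the fact that $Z=\powser{\CC}{t}$ is a discrete valuation ring and in particular a PID, so that any submodule of a finitely generated free $Z$-module is finitely generated and free.

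For $FM=\Hom_B(eA,M)$, I would first observe that $T=eA$ is naturally a $(B,A)$-bimodule: $B=eAe$ acts on the left by multiplication in $A$, while $A$ acts on the right. Consequently $FM$ is a left $A$-module, and decomposing by vertex idempotents gives an identification $e_jFM\isom\Hom_B(eAe_j,M)$ of $Z$-modules for each $j\in Q_0$. By thinness, each $e_iAe_j$ is free of rank one over $Z$, so $eAe_j=\bigoplus_{i\in\C_0}e_iAe_j$ is finitely generated and free over $Z$. Since $Z$ is central in $B$, any $B$-linear map is in particular $Z$-linear, so $\Hom_B(eAe_j,M)$ is a $Z$-submodule of $\Hom_Z(eAe_j,M)$. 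The latter is finitely generated and free over $Z$ because both source and target are, and the PID property then gives the same for the $Z$-submodule $\Hom_B(eAe_j,M)$. Summing over the finite set $Q_0$, we conclude that $FM$ is finitely generated and free over $Z$, hence in $\CM(A)$.

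For $F'M$ the argument is then immediate: by construction $F'M=\im(\iota_M)$ is a $Z$-submodule of the finitely generated free $Z$-module $FM$ just exhibited, so $F'M$ is itself finitely generated and free over $Z$, again by the PID property, and therefore lies in $\CM(A)$. The main obstacle in this argument, namely establishing thinness of $A$, has already been dealt with in Proposition~\ref{p:thin}; everything else reduces to a formal consequence of the bimodule structure on $eA$ and the PID property of $Z$, so no further technical work is needed.
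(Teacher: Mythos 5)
Your proof is correct and follows essentially the same route as the paper's: realise $\Hom_B(T,M)$ as a $Z$-submodule of $\Hom_Z(T,M)$, which is free and finitely generated over $Z$ by thinness of $A$ and the hypothesis on $M$, invoke the PID property of $Z$ to conclude $FM\in\CM(A)$, and then note $F'M\subset FM$ is again such a submodule. The vertex-by-vertex decomposition $e_jFM\isom\Hom_B(eAe_j,M)$ you use is a harmless repackaging of the same argument.
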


\begin{proof}
Since $Z$ is a principal ideal domain, any submodule of a free and finitely generated $Z$-module is again free and finitely generated. By Proposition~\ref{p:thin}, $T\in\CM(A)$.

It follows that $\smallF M\subset FM\subset\Hom_Z(T,M)$ are free and finitely generated whenever $M\in\CM(B)$, since $T$ and $M$ are. Thus $\smallF M, FM\in\CM(A)$ for all $M\in\CM(B)$.
\end{proof}

A consequence of Lemma~\ref{l:cm-lift} is that for any $M\in\CM(B)$, there exists $N\in\CM(A)$ with $eN=M$ (for example, take $N=FM$). Thus $e_iM=e_iN$ for any boundary vertex $i$, and so $\rk_Z(e_iM)=\rk(N)$ is constant in $i$. We define $\rk(M)$ to be this constant, the above argument showing that $\rk(M)=\rk(N)$ for any $N\in\CM(A)$ with $eN=M$. It also follows by a direct comparison of the definitions that the rank of $M\in\CM(B)$ agrees with that of $\rho(M)\in\CM(C)$.

\begin{lemma}
\label{l:inj-on-boundary}
Let $f\colon M\to N$ be a morphism in $\CM(A)$ such that its restriction $e(f)\colon eM\to eN$ is injective. Then $f$ is injective.
\end{lemma}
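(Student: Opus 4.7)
The plan is to reduce global injectivity of $f$ to the given injectivity on the boundary by transporting elements to boundary vertices along paths in $Q$. Since $f$ respects the vertex decomposition $M=\bigoplus_{i\in Q_0}e_iM$, it suffices to show that $e_if\colon e_iM\to e_iN$ is injective for each $i\in Q_0$. For boundary vertices $i$ this is immediate, as $e_if$ is then a direct summand of $e(f)$, which is injective by hypothesis. All the work therefore lies at internal vertices.

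The key technical ingredient I would establish first is that every arrow, and hence every path, of $Q$ acts injectively on any $L\in\CM(A)$. Given an arrow $\alpha$ with $t\alpha=i$, the boundary of a face containing $\alpha$, read starting at $i$, has the form $p\alpha$ for a path $p$ from $h\alpha$ back to $i$, and this composite equals $t_i$ in $A$ by Definition~\ref{d:centralelement}. Because $t_j$ annihilates $e_iL$ for $j\neq i$, the element $t_i$ acts on $e_iL$ as multiplication by the central element $t\in Z$, which is injective by the $Z$-freeness of $L$. Hence the composition $p\circ\alpha\colon e_iL\to e_iL$ is injective, which forces $\alpha$ itself to be injective on $e_iL$. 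Composing shows every path acts injectively.

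With this in hand, I would finish as follows: given an internal vertex $i$ and $m\in e_iM$ with $f(m)=0$, use the strong connectedness of $Q$ (the same input used in the proof of Proposition~\ref{p:thin}, coming from \cite[Rem.~3.3]{BKM}) to choose a path $p$ from $i$ to some boundary vertex $j$. Then $p\cdot m\in e_j(eM)$ and $e(f)(p\cdot m)=p\cdot f(m)=0$, so $p\cdot m=0$ by injectivity of $e(f)$. The injectivity of path actions established above then forces $m=0$. The only step requiring genuine argument is the injectivity of arrow actions on CM modules, which I do not expect to be a serious obstacle, since it is built directly out of the face relations in Definition~\ref{d:centralelement} and the $Z$-freeness defining $\CM(A)$.
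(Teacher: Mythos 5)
Your proposal is correct, and it rests on the same single key fact that drives the paper's proof: every arrow of $Q$ acts injectively on a module $L\in\CM(A)$, because the arrow can be completed to the boundary cycle of a face, which acts on $L$ as the central element $t$, and $t$ acts injectively by $Z$-freeness. The difference is purely one of packaging. The paper had already extracted this fact (in the paragraph preceding Lemma~\ref{l:cm-lift}) in the form that $\rk_Z(e_iL)$ is constant over $i\in Q_0$, and then proves the lemma structurally: the kernel $K=\ker f$ lies in $\CM(A)$, vanishes on the boundary, hence has rank zero everywhere, hence is zero. You instead work element-wise: given $m\in e_iK$ at an internal vertex $i$, transport it along a path $p$ to a boundary vertex, observe $p\cdot m=0$ by injectivity of $e(f)$, and conclude $m=0$ by injectivity of the path action. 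The two are interchangeable; yours avoids invoking rank constancy explicitly, while the paper's reuses an observation it needed anyway for defining $\rk(N)$. One small wrinkle in your write-up: reducing to injectivity of each component $e_if$ is fine, but the final element-chase already handles all vertices at once, so the initial reduction and the remark that boundary components are direct summands of $e(f)$ are somewhat redundant given where you end up.
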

\begin{proof}
Since $\CM(A)$ is closed under submodules, $K=\ker{f}$ is Cohen--Macaulay. If $i\in Q_0$ is a boundary vertex, then $e_iK\subset eK=0$ since $e(f)$ is injective. Since for any $j\in Q_0$ we have $\rk_Z(e_jK)=\rk_Z(e_iK)=0$, and $e_jK$ is free over $Z$, it follows that $e_jK=0$ for all $j$, and thus that $K=0$.
\end{proof}

\begin{lemma}
\label{l:res-inj}
For $M,N\in\CM(A)$, the restriction $\Hom_A(M,N)\to\Hom_B(eM,eN)$ is injective.
\end{lemma}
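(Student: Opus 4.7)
The plan is to mimic closely the proof of Lemma~\ref{l:inj-on-boundary}, replacing the kernel of $f$ by its image. Given $f\colon M\to N$ in $\CM(A)$ with $e(f)=0$, it will suffice to show that $L:=\im f\subset N$ is zero.

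The first observation is that $L\in\CM(A)$: as a submodule of the torsion-free, finitely generated $Z$-module $N$, and since $Z$ is a Noetherian PID, $L$ is itself free and finitely generated over $Z$. Next, $eL=e(\im f)=\im(e(f))=0$, so $e_iL=0$ for every boundary vertex $i\in\C_0$.

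The key step is then to propagate this vanishing from boundary vertices to all of $Q_0$, using the rank argument given in the paragraph preceding Lemma~\ref{l:cm-lift}. That argument shows that for any module in $\CM(A)$, each arrow acts injectively (since each arrow lies in a face-bounding cycle which acts as multiplication by $t$), and then strong connectivity of $Q$ forces $\rk_Z(e_jL)$ to be constant in $j$. Since this rank vanishes on the boundary, it vanishes everywhere, so $L=0$ by $Z$-freeness, and hence $f=0$. I do not anticipate a genuine obstacle: the only thing to verify is that $L\in\CM(A)$, after which the rank argument applies verbatim, exactly as it does to $K=\ker f$ in Lemma~\ref{l:inj-on-boundary}.
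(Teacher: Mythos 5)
Your proof is correct, but it takes a different route from the paper's. The paper factors the restriction map as
\[\Hom_A(M,N)\longrightarrow\Hom_A(M,FeN)\isoto\Hom_B(eM,eN),\]
observes that the counit $N\to FeN$ restricts to the identity on the boundary and is therefore a monomorphism by Lemma~\ref{l:inj-on-boundary}, and concludes by left-exactness of $\Hom_A(M,-)$. You instead argue directly: if $ef=0$ then the image $L=\im f$ lies in $\CM(A)$ (as a submodule of the torsion-free, finitely generated $Z$-module $N$), the exact functor $e\cdot(-)$ gives $eL=\im(ef)=0$, and the constant-rank argument then forces $L=0$, hence $f=0$. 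Both proofs ultimately rest on the same rank-propagation mechanism, but your version avoids the detour through $F$ and the adjunction; it is essentially the argument of Lemma~\ref{l:inj-on-boundary} rerun with $\im f$ in place of $\ker f$. The paper's phrasing, on the other hand, makes explicit the formal identity underlying both Lemma~\ref{l:inj-on-boundary} and Lemma~\ref{l:res-inj}, namely that the counit is a monomorphism, which is a reusable fact about the adjunction. Your version is slightly more self-contained; the paper's is slightly more conceptual.
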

\begin{proof}
As in any adjunction, the restriction map can be factored as
\[\Hom_A(M,N)\longrightarrow\Hom_A(M,FeN)\isoto\Hom_B(eM,eN),\]
where the first map is $\Hom_A(M,-)$ applied to the counit $N\to FeN$, and the second is adjunction. In this case, the counit map restricts to the identity $eN\to eN$ and so is injective by Lemma~\ref{l:inj-on-boundary}. Since $\Hom_A(M,-)$ is left exact, the restriction map is injective as required.
\end{proof}

One immediate consequence of these lemmas is the following.
\begin{proposition}
\label{p:F'-is-min}
Let $M\in\CM(B)$ and $N\leq FM$. Then $eN=M$ if and only if $F'M\leq N$.
\end{proposition}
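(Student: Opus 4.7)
The strategy rests on the universal characterisation of $\iota_M$ as the unique morphism $\Ftil M\to FM$ that corresponds, under the $(\Ftil,e)$-adjunction $\Hom_A(\Ftil M,FM)\isom\Hom_B(M,eFM)$, to the canonical isomorphism $M\isoto eFM$ supplied by the counit of $(e,F)$. An immediate consequence is that the induced morphism $e(\iota_M)\colon e\Ftil M\to eFM$ is the canonical isomorphism between these two modules (each naturally identified with $M$ via the unit of $(\Ftil,e)$ and the counit of $(e,F)$ respectively), and in particular is invertible.

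For the ``if'' direction, suppose $\smallF M\leq N\leq FM$. Since the restriction functor $e$ is exact, applying it yields $e\smallF M\leq eN\leq eFM$. By the observation above, $e\smallF M=\im(e\iota_M)=eFM$, and so $eN=eFM=M$ is forced.

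For the ``only if'' direction, suppose $eN=M$, which we interpret as $eN=eFM$ as submodules of $eFM$. By the $(\Ftil,e)$-adjunction, the canonical isomorphism $M\isoto eN$ (obtained from $M\isoto eFM$) corresponds to a morphism $\alpha\colon\Ftil M\to N$. Composing with the inclusion $j\colon N\hookrightarrow FM$, naturality of the adjunction combined with the fact that $e(j)$ is the identity embedding $eN=eFM$ implies that $j\circ\alpha\colon\Ftil M\to FM$ corresponds to the same canonical isomorphism $M\isoto eFM$ that $\iota_M$ does. Uniqueness in the adjunction then gives $j\circ\alpha=\iota_M$, whence $\smallF M=\im(\iota_M)=\im(j\circ\alpha)\leq\im(j)=N$.

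The main care needed is in keeping the several canonical identifications straight, but there is no genuine obstacle: once the characterising property of $\iota_M$ as ``adjoint to the identity'' is articulated, both implications are immediate formal consequences of the two adjunctions, with the exactness of $e$ invoked only in the very short ``if'' direction.
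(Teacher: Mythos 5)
Your proof is correct. The ``if'' direction is essentially identical to the paper's: apply the exact functor $e$ to the chain $\smallF M\leq N\leq FM$ and note that $e\iota_M$ is invertible.

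For the ``only if'' direction you take a slightly different and arguably cleaner route. The paper considers the composite $\smallF eN\to N\to FeN$ (counit of the $(\smallF,e)$ adjunction followed by unit of the $(e,F)$ adjunction), checks that it restricts to the identity on the boundary, and then invokes Lemma~\ref{l:res-inj} twice --- once to identify this composite with the inclusion $\smallF eN\hookrightarrow FeN$, and once to identify the map $N\to FeN$ with the given inclusion $N\hookrightarrow FM$. Your version stays entirely within the $(\Ftil,e)$ adjunction: the morphism $j\circ\alpha$ and $\iota_M$ both correspond, under the natural bijection $\Hom_A(\Ftil M,FM)\isom\Hom_B(M,eFM)$, to the identity of $M$, so they coincide by bijectivity of the adjunction isomorphism; hence $\smallF M=\im(\iota_M)=\im(j\circ\alpha)\leq N$. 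This sidesteps Lemma~\ref{l:res-inj} (whose proof relies on $\CM$-theoretic facts via Lemma~\ref{l:inj-on-boundary}) in favour of a purely formal adjunction argument, at the small cost of keeping track of the identifications $e\Ftil M\isom M\isom eFM$; the paper's route localises that bookkeeping into the appeal to Lemma~\ref{l:res-inj}. Both are valid; yours is marginally more self-contained.
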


\begin{proof}
Note that the statement makes the canonical identification $eFM=M$.
For the backwards implication, 
note that the map $\Ftil M\to FM$ restricts to the identity $M\to M$ on the boundary
and therefore $eF'M=M$. 
Hence, if $F'M\leq N$, then $eN$ is sandwiched between $eF'M=M$ and $eFM=M$, so $eN=M$.
For the forward implication, the left and right adjunctions provide universal (unit and counit) maps
\[
  \smallF eN \to N \to FeN.
\]
By general properties of adjunctions, the composition of these maps restricts to $\id\colon eN\to eN$ on the boundary (using our canonical identification), and so by Lemma~\ref{l:res-inj} agrees with the inclusion map $F'eN\to FeN$, which also has this restriction. If $eN=M$, then a similar argument shows that the second map is the given inclusion $N\to FM$. Thus we have shown in this case that the inclusion of $F'M$ into $FM$ factors over that of $N$ into $FM$, and so $F'M\leq N$.
\end{proof}

\begin{proposition} 
\label{p:matchings-to-modules}
Let $M\in\CM(B)$ with $\rk(M)=1$. Then there is a bijection
\[\theta\colon\{N\leq FM : eN=M\}\to\{\mu:eN_\mu\isom M\}\]
determined by $\theta(N)=\mu$ when $N\isom N_\mu$.
\end{proposition}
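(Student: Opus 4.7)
The plan is to verify that $\theta$ is a well-defined bijection, in three steps.

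First, for well-definedness: given $N\leq FM$ with $eN=M$, we have $N\in\CM(A)$ because $\CM(A)$ is closed under $A$-submodules and $FM\in\CM(A)$ by Lemma~\ref{l:cm-lift}. The rank argument in the discussion preceding Lemma~\ref{l:cm-lift} gives $\rk_Z(e_jN)=\rk(N)=\rk_Z(e_iM)=\rk(M)=1$ for any $j\in Q_0$, using a boundary vertex $i$ to compute the common value. Hence each fibre of $N$ is free of rank $1$ over $Z$, and Corollary~\ref{c:proj-pm-mods} yields $N\isom N_\mu$ for a unique perfect matching $\mu$; since $eN_\mu\isom eN=M$, this $\mu$ lies in the target set, so $\theta(N):=\mu$ is well-defined.

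For surjectivity, given $\mu$ together with an isomorphism $\phi\colon eN_\mu\isoto M$, the $(e,F)$-adjunction $\Hom_A(N_\mu,FM)\cong\Hom_B(eN_\mu,M)$ produces a map $\tilde\phi\colon N_\mu\to FM$ with $e\tilde\phi=\phi$ under the canonical identification $eFM=M$. Since $\phi$ is an isomorphism, Lemma~\ref{l:inj-on-boundary} applies to make $\tilde\phi$ injective, and its image $N\leq FM$ satisfies $N\isom N_\mu$ and $eN=\phi(eN_\mu)=M$, so $\theta(N)=\mu$.

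For injectivity, suppose $N_1,N_2\leq FM$ with $\theta(N_i)=\mu$, and pick an isomorphism $\psi\colon N_1\isoto N_2$. Under adjunction, each inclusion $\iota_i\colon N_i\hookrightarrow FM$ corresponds to $\id_M$, while $\iota_2\circ\psi\colon N_1\to FM$ corresponds to $e\psi\in\End_B(M)$. By Proposition~\ref{p:restriction}, $\End_B(M)=\End_{\jksalg}(\rho M)$, and a direct fibre-wise calculation (all arrows in $Q(\C)$ act on the rank-$1$ module $\rho M$ by $1$ or $t$, so any $\jksalg$-linear endomorphism must be a common $Z$-scalar) identifies this ring with $Z$. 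Hence $e\psi=u\cdot\id_M$ for some $u\in Z^\times$, so $\iota_2\circ\psi=u\cdot\iota_1$, and therefore $N_2=(\iota_2\circ\psi)(N_1)=u\iota_1(N_1)=N_1$ since $u$ is a unit. The main obstacle is this last step: Corollary~\ref{c:proj-pm-mods} pins down the matching only up to isomorphism of $N$, and to conclude that the submodules $N_1,N_2\leq FM$ actually coincide one must eliminate the ambiguity of the chosen isomorphism, which is achieved by identifying $\End_B(M)=Z$ via the fully faithful restriction of Proposition~\ref{p:restriction} together with the explicit rank-$1$ module structure over $\jksalg$.
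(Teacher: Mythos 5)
Your proof is correct. Well-definedness and surjectivity proceed along essentially the same lines as the paper (rank argument plus Corollary~\ref{c:proj-pm-mods} for the former; adjunction and injectivity-on-the-boundary for the latter). The injectivity step, however, takes a genuinely different route. The paper fixes a perfect matching module structure on $FM$, uses Lemma~\ref{l:pm-submods} to propagate it canonically to every $N\leq FM$, and then applies Lemma~\ref{l:res-inj} to conclude that the canonical map $N_\mu\to FM$ restricting to the resulting preferred boundary isomorphism is unique, so its image $N$ is determined by $\mu$. You instead compute $\End_B(M)\cong Z$ via the fully faithful restriction $\rho$ of Proposition~\ref{p:restriction} and the explicit $Z$-module structure of the rank-$1$ module $M_I\in\CM(C)$ (a connectedness argument on $Q(\C)$ forcing a common scalar), and deduce that two submodules $N_1,N_2\leq FM$ abstractly isomorphic to $N_\mu$ can differ only by a unit of $Z$ acting on $FM$, hence coincide. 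Your route is a little more self-contained here, avoiding the bookkeeping of Definition~\ref{d:pmm} and Lemma~\ref{l:pm-submods}; the paper's formulation, by contrast, sets up canonical generators on each $N\leq FM$ that it immediately reuses in Lemma~\ref{l:supp-restr} and Proposition~\ref{p:pm-supp}, so the extra structure is not wasted there.
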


\begin{proof}
Since $eFM=M$ has rank $1$, we also have $\rk(FM)=1$ and so by Proposition~\ref{p:pm-mods} we may choose a perfect matching module structure, in the sense of Definition~\ref{d:pmm}, on $FM$. This induces such a structure on any $N\leq FM$ by Lemma~\ref{l:pm-submods}. Thus $N\isom N_\mu$ for a perfect matching $\mu$, unique by Corollary~\ref{c:proj-pm-mods}. In addition, by restriction to the boundary, the perfect matching module structure on $N$ 
induces a preferred isomorphism $eN_\mu\isoto M$, and so $\theta$ is a well-defined map.

For injectivity, suppose $\mu=\theta(N)$. Being a perfect matching module, $N$ is the image of a canonical map $N_\mu\to FM$, as in Definition~\ref{d:pmm}. Since $eN=M$, this map restricts to the preferred isomorphism $eN_\mu\isoto M$ on the boundary and, by Lemma~\ref{l:res-inj}, there can only be one such map. Thus $N$ is uniquely determined by $\mu$.
For surjectivity, let $\mu$ be a perfect matching with $eN_\mu\isom M$. This induces an isomorphism $FeN_\mu\isoto FM$ and, precomposing with the unit of the adjunction, a monomorphism $N_\mu\to FM$. This map restricts to the given isomorphism $eN_\mu\isoto M$ on the boundary, and so its image $N$ has $eN=M$ and $\theta(N)=\mu$.
\end{proof}

\begin{remark}
\label{r:combinatorics}
We can use Proposition~\ref{p:F'-is-min} to rewrite the domain
of $\theta$ in Proposition~\ref{p:matchings-to-modules} as 
\[ \{N\leq FM : eN=M\}  = \{N : \smallF M \leq N\leq F M\}. 
\]
We can also use Corollary~\ref{c:bdy-val-mod} to rewrite the 
codomain of $\theta$ in purely combinatorial terms:
\[ \{\mu\colon eN_\mu\isom M\}=\{\mu:\pmbdry{\mu}=I\},
\]
where $I\subset\C_1$ is the unique $k$-subset such that $\rho(M)\isom M_I$ \cite[Prop.~5.2]{JKS}. 
\end{remark}

\begin{lemma}
\label{l:supp-restr}
Let $D$ be a Postnikov diagram, $B$ the boundary algebra of $A_D$ and $M\in\CM(B)$ with $\rk(M)=1$. Then
\begin{enumerate}
\item \label{it:sr1} 
any two perfect matchings of $Q(D)$ with boundary module $M$ coincide on all arrows not incident with the set $S\subset Q_0$ of vertices on which $FM/F'M$ is supported, and
\item \label{it:sr2} 
if $\mu$ is such a perfect matching, and $\omega$ is a cycle of arrows in $Q(D)$ bounding a face and passing through $S$, then the unique arrow of $\mu$ in $\omega$ is incident with $S$.
\end{enumerate}
\end{lemma}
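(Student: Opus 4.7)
The plan is to reduce both parts to arithmetic in the perfect matching module structure on $FM$. Using Proposition~\ref{p:matchings-to-modules} together with Proposition~\ref{p:F'-is-min} (cf.\ Remark~\ref{r:combinatorics}), I would identify perfect matchings $\mu$ with boundary module isomorphic to $M$ with the $A$-submodules $N$ satisfying $\smallF M \leq N \leq FM$. Since $\rk(FM) = 1$, Proposition~\ref{p:pm-mods} allows me to fix generators $g_j$ of the rank-one $Z$-modules $e_j FM$ exhibiting $FM$ as a perfect matching module; for any such $N$, the submodule $e_j N$ is then generated by $t^{m_j(N)} g_j$ for a unique integer $m_j(N) \geq 0$, with $m_j(FM) = 0$ and $k_j := m_j(\smallF M) > 0$ exactly when $j \in S$. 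The sandwich condition becomes $0 \leq m_j(N) \leq k_j$ vertexwise. Writing $m_\alpha(FM) \in \{0,1\}$ for the exponent by which $\alpha$ acts on $FM$, the fact that $N$ is also a perfect matching module forces, for each arrow $\alpha \colon j \to j'$,
\[
m_\alpha(N) := m_j(N) + m_\alpha(FM) - m_{j'}(N) \in \{0,1\},
\]
with $\alpha$ lying in the matching $\mu$ associated to $N$ precisely when this exponent equals $1$.

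Part~(1) is then immediate: if both endpoints of $\alpha$ lie outside $S$, then $m_j(N) = m_{j'}(N) = 0$ for every allowed $N$, so the condition $\alpha \in \mu$ reduces to $m_\alpha(FM) = 1$, independently of $\mu$. For part~(2) I plan to argue by contradiction. Let $\omega = \alpha_1 \cdots \alpha_p$, with $\alpha_\ell \colon i_{\ell-1} \to i_\ell$ (indices mod $p$), bound a face and pass through $v = i_{\ell^*} \in S$. The face relation gives $\sum_\ell m_{\alpha_\ell}(N) = 1$ for any valid $N$, so there is a unique $\mu$-arrow $\alpha_{\ell_0}$. Assume both endpoints of $\alpha_{\ell_0}$ lie outside $S$: then $m_{i_{\ell_0-1}}(N) = m_{i_{\ell_0}}(N) = 0$, so $m_{\alpha_{\ell_0}}(N) = 1$ forces $m_{\alpha_{\ell_0}}(FM) = 1$, and comparing with $\sum_\ell m_{\alpha_\ell}(FM) = 1$ gives $m_{\alpha_\ell}(FM) = 0$ for $\ell \neq \ell_0$. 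Rerunning the calculation for $\smallF M$ gives $m_{\alpha_{\ell_0}}(\smallF M) = 0 + 1 - 0 = 1$ and hence $k_{i_{\ell-1}} - k_{i_\ell} = 0$ for each $\ell \neq \ell_0$; propagating around $\omega$ with the anchor $k_{i_{\ell_0-1}} = k_{i_{\ell_0}} = 0$ forces $k_{i_\ell} = 0$ for all $\ell$, contradicting $v \in S$.

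The main obstacle, and the reason neither part can be proved from $\mu$ alone, is the need to compare $\mu$ against the extreme matchings $\mu_{\smallF M}$ and $\mu_{FM}$: part~(1) uses that any two valid $N$ share the values $m_j = 0$ outside $S$, while part~(2) plays $\mu$ off against $\mu_{\smallF M}$ to turn the hypothesised local failure of $\alpha_{\ell_0}$ to touch $S$ into a global vanishing of the height function $k$ around $\omega$. Once the bookkeeping distinguishing $m_j(N)$ from $m_\alpha(N)$ is set up, the remaining arithmetic is essentially telescoping around the face cycle.
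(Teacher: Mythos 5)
Your proof is correct. Part~(1) is essentially the paper's argument written in coordinates: outside $S$ one has $m_j(N) = 0$ for every valid $N$, so the action of an arrow with both endpoints outside $S$ is determined by $FM$ alone, independently of $\mu$. For part~(2), however, your route differs from the paper's. The paper argues directly for the extreme matching $\mu_0 = \theta(FM)$: since $\omega$ passes through $S$ and (in the nontrivial case) not all of its vertices lie in $S$, there is an arrow $a$ of $\omega$ with $ta \notin S$ and $ha \in S$; the containments $e_{ta}F'M = e_{ta}FM$ and $e_{ha}F'M \subsetneq e_{ha}FM$ then show the arrow map of $a$ on $FM$ is not an isomorphism, so $a \in \mu_0$, and the result transfers to any other $\mu$ by part~(1). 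You instead argue by contradiction for an arbitrary $\mu$ at once: assuming the unique $\mu$-arrow $\alpha_{\ell_0}$ of $\omega$ misses $S$ entirely, you pin down $m_{\alpha_\ell}(FM)$ and $m_{\alpha_\ell}(F'M)$ on all of $\omega$ via the face constraint, and the resulting telescoping identities force the height function $k$ to vanish identically around $\omega$, contradicting $\omega$ passing through $S$. The paper's version has the virtue of locating the matched arrow explicitly (it is the one entering $S$) and of reducing part~(2) cleanly to part~(1); your version avoids singling out a special matching, at the cost of playing both extreme submodules $FM$ and $F'M$ against $N$ in the height calculus. Both are rigorous, and your coordinate framework $m_j(N)$, $k_j$ is a transparent way to organise either argument.
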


\begin{proof}
Using the bijection $\theta$ of Proposition~\ref{p:matchings-to-modules}, 
let $\mu_0=\theta(FM)$, let $\mu$ be another perfect matching with boundary module $M$ and let 
$N=\theta^{-1}(\mu)$.

Since $F'M\leq N\leq FM$ by Proposition~\ref{p:F'-is-min}, the module $N$ coincides with $FM$ away from the vertices supporting $FM/F'M$. Thus if $a\in Q_1$ is an arrow not incident with these vertices, then $e_{ta}N=e_{ta}FM$ and $e_{ha}N=e_{ha}FM$, and since $N$ is a submodule of $FM$, the action of $a$ is the same in the two module structures. It follows that $a$ is an arrow of $\mu$ if and only if it is an arrow of $\mu_0$, establishing \eqref{it:sr1}.

We first prove \eqref{it:sr2} for the perfect matching $\mu_0=\theta(FM)$. 
If every arrow of $\omega$ has both head and tail in $S$, then there is nothing to prove, so assume otherwise. 
Then there must be an arrow $a$ of $\omega$ with $ha\in S$ but $ta\notin S$. 
Since $FM/F'M$ is $0$ at $ta$, we have $e_{ta}FM=e_{ta}F'M$. 
Since $F'M$ is a submodule of $FM$, the action of $a$ takes the $Z$-generator of $e_{ta}F'M$ to an element of $e_{ha}F'M$, which is properly contained in $e_{ha}FM$ since $ha\in S$. Thus this image cannot be the $Z$-generator of the codomain, so the arrow map on $a$ is not an isomorphism and $a\in\mu_0$.

The statement for any other matching $\mu$ with boundary $M$ then follows from \eqref{it:sr1}, since $\mu$ agrees with $\mu_0$ on the arrows of $\omega$ not incident with $S$, meaning none of these arrows can appear in $\mu$.
\end{proof}

This result will help us to show that the computation of the set of perfect matchings in Proposition~\ref{p:matchings-to-modules} can be reduced to a potentially much smaller computation, only involving the vertices on which $FM/F'M$ is supported. 
To explain how this works, it will be helpful to use the description of a perfect matching as a set of edges in a bipartite graph, rather than as a set of arrows in the dual quiver.

\begin{proposition} \label{p:pm-supp}
In the setting of Lemma~\ref{l:supp-restr}, let $\plabic_M$ be the graph consisting of those edges and nodes of $\plabic(D)$ incident with the tiles corresponding to the vertices of $Q(D)$ supporting $FM/F'M$. Then the perfect matchings of $Q(D)$ with boundary module $M$ are in bijection with those of $\plabic_M$ via intersection, i.e.\ by taking a perfect matching $\mu$ to the set of edges of $\plabic_M$ dual to arrows of $\mu$. 
Precomposing with the bijection from Proposition~\ref{p:matchings-to-modules}, 
we obtain a bijection
\[
  \{N\leq FM : eN=M\} \to \{\mu:\text{$\mu$ is a perfect matching of $\plabic_M$}\}.
\]
\end{proposition}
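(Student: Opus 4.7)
The plan is to construct the bijection directly via the intersection operation $\mu \mapsto \mu \cap \plabic_M$, and then obtain the second bijection by composing with $\theta$ from Proposition~\ref{p:matchings-to-modules}. First I would translate $\plabic_M$ into the quiver language: an edge of $\plabic_M$ is dual to an arrow of $Q(D)$ with at least one endpoint in $S$, and a node of $\plabic_M$ is a face of $Q(D)$ having some vertex in $S$. A quick check ensures that both endpoints of every edge of $\plabic_M$ automatically lie in $\plabic_M$, so $\plabic_M$ is a genuine bipartite subgraph of $\plabic(D)$. Crucially, no boundary vertex of $Q(D)$ lies in $S$, since $eFM = M = eF'M$ forces $FM/F'M$ to vanish on the boundary; consequently, no boundary arrow of $Q(D)$ belongs to $\plabic_M$, and in particular $\plabic_M$ contains no half-edges.

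For the forward direction, given a perfect matching $\mu$ of $Q(D)$ with boundary module $M$, I would show that $\mu \cap \plabic_M$ is a perfect matching of $\plabic_M$ in the usual graph-theoretic sense. Each node $F$ of $\plabic_M$ contains a vertex of $S$, so by Lemma~\ref{l:supp-restr}(\ref{it:sr2}) the unique arrow of $\mu$ in $\bdry F$ is incident with $S$ and hence lies in $\plabic_M$, producing exactly one edge of $\mu \cap \plabic_M$ at this node.

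For the inverse, set $\mu_0 = \theta(FM)$, the perfect matching corresponding under Proposition~\ref{p:matchings-to-modules} to $FM$ itself, and for a perfect matching $\nu$ of $\plabic_M$ define $\mu_\nu = (\mu_0 \setminus \plabic_M) \cup \nu$, viewing $\nu$ as a set of arrows of $Q(D)$. I would verify that $\mu_\nu$ is a perfect matching of $Q(D)$ by a face-by-face analysis: if a face $F$ has no vertex in $S$, then $\bdry F$ meets $\plabic_M$ in nothing, so $\mu_\nu$ agrees with $\mu_0$ on $\bdry F$; if $F$ has a vertex in $S$, then Lemma~\ref{l:supp-restr}(\ref{it:sr2}) discards the unique arrow of $\mu_0$ in $\bdry F$, and the unique edge of $\nu$ at the node $F$ supplies its replacement. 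Because $\mu_\nu$ agrees with $\mu_0$ on every arrow outside $\plabic_M$---in particular on every boundary arrow of $Q(D)$---the two matchings have the same boundary value, and so $\mu_\nu$ has boundary module $M$.

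Mutual inverseness is then a formality: $\mu_\nu \cap \plabic_M = \nu$ is immediate from the disjointness in the definition, while Lemma~\ref{l:supp-restr}(\ref{it:sr1}) tells us that an arbitrary matching $\mu$ with boundary module $M$ agrees with $\mu_0$ on all arrows outside $\plabic_M$, giving $\mu_{\mu \cap \plabic_M} = \mu$. The main obstacle is ensuring that the inverse construction actually produces a matching with boundary module $M$, and this is precisely what the preliminary observation about boundary vertices delivers; beyond this, both parts of Lemma~\ref{l:supp-restr} enter essentially. The bijection in the second displayed formula then follows by precomposing with $\theta$ from Proposition~\ref{p:matchings-to-modules}.
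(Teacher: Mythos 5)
Your proof is correct and takes essentially the same route as the paper: the forward direction via Lemma~\ref{l:supp-restr}(\ref{it:sr2}), and surjectivity by patching a matching of $\plabic_M$ with $\mu_0 = \theta(FM)$ away from the support $S$, using the observation that $\plabic_M$ contains no half-edges to match boundary values. You are somewhat more explicit than the paper in establishing mutual inverseness via Lemma~\ref{l:supp-restr}(\ref{it:sr1}), which the paper leaves implicit, but this is the same underlying argument.
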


\begin{proof}
Let $\mu$ be a perfect matching of $Q(D)$ with boundary module $M$. Each node $v$ of $\plabic_M$ corresponds to a face of $Q(D)$ whose boundary cycle $\omega$ intersects $S$, the support of $FM/F'M$. The edges of $\plabic_M$ incident with $v$ are dual to arrows of $\omega$ incident with $S$, and by Lemma~\ref{l:supp-restr}(2), one of these arrows is the unique arrow of $\omega$ lying in $\mu$. Thus intersection indeed gives a perfect matching of $\plabic_M$.

It remains to show that any perfect matching of $\plabic_M$ arises in this way. 
Let $\mu_0$ be the matching of $Q(D)$ such that $FM\cong N_{\mu_0}$, 
and let $\mu$ be a perfect matching of $\plabic_M$. 
Then we may take $\hat{\mu}$ to be the set of arrows of $Q(D)$ dual to the edges of $\mu$, 
together with those arrows in $\mu_0$ not incident with $S$. 
By Lemma~\ref{l:supp-restr}(2) again, $\hat{\mu}$ is a perfect matching of $Q(D)$. 
By construction, the intersection of $\hat{\mu}$ with $\plabic_M$ is $\mu$, 
and it remains to check that $\hat{\mu}$ has the correct boundary module.

Since $eF'M=M=eFM$, the support of $FM/F'M$ does not contain any boundary vertices, and hence $\plabic_M$ contains none of the half-edges. Thus $\pmbdry{\hat{\mu}}=\pmbdry{\mu_0}$. 
By Corollary~\ref{c:bdy-val-mod}, we see that $\hat{\mu}$ has the same boundary module as $\mu_0$, namely $M$.
\end{proof}

\begin{remark}
The set $\{N\leq FM:eN=M\}$ is naturally a poset under inclusion. 
It has the unique maximal element $FM$ and, by Proposition~\ref{p:F'-is-min}, the unique minimal element $F'M$. 
Thus the bijection in Proposition~\ref{p:matchings-to-modules} puts a poset structure on the set of perfect matchings $\mu$ with $eN_\mu\isom M$, 
that is, with $\pmbdry{\mu}=I$ for the appropriate $k$-subset $I$ (cf.~Remark~\ref{r:combinatorics}),
and there are unique maximal and minimal matchings (similar to \cite[Defn.~4.7]{MSW}). 
We also get a poset structure on the perfect matchings of the subgraph $\plabic_M\subset\plabic(D)$ from Proposition~\ref{p:pm-supp}. When the full subquiver of $Q(D)$ on the vertices supporting $FM/F'M$ is an orientation of an $\mathsf{A}_n$ quiver, so in particular $FM/F'M$ is an indecomposable string module, 
Proposition~\ref{p:pm-supp} corresponds to \cite[Thm.~3.9]{CS3}.
\end{remark}

The quotient $FM/F'M$ has another description, which plays a key role later on.
To obtain this description, we use that the dimer algebra of a Postnikov diagram is internally 3-Calabi--Yau \cite[Defn.~2.1]{Pre}, 
which is proved in \cite[Thm.~3.7]{Pre3}, and has the following consequences.

\begin{proposition} \label{p:double-cen}
Let $D$ be a connected Postnikov diagram, 
with associated dimer algebra $A=A_D$ and boundary algebra $B=eAe$, and set $T=eA$. 
\begin{enumerate}
\item[(i)] The natural map $A \to \End_B(T)\op$ is an isomorphism of algebras.
\item[(ii)]  $\Ext^1_B(T,T)=0$.
\end{enumerate}
In particular, 
\begin{enumerate}
\item[(iii)] The natural map $Ae \to \Hom_B(T,B)$ is an isomorphism of $A$-modules.
\item[(iv)]  $\Ext^1_B(T,B)=0$.
\end{enumerate}
\end{proposition}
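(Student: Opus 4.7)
The plan is to derive (i) and (ii) from the internal 3-Calabi--Yau property of the dimer algebra, and then to obtain (iii) and (iv) as formal consequences of (i) and (ii).

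For (i) and (ii), I would appeal to \cite[Thm.~3.7]{Pre3}, which establishes that $A = A_D$ is internally 3-Calabi--Yau relative to the boundary idempotent $e$, in the sense of \cite[Defn.~2.1]{Pre}. Together with the general machinery of \cite{Pre3} summarised in Section~\ref{sec:clucats-dimers}, this implies that $T = eA$ is a cluster-tilting object in $\GP(B)$, with $A \isom \End_B(T)\op$ under the canonical map $a \mapsto r_a$ given by $r_a(t) = ta$, and with $\Ext^1_B(T,T) = 0$ as part of the definition of cluster-tilting.

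For (iii), the canonical map $Ae \to \Hom_B(T,B)$ sends $x \in Ae$ to $r_x\colon t \mapsto tx$, which takes values in $Te = B$ since $x = xe$, and is left $A$-linear by a direct check. I would identify it with the restriction to $Ae \subset A$ of the isomorphism in (i), using the observation that $r_a \in \End_B(T)$ has image contained in $Te = B$ if and only if $a \in Ae$. The forward direction is immediate; conversely, if $r_a(T) \subset Te$, then $r_a = r_{ae}$ as endomorphisms of $T$, whence $a = ae$ by the injectivity of the map in (i). This identifies $\Hom_B(T,B)$ with the image of $Ae$ in $\End_B(T)\op$, giving the claim.

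For (iv), the decomposition $T = \bigoplus_{i \in Q_0} eAe_i$ by primitive idempotents exhibits $B = \bigoplus_{i \in \C_0} eAe_i$ as a direct summand of $T$ as a left $B$-module, with complement $\bigoplus_{i \in Q_0 \setminus \C_0} eAe_i$. Hence $\Ext^1_B(T,B)$ is a direct summand of $\Ext^1_B(T,T)$, which vanishes by (ii). Since (i) and (ii) are essentially cited from \cite{Pre3}, the only content of the present proof is this bookkeeping, and I do not anticipate any substantial obstacle beyond unpacking the consequences of the cluster-tilting structure on $T$.
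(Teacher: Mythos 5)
Your proof is correct and follows essentially the same route as the paper: cite the internally 3-Calabi--Yau property from \cite[Thm.~3.7]{Pre3} and the cluster-tilting machinery to get (i) and (ii), then deduce (iii) and (iv) from the identity $Te = B$. One small slip: the general theory deriving (i)--(ii) from the 3-CY property is \cite[Thm.~4.1]{Pre}, not \cite{Pre3} (the paper also invokes \cite[Prop.~4.4]{Pre3} to verify the remaining hypotheses); and the relevant background is recalled in Section~\ref{sec:ind-res} rather than Section~\ref{sec:clucats-dimers}. Your more explicit argument for (iii) --- characterising $r_a(T) \subset Te$ as equivalent to $a = ae$ via injectivity of the map in (i) --- is a valid unpacking of the paper's terse ``follows immediately, since $Te = B$'', as is your direct-summand argument for (iv).
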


\begin{proof}
Statements (i) and (ii) are among the conclusions of \cite[Thm.~4.1]{Pre}, and the pair $(A,e)$ satisfies the assumptions of this theorem by \cite[Thm.~3.7, Prop.~4.4]{Pre3}.
Then (iii) and (iv) follow immediately, since $Te=B$.
\end{proof}

It in fact follows from \cite[Thm.~3.7]{Pre3} together with the general theory from \cite{Pre} that $T$ is a cluster-tilting object in the Frobenius category $\GP(B)$ of Gorenstein projective $B$-modules, in which $B$ is injective. We will return to this in Section~\ref{sec:newCC}, but for now we need only the resulting vanishing of extension groups in Proposition~\ref{p:double-cen}.

\begin{corollary}
\label{cor:smallF}
In the setting of Proposition~\ref{p:double-cen}, let $M\in\CM(B)$. Then $F'M$ is the subspace of $FM$ consisting of maps factoring through a projective $B$-module, 
and hence 
\begin{equation*}
 FM/F'M = \stHom_B(\cto,M) = G \syz M,
\end{equation*}
where $\syz M$ is a first syzygy of $M$, i.e.\ the kernel of a projective cover,
and $G =\Ext^1_B(\cto,-)$.
\end{corollary}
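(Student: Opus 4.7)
The plan is to exploit the identification $Ae \cong \Hom_B(\cto, B)$ from Proposition~\ref{p:double-cen}(iii), which converts $\Ftil M = Ae \otimes_B M$ into $\Hom_B(\cto, B) \otimes_B M$ and makes $\iota_M \colon \Ftil M \to FM = \Hom_B(\cto, M)$ the natural evaluation-like map $f \otimes m \mapsto [\tau \mapsto f(\tau)\, m]$. Working out the image of this map is what ultimately lets us read off both of the claimed equalities.

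First I would show that $\smallF M = \im(\iota_M)$ coincides with the subspace of $\Hom_B(\cto, M)$ consisting of homomorphisms which factor through a finitely generated projective $B$-module. Any element of the image is a finite sum $\sum_i f_i(-)\, m_i$ with $f_i \colon \cto \to B$ and $m_i \in M$, which factors through $B^n$ via $(f_1, \dotsc, f_n) \colon \cto \to B^n$ followed by the map sending the $i$-th basis vector to $m_i$. Conversely, a factorisation $\cto \to P \to M$ through a finitely generated projective $P$ extends (by embedding $P$ as a summand of some $B^n$) to a factorisation through $B^n$, and the resulting map is visibly in the image of $\iota_M$ under the identification above.

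Given this, choose a projective cover $0 \to \syz M \to P \to M \to 0$ in $\md B$ and apply $\Hom_B(\cto, -)$. Since $P$ is a direct summand of a free $B$-module, the vanishing $\Ext^1_B(\cto, B) = 0$ from Proposition~\ref{p:double-cen}(iv) gives $\Ext^1_B(\cto, P) = 0$, so the long exact sequence supplies exactness of
\begin{equation*}
\Hom_B(\cto, P) \longrightarrow \Hom_B(\cto, M) \longrightarrow \Ext^1_B(\cto, \syz M) \longrightarrow 0.
\end{equation*}
The image of the first map is exactly the set of homomorphisms $\cto \to M$ factoring through $P$, and a standard lifting argument (any map from a projective $Q$ to $M$ lifts along $P \twoheadrightarrow M$, so any factorisation through $Q$ yields one through $P$) shows this agrees with the set of maps factoring through \emph{any} projective, which by the first step is $\smallF M$. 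Thus $FM/\smallF M \cong \Ext^1_B(\cto, \syz M) = G\syz M$, and this common quotient is $\stHom_B(\cto, M)$ by the very definition of the stable Hom.

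The main obstacle I anticipate is pinning down $\im(\iota_M)$ precisely: this requires tracking the $A$-module structure in Proposition~\ref{p:double-cen}(iii) carefully enough to recognise $\iota_M$ as the expected evaluation map, which is what forces its image to be described by factorisations through projective $B$-modules. Once that identification is in hand, the remainder is routine homological algebra, with the vanishing $\Ext^1_B(\cto, B) = 0$ doing the essential work by killing $\Ext^1_B(\cto, P)$ for every finitely generated projective $P$.
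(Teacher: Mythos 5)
Your proof is correct and follows essentially the same route as the paper: identify $\iota_M$ via Proposition~\ref{p:double-cen}(iii) to recognise $\smallF M$ as the maps factoring through a projective, then apply $F$ to the syzygy sequence and invoke Proposition~\ref{p:double-cen}(iv) to truncate the long exact sequence. You spell out a few steps the paper leaves implicit (the factoring-through-projectives characterisation of $\im\iota_M$ and the lifting argument comparing factorisations through $\projcov M$ with those through an arbitrary projective), but the underlying argument is the same.
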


\begin{proof}
By Proposition~\ref{p:double-cen}(iii) the map $\iota_M$ of \eqref{eq:iota} 
may be identified with the composition map
\[
  \Hom_B(\cto,B)\otimes_B \Hom_B(B,M) \to \Hom_B(\cto,M)
\]
and the first equality follows.

Consider a short exact sequence
\begin{equation}\label{eq:syzdef}
  0\lra{} \syz M \lra{} \projcov M \lra{} M \lra{} 0
\end{equation}
where $\projcov M \to M$ is a projective cover of $M$. 
Applying the functor $F=\Hom_B(\cto,-)$ to \eqref{eq:syzdef} yields the long exact sequence
\begin{equation}\label{eq:les-syz}
 0\lra{} F\syz M \lra{} F\projcov M \lra{} F M \lra{} G \syz M \lra{} 0 
\end{equation}
where the final zero follows from Proposition~\ref{p:double-cen}(iv),
since $\projcov M\in \add (B)$.
Thus the second equality follows.
\end{proof}

While $\syz M$ depends on the choice of projective cover $\projcov M$, since we do not insist that the cover is minimal, the $A$-module $G\syz M$ is independent of this choice. 
By Corollary~\ref{cor:smallF}, the image of the middle map in \eqref{eq:les-syz} is $F'M$, 
yielding the two exact sequences
\begin{align}
  0\lra{} F\syz M &\lra{} F\projcov M \lra{} \smallF M \lra{} 0, \label{eq:FPi}\\
 0\lra{} \smallF M &\lra{} F M \lra{} G \syz M \lra{} 0. \label{eq:GOm}
\end{align}

\section{A projective resolution}  \label{sec:proj-res}

\newcommand{\up}[3]{p^{#1}_{#2,#3}}
\newcommand{\comisom}{\psi}

In this section, we construct an explicit projective resolution for each perfect matching module. This will play a key role for us later on, both in Section~\ref{sec:MuSp} when determining which perfect matchings describe the indecomposable projective modules of the dimer algebra of a Postnikov diagram, and in Sections~\ref{sec:newMS} and \ref{sec:newCC} when we relate the combinatorial information appearing in Marsh--Scott's dimer partition function to the homological information in the Caldero--Chapoton cluster character formula.

Let $Q$ be a dimer model with boundary. Recall that the dimer algebra $A=A_Q$ is a $Z$-algebra, for $Z=\powser{\CC}{t}$, as in Definition~\ref{d:centralelement}.

Given a perfect matching $\mu$ of $Q$, let $Q_1^\mu=Q_1\setminus\mu$. We write
\[Q_2^\mu=\{f\cup f':\text{$f,f'\in Q_2$ with boundaries sharing some arrow of $\mu$}\}.\]
In other words, $Q_2^\mu$ is obtained from the set of faces of $Q$ by merging those faces adjacent along an arrow in the matching $\mu$, and deleting those whose intersection with $\mu$ is a boundary arrow.
In what follows, it will be convenient to identify $Q_2^\mu$ with the set of defining relations $r(\beta)=p_\beta^+-p_\beta^-\in\compl{\CC Q}$ corresponding to internal arrows $\beta\in\mu$; we do this by identifying $r(\beta)$ with the union of the two faces containing $\beta$.

We will also want to consider various (projective) $A$-modules of the form
\[\bigoplus_{x\in X}Ae_{hx}\otimes_Z e_{tx}N_\mu,\]
where $X$ is some set together with head and tail maps $h,t\colon X\to Q_0$, and $N_\mu$ is the module attached to $\mu$ in Definition~\ref{d:n-mu}. For the rest of the section, we will write $\otimes=\otimes_Z$. Since $e_jN_\mu=Z$ for all $j$ by definition, each element of $Ae_{hx}\otimes e_{tx}N_\mu$ is of the form $a\otimes 1$ for some unique $a\in Ae_{hx}$. Denoting the image of $a\otimes 1$ under the map $Ae_{hx}\otimes e_{tx}N_\mu\to\bigoplus_{x\in X}Ae_{hx}\otimes e_{tx}N_\mu$ by $a\otimes[x]$, each element of this direct sum can be written
\[
\sum_{x\in X}a_x\otimes[x],
\]
for some unique elements $a_x\in Ae_{hx}$. 
To define an $A$-module homomorphism $\varphi\colon\bigoplus_{x\in X}Ae_{hx}\otimes e_{tx}N_\mu\to M$, it suffices to specify $\varphi(e_{hx}\otimes[x])\in e_{hx}M$ for each $x\in X$, which may be done freely.

Now consider the complex
\[
\res{\mu}\colon 
\bigoplus_{r\in Q_2^\mu} Ae_{hr}\otimes e_{tr}N_\mu  \lra{\partial_2}
\bigoplus_{\alpha\in Q_1^\mu} Ae_{h\alpha}\otimes e_{t\alpha}N_\mu \lra{\partial_1}
\bigoplus_{j\in Q_0} Ae_j\otimes e_jN_\mu \lra{\partial_0}
N_\mu,
\]
whose terms are in homological degrees $2$, $1$, $0$ and $-1$, and whose maps are defined as follows. 
First, $\partial_0$ is just the action of $A$ on $N_\mu$.
For $a\in e_kAe_j$, we also write $a\colon Ae_k\to Ae_j$ for right multiplication by $a$, and denote by $a_*\colon e_jN_\mu\to e_kN_\mu$ the action of $a$ on $N_\mu$. 
Then, for any $\alpha\in Q_1^\mu$, the $\alpha$ component of $\partial_1$ is
\[
(\alpha\otimes1,-1\otimes\alpha_*)\colon Ae_{h\alpha}\otimes e_{t\alpha}N_\mu\to(Ae_{t\alpha}\otimes e_{t\alpha}N_\mu)\oplus(Ae_{h\alpha}\otimes e_{h\alpha}N_\mu).
\]
Since $\alpha$ is unmatched, $\alpha_*$ is the identity $Z\to Z$, so we have
\[
\partial_1(e_{h\alpha}\otimes[\alpha])=\alpha\otimes[t\alpha]-e_{h\alpha}\otimes[h\alpha].
\]
For any path $p=\alpha_m\dotsm\alpha_1$ of $Q$ and any arrow $\alpha\in Q_1$, we define
\[\Delta_\alpha(p)=\sum_{\alpha_i=\alpha}\alpha_m\dotsm\alpha_{i+1}\otimes(\alpha_{i-1}\dotsm\alpha_1)_*\colon Ae_{hp}\otimes e_{tp}N_\mu\to Ae_{h\alpha}\otimes e_{t\alpha}N_\mu.\]
The components of $\partial_2$ are then
\[\partial_2^{r(\beta),\alpha}=\Delta_\alpha(p^+_\beta)-\Delta_\alpha(p^-_\beta)\colon Ae_{t\beta}\otimes e_{h\beta}N_\mu\to Ae_{h\alpha}\otimes e_{t\alpha}N_\mu\]
for $\beta\in\mu$ internal and $\alpha\notin\mu$. Since $\beta\in\mu$, none of the arrows of $p^+_\beta$ or $p^-_\beta$ are in $\mu$, and so writing $p_\beta^+=\alpha^+_m\dotsm\alpha^+_1$ and $p_\beta^-=\alpha^-_\ell\dotsm\alpha^-_1$, we have
\[\partial_2(e_{t\alpha}\otimes[r(\beta)])=\sum_{i=1}^m\alpha^+_m\dotsm\alpha^+_{i+1}\otimes[\alpha^+_i]-\sum_{i=1}^\ell\alpha^-_\ell\dotsm\alpha^-_{i+1}\otimes[\alpha^-_i],\]
and in particular $\partial_2$ takes values in the appropriate sum of projective modules. In the above formula, an empty product of arrows is interpreted as the appropriate idempotent (for example, $\alpha^+_m\dotsm\alpha^+_{m+1}$ should be read as $e_{h\alpha^+_m}$).

The goal of this section is to prove, in the case that $A=A_D$ for $D$ a connected Postnikov diagram, that $\res{\mu}$ is exact; in other words, its non-negative degree part is
a projective resolution of the perfect matching module $N_\mu$. A priori, we will show this for any dimer algebra $A=A_Q$ which is thin, and for which the cell complex $Q$ has $\H^2(Q)=0$, although it will then follow from exactness of $\res{\mu}$ that $|Q|$ is the disc.

Using the grading $\deg_\mu$ from Definition~\ref{d:grading}, each map $\partial_i$ in $\res{\mu}$ has degree $0$, 
since this is true of every arrow in $Q_1^\mu$. 
This makes $\res{\mu}$ into a graded complex, which is exact if and only if its degree $d$ part $(\res{\mu})_d$ is exact for all $d$. 
Moreover, as vector spaces, each complex $(\res{\mu})_d$ decomposes as the direct sum
\[(\res{\mu})_d=\bigoplus_{i\in Q_0}e_i(\res{\mu})_d,\]
so, extending the refinement by degree, $\res{\mu}$ is exact if and only if $e_i(\res{\mu})_d$ is exact for all $i\in Q_0$ and $d\in\ZZ$.

\begin{remark}\label{rem:A0}
The degree $0$ part of $\res{\mu}$ is a complex of modules for the algebra $A_0$, 
which can be presented as the path algebra of the quiver $(Q_0,Q_1^\mu)$ 
modulo the ideal of relations generated by $r\in Q_2^\mu$, i.e.\ $r=r(\beta)$ for $\beta\in\mu$. 
In fact, $(\res{\mu})_0$ is the start of the standard resolution (see for example \cite[1.2]{BK}) of the $A_0$-module $(N_\mu)_0$,
which is given by $\CC$ at each vertex, with all arrows (in $Q_1^\mu$) acting as the identity.
Thus $(\res{\mu})_0$ is always exact in homological degrees $1$, $0$ and $-1$.
\end{remark}

In order to study the complexes $e_i(\res{\mu})_d$, we interpret them topologically. 
Recall that the quiver with faces $Q$ can be thought of as a cell complex $(Q_0,Q_1,Q_2)$ for the topological space $|Q|$.
Given a subset $S\subset Q_0$, we denote by $Q[S]$ the full subcomplex of $Q$ with vertex set $S$,
that is, the edges of $Q[S]$ are those edges of $Q$ with both endpoints in $S$ 
and the faces of $Q[S]$ are those faces of $Q$ incident only with vertices in $S$. 
The geometric realisation $|Q[S]|$ is naturally embedded into $|Q|$. 
We also consider the cell complex $Q^\mu=(Q_0,Q_1^\mu,Q_2^\mu)$, 
used above in the construction of the chain complex $\res{\mu}$, 
and its full subcomplexes $Q^\mu[S]$ for $S\subset Q_0$.

For any perfect matching $\mu$ on $Q$, vertex $i\in Q_0$ and $d\geq 0$,
we define the subset
\begin{align*}
 S(\mu,i,d) &= \{j\in Q_0: \text{there is a path $p\colon j\to i$ with $\deg_\mu(p)=d$} \} \\
  &= \{j\in Q_0: (e_i A e_j)_d\neq 0\}
 \end{align*}
 
\begin{lemma}
\label{l:heads-in-S}
An arrow $\alpha\in Q_1^\mu$, respectively a face $r\in Q_2^\mu$, lies in $Q^\mu[S(\mu,i,d)]$ if and only if $h\alpha$, respectively $hr$, does.
\end{lemma}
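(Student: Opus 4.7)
The plan is to unpack both directions from the definition of the full subcomplex $Q^\mu[S(\mu,i,d)]$, noting that the only nontrivial content lies in the reverse implications. For each of the two claims the forward direction is immediate: if an arrow or face lies in a full subcomplex, then by definition every incident vertex, and in particular its head, lies in the prescribed vertex set $S(\mu,i,d)$.

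For the reverse implications I would exploit the grading $\deg_\mu$ from Definition~\ref{d:grading} and the key observation that every arrow in $Q_1^\mu$ has degree $0$. First I would handle arrows: given $\alpha \in Q_1^\mu$ with $h\alpha \in S(\mu,i,d)$, pick a witnessing path $p\colon h\alpha \to i$ with $\deg_\mu(p) = d$ from the characterisation of $S(\mu,i,d)$, and consider the composition $p\alpha\colon t\alpha \to i$. Since $\alpha \notin \mu$ contributes degree $0$, this composition also has degree $d$, so $t\alpha \in S(\mu,i,d)$, and both endpoints of $\alpha$ then lie in $S(\mu,i,d)$.

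The face case runs along the same lines. For $r = r(\beta) \in Q_2^\mu$ with $hr = t\beta \in S(\mu,i,d)$, I would use that $r$ is the union of the two faces of $Q_2$ containing the internal arrow $\beta \in \mu$ (with $\beta$ itself removed), so its boundary in $Q^\mu$ is traced out by the two paths $p_\beta^+$ and $p_\beta^-$ from $h\beta$ to $t\beta$. Because $\mu$ is a perfect matching and $\beta$ is the unique matched arrow in each of the two faces being merged, none of the arrows of $p_\beta^\pm$ lie in $\mu$, so each of these paths has $\deg_\mu = 0$. For any vertex $j$ appearing on $r$, the appropriate sub-path of $p_\beta^+$ or $p_\beta^-$ from $j$ to $t\beta$ therefore has degree $0$, and splicing it with a degree-$d$ witness $t\beta \to i$ produces a path $j \to i$ of degree $d$, placing $j$ in $S(\mu,i,d)$. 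Every vertex incident with $r$ is thus in $S(\mu,i,d)$, so $r$ lies in $Q^\mu[S(\mu,i,d)]$.

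I do not foresee any real obstacle; the lemma is essentially a verification that the combinatorial definitions interact as one would hope. The only care required is to remember that a face of a full subcomplex is one with every incident vertex in the chosen subset, and to invoke the perfect matching property at the right moment to ensure that all boundary arrows of $r$ apart from $\beta$ are unmatched.
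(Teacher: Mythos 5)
Your proof is correct and follows essentially the same route as the paper: observe that arrows in $Q_1^\mu$ have $\deg_\mu = 0$, so composing a degree-$d$ witness path from $h\alpha$ (resp.\ $hr$) to $i$ with the degree-zero arrow $\alpha$ (resp.\ with the degree-zero subpath of $p_\beta^\pm$ from any vertex of $r$ to $hr = t\beta$) places all incident vertices in $S(\mu,i,d)$. The only cosmetic difference is that you spell out the trivial forward implication and explicitly invoke the perfect matching property to see that $p_\beta^\pm$ consists of unmatched arrows, both of which the paper leaves implicit.
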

\begin{proof}
Any $\alpha\in Q_1^\mu$ has $\deg_\mu(\alpha)=0$. Thus if $j\in S(\mu,i,d)$, so that there is a path $p\colon j\to i$ with $\deg_\mu(p)=d$, then any $\alpha\in Q_1^\mu$ with $h\alpha=j$ determines a path $p\alpha\colon t\alpha\to i$ with $\deg_\mu(p\alpha)=d$, and hence $t\alpha\in S(\mu,i,d)$. Thus any such $\alpha$ has both endpoints in $S(\mu,i,d)$ and so lies in $Q^\mu[S(\mu,i,d)]$.

Similarly, every arrow in the boundary of $r\in Q_2^\mu$ has degree $0$, and any vertex in this boundary begins a path consisting of these arrows and ending at $hr$. Thus if $hr\in S(\mu,i,d)$, so is every vertex incident with $r$.
\end{proof}

If $A$ is thin, in the sense of Definition~\ref{d:thin}, then any two paths $p\colon j\to i$ with $\deg_\mu(p)=d$, as appearing in the definition of $S(\mu,i,d)$, are F-term equivalent, and so all determine the same element of $A$, which we denote by $\up{d}{i}{j}$. 
This element is then a preferred basis for the one-dimensional vector space $(e_i A e_j)_d$. Recall from Proposition~\ref{p:thin} that if $A=A_D$ for some Postnikov diagram $D$, then $A$ is thin.

\begin{proposition}
\label{prop:cohom-computation}
When $A$ is thin, the complex $e_i(\res{\mu})_d$ computes the reduced cohomology of the cell complex 
$Q[S(\mu,i,d)]$ with coefficients in $\CC$.
\end{proposition}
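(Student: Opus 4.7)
My plan is to identify $e_i(\res{\mu})_d$, term-by-term and differential-by-differential, with the augmented cellular chain complex of a CW subcomplex supported on $S(\mu,i,d)$, and then argue that this subcomplex has the same reduced cohomology as $Q[S(\mu,i,d)]$.

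The first step is to produce a canonical $\CC$-basis for each graded piece. By thinness (Proposition~\ref{p:thin}), for every $j\in Q_0$ the $Z$-module $e_iAe_j$ is free of rank one with a generator of minimum degree $d_0(i,j)$, the minimum $\deg_\mu$ of any path $j\to i$. Hence $(e_iAe_j\otimes_Z e_jN_\mu)_d$ is one-dimensional with canonical basis $\up{d}{i}{j}\otimes 1$ precisely when $d\geq d_0(i,j)$---equivalently, when $j\in S(\mu,i,d)$---and is zero otherwise. Applying the same calculation to the arrow and face terms and using Lemma~\ref{l:heads-in-S}, the summands of $e_i(\res{\mu})_d$ in homological degrees $0$, $1$, $2$ are in canonical bijection with the $0$-, $1$-, and $2$-cells of $Q^\mu[S(\mu,i,d)]$.

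The second step is to evaluate each differential on this basis. Since $\deg_\mu\alpha=0$ for $\alpha\in Q_1^\mu$ and $\up{d}{i}{h\alpha}\cdot\alpha=\up{d}{i}{t\alpha}$ by thinness, the formula for $\partial_1$ reduces to the cellular boundary $[\alpha]\mapsto[t\alpha]-[h\alpha]$. The definition of $\partial_2$ on $r(\beta)$ expands, on the corresponding basis element, to the alternating sum of the basis elements indexed by the arrows of the two bounding paths $p^\pm_\beta$, which is the cellular boundary of the merged $2$-cell. Finally $\partial_0$ sends each $\up{d}{i}{j}\otimes 1$ to $\up{d}{i}{j}\cdot 1=t^d$, which, after identifying the one-dimensional space $(e_iN_\mu)_d=\CC\cdot t^d$ with $\CC$, is the standard augmentation. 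Therefore $e_i(\res{\mu})_d$ coincides with the augmented cellular chain complex of $Q^\mu[S(\mu,i,d)]$ with $\CC$ coefficients, whose homology is $\tilde H_*(Q^\mu[S(\mu,i,d)];\CC)\cong\tilde H^*(Q^\mu[S(\mu,i,d)];\CC)$.

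The final step is to show $|Q^\mu[S(\mu,i,d)]|\simeq|Q[S(\mu,i,d)]|$. Passing from $Q$ to $Q^\mu$ consists of deleting each matched internal arrow $\beta$ and fusing its two adjacent faces into the merged face $r(\beta)$, together with deleting each matched boundary arrow and its unique incident face. Restricted to $S(\mu,i,d)$, the first operation does not alter the underlying space, only its CW decomposition, and the second amounts to trimming a boundary $2$-cell, a homotopy equivalence. Thus the reduced cohomologies of the two subcomplexes agree. The main obstacle is making this topological comparison precise: one must check that the collapses and trimmings restrict correctly to the subcomplex on $S(\mu,i,d)$, particularly when a matched arrow has both endpoints in $S(\mu,i,d)$ but one of its incident faces is not supported on $S(\mu,i,d)$. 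The first two steps are essentially bookkeeping given thinness and the defining formulas for $\partial_0,\partial_1,\partial_2$.
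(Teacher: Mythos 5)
Your plan---identify $e_i(\res{\mu})_d$ term-by-term and map-by-map with the augmented cellular chain complex of $Q^\mu[S(\mu,i,d)]$, then compare $Q^\mu[S]$ with $Q[S]$ topologically---is the same as the paper's, just with the two steps in the opposite order. The identification step is correct and done in essentially the same way as the paper (thinness supplies the preferred generators $\up{d}{i}{j}$, Lemma~\ref{l:heads-in-S} handles the matching of arrow/face summands, and the differentials reduce to cellular boundaries and augmentation). So there is nothing genuinely different there.

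The gap you flag at the end is, however, a real gap, and your dichotomy for the topological comparison is the wrong one. You sort matched arrows by whether they are internal or boundary arrows of $Q$, and then claim the internal case ``does not alter the underlying space''. But once you restrict to $S=S(\mu,i,d)$, that claim is only true when \emph{both} adjacent faces of the internal matched arrow are faces of $Q[S]$. The paper instead sorts matched arrows $\alpha$ (with $t\alpha\in S$) by whether they lie in one or two faces of the \emph{restricted} complex $Q[S]$: the two-face case is the ``same underlying space'' case, and the one-face case is handled by deformation-retracting that face $F$ onto $\bdry F\setminus\alpha$, which is a homotopy equivalence since $\alpha$ is then a free edge on the boundary of $|Q[S]|$. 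These contractions are independent because $\mu$ is a perfect matching, so each face carries exactly one matched arrow. This dichotomy covers your problematic case uniformly, regardless of whether $\alpha$ is internal or boundary in $Q$.

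Incidentally, your problematic case in fact cannot arise: if $\alpha\in\mu$ has $t\alpha\in S$ then every vertex of every face of $Q$ containing $\alpha$ already lies in $S$, because the complement of $\alpha$ in the boundary cycle of such a face consists entirely of unmatched arrows, and repeated application of Lemma~\ref{l:heads-in-S} propagates membership of $S$ from $t\alpha$ around the whole cycle. So the ``one face of $Q[S]$'' case occurs only for boundary arrows of $Q$, and your internal/boundary dichotomy is equivalent to the paper's one-face/two-face dichotomy---but this observation needs to be stated and proved. As written, your ``first operation does not alter the underlying space'' is an unjustified assertion, and the proof is incomplete without either the contraction argument for the one-face case or the Lemma~\ref{l:heads-in-S} argument showing that case is vacuous.
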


\begin{proof}
Write $S=S(\mu,i,d)$. 
The first step is to observe that $Q[S]$ is homotopy equivalent to $Q^\mu[S]$.
Indeed, these two complexes differ only when there is an $\alpha\in\mu$ such that $t\alpha\in S$.
If $\alpha$ is contained in two faces of $Q[S]$, then these faces are merged in $Q^\mu[S]$, 
leaving the geometric realisation unchanged. 
If $\alpha$ is contained in only a single face $F$ of $Q[S]$, 
then it lies in the boundary of the geometric realisation 
and we can contract $F$ onto the union of its other edges in this realisation, 
corresponding to the removal of $F$ in $Q^\mu[S]$. 
Since $\mu$ is a perfect matching, the contractions operate independently of one another
and collectively describe a homotopy equivalence between $|Q[S]|$ and $|Q^\mu[S]|$.
Thus we can instead prove the result for the reduced cohomology of the cell complex 
$Q^\mu[S]$.

Each face $r\in Q^\mu_2[S]$ is the union of two faces of $Q[S]$, one black and one white; 
we write $r^+$ for the set of arrows of $Q^\mu_1[S]$ lying in the boundary of the black face, 
and $r^-$ for the set of arrows of $Q^\mu_1[S]$ lying in the boundary of the white face. 
Then the chain complex computing the reduced cohomology of the cell complex, 
using an anticlockwise orientation on faces and the given orientation of the edges,
has non-zero terms 
\[
\zeta\colon 
\bigoplus_{r\in Q_2^\mu[S]} \CC\cdot r \lra{\delta_2}
\bigoplus_{\alpha\in Q_1^\mu[S]} \CC\cdot\alpha \lra{\delta_1}
\bigoplus_{j\in S} \CC\cdot j \lra{\delta_0} \CC,
\]
in homological degrees $2,1,0,-1$.
The maps are defined on generators as follows.
\begin{gather*}
\delta_0(1\cdot j)=1,\qquad
\delta_1(1\cdot\alpha)=1\cdot t{\alpha}-1\cdot h{\alpha},\\
\delta_2(1\cdot r)=\sum_{\alpha^+\in r^+}1\cdot\alpha^+-\sum_{\alpha^-\in r^-}1\cdot\alpha^-
\end{gather*}
We now construct an isomorphism of complexes $\comisom\colon \zeta \to e_i(\res{\mu})_d$.
In homological degree $-1$, we have, by definition, $e_i N_\mu=Z$, so $(e_i N_\mu)_d=\CC\cdot t^d$
and so we start with $\comisom_{-1}\colon 1\mapsto t^d$.

To compare the other terms, we note that each summand in the higher degree terms of $\res{\mu}$ is of the form $Ae_{hx}\otimes e_{tx}N_\mu$, for some cell $x$ of $Q^\mu$. 
As before, we denote the single generator of $e_{tx}N_\mu$ by $[x]$,
which has $\deg_\mu [x]=0$.
Thus the corresponding term of $e_i(\res{\mu})_d$ is $(e_i Ae_{hx}\otimes e_{tx}N_\mu)_d$,
which, as $A$ is thin, is either a one-dimensional vector space with basis $\up{d}{i}{hx}\otimes [x]$, if $hx\in S$, or 
zero, if $hx\notin S$. 
Hence we may define
\begin{gather*}
\comisom_0(1\cdot j) =\up{d}{i}{j}\otimes [j],\quad
\comisom_1(1\cdot\alpha) =\up{d}{i}{h\alpha}\otimes [\alpha],\quad
\comisom_2(1\cdot r) =\up{d}{i}{hr}\otimes [r].
\end{gather*}
Once we have checked that this is a morphism of complexes, it follows that it is a (well-defined) isomorphism by Lemma~\ref{l:heads-in-S},
because $\alpha\in Q_1^\mu[S]$ if and only if $h\alpha\in S$ and $r\in Q_2^\mu[S]$ if and only if $hr\in S$.
There is nothing to prove in homological degree $0$ because $Q_0^\mu[S]=S$, as already used in writing down the complex $\zeta$.

It remains to check that the maps $\comisom_\bullet$ commute with the differentials.
For $j\in S$,
\[
\partial_0\comisom_0(1\cdot j)=
\partial_0(\up{d}{i}{j}\otimes[j])=t^d = \comisom_{-1}\delta_0(1\cdot j),
\]
where the middle equality follows precisely because $\deg_\mu \up{d}{i}{j} = d$, 
so $\up{d}{i}{j}$ acts on $N_\mu$ as multiplication by $t^d$. 
For $\alpha\in Q_1^\mu[S]$, we check 
$\partial_1\comisom_1(1\cdot \alpha)= \comisom_0\delta_1(1\cdot \alpha)$, i.e.
\[
\partial_1\bigl( \up{d}{i}{h\alpha}\otimes [\alpha] \bigr) 
= \up{d}{i}{h\alpha}\alpha\otimes[t\alpha]-\up{d}{i}{h\alpha}\otimes[h\alpha]
=\comisom_0(1\cdot t\alpha-1\cdot h\alpha),
\]
because $\up{d}{i}{h\alpha}\alpha=\up{d}{i}{t\alpha}$, that is, it is a path $t\alpha\to i$ of degree $d$.

Finally, when $r=r(\beta)\in Q_2^\mu[S]$, write $p_\beta^+=\alpha^+_M\dotsm\alpha^+_1$ 
and $p_\beta^-=\alpha^-_L\dotsc\alpha^-_1$, 
so that $r^+=\{\alpha^+_1,\dotsc,\alpha^+_M\}$ and $r^-=\{\alpha^-_1,\dotsc,\alpha^-_L\}$.
Then $\partial_2\comisom_2(1\cdot r)= \comisom_1\delta_2(1\cdot r)$, i.e.
\begin{align*}
\partial_2\bigl(\up{d}{i}{hr}\otimes[r]\bigr)
&= \sum_{m=1}^M \up{d}{i}{hr} \alpha^+_M\dotsm\alpha^+_{m+1}\otimes[\alpha^+_m]
- \sum_{\ell=1}^L\up{d}{i}{hr} \alpha^-_L\dotsm\alpha^-_{\ell+1}\otimes[\alpha^-_\ell]\\
&=\sum_{m=1}^M \up{d}{i}{h\alpha^+_m}\otimes[\alpha^+_m]-\sum_{\ell=1}^L\up{d}{i}{h\alpha^-_\ell}\otimes[\alpha^-_\ell] \\
&=\comisom_1\left( \sum_{\alpha^+\in r^+}1\cdot\alpha^+-\sum_{\alpha^-\in r^-}1\cdot\alpha^- \right),
\end{align*}
completing the proof.
\end{proof}

\begin{lemma}
\label{lem:highdeg}
Assume $A$ is thin. Then for any $i\in Q_0$ and sufficiently large $d$, the cohomology of the complex $e_i(\res{\mu})_d$ is the reduced cohomology of $|Q|$.
\end{lemma}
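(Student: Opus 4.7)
The plan is to reduce the statement to Proposition~\ref{prop:cohom-computation} and show that the subcomplex $Q[S(\mu,i,d)]$ eventually equals all of $Q$. By that proposition, the cohomology of $e_i(\res{\mu})_d$ is the reduced cohomology of the cell complex $Q[S(\mu,i,d)]$, so it suffices to prove that there exists $D$ such that $S(\mu,i,d) = Q_0$ for all $d \geq D$. Then $Q[S(\mu,i,d)] = Q$ on the nose, and its reduced cohomology is that of $|Q|$.

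The key input is thinness of $A$. For each $j \in Q_0$, the space $e_i A e_j = \Hom_A(Ae_j, Ae_i)$ is a free $Z$-module of rank one, and in particular non-zero. Equivalently, there is at least one path $j \to i$ in $Q$; let $d_{\min}(j)$ denote its minimal $\mu$-degree. I would then observe that the central element $t \in A$ has $\deg_\mu(t) = 1$, and that multiplication by $t_i = e_i t e_i$ sends any path $j \to i$ to one of the same form with $\mu$-degree increased by one (indeed, by the relations in $A$, this is F-term equivalent to inserting a full face boundary at $i$). Consequently $(e_i A e_j)_d \neq 0$ for every $d \geq d_{\min}(j)$, i.e.\ $j \in S(\mu, i, d)$ for every such $d$.

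Because $Q_0$ is finite, the quantity $D = \max_{j \in Q_0} d_{\min}(j)$ is a well-defined non-negative integer, and for all $d \geq D$ each vertex $j$ lies in $S(\mu, i, d)$. Hence $S(\mu,i,d) = Q_0$, the subcomplex $Q[S(\mu,i,d)]$ is all of $Q$, and Proposition~\ref{prop:cohom-computation} identifies the cohomology of $e_i(\res{\mu})_d$ with the reduced cohomology of $|Q|$. There is essentially no obstacle here: the content of the lemma is almost entirely absorbed into Proposition~\ref{prop:cohom-computation}, with thinness plus the centrality of $t$ supplying the uniform bound $D$.
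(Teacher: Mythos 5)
Your proof is correct and follows essentially the same route as the paper's: reduce via Proposition~\ref{prop:cohom-computation} to showing $S(\mu,i,d)=Q_0$ for $d$ large, then use thinness (existence of a path $j\to i$ for each $j$) and finiteness of $Q_0$ to obtain a uniform bound. The only difference is that you make explicit the step (inserting $t_i$ to raise degree by one) showing $j\in S(\mu,i,d)$ for all $d\geq d_{\min}(j)$ rather than merely for $d=d_{\min}(j)$; the paper leaves this implicit.
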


\begin{proof}
By Proposition~\ref{prop:cohom-computation}, 
the cohomology of $e_i(\res{\mu})_d$ is that of $Q[S(i,\mu,d)]$. 
On the other hand, every $j\in Q_0$ admits some path $j\to i$, hence one of minimal degree. So, if $d$ is larger than the maximum, over $j\in Q_0$, of these minimal degrees,
then $S(i,\mu,d)=Q_0$ and $Q[S(i,\mu,d)]=Q$.
\end{proof}

\begin{lemma}
\label{lem:d2inj}
Assume $A$ is thin and $\H^2(Q)=0$. Then, for any $\mu$, the map $\partial_2$ in $\res{\mu}$ is injective.
\end{lemma}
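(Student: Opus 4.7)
The plan is to reduce the injectivity of $\partial_2$ to a topological vanishing statement via Proposition~\ref{prop:cohom-computation}, and then to exploit the fact that $Q[S(\mu,i,d)]$ is a subcomplex of the $2$-dimensional cell complex $Q$.

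First I would observe that the differentials in $\res{\mu}$ are $A$-linear and homogeneous of degree zero for the grading $\deg_\mu$, so $\partial_2$ decomposes as the direct sum of its vertex-and-degree pieces $e_i(\partial_2)_d$, and it suffices to show each piece is injective. By Proposition~\ref{prop:cohom-computation}, $\ker e_i(\partial_2)_d$ is identified with the top reduced cohomology $\H^2(Q[S(\mu,i,d)];\CC)$, which over $\CC$ has the same dimension as $\H_2(Q[S(\mu,i,d)];\CC)$ by the universal coefficient theorem, so it is equivalent to prove the vanishing of the latter.

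The key topological point is that $\H_2(Q[S];\CC)=0$ for any subset $S\subset Q_0$, using only the hypothesis $\H^2(Q;\CC)=0$. Indeed, the inclusion $Q[S]\hookrightarrow Q$ of CW complexes induces a degreewise injection of cellular chain complexes whose differentials agree on common chains. Since $Q$ is $2$-dimensional, top homology is simply $\ker\partial_2$, so $\H_2(Q[S];\CC)=\ker\partial_2^Q\cap C_2(Q[S])\subset\H_2(Q;\CC)$. By hypothesis $\H^2(Q;\CC)=0$, forcing $\H_2(Q;\CC)=0$, and the required vanishing follows.

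Once Proposition~\ref{prop:cohom-computation} is available, there is no serious obstacle; the proof is elementary bookkeeping plus the subcomplex observation above. The one potential subtlety, namely that $e_i(\res{\mu})_d$ literally corresponds to the cellular complex of the modified complex $Q^\mu[S(\mu,i,d)]$ rather than of $Q[S(\mu,i,d)]$ itself, has already been handled in the proof of Proposition~\ref{prop:cohom-computation} via the homotopy equivalence established there, so it can simply be invoked.
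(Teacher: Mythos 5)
Your proof is correct, and it takes a genuinely different route from the paper's. The paper's argument observes that $K=\ker\partial_2$ is a free $Z$-module (as a submodule of a free $Z$-module, $Z$ being a PID), invokes Lemma~\ref{lem:highdeg} to conclude that $K_d=0$ for all sufficiently large $d$ using $\H^2(Q)=0$, and then uses freeness over $Z=\powser{\CC}{t}$ to deduce $K=0$: a nonzero free $Z$-module cannot vanish in all high degrees. You instead prove the stronger statement that $K_d=0$ for \emph{every} $d$, by identifying $\ker e_i(\partial_2)_d$ via Proposition~\ref{prop:cohom-computation} with $\H^2\bigl(Q[S(\mu,i,d)];\CC\bigr)$ and then observing that for a $2$-dimensional CW complex the top homology is simply $\ker\partial_2$ of the cellular chain complex, so the inclusion of chain complexes $C_\bullet(Q[S])\hookrightarrow C_\bullet(Q)$ restricts to an injection $\H_2(Q[S];\CC)\hookrightarrow\H_2(Q;\CC)=0$. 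This is a clean topological observation that applies to any full subcomplex $S\subset Q_0$ and bypasses both Lemma~\ref{lem:highdeg} and the free-over-$Z$ argument entirely. The trade-off is essentially one of emphasis: the paper's proof is very short given that Lemma~\ref{lem:highdeg} is needed elsewhere anyway (for Theorem~\ref{thm:match-res}), whereas your argument is more self-contained and yields vanishing degree-by-degree rather than only asymptotically. Your handling of the homology/cohomology distinction (universal coefficients over $\CC$, and the homotopy equivalence $Q^\mu[S]\simeq Q[S]$ already built into Proposition~\ref{prop:cohom-computation}) is also correct.
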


\begin{proof}
As $A$ is  thin, $\bigoplus_{r\in Q_2^\mu}Ae_{hr}\otimes e_{tr}N_\mu$ is a free $Z$-module and hence so is the submodule $K=\ker{\partial_2}$. 
However, by Lemma~\ref{lem:highdeg} and the assumption on $Q$, 
we have $K_d=0$, for sufficiently large $d$, and hence $K=0$.
\end{proof}

\begin{lemma}
\label{lem:deg0}
Assume $A$ is thin and $\H^2(Q)=0$. Then the complex $(\res{\mu})_0$ is exact for all $\mu$.
\end{lemma}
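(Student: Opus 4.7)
The plan is to combine Remark~\ref{rem:A0} with the injectivity established in Lemma~\ref{lem:d2inj}. The complex $(\res{\mu})_0$ lives in four homological degrees $-1, 0, 1, 2$, so I would verify exactness at each of these four positions separately and then stitch the results together.

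For the three low-degree positions, I would invoke Remark~\ref{rem:A0}: the complex $(\res{\mu})_0$ is the initial segment of the standard resolution of the trivial $A_0$-module $(N_\mu)_0$, and general theory of such standard resolutions gives exactness at positions $-1$, $0$ and $1$ (surjection onto the trivial module, kernel-equals-arrow-span at the vertex term, and kernel-equals-relation-span at the arrow term, which is essentially the definition of the defining relations of $A_0$). To make this invocation water-tight I would verify that the formulas for $\partial_1$ and $\partial_2$ really do reduce in degree $0$ to the standard-resolution differentials; this is straightforward because every arrow in $Q_1^\mu$ and every path appearing in the boundaries in $Q_2^\mu$ has $\deg_\mu = 0$, so that in the relevant formulas the action maps $\alpha_*$ on $(N_\mu)_0$ become identity maps $\CC\to\CC$ and only the left-multiplication part survives.

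The remaining exactness, at homological degree $2$, amounts to injectivity of $\partial_2$ restricted to the degree $0$ piece. This is an immediate consequence of Lemma~\ref{lem:d2inj}, which uses precisely the hypotheses at hand (namely $A$ thin and $\H^2(Q)=0$) to show that $\partial_2$ is injective on the whole of $\res{\mu}$. Since $\partial_2$ is homogeneous of degree $0$ for the grading $\deg_\mu$, its kernel is a graded submodule, and in particular its restriction to the degree $0$ piece is again injective.

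I do not anticipate a substantial obstacle: the lemma is really a packaging of Remark~\ref{rem:A0} together with Lemma~\ref{lem:d2inj}. If there is a pitfall at all, it is in being careful about what ``standard resolution'' means for the possibly infinite-dimensional algebra $A_0$---but exactness at positions $-1, 0, 1$ only uses the universal property of the first three terms of any such resolution, so no finiteness is actually needed.
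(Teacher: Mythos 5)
Your proof is correct and follows essentially the same route as the paper's, which likewise cites Lemma~\ref{lem:d2inj} for injectivity of $\partial_2$ and Remark~\ref{rem:A0} for exactness at the remaining positions. The only addition you make is the (correct, if small) observation that injectivity of $\partial_2$ on the whole graded complex restricts to injectivity on the degree-$0$ piece because $\partial_2$ is homogeneous.
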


\begin{proof}
Injectivity of $\partial_2$ follows from Lemma~\ref{lem:d2inj} 
and the exactness elsewhere has already been noted in 
Remark~\ref{rem:A0}.
\end{proof}

We are now ready to complete the proof that $\res{\mu}$ is exact under the assumptions of Lemma~\ref{lem:deg0}.
Our strategy is to show that each of the subsets $S=S(\mu,i,d)$ is equal to $S(\nu,i,0)$, 
for some other matching $\nu$.
Then the cohomology of $Q[S]$ is computed by both of the complexes $e_i(\res{\mu})_d$ and
$e_i(\res{\nu})_0$. Since the second of these complexes is exact by Lemma~\ref{lem:deg0}, it will follow that $e_i(\res{\mu})_d$ is also exact.
To construct the new matching $\nu$, we use the following two results.

\begin{lemma}
\label{lem:degree-change}
Assume $A$ is thin. Let $i\in Q_0$ be a vertex, $\alpha\in Q_1$ be an arrow, and $\mu$ be a matching of $Q$. For each $j\in Q_0$, choose a minimal degree path $p_j\colon j\to i$, and for each $\alpha\in Q_1$ write $\varepsilon_\alpha=\deg_\mu(\alpha)$; i.e.\ $\varepsilon_\alpha=1$ when $\alpha\in\mu$, and $\varepsilon_\alpha=0$ otherwise.
Then
\[\deg_\mu(p_{t\alpha})-\varepsilon_\alpha\leq\deg_\mu(p_{h\alpha})\leq\deg_\mu(p_{t\alpha})+1-\varepsilon_\alpha.\]
It follows that $\deg_\mu(p_{t\alpha})$ can be strictly smaller than $\deg_\mu(p_{h\alpha})$ only when $\alpha\notin\mu$, and strictly larger only when $\alpha\in\mu$, the difference being bounded by $1$ in each case.
\end{lemma}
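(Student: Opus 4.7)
The plan is to prove both inequalities by exhibiting, in each case, an explicit path of the claimed degree; then by minimality of the chosen $p_{h\alpha}$ and $p_{t\alpha}$, the inequalities follow. The key observation is that the arrow $\alpha$ and any face containing it combine to give paths of controlled $\mu$-degree between the two endpoints $h\alpha$ and $t\alpha$ of $\alpha$, which can be spliced onto the minimal paths terminating at $i$. Thinness plays no direct role here beyond letting us speak of $\deg_\mu$ unambiguously on F-term equivalence classes.

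For the upper bound $\deg_\mu(p_{h\alpha}) \leq \deg_\mu(p_{t\alpha}) + 1 - \varepsilon_\alpha$, I will use a face of $Q$ incident with $\alpha$. By Definition~\ref{d:dimeralgebra}, there is a path $p^+_\alpha$ from $h\alpha$ to $t\alpha$ such that $\alpha p^+_\alpha$ is a face boundary. Since each face boundary contains exactly one arrow of $\mu$, we have $\deg_\mu(p^+_\alpha) + \varepsilon_\alpha = 1$, so $\deg_\mu(p^+_\alpha) = 1 - \varepsilon_\alpha$. The composition $p_{t\alpha} p^+_\alpha$ is then a path from $h\alpha$ to $i$ of degree $\deg_\mu(p_{t\alpha}) + 1 - \varepsilon_\alpha$, and minimality of $p_{h\alpha}$ gives the required inequality.

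For the lower bound $\deg_\mu(p_{t\alpha}) - \varepsilon_\alpha \leq \deg_\mu(p_{h\alpha})$, the path $p_{h\alpha}\alpha$ runs from $t\alpha$ through $h\alpha$ to $i$ and has degree $\varepsilon_\alpha + \deg_\mu(p_{h\alpha})$; minimality of $p_{t\alpha}$ then yields $\deg_\mu(p_{t\alpha}) \leq \deg_\mu(p_{h\alpha}) + \varepsilon_\alpha$, which is the claim. The final sentence of the lemma is simply a rereading of the two inequalities: when $\varepsilon_\alpha = 0$ the bounds read $\deg_\mu(p_{t\alpha}) \leq \deg_\mu(p_{h\alpha}) \leq \deg_\mu(p_{t\alpha}) + 1$, and when $\varepsilon_\alpha = 1$ they read $\deg_\mu(p_{t\alpha}) - 1 \leq \deg_\mu(p_{h\alpha}) \leq \deg_\mu(p_{t\alpha})$. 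There is no real obstacle; the only thing to be careful about is the composition convention (paths are read right-to-left, so $qp\colon k \to i$ whenever $p\colon k \to j$ and $q\colon j \to i$), and the fact that every arrow $\alpha$ of a dimer model lies in at least one face, so the auxiliary path $p^+_\alpha$ always exists.
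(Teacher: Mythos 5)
Your proof is correct and matches the paper's own argument almost exactly: the lower bound comes from splicing $\alpha$ onto $p_{h\alpha}$ and the upper bound from splicing a face-completing path onto $p_{t\alpha}$, with minimality in both cases. The only cosmetic difference is the order of the two inequalities and your use of the notation $p^+_\alpha$ (strictly reserved in Definition~\ref{d:dimeralgebra} for internal arrows) where the paper just writes $q$ for the completing path, but you correctly note that every arrow lies in at least one face, so the argument goes through for boundary arrows as well.
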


\begin{proof}
The path $p_{h\alpha}\alpha\colon t\alpha\to i$ has degree $\deg_\mu(p_{h\alpha})+\varepsilon_\alpha$, and the first inequality follows. Consider a path $q\colon h\alpha\to t\alpha$ completing $\alpha$ to the boundary of a face of $Q$. Since $\mu$ is a perfect matching, this boundary has degree $1$, and so $\deg_\mu(q)=1-\varepsilon_\alpha$. The second inequality then follows by considering the path $p_{t\alpha}q\colon h\alpha\to i$, of degree $\deg_\mu(p_{t\alpha})+1-\varepsilon_\alpha$.
\end{proof}

\begin{proposition} \label{prop:rotate}
Assume $A$ is thin, and let $\mu$ be a matching of $Q$, $i\in Q_0$ and $d\geq1$. 
Then there exists a matching $\nu$ such that $S(\mu,i,d)=S(\nu,i,d-1)$.
\end{proposition}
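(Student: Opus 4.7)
The plan is to construct $\nu$ explicitly from $\mu$ via a ``one-step'' rotation encoded by a truncated distance function. For each $j \in Q_0$, write $\ell(j) = \deg_\mu(p_j)$ for the minimum $\mu$-degree of a path $j \to i$, so that by thinness $S(\mu, i, d) = \{j \in Q_0 : \ell(j) \leq d\}$. I then define $h \colon Q_0 \to \{0,1\}$ by $h(j) := \min(1, \ell(j))$, and on each arrow $\beta \in Q_1$ set
\begin{equation*}
  \deg_\nu(\beta) := \deg_\mu(\beta) + h(h\beta) - h(t\beta).
\end{equation*}
The candidate matching will be $\nu := \{\beta \in Q_1 : \deg_\nu(\beta) = 1\}$, and it then remains to verify that $\nu$ is a perfect matching with $S(\nu,i,d-1) = S(\mu,i,d)$.

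The first task is to show $\deg_\nu(\beta) \in \{0,1\}$ for every arrow $\beta$, and this is where Lemma~\ref{lem:degree-change} does the work: it implies $\ell(h\beta) - \ell(t\beta) \in \{-1, 0\}$ when $\beta \in \mu$ and $\ell(h\beta) - \ell(t\beta) \in \{0, 1\}$ when $\beta \notin \mu$, which immediately gives the same inclusions for $h(h\beta) - h(t\beta)$, and hence $\deg_\nu(\beta) \in \{0,1\}$ in both cases. Granted this, $\nu$ is a perfect matching by a telescoping count: around the boundary cycle of any face $F$, the terms $h(h\beta) - h(t\beta)$ sum to $0$, so $\sum_{\beta \in \bdry F} \deg_\nu(\beta) = \deg_\mu(\bdry F) = 1$, and since each summand is $0$ or $1$, exactly one arrow of $\bdry F$ lies in $\nu$.

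Once $\nu$ is in hand, the same telescoping applied to an arbitrary path $p \colon j \to i$ gives $\deg_\nu(p) = \deg_\mu(p) + h(i) - h(j) = \deg_\mu(p) - h(j)$, using $\ell(i) = 0$ and hence $h(i) = 0$. Minimising over $p$ yields
\begin{equation*}
  \ell^\nu(j) := \min_p \deg_\nu(p) = \ell(j) - h(j) = \max(0, \ell(j) - 1).
\end{equation*}
Since $d \geq 1$, the inequality $\ell^\nu(j) \leq d-1$ is equivalent to $\ell(j) \leq d$, and therefore $S(\nu, i, d-1) = S(\mu, i, d)$ as required.

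The one step requiring genuine care is the case analysis in the first task, ensuring $\deg_\nu(\beta)$ never falls outside $\{0,1\}$; once this is in place, everything else is formal telescoping. A pleasant by-product is that $\nu$ depends only on the pair $(\mu, i)$ and not on $d$, so a single $\nu$ constructed in this way works simultaneously for every $d \geq 1$, which dovetails cleanly with the iterative reduction of $d$ down to $0$ described just before the proposition.
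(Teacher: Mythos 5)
Your proof is correct, and it takes a genuinely different route from the paper's. The paper fixes minimal-degree paths $p_j$, identifies two explicit sets of arrows $X,Y$ sitting at the level $d-1/d$ boundary of the distance function $\ell(j)=\deg_\mu(p_j)$, sets $\nu=(\mu\setminus X)\cup Y$, and then verifies by a hands-on face-by-face case analysis that $\nu$ is a matching, followed by a careful path-following argument for the degree formula. You instead shift $\deg_\mu$ by the coboundary of the potential $h=\min(1,\ell)$, after which both verifications --- that $\nu$ is a matching and that $\ell^\nu(j)=\ell(j)-h(j)$ --- become one-line telescoping identities. In fact the paper's construction is the potential shift for a different choice of $h$, namely the step function $x\mapsto[x\geq d]$, which explains why the two resulting matchings and the two degree formulae disagree (yours gives $\ell^\nu(j)=\max(0,\ell(j)-1)$, the paper's gives $\ell^\nu(j)=\ell(j)-[\ell(j)\geq d]$) yet both satisfy $S(\nu,i,d-1)=S(\mu,i,d)$. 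The only ingredient you need from $h$ is that it is non-decreasing and $1$-Lipschitz, which is what turns the inequalities of Lemma~\ref{lem:degree-change} on $\ell$ into the same inequalities on $h\circ\ell$, hence $\deg_\nu(\beta)\in\{0,1\}$; you assert this as ``immediate'' and it is, but it is worth saying the phrase ``non-decreasing and $1$-Lipschitz'' so the reader does not have to re-derive it. The by-product you observe --- that your $\nu$ depends only on $(\mu,i)$, not on $d$, so the inductive descent in Theorem~\ref{thm:match-res} can reuse a single $\nu$ throughout --- is a nice structural simplification that is not visible in the paper's formulation.
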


\begin{proof}
As in Lemma~\ref{lem:degree-change}, choose a minimal degree path $p_j\colon j\to i$ for each $j\in Q_0$. Define
\begin{align*}
X&=\{\alpha\in Q_1:\deg_\mu(p_{t\alpha})=d,\ \deg_\mu(p_{h\alpha})=d-1\},\\
Y&=\{\beta\in Q_1:\deg_\mu(p_{t\alpha})=d-1,\ \deg_\mu(p_{h\alpha})=d\}.
\end{align*}
Then, by Lemma~\ref{lem:degree-change}, we have $X\subset\mu$, and $\mu\cap Y=\varnothing$. 
Let $\nu=(\mu\setminus X)\cup Y$. We claim first that $\nu$ is a perfect matching, and secondly that, for any $j\in Q_0$,
\[
\deg_\nu(p_{j})= 
\begin{cases}
\deg_\mu(p_{j}) & \text{if $\deg_\mu(p_{j})\leq d-1$},\\
\deg_\mu(p_{j})-1 & \text{if $\deg_\mu(p_{j})\geq d$}.
\end{cases}
\]
From these two claims it immediately follows that $S(\mu,i,d)=S(\nu,i,d-1)$, because
\[
S(\mu,i,d)=\{j\in Q_0:\deg_{\mu}(p_{j})\leq d\}.
\]
 
We now prove the claims, beginning with the statement that $\nu$ is a perfect matching. Let $F$ be a face of $Q$, the boundary of which contains exactly one arrow $\alpha\in\mu$, which lies either in $X$ or $\mu\setminus X$. For $F$ to intersect $\nu$ in exactly one arrow, we must show that the boundary of $F$ contains exactly one arrow from $Y$ in the case that $\alpha\in X$, and no arrows from $Y$ if $\alpha\in\mu\setminus X$.

First assume $\alpha\in X$, and consider the path $q$ completing $\alpha$ to $\bdry F$. By the assumption on $\alpha$, we have $\deg_\mu(p_{hq})=d$ and $\deg_\mu(p_{tq})=d-1$. Moreover, $q$ contains no arrows of $\mu$ and so $\deg_\mu(p_{h\gamma})\geq\deg_{\mu}(p_{t\gamma})$ for every arrow $\gamma$ of $q$, by Lemma~\ref{lem:degree-change}. It follows that these two degrees are equal for all but one $\gamma$, which is the unique element of $\partial F\cap Y$.

Now assume that $\beta\in\bdry F\cap Y\ne\varnothing$, and let $\alpha$ be the unique arrow of $\mu$ in $\bdry F$; we aim to show that $\alpha\in X$. Let $q$ be the path completing $\beta$ to $\bdry F$, so that $\deg_\mu(p_{hq})=d-1$ and $\deg_\mu(p_{tq})=d$ by the assumption on $\beta$. Using Lemma~\ref{lem:degree-change} as in the previous case, $\deg_\mu(p_{h\gamma})\geq\deg_{\mu}(p_{t\gamma})$ for every arrow $\gamma\ne\alpha$ of $q$, since these arrows are not in $\mu$, whereas $\deg_\mu(p_{h\alpha})\geq \deg_\mu(p_{t\alpha})-1$ since $\alpha\in\mu$. Comparing to $\deg_\mu(p_{hq})$ and $\deg_\mu(p_{tq})$ we see that all of these inequalities are in fact equalities, and so $\alpha\in X$.

Now to prove the second claim, concerning the degrees $\deg_\nu(p_j)$, observe first that for any arrow $\alpha$ of the path $p_{j}$ we get inequalities
\[
\deg_\mu(p_{t\alpha})-1\leq\deg_\mu(p_{h\alpha})\leq\deg_\mu(p_{t\alpha}).
\]
Indeed, the first inequality holds for any arrow $\alpha\in Q_1$, since $\deg_\mu(p_{t\alpha})\leq\deg_\mu(p_{h\alpha}\alpha)$ and $\deg_\mu(\alpha)\leq 1$. For the second, write $p_j=p_1\alpha p_2$. By minimality of $\deg_\mu(p_j)$, we must have $\deg_\mu(p_1)=\deg_\mu(p_{h\alpha})$ and $\deg_\mu(p_1\alpha)=\deg_\mu(p_{t\alpha})$, and so the second inequality follows.

From these inequalities, it follows that the degrees $\deg_\mu(p_\ell)$ are weakly decreasing as $\ell$ runs through the vertices of $p_j$ in the direction of the path, and two successive degrees in this sequence differ by at most $1$. Thus if $\deg_\mu(p_{j})\leq d-1$, so that $j\in S(\mu,i,d-1)$, 
then $p_{j}$ only passes through vertices in this subset, 
so it contains no arrows of $X$ or $Y$. 
Hence the matchings $\mu$ and $\nu$ agree on all arrows of  $p_{j}$ and
$\deg_\nu(p_{j})=\deg_\mu(p_{j})$. On the other hand, if $\deg_\mu(p_{j})\geq d$, 
then the path $p_{j}$ contains a unique arrow $\alpha\in X\subset\mu$ and no arrow of $Y$. 
By construction, the arrow $\alpha$ does not appear in $\nu$, 
but the matchings $\mu$ and $\nu$ agree on the other arrows of the path. 
Thus $\deg_\nu(p_{j})=\deg_\mu(p_{j})-1$,
completing the argument.
\end{proof}

Putting everything together, we obtain the following theorem.

\begin{theorem}
\label{thm:match-res}
Let $Q$ be a dimer model with boundary such that $A_Q$ is thin and $\H^2(Q)=0$, and let $N_\mu$ be the $A$-module corresponding to a perfect matching $\mu$. 
Then the complex (extended by zeroes)
\[
\res{\mu}\colon
\bigoplus_{f\in Q_2^\mu}Ae_{hf}\otimes e_{tf}N_\mu \lra{\partial_2}
\bigoplus_{\alpha\in Q_1^\mu}Ae_{h\alpha}\otimes e_{t\alpha}N_\mu \lra{\partial_1}
\bigoplus_{j\in Q_0}Ae_j\otimes e_jN_\mu \lra{\partial_0} N_\mu
\]
is exact, yielding a projective resolution of $N_\mu$. This result applies in particular to the case that $A=A_D$ for a connected Postnikov diagram $D$.
\end{theorem}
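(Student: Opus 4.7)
The plan is to exploit the $\deg_\mu$-grading on $A$ from Definition~\ref{d:grading} to split $\res{\mu}$ into pieces indexed by a vertex $i \in Q_0$ and a degree $d \in \ZZ$. Since all differentials have degree $0$ and the vertex decomposition is compatible, exactness of $\res{\mu}$ is equivalent to exactness of $e_i(\res{\mu})_d$ for every $i$ and every $d$; the pieces with $d < 0$ vanish trivially, so only $d \geq 0$ requires attention.

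To handle these pieces, I invoke Proposition~\ref{prop:cohom-computation}, which identifies $e_i(\res{\mu})_d$ up to isomorphism of complexes with the cochain complex computing the reduced cohomology $\widetilde{\H}^\bullet(Q[S(\mu,i,d)])$ of the full subcomplex of $Q$ on the vertex set $S(\mu,i,d)$. Thus the theorem reduces to showing that this reduced cohomology vanishes for every matching $\mu$, every vertex $i$ and every $d \geq 0$.

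The base case $d = 0$ is handled by Lemma~\ref{lem:deg0}, which (combining Lemma~\ref{lem:d2inj} at the top with the standard resolution observation of Remark~\ref{rem:A0} in the remaining positions) establishes that $(\res{\nu})_0$ is exact for every matching $\nu$. Translating back through Proposition~\ref{prop:cohom-computation}, this yields $\widetilde{\H}^\bullet(Q[S(\nu,i,0)]) = 0$ for every $\nu$ and every $i$. For general $d \geq 1$, I iterate Proposition~\ref{prop:rotate}: each application replaces $(\mu,d)$ by some $(\nu',d-1)$ with $S(\mu,i,d) = S(\nu',i,d-1)$, so after $d$ applications we arrive at a matching $\nu$ with $S(\mu,i,d) = S(\nu,i,0)$. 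The base case then gives $\widetilde{\H}^\bullet(Q[S(\mu,i,d)]) = 0$, completing the reduction and hence establishing exactness of $\res{\mu}$.

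For the closing claim about connected Postnikov diagrams $D$, the two hypotheses of the theorem are easily verified: $A_D$ is thin by Proposition~\ref{p:thin}, and $|Q(D)|$ is the closed disc (by the discussion following Definition~\ref{d:quiver}), which is contractible and so certainly satisfies $\H^2(Q(D)) = 0$. The main conceptual ingredient is Proposition~\ref{prop:rotate}, whose construction of $\nu$ from $\mu$ by a careful edge-swap along the sets $X$ and $Y$ is the only genuinely delicate step; given this and the topological identification of Proposition~\ref{prop:cohom-computation}, the theorem itself is essentially an assembly of the preparatory lemmas.
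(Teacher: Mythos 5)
Your proposal follows the paper's proof exactly: split $\res{\mu}$ into graded pieces $e_i(\res{\mu})_d$, identify each with the reduced cochain complex of $Q[S(\mu,i,d)]$ via Proposition~\ref{prop:cohom-computation}, reduce to degree $0$ by iterating Proposition~\ref{prop:rotate}, and finish with Lemma~\ref{lem:deg0}; the application to connected Postnikov diagrams via Proposition~\ref{p:thin} and contractibility of the disc is likewise the paper's closing sentence. This is correct and not a different route.
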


\begin{proof}
To recap, we show that $e_i(\res{\mu})_d$ is exact, for every vertex $i\in Q_0$ and degree $d\geq 0$.
By Proposition~\ref{prop:cohom-computation}, 
$e_i(\res{\mu})_d$ computes the (reduced) cohomology of $Q[S]$, for $S=S(\mu,i,d)$.
By applying Proposition~\ref{prop:rotate} inductively, we may construct a matching $\nu$ for which $S=S(\nu,i,0)$.
Hence the (reduced) cohomology of $Q[S]$ vanishes, because 
(again by Proposition~\ref{prop:cohom-computation}) 
it is computed by $e_i(\xi_\nu)_0$, which is exact by Lemma~\ref{lem:deg0}.
Thus $e_i(\res{\mu})_d$ is exact, as required.

If $D$ is a connected Postnikov diagram, then $\H^2(Q(D))=0$ since $|Q(D)|$ is a disc, and $A_D$ is thin by Proposition~\ref{p:thin}.
\end{proof}

\begin{corollary}
If $Q$ is a dimer model with boundary admitting a perfect matching $\mu$, and such that $\H^2(Q)=0$ and $A_Q$ is thin, then $|Q|$ is a disc.
\end{corollary}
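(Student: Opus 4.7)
The plan is to extract topological information about $|Q|$ from the exact complex provided by Theorem~\ref{thm:match-res}. Since the hypotheses of the corollary are exactly those of the theorem, we may apply it to obtain that $\res{\mu}$ is an exact complex of $A_Q$-modules. Exactness is preserved under the decomposition by the vertex idempotents $e_i$ and by the grading $\deg_\mu$ from Definition~\ref{d:grading}, so each individual complex $e_i(\res{\mu})_d$ is exact for every $i \in Q_0$ and every $d \geq 0$.

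Next, I would invoke Proposition~\ref{prop:cohom-computation}, which identifies the cohomology of $e_i(\res{\mu})_d$ with the reduced cohomology $\tilde{\H}^*(Q[S(\mu,i,d)];\CC)$ of the full subcomplex on $S(\mu,i,d) \subset Q_0$. Fix any $i \in Q_0$ and take $d$ sufficiently large; as observed in the proof of Lemma~\ref{lem:highdeg}, we then have $S(\mu,i,d) = Q_0$, so $Q[S(\mu,i,d)] = Q$. Combining with exactness of $e_i(\res{\mu})_d$ yields $\tilde{\H}^*(|Q|;\CC) = 0$, i.e.\ $|Q|$ is connected (from $\tilde{\H}^0 = 0$) and has vanishing $\H^1$ and $\H^2$.

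The final step is to apply the classification of compact surfaces. By the discussion following Definition~\ref{d:dimermodel}, $|Q|$ is a compact oriented surface (possibly with boundary). Since $\H^2(|Q|;\CC) = 0$, $|Q|$ cannot be closed, as a closed connected oriented surface has $\H^2 \isom \CC$; thus $|Q|$ has non-empty boundary. For a connected compact oriented surface of genus $g$ with $b \geq 1$ boundary components, $\H^1(|Q|;\CC) \isom \CC^{2g + b - 1}$. The vanishing of $\H^1$ forces $g = 0$ and $b = 1$, so $|Q|$ is a disc.

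I do not anticipate any serious obstacle, since all the real work has already been done in Theorem~\ref{thm:match-res} and Proposition~\ref{prop:cohom-computation}: the corollary is essentially a topological reading of the exactness of $\res{\mu}$. The only small point to check carefully is that the existence of a perfect matching $\mu$ is genuinely used (it is, as the whole construction of $\res{\mu}$ and the grading $\deg_\mu$ depends on $\mu$), and that connectedness of $|Q|$ is automatic from connectedness of the quiver $Q$ in Definition~\ref{d:dimermodel}, which makes the conclusion $\tilde{\H}^0 = 0$ consistent rather than a separate hypothesis.
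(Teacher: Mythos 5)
Your proof is correct and follows essentially the same route as the paper: exactness of $\res{\mu}$ from Theorem~\ref{thm:match-res}, then Lemma~\ref{lem:highdeg} (via Proposition~\ref{prop:cohom-computation}) to conclude that $\tilde{\H}^*(|Q|;\CC)$ vanishes, then the classification of compact oriented surfaces. The only difference is that you spell out the final topological step in more detail than the paper, which simply asserts that a compact surface with boundary and vanishing reduced cohomology is a disc.
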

\begin{proof}
The complex $\res{\mu}$ is exact by Theorem~\ref{thm:match-res}, and so $e_i(\res{\mu})_d$ must also be exact for all $d\geq0$ and all $i\in Q_0$. By Lemma~\ref{lem:highdeg}, the reduced cohomology of $|Q|$ vanishes, and so $|Q|$ is a contractible compact surface with boundary, i.e.\ a disc.
\end{proof}

\newcommand{\vmut}{\textup{int}}
\newcommand{\amut}{\textup{int}}
\newcommand{\vfro}{\textup{bdry}}
\newcommand{\afro}{\textup{bdry}}
\newcommand{\bl}{\operatorname{bl}}
\newcommand{\wh}{\operatorname{wh}}
\newcommand{\wtms}[1][]{\wt[#1]_{\operatorname{MS}}}

As an immediate consequence of Theorem~\ref{thm:match-res}, $N_\mu$ has finite projective dimension 
and so we can associate to it a class $[N_\mu]$ in the (free abelian) Grothendieck group $\Kgp(\proj A)$.
Indeed, the resolution enables us to write several explicit expressions for this class,
written in the canonical basis $[P_j]=[Ae_j]$, for $j\in Q_0$.

We do this for the case $A=A_D$ for a connected Postnikov diagram $D$.
Recall from Definition~\ref{d:dimermodel} that vertices and arrows of $Q$ are either internal (abbreviated `int' below) or boundary (abbreviated `bdry'). Note that every internal arrow $\gamma$ of $Q$ is contained in a unique `black cycle', the boundary of a black face of $Q$, which we denote by $\bl(\gamma)$ and which corresponds to a black node of the dual bipartite graph $\plabic(D)$.
We write $\bl_0(\gamma)$ for the vertices in this cycle and 
\begin{equation}\label{eq:bl'0}
	\bl'_0(\gamma) = \bl_0(\gamma)\setmin\{t\gamma,h\gamma\}.
\end{equation}
Similarly, $\gamma$ is also contained in a unique white cycle $\wh(\gamma)$ passing through vertices $\wh_0(\gamma)$, and we write
\begin{equation}\label{eq:wh'0}
\wh'_0(\gamma)=\wh_0(\gamma)\setmin\{t\gamma,h\gamma\}.
\end{equation}
Without loss of generality (i.e.\ without changing $A_D$ up to isomorphism), we may assume that $D$ is standardised (Remark~\ref{r:boundary-convention}). If $D$ is $\white$-standardised then the boundary arrows of $Q(D)$ are $\alpha_i=\canCB(x_i)$ for $i\in\C_1$, whereas if $D$ is $\black$-standardised then the boundary arrows are $\beta_i=\canCB(y_i)$ for $i\in\C_1$.
Recall that a perfect matching $\mu$ is a subset of $Q_1$, with boundary value $\bdry\mu\subset\C_1$ (Definition~\ref{d:bdry-value}). For standardised $D$, the description of $\bdry\mu$ simplifies---it is either the set of $i\in\C_1$ such that the boundary arrow $\alpha_i$ is in $\mu$, if $D$ is $\white$-standardised, or the set of $i\in\C_1$ such that the boundary arrow $\beta_i$ is not in $\mu$ if $D$ is $\black$-standardised.

For an arrow $\gamma\in Q_1$, write
\begin{equation}\label{eq:mswts}
 \wtms[\white](\gamma) = \sum_{ j\in \bl'_0(\gamma) } [P_j],
 \qquad
 \wtms[\black](\gamma) = \sum_{ j\in \wh'_0(\gamma) } [P_j],
 \end{equation}
where $\bl'_0(\gamma)$ is the truncated black cycle \eqref{eq:bl'0}, and $\wh'_0(\gamma)$ is the truncated white cycle \eqref{eq:wh'0}.
For a perfect matching $\mu$ of $Q$, we may then define
\begin{equation}
\label{eq:wtms-mu}
\wt[\white](\mu) = \sum_{\substack{ \gamma\in \mu \\ \amut }} \wtms[\white](\gamma),
 \qquad
 \wt[\black](\mu) = \sum_{\substack{ \gamma\in \mu \\ \amut }} \wtms[\black](\gamma).
\end{equation}
The weight $\wtms[\white](\gamma)$ in \eqref{eq:mswts} is the edge weight used by Marsh--Scott \cite{MaSc} to write down a formula for a twisted Pl\"ucker coordinate as a dimer partition function, and $\wtms[\black](\gamma)$ is a natural variant of it.
Strictly speaking, Marsh--Scott define $\wt(e)$, for an edge $e$ of the dual bipartite graph, in terms of face weights. Recall also from Remark~\ref{r:conventions} that in \cite{MaSc} the colours black and white have opposite meanings to here. We will return to the Marsh--Scott formula in Section~\ref{sec:newMS}.

\begin{proposition}
\label{prop:Nmu-class}
Let $D$ be a $\white$-standardised connected Postnikov diagram with dimer algebra $A=A_D$, and let $\mu$ be a perfect matching of $Q(D)$. In $\Kgp(\proj A)$, the class of $N_\mu$ is given by the formula
\begin{equation}\label{eq:Nmu-class-white}
  [N_\mu] = \sum_{\substack{ j\in Q_0 \\ \vmut }} [P_j]
  + \sum_{i\in \bdry\mu} [P_{h\alpha_i}]
  - \wt[\white](\mu).
\end{equation}
If instead $D$ is $\black$-standardised, then
\begin{equation}\label{eq:Nmu-class-black}
[N_\mu] = \sum_{\substack{ j\in Q_0 \\ \vmut }} [P_j]
+ \sum_{i\notin \bdry\mu} [P_{h\beta_i}]
- \wt[\black](\mu).
\end{equation}
\end{proposition}

\begin{proof}
Using Theorem~\ref{thm:match-res}, recalling that $Q_1^\mu=Q_1\setminus\mu$ and noting that the faces $f\in Q_2^\mu$ correspond one-to-one to
the internal arrows $\alpha\in\mu$ in such a way that $hf=t\alpha$, we get
\begin{equation}
\label{eq:proj-res-class}
  [N_\mu] = \sum_{j\in Q_0} [P_j]
  - \sum_{\gamma\notin\mu} [P_{h\gamma}]
  + \sum_{\substack{ \gamma\in\mu \\ \amut }} [P_{t\gamma}].
\end{equation}
When $D$ is standardised, each boundary vertex is the head of a unique boundary arrow and so we can write the first term above as
\begin{equation}
\label{eq:vertex-partition}
\sum_{j\in Q_0}[P_j]=\sum_{\substack{j\in Q_0\\\vmut}}[P_j]+\sum_{\substack{ \gamma\in Q_1 \\ \afro }}[P_{h\gamma}].
\end{equation}

If $D$ is $\white$-standardised, then the boundary arrows are the clockwise arrows $\alpha_i$ for $i\in\C_1$, and those not in the matching $\mu$ are precisely those for which $i\notin\bdry\mu$. Thus substituting \eqref{eq:vertex-partition} into \eqref{eq:proj-res-class} and simplifying yields
\[
  [N_\mu] = \sum_{\substack{ j\in Q_0 \\ \vmut }} [P_j]
  + \sum_{i\in \bdry\mu} [P_{h\alpha_i}]
  - \sum_{\substack{ \gamma\notin\mu \\ \amut }} [P_{h\gamma}]
  + \sum_{\substack{ \gamma\in\mu \\ \amut }} [P_{t\gamma}].
\]
Thus, to prove \eqref{eq:Nmu-class-white}, it remains to prove
\begin{equation}\label{eq:wt-mu}
\wt[\white](\mu) =
 \sum_{\substack{ \gamma\in Q_1 \\ \amut }} \wt_\mu(\gamma),
\end{equation}
where
\begin{equation}\label{eq:wt-alpha}
\wt_\mu(\gamma) = \begin{cases} -[P_{t\gamma}], & \gamma\in\mu, \\ \phantom{-}[P_{h\gamma}], & \gamma\notin\mu. \end{cases}
\end{equation}
To this end, we observe that
\[
  \sum_{\substack{ \gamma\in Q_1 \\ \amut }} \wt_\mu(\gamma)
  = \sum_{\substack{ \gamma\in Q_1 \\ \amut }} [P_{h\gamma}] 
  - \sum_{\substack{ \gamma\in\mu \\ \amut }}\bigl(  [P_{h\gamma}] + [P_{t\gamma}] \bigr).
\]
Every internal arrow is in a unique black cycle and, since $D$ is $\white$-standardised, every arrow in a black cycle is internal. Since each black cycle contains a unique (internal) arrow of $\mu$, we can rewrite the preceding expression as
\begin{align*}
  \sum_{\substack{ \gamma\in Q_1 \\ \amut }} \wt_\mu(\gamma)
  &= \sum_{\substack{ \gamma\in \mu \\ \amut }} \Bigl( \sum_{j\in\bl_0(\gamma)} [P_j] \Bigr)
  - \sum_{\substack{ \gamma\in\mu \\ \amut }}\bigl(  [P_{h\gamma}] + [P_{t\gamma}] \bigr)\\
  &=\sum_{\substack{\gamma\in\mu\\\amut}}\sum_{j\in\bl'_0(\gamma)}[P_j]\\
  &=\sum_{\substack{\gamma\in\mu\\\amut}}\wtms[\white](\gamma),
\end{align*}
which yields \eqref{eq:wt-mu} and hence \eqref{eq:Nmu-class-white}.

On the other hand, if $D$ is $\black$-standardised then the boundary arrows are the anticlockwise arrows $\beta_i$ for $i\in\C_1$, which are not in $\mu$ if and only if $i\in\bdry\mu$, and so
\[
[N_\mu] = \sum_{\substack{ j\in Q_0 \\ \vmut }} [P_j]
+ \sum_{i\notin \bdry\mu} [P_{h\beta_i}]
- \sum_{\substack{ \gamma\notin\mu \\ \amut }} [P_{h\gamma}]
+ \sum_{\substack{ \gamma\in\mu \\ \amut }} [P_{t\gamma}],
\]
and so \eqref{eq:Nmu-class-black} will follow from
\begin{equation}\label{eq:wt-mu-black}
\wt[\black](\mu) =
 \sum_{\substack{ \gamma\in Q_1 \\ \amut }} \wt_\mu(\gamma).
\end{equation}
This is proved similarly to \eqref{eq:wt-mu}, using the fact that white cycles contain only internal arrows, because $D$ is $\black$-standardised.
\end{proof}

\begin{remark}
\label{r:black-vs-white}
We use the notation $\wt[\white]$ and $\wt[\black]$ to emphasise that these functions should be applied to matchings of $\white$-standardised and $\black$-standardised diagrams respectively. It can happen that a $\white$-standardised diagram and a $\black$-standardised diagram have isomorphic dimer algebras (for example, by starting with an arbitrary diagram and then standardising it in each way as in Remark~\ref{r:boundary-convention}), in which case this isomorphism induces a bijection between the two sets of perfect matchings via Proposition~\ref{p:pm-mods}.
However, the value of $\wt[\white]$ on a matching of the $\white$-standardised diagram typically does not agree with the value of $\wt[\black]$ on the corresponding matching of the $\black$-standardised diagram, despite the fact that the right-hand sides of \eqref{eq:wt-mu} and \eqref{eq:wt-mu-black} appear to coincide, since the two quivers have different sets of internal arrows.
\end{remark}

\newcommand{\matlat}{\mathbb{M}}
\newcommand{\matcls}[1][]{\eta^{#1}}
\newcommand{\matdeg}{\operatorname{deg}}

\begin{remark} \label{rem:eta-def}
Consider the reduced cochain complex of the quiver with faces $Q$
\begin{equation}\label{eq:cochain}
 \ZZ \lra{c} \ZZ^{Q_0} \lra{d} \ZZ^{Q_1} \lra{d} \ZZ^{Q_2},
\end{equation}
where the first map is the inclusion of the constant functions and the other two are the coboundary maps.
Note that the faces are all oriented so that second coboundary map $d\colon \ZZ^{Q_1} \to \ZZ^{Q_2}$ 
is simply the face-arrow incidence matrix, with all coefficients $0$ or $1$.
Since $|Q|$ is contractible, this complex (with $0$ added at both ends) is exact.

Let $w\in \ZZ^{Q_2}$ be the function with constant value $1$ on faces and
let $\matlat=d^{-1} \ZZ w$ be the sublattice in $\ZZ^{Q_1}$ of functions with the same sum around every face.
Define $\matdeg\colon \matlat \to\ZZ$ to give the value of that sum, that is,
$df=\matdeg(f)w$, for all $f\in\matlat$.
Then \eqref{eq:cochain} restricts to the exact sequence
\begin{equation}\label{eq:restr-cochain}
 \ZZ \lra{c} \ZZ^{Q_0} \lra{d} \matlat \lra{\matdeg} \ZZ.
\end{equation}

Observe that perfect matchings $\mu$, as in Definition~\ref{d:pm}, may be characterised as 
functions $\mu\in\matlat$ such that $\mu(\gamma)\geq 0$ for all $\gamma\in Q_1$ and $\matdeg(\mu)=1$.
We can then also observe that \eqref{eq:proj-res-class} from (the proof of) 
Proposition~\ref{prop:Nmu-class} can be formulated as 
\begin{equation}
\label{eq:matcls-value}
[N_\mu] = \matcls(\mu),
\end{equation}
where $\matcls\colon\matlat \to \Kgp(\proj A)$ is defined by
\begin{equation}
\label{eq:matcls-grey}
  \matcls(f) = \matdeg(f) \sum_{j\in Q_0} [P_j]
  - \sum_{\gamma\in Q_1} (\matdeg(f)-f(\gamma)) [P_{h\gamma}]
  + \sum_{\substack{ \gamma\in Q_1 \\ \amut }} f(\gamma) [P_{t\gamma}].
\end{equation}
\newcommand{\gambar}{\overline{\gamma}}

Note that the matching lattice $\matlat$ and the map $\matcls$ are 
insensitive to the addition of boundary digons
to the quiver with faces $Q$, as in Remark~\ref{r:boundary-convention}.
More precisely, suppose that $\gamma\in Q_1$ is a boundary arrow and that we add a digon with boundary 
$\gamma\gambar$, where $\gambar$ is opposite to $\gamma$ and becomes the new boundary arrow.
Then we can uniquely extend $f\in\matlat$ from the old to new $Q$ by setting $f(\gambar)=\matdeg(f)-f(\gamma)$.
The formula on the right of \eqref{eq:matcls-grey} gains two new terms, which cancel.

In the special case that $Q$ is $\white$-standardised, 
the derivation of \eqref{eq:Nmu-class-white} from \eqref{eq:proj-res-class} generalises to give
\begin{equation}\label{eq:matcls-white}
 \matcls(f) = 
 \matdeg(f)\sum_{\substack{ j\in Q_0 \\ \vmut }} [P_j]
  + \sum_{i\in \C_1} f(\alpha_i) [P_{h\alpha_i}]
  - \sum_{\substack{ \gamma\in Q_1 \\ \amut }} f(\gamma) \wtms[\white](\gamma) .
\end{equation}
Similarly, when $Q$ is $\black$-standardised, the derivation of \eqref{eq:Nmu-class-black} gives
\begin{equation}\label{eq:matcls-black}
\matcls(f) =
 \matdeg(f)\sum_{\substack{ j\in Q_0 \\ \vmut }} [P_j]
  + \sum_{i\in \C_1} f(\beta_i) [P_{h\beta_i}]
 - \sum_{\substack{ \gamma\in Q_1 \\ \amut }} f(\gamma) \wtms[\black](\gamma) .
\end{equation}
\end{remark}

\newcommand{\intchi}{\chi}

\begin{remark}\label{rem:eta-in-MS}
The map $\matcls$ appears implicitly in \cite{MuSp}.
There $\matlat$ appears as the kernel of the map 
$\ZZ\oplus\ZZ^{Q_1}\to \ZZ^{Q_2}\colon (n,f)\mapsto nw-df$.
They also consider the map 
$\ZZ\oplus\ZZ^{Q_1}\to \ZZ^{Q_0}$ given by
\begin{equation} \label{eq:matcls-MS}
	(n,f)\mapsto  n \sum_{j\in Q_0} (1-B_j)p_j
	+ \sum_{\gamma\in Q_1} f(\gamma) \bigl( p_{h\gamma} + \intchi_\gamma p_{t\gamma} \bigr),
\end{equation}
where $\{p_j:j\in Q_0\}$ is the standard basis of $\ZZ^{Q_0}$, while $B_j=\#\{ \gamma\in Q_1 : h\gamma=j \}$ and $\intchi_\gamma$ is $1$ (resp.\ $0$) when $\gamma$ is internal
(resp.\ on the boundary). Identifying $\ZZ^{Q_0}$ with $\Kgp(\proj{A})$ using $p_j\mapsto[P_j]$ and comparing to \eqref{eq:matcls-grey}, we see that $\matcls$ is obtained by restricting this second map to $\matlat$.

For comparison, in \cite[Lemma~5.1]{MuSp} these two maps are combined into a single map 
$X\colon\ZZ\oplus\ZZ^{Q_1}\to \ZZ^{Q_0}\oplus\ZZ^{Q_2}$
and described in terms of the bipartite graph dual to $Q$.
However, $\matcls$ itself appears more explicitly in the proof of \cite[Prop~5.5]{MuSp}.
The facts that $X$ and $\matcls$ are isomorphisms are the content of these two results in \cite{MuSp}.
\end{remark}

\newcommand{\fd}{\operatorname{fd}}
\newcommand{\exchmat}{\beta}

With our interpretation of $\matcls$ in terms of projective resolution, 
we can give a more conceptual proof of the fact that it is an isomorphism.

\begin{lemma} \label{lem:matcls-isom}
The map $\matcls\colon\matlat \to \Kgp(\proj A)$, defined in \eqref{eq:matcls-grey}, is an isomorphism.
\end{lemma}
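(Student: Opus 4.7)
The plan is to observe that $\matlat$ and $\Kgp(\proj A)$ are both free abelian groups of rank $|Q_0|$, and then to establish surjectivity of $\matcls$ by exhibiting the basis $\{[P_j]:j\in Q_0\}$ of $\Kgp(\proj A)$ in its image; a surjection between free abelian groups of equal finite rank is automatically an isomorphism.

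First I would compute the rank of $\matlat$. In the exact sequence \eqref{eq:restr-cochain} the map $c$ is the injection of constant functions, so it is injective with image of rank~$1$, while $\matdeg\colon\matlat\to\ZZ$ is surjective, since perfect matchings exist and have degree~$1$ by Definition~\ref{d:pm}. Exactness then gives $\rk(\matlat)=(|Q_0|-1)+1=|Q_0|$, which matches the rank of $\Kgp(\proj A)$, the latter being freely generated by the classes $[P_j]=[Ae_j]$ for $j\in Q_0$.

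For surjectivity I would invoke Corollary~\ref{c:proj-pm-mods}: every indecomposable projective $A$-module $Ae_j$ is isomorphic to $N_{\mu_j}$ for some perfect matching $\mu_j$ of $Q$. Combined with the identity \eqref{eq:matcls-value}, this gives $\matcls(\mu_j)=[N_{\mu_j}]=[P_j]$ for each $j\in Q_0$, so $\{[P_j]:j\in Q_0\}\subseteq\im(\matcls)$, and since these classes generate $\Kgp(\proj A)$, $\matcls$ is surjective.

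I do not foresee a substantial obstacle, since the hard work has already been done: the projective resolution $\res{\mu}$ from Theorem~\ref{thm:match-res} underlies the class identity \eqref{eq:matcls-value}, while Corollary~\ref{c:proj-pm-mods} identifies indecomposable projectives with perfect matching modules. The lemma thus reduces to a short rank-plus-surjectivity check, realising the conceptual proof of bijectivity promised in the text preceding the statement and avoiding the direct combinatorial verification of \cite[Prop.~5.5]{MuSp}.
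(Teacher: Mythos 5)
Your proof is correct and follows essentially the same route as the paper: establish surjectivity via Corollary~\ref{c:proj-pm-mods} and the class identity for $N_\mu$, then conclude by comparing ranks using the exact sequence \eqref{eq:restr-cochain}. The only difference is that you spell out the rank computation for $\matlat$ slightly more explicitly.
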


\begin{proof}
By Corollary~\ref{c:proj-pm-mods},
every indecomposable projective $P_j=Ae_j$ is (isomorphic to) some perfect matching module $N_{\mu_j}$. 
Hence $[P_j]=\matcls(\mu_j)$ by Proposition~\ref{prop:Nmu-class} and so,
since $\{[P_j] : j\in Q_0\}$ is a basis of $\Kgp(\proj{A})$, we see that $\matcls$ is surjective.
However, as the sequence \eqref{eq:restr-cochain} is exact, $\matlat$ has the same rank (namely $|Q_0|$) as
$\Kgp(\proj{A})$ and so $\matcls$ is an isomorphism, as required.
\end{proof}

\begin{corollary} \label{cor:rank1-rigid}
Let $M_1,M_2\in\CM(A)$. If $\rk(M_1)=\rk(M_2)=1$ and $[M_1]=[M_2]$ in $\Kgp(\proj A)$, then $M_1\isom M_2$.
\end{corollary}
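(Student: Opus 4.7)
The plan is to reduce the statement to the injectivity of the map $\matcls\colon\matlat\to\Kgp(\proj A)$ established in Lemma~\ref{lem:matcls-isom}, via the characterisation of rank $1$ Cohen--Macaulay modules as perfect matching modules.

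First, I would invoke Corollary~\ref{c:proj-pm-mods}: since $\rk(M_1)=\rk(M_2)=1$, each $e_j M_\ell$ is a free $Z$-module of rank one, so there exist perfect matchings $\mu_1,\mu_2$ of $Q(D)$, uniquely determined by $M_1$ and $M_2$, such that $M_\ell\isom N_{\mu_\ell}$ for $\ell=1,2$. The problem therefore becomes showing $\mu_1=\mu_2$.

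Next, I would apply Proposition~\ref{prop:Nmu-class} (or equivalently \eqref{eq:matcls-value}) to identify $[M_\ell]=[N_{\mu_\ell}]=\matcls(\mu_\ell)$ in $\Kgp(\proj A)$, where $\matcls\colon\matlat\to\Kgp(\proj A)$ is the map defined in \eqref{eq:matcls-grey}. The hypothesis $[M_1]=[M_2]$ then reads $\matcls(\mu_1)=\matcls(\mu_2)$.

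Finally, Lemma~\ref{lem:matcls-isom} asserts that $\matcls$ is an isomorphism, and in particular injective. Hence $\mu_1=\mu_2$ as elements of $\matlat\subset\ZZ^{Q_1}$, so $N_{\mu_1}=N_{\mu_2}$ and consequently $M_1\isom M_2$. There is no substantial obstacle here: the entire content has been packaged into the preceding results, and this corollary is essentially a direct consequence of the bijection between rank $1$ Cohen--Macaulay modules and perfect matchings together with the injectivity of $\matcls$.
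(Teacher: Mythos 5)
Your argument is essentially identical to the paper's own proof: identify each $M_\ell$ with a perfect matching module $N_{\mu_\ell}$ (the paper cites Proposition~\ref{p:pm-mods}; you cite its Corollary~\ref{c:proj-pm-mods}, which is the same thing in the Postnikov setting), observe $[M_\ell]=\matcls(\mu_\ell)$ via Proposition~\ref{prop:Nmu-class}, and conclude from the injectivity of $\matcls$ in Lemma~\ref{lem:matcls-isom}. No gap; this is the intended proof.
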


\begin{proof}
As $\rk(M_i)=1$, Proposition~\ref{p:pm-mods} implies that $M_i\isom N_{\mu_i}$, 
for perfect matchings $\mu_i\in\matlat$, and so $[M_i]=\matcls(\mu_i)$.
Since $\matcls$ is injective, the fact that $[M_1]=[M_2]$ implies that $\mu_1=\mu_2$ 
and thus $N_{\mu_1}\isom N_{\mu_2}$, as required.
\end{proof}

One consequence of Corollary~\ref{cor:rank1-rigid} is that, to identify the matching $\mu$
for which $P_j\isom N_\mu$, as in Corollary~\ref{c:proj-pm-mods}, it suffices to show that $[N_\mu]=[P_j]$.
We do this in the next section, using the calculation \eqref{eq:Nmu-class-white} of $[N_\mu]$.

Lemma~\ref{lem:matcls-isom} has a further consequence for the `cluster ensemble sequence'
\begin{equation}\label{eq:clu-ens}
 \ZZ \lra{c} \ZZ^{Q_0} \lra{\exchmat} \ZZ^{Q_0} \lra{\rk} \ZZ,
\end{equation}
where we identify the first  $\ZZ^{Q_0}$ with $\Kgp(\fd A)$ via its basis of simples $[S_i]$, for $i\in Q_0$,
and the second $\ZZ^{Q_0}$ with $\Kgp(\proj A)$ via its basis of projectives $[P_i]$, for $i\in Q_0$.
As before, $c$ is the inclusion of constant functions, while $\rk [P_i] =1$ for all $i$.
The map $\exchmat$ corresponds to projective resolution, but can just be described combinatorially as
\begin{equation}\label{eq:beta}
 \exchmat [S_i] = [P_i] - \sum_{a:ta=i} [P_{ha}]  + \sum_{a:ha=i} \intchi_a [P_{ta}]  - \intchi_i [P_i]
\end{equation}
where $\intchi_a$ (resp.\ $\intchi_i$) is 1 or 0 depending on whether the arrow $a\in Q_1$ 
(resp.\ vertex $i\in Q_0$) is internal or on the boundary.
Note that $\exchmat$ is an extension of the exchange matrix (or its negative, depending on the convention used), when $Q$ is interpreted as the ice quiver of a cluster algebra seed as in \cite{GL}.

\begin{proposition}
\label{p:clu-ens-exact}
The cluster ensemble sequence \eqref{eq:clu-ens} is exact.
\end{proposition}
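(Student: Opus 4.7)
The plan is to identify the cluster ensemble sequence \eqref{eq:clu-ens} with the restricted cochain complex \eqref{eq:restr-cochain} via the isomorphism $\matcls$ of Lemma~\ref{lem:matcls-isom}. The complex \eqref{eq:restr-cochain} is exact as noted in Remark~\ref{rem:eta-def}, being extracted from the reduced cochain complex of the contractible space $|Q|$. It therefore suffices to construct a commutative diagram
\[
\begin{tikzcd}
\ZZ \arrow[r, "c"] \arrow[d, equal] & \ZZ^{Q_0} \arrow[r, "d"] \arrow[d, equal] & \matlat \arrow[r, "\matdeg"] \arrow[d, "\matcls"] & \ZZ \arrow[d, equal] \\
\ZZ \arrow[r, "c"] & \ZZ^{Q_0} \arrow[r, "\exchmat"] & \Kgp(\proj A) \arrow[r, "\rk"] & \ZZ
\end{tikzcd}
\]
with the middle vertical an isomorphism, from which exactness of the bottom row follows. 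The content is then to verify the two compatibilities $\matcls\circ d=\exchmat$ and $\rk\circ\matcls=\matdeg$.

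For the second, I would apply $\rk$ (which sends each $[P_j]$ to $1$) to the formula \eqref{eq:matcls-grey}, then use (i) that $\sum_{\gamma\in\bdry F}f(\gamma)=\matdeg(f)$ for every face $F$ by the defining property of $\matlat$, so that summing over faces yields $\sum_{\gamma\in Q_1}f(\gamma)+\sum_{\gamma\text{ internal}}f(\gamma)=|Q_2|\matdeg(f)$, and (ii) that $|Q_0|-|Q_1|+|Q_2|=1$ since $|Q|$ is a disc. Together these collapse $\rk\matcls(f)$ to $\matdeg(f)$.

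For the first compatibility, I would evaluate both sides on a basis vector $[S_i]$. The coboundary is $d(e_i)(\gamma)=\delta_{h\gamma,i}-\delta_{t\gamma,i}$ with $\matdeg(d(e_i))=0$, and direct substitution into \eqref{eq:matcls-grey} gives
\[
\matcls(d(e_i)) = (B_i-N_i)[P_i] - \sum_{\gamma: t\gamma=i}[P_{h\gamma}] + \sum_{\gamma: h\gamma=i}\intchi_\gamma[P_{t\gamma}],
\]
where $B_i=\#\{\gamma\in Q_1:h\gamma=i\}$ and $N_i=\#\{\gamma\in Q_1\text{ internal}:t\gamma=i\}$. Comparison with \eqref{eq:beta} reduces the desired identity to the purely combinatorial statement $B_i-N_i=1-\intchi_i$, which I expect to be the main step of the proof.

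To verify this, I would exploit the bipartite structure of the incidence graph of $Q$ at $i$, with parts given by arrows with head $i$ and arrows with tail $i$; this is genuinely a bipartition because consecutive arrows in a face boundary alternate direction at $i$. At an internal vertex ($\intchi_i=1$) the incidence graph is an even-length cycle with equal sizes of its two parts, and since all arrows at $i$ are internal, $B_i=N_i$. At a boundary vertex ($\intchi_i=0$) the incidence graph is a line whose two endpoints are precisely the boundary arrows at $i$; a short case analysis on whether these endpoints lie in the same or different parts, combined with the correction that the $0$, $1$, or $2$ boundary arrows at $i$ with tail $i$ are excluded from $N_i$, yields $B_i=N_i+1$ in all cases. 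Both cases give $B_i-N_i=1-\intchi_i$, completing the identification of the two sequences and hence exactness.
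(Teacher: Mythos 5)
Your proof is correct and follows essentially the same route as the paper: both construct the same commutative diagram comparing the cluster ensemble sequence with the exact cochain complex \eqref{eq:restr-cochain} via the isomorphism $\matcls$ from Lemma~\ref{lem:matcls-isom}, and both reduce the verification to the two compatibilities $\matcls\circ d=\exchmat$ and $\rk\circ\matcls=\matdeg$, the latter via the Euler characteristic of the disc. Your more explicit bipartite-incidence-graph case analysis for the combinatorial identity $B_i-N_i=1-\intchi_i$ is merely a fleshed-out version of the paper's informal counting of arrows in and out of $i$, so there is no substantive difference in the argument.
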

\begin{proof}
Two straightforward (but not entirely trivial) calculations, which we describe below, show that the map $\matcls$ fits into the following commutative diagram.
\begin{equation}
\label{eq:clu-ens-comp}
\begin{tikzcd}
\ZZ\arrow{r}{c}\arrow[equal]{d}&
\ZZ^{Q_0}\arrow{r}{d}\arrow[equal]{d}&
\matlat\arrow{r}{\matdeg}\arrow{d}{\matcls}&
\ZZ\arrow[equal]{d}
\\
\ZZ\arrow{r}{c}&
\ZZ^{Q_0}\arrow{r}{\beta}&
\ZZ^{Q_0}\arrow{r}{\rk}&
\ZZ
\end{tikzcd}
\end{equation}
Since $\eta$ is an isomorphism by Lemma~\ref{lem:matcls-isom}, this diagram describes an isomorphism of complexes from the exact cochain complex \eqref{eq:cochain} to the cluster ensemble sequence \eqref{eq:clu-ens}, which is therefore also exact.

The first calculation is that $\matcls (d[S_i]) = \exchmat[S_i]$. 
We start by noting that $\matdeg(d[S_i]) = 0$, so we calculate from \eqref{eq:matcls-value} that
\[
 \matcls (d[S_i]) = \sum_{a:ha=i} \bigl([P_{ha}] + \intchi_a [P_{ta}]  \bigr)
  - \sum_{a:ta=i} \bigl([P_{ha}] + \intchi_a [P_{ta}] \bigr),
\]
using the indicator function $\chi_a$ to convert a sum over internal arrows into a sum over all arrows.
Thus, comparing to \eqref{eq:beta}, we need to prove that
\[
  \sum_{a:ha=i} [P_{ha}] 
  - \sum_{a:ta=i} \intchi_a [P_{ta}]
  = (1-\intchi_i)[P_i].
\]
When $i$ is internal, all incident arrows $a$ are internal 
and there are as many with $ha=i$ as with $ta=i$.
When $i$ is on the boundary, there is one more arrow $a$ with $ha=i$ than with $ta=i$,
when we ignore any boundary arrows of the latter type.

The second calculation is that $\rk(\matcls(f))=\matdeg(f)$. 
For this we observe, from \eqref{eq:matcls-value} and the fact that $\rk[P_i]=1$ for all $i\in Q_0$, that 
\[
\rk(\matcls(f)) = \matdeg(f) \bigl( |Q_0| - |Q_1| \bigr)+ \sum_{a\in Q_1} f(a)(1+\intchi_a).
\]
The sum on the right-hand side is equal to $\matdeg(f) |Q_2|$, as $f\in\matlat$.
But $|Q_0| - |Q_1|+ |Q_2|=1$, that is, the Euler characteristic of the disc.
\end{proof}

\begin{remark}
The commutativity of \eqref{eq:clu-ens-comp} in fact shows that the exactness of \eqref{eq:clu-ens} is equivalent to $\eta$ being an isomorphism; one direction is as given in the proof of Proposition~\ref{p:clu-ens-exact}, while the converse follows from the five lemma.

Indeed, if one calculates the maps $\matcls$ and $\exchmat$ combinatorially in 
the inconsistent example in Figure~\ref{f:inconsistent}, 
then one finds that $\matcls$ is not an isomorphism and the sequence \eqref{eq:clu-ens} is not exact.
\end{remark}

\section{Muller--Speyer matchings} \label{sec:MuSp}

\newcommand{\MSmap}{\mathfrak{M}}
\newcommand{\DSwedge}{\mathfrak{DS}}
\newcommand{\alteta}{\hat{\eta}}

Let $D$ be a Postnikov diagram, with $Q=Q(D)$ and $A=A_D$, and consider the map $\alteta\colon\matlat\to\ZZ^{Q_0}$ given by
\begin{equation}
\label{eq:matcls-alt}
  \alteta(f) = \matdeg(f) \sum_{j\in Q_0}p_j
- \sum_{\gamma\in Q_1} (\matdeg(f)-f(\gamma))p_{h\gamma}
+ \sum_{\substack{ \gamma\in Q_1 \\ \amut }} f(\gamma)p_{t\gamma}
\end{equation}
where $\{p_j:j\in Q_0\}$ is the standard basis of $\ZZ^{Q_0}$. When $D$ is connected, we use the isomorphism $\ZZ^{Q_0}\to\K_0(\proj{A})\colon p_j\mapsto[P_j]$ as a (silent) identification, so that the map $\alteta$ in \eqref{eq:matcls-alt} is identified with the map $\matcls$ in \eqref{eq:matcls-grey}. In this case, Lemma~\ref{lem:matcls-isom} shows that $\alteta$ is an isomorphism.

In this section we will want to evaluate $\alteta$ on perfect matchings $\mu\in\matlat$, for which we have (cf.\ \eqref{eq:proj-res-class})
\begin{equation}
	\label{eq:matcls-mu}
	\alteta(\mu) = \sum_{i\in Q_0}p_i
	- \sum_{\gamma\notin\mu}p_{h\gamma}
	+ \sum_{\substack{ \gamma\in\mu \\ \amut }}p_{t\gamma}.
\end{equation}

In \cite[\S5.2]{MuSp}, without requiring $D$ to be connected, Muller--Speyer defined a special matching $\MSmat_j$, associated to 
any vertex $j\in Q_0$
(or more strictly to a face of the dual plabic graph $\plabic(D)$), by 
\begin{equation}
\label{eq:MSmat}
 \alpha \in \MSmat_j \iff j \in \DSwedge(\alpha)
\end{equation}
where $\DSwedge(\alpha)$ is the downstream wedge of the arrow $\alpha\in Q_1$, 
as illustrated in Figure~\ref{f:ds-wedge}(a).
One of their results \cite[Cor.~5.6]{MuSp} is that $\{ \MSmat_j : j\in Q_0\}$ is a basis of $\matlat$,
which can be formulated as saying that
\begin{equation}\label{eq:MSmap}
 \MSmap \colon \ZZ^{Q_0} \to \matlat \colon p_j \mapsto \MSmat_j
\end{equation}
is an isomorphism.
To make the comparison, note that  \cite[\S5.3]{MuSp} actually uses $-\MSmap$ to 
describe a monomial map between the tori $\Gm^{|Q_1|}/\Gm^{|Q_2|-1}$ and $\Gm^{|Q_0|}$ whose character lattices are $\matlat$ and $\ZZ^{Q_0}$ respectively. 
Furthermore these tori are described in terms of the bipartite graph dual to the quiver with faces~$Q$.

Our main goal in this section is to show that, in fact, $\alteta=\MSmap^{-1}$.
We do this by showing that $\alteta \compo \MSmap = \id$,
i.e.\ $\alteta(\MSmat_j)=p_j$ for all $j\in Q_0$.
In the case that $D$ is connected, this means that $\matcls(\MSmat_j)=[P_j]$ and so, since we also have $\matcls(\MSmat_j)=[N_{\MSmat_j}]$ by \eqref{eq:matcls-value}, we may conclude via Corollary~\ref{cor:rank1-rigid} that $P_j\isom N_{\MSmat_j}$.

An example of a Muller--Speyer matching $\MSmat_j$ is shown in Figure~\ref{f:ms-example} 
and one can verify in this case that $N_{\MSmat_j}\isom P_j$.

\begin{figure} [h]
\begin{tikzpicture}[scale=2.5,yscale=-1]
\foreach \n/\m/\a in {1/4/0, 2/3/0, 3/2/5, 4/1/10, 5/7/0, 6/6/-3, 7/5/0}
{ \coordinate (b\n) at (\bstart-\seventh*\n+\a:1.0);}

\foreach \n/\m in {8/1, 9/2, 10/3, 11/4, 14/5, 15/6, 16/7}
  {\coordinate (b\n) at ($0.65*(b\m)$);}

\coordinate (b13) at ($(b15) - (b16) + (b8)$);
\coordinate (b12) at ($(b14) - (b15) + (b13)$);

\foreach \n/\x\y in {13/-0.03/-0.03, 12/-0.22/0.0, 14/-0.07/-0.03, 11/0.05/0.02, 16/-0.02/0.02}
  {\coordinate (b\n)  at ($(b\n) + (\x,\y)$); } 

\foreach \e/\f/\t in {2/9/0.5, 4/11/0.5, 5/14/0.5, 7/16/0.5, 
 8/9/0.5, 9/10/0.5, 10/11/0.5,11/12/0.5, 12/13/0.45, 8/13/0.6, 
 14/15/0.5, 15/16/0.6, 12/14/0.45, 13/15/0.4, 8/16/0.6}
{\coordinate (a\e-\f) at ($(b\e) ! \t ! (b\f)$); }

\draw [strand] plot[smooth]
coordinates {(b1) (a8-16) (a15-16) (b6)}
[postaction=decorate, decoration={markings,
 mark= at position 0.2 with \strarrow,
 mark= at position 0.5 with \strarrow, 
 mark= at position 0.8 with \strarrow }];
 
\draw [strand] plot[smooth]
coordinates {(b6) (a14-15) (a12-14)(a11-12) (a10-11) (b3)}
[postaction=decorate, decoration={markings,
 mark= at position 0.15 with \strarrow, mark= at position 0.35 with \strarrow,
 mark= at position 0.53 with \strarrow, mark= at position 0.7 with \strarrow,
 mark= at position 0.87 with \strarrow }];
 
\draw [strand] plot[smooth]
coordinates {(b3) (a9-10) (a8-9) (b1)}
[postaction=decorate, decoration={markings,
 mark= at position 0.2 with \strarrow,
 mark= at position 0.5 with \strarrow, 
 mark= at position 0.8 with \strarrow }];

\draw [strand] plot[smooth]
coordinates {(b2) (a9-10) (a10-11) (b4)}
 [postaction=decorate, decoration={markings,
 mark= at position 0.2 with \strarrow,
 mark= at position 0.5 with \strarrow, 
 mark= at position 0.8 with \strarrow }];

\draw [strand] plot[smooth]
coordinates {(b4) (a11-12) (a12-13) (a13-15) (a15-16) (b7)}
[postaction=decorate, decoration={markings,
 mark= at position 0.15 with \strarrow, mark= at position 0.35 with \strarrow,
 mark= at position 0.55 with \strarrow, mark= at position 0.7 with \strarrow,
 mark= at position 0.87 with \strarrow }];

\draw [strand] plot[smooth]
coordinates {(b7) (a8-16) (a8-13) (a12-13) (a12-14) (b5)}
[postaction=decorate, decoration={markings,
 mark= at position 0.15 with \strarrow, mark= at position 0.315 with \strarrow,
 mark= at position 0.5 with \strarrow, mark= at position 0.7 with \strarrow,
 mark= at position 0.88 with \strarrow }];

\draw [strand] plot[smooth]
coordinates {(b5) (a14-15) (a13-15) (a8-13) (a8-9) (b2)}
[postaction=decorate, decoration={markings,
 mark= at position 0.13 with \strarrow, mark= at position 0.33 with \strarrow,
 mark= at position 0.5 with \strarrow, mark= at position 0.7 with \strarrow,
 mark= at position 0.88 with \strarrow }];

\foreach \n/\m\a in {1/124/0, 2/234/-1, 3/345/1, 4/456/10, 5/256/5, 6/267/0, 7/127/0}
{ \draw [\quivcolor] (\bstart+\seventh/2-\seventh*\n+\a:1) node (q\m) {\tiny $\bullet$}; }
\foreach \m/\a/\r in {247/\bstart/0.42, 245/10/0.25, 257/210/0.36}
{ \draw [\quivcolor] (\a:\r) node (q\m) {\tiny $\bullet$}; }
\draw [\quivcolor, thick] (q247) circle (0.05);
\foreach \t/\h/\a in {234/124/-20, 234/345/19, 456/345/-16, 456/256/22, 127/267/-20,
 127/124/19, 247/127/11, 245/234/18, 247/245/34, 245/257/15,
 267/257/6, 257/256/0, 245/456/-16}
{ \draw [quivarrow]  (q\t) edge [bend left=\a] (q\h); }

\foreach \t/\h/\a in {257/247/14, 124/247/13, 345/245/0, 256/267/21, 256/245/-9}
{ \draw [quivarrow, ultra thick, violet]  (q\t) edge [bend left=\a] (q\h); }
\end{tikzpicture}
\caption{The Muller--Speyer matching $\MSmat_j$ for $j$ the circled vertex.}
\label{f:ms-example}
\end{figure}
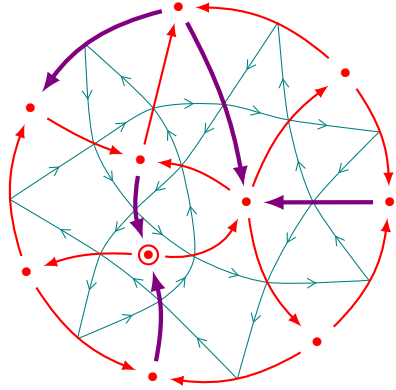

Interestingly, Muller--Speyer \cite[proof of Prop.~5.5]{MuSp} also show that $\alteta=\MSmap^{-1}$,
but by instead proving that $\MSmap \compo \alteta = \id$. They deduce this identity by defining larger matrices 
\[ X\colon \ZZ\oplus \ZZ^{Q_1}\to \ZZ^{Q_0}\oplus \ZZ^{Q_2}
\quad\text{and}\quad 
X'\colon \ZZ^{Q_0}\oplus \ZZ^{Q_2}\to \ZZ\oplus \ZZ^{Q_1},
\] 
with $\MSmap$ a component of $X'$,
and showing \cite[Lemma~5.1]{MuSp} that $X'\compo X=\id$ (cf.\ Remark~\ref{rem:eta-in-MS}).
It then follows that $X\compo X'=\id$, and one component of this identity implies that 
each $\MSmat_j$ is indeed a matching.

\begin{figure} [h]
\pgfmathsetmacro{\brad}{1.0}
\pgfmathsetmacro{\qang}{30}
\pgfmathsetmacro{\alpA}{\bstart-\seventh}
\pgfmathsetmacro{\alpB}{\bstart-3*\seventh+5}
\pgfmathsetmacro{\alpC}{\bstart-4*\seventh-5}
\pgfmathsetmacro{\alpD}{\bstart-6*\seventh}
\begin{tikzpicture}[scale=2]
\begin{scope}
\foreach \n/\a in {1/\alpA, 3/\alpB, 4/\alpC, 6/\alpD}
  \coordinate (b\n) at (\a:\brad);
\foreach \n/\a/\r in {1/\qang/0.15, 2/180+\qang/0.35}
  \coordinate (q\n) at (\a:\r);
\coordinate (m-ab) at ($(q1) ! 0.6 ! (q2)$); 

\draw [strand, white, fill=teal!10] (b1) to [out= -130, in=\qang+45] (m-ab) 
 to [out=\qang-45, in= 180] (b3) arc (\alpB:\alpA:\brad);

\draw [strand] (b6) to [out=0, in=180+\qang-45] (m-ab) to [out=\qang-45, in= 180] (b3)
[postaction=decorate, decoration={markings,
 mark= at position 0.25 with \strarrow,  mark= at position 0.75 with \strarrow }];
\draw [strand] (b4) to [out=120, in=180+\qang+45] (m-ab) to [out=\qang+45, in= -130] (b1)
[postaction=decorate, decoration={markings,
 mark= at position 0.25 with \strarrow,  mark= at position 0.75 with \strarrow }];

\draw [quivarrow] ($(q2) ! 0.15 ! (q1)$) to ($(q2) ! 0.85 ! (q1)$) ;
\foreach \n in {1,2}
 \draw [\quivcolor] (q\n) node {\tiny $\bullet$};

\draw [boundary] (0,0) circle (\brad);
\end{scope}
\begin{scope}[shift={(3,0)}]
\pgfmathsetmacro{\qrad}{0.45}
\pgfmathsetmacro{\sang}{20}
\pgfmathsetmacro{\alpA}{\bstart-\seventh}
\pgfmathsetmacro{\alpB}{\bstart-3*\seventh+5}
\pgfmathsetmacro{\alpC}{\bstart-5*\seventh-5}
\pgfmathsetmacro{\alpD}{\bstart-7*\seventh}
\foreach \n/\a in {1/\alpA, 2/\alpB, 3/\alpC, 4/\alpD}
  \coordinate (b\n) at (\a:\brad);
 \coordinate (q1) at (\qang:0.15);
\foreach \n/\m/\a in {2/1/90, 3/2/180, 4/3/270}
 \coordinate (q\n) at ($(q\m)+(\qang+\a:\qrad)$);
\foreach \n/\m/\r in {1/2/0.4, 2/3/0.4, 3/4/0.4, 4/1/0.4}
  \coordinate (a\n\m) at ($(q\n) ! \r ! (q\m)$); 
\foreach \n/\a in {12/-30, 23/60, 34/150, 41/-120}
 \coordinate (c\n) at ($(a\n)+(\a+\qang:0.15)$);

\draw [strand, white, fill=teal!20] (b1) arc (\alpA:\alpB:\brad)
(b2) to [in=\qang-45,  out=180+\alpB+\sang]
(a41) to [out=\qang+45, in=\qang-135] (a12) to [out=\qang+45,  in=180+\alpA+\sang] (b1);

\draw [strand, white, fill=teal!10] (b2) arc (\alpB:\alpC:\brad)
(b3) to  [in=\qang-135,  out=180+\alpC+\sang] 
(a34) to [out=\qang-45, in=\qang+135] (a41) to [out=\qang-45,  in=180+\alpB+\sang] (b2);

\draw [strand, white, fill=teal!20] (b3) arc (\alpC:\alpD:\brad)
(b4) to [in=\qang+135,  out=180+\alpD-\sang]
(a23) to [out=\qang-135, in=\qang+45] (a34) to [out=\qang-135,  in=180+\alpC+\sang] (b3);

\draw [strand, white, fill=teal!10] (b4) arc (\alpD:\alpA-360:\brad)
(b1) to [in=\qang+45,  out=180+\alpA+\sang] 
(a12) to [out=\qang+135, in=\qang-45] (a23) to [out=\qang+135,  in=180+\alpD-\sang] (b4);

\draw [strand] (c41) to [out=\qang+60, in=\qang-135] (a41) to [out=\qang+45, in=\qang-135] (a12) 
 to [out=\qang+45,  in=180+\alpA+\sang] (b1)
[postaction=decorate, decoration={markings, 
 mark= at position 0.32 with \strarrow, mark= at position 0.75 with \strarrow }];
\draw [strand] (c12) to [out=\qang+150, in=\qang-45] (a12) to [out=\qang+135, in=\qang-45] (a23) 
 to [out=\qang+135,  in=180+\alpD-\sang] (b4)
[postaction=decorate, decoration={markings, 
 mark= at position 0.35 with \strarrow, mark= at position 0.75 with \strarrow }];
\draw [strand] (c23) to [out=\qang+240, in=\qang+45] (a23) to [out=\qang-135, in=\qang+45] (a34) 
 to [out=\qang-135,  in=180+\alpC+\sang] (b3)
[postaction=decorate, decoration={markings, 
 mark= at position 0.32 with \strarrow, mark= at position 0.75 with \strarrow }];
\draw [strand] (c34) to [out=\qang-30, in=\qang+135] (a34) to [out=\qang-45, in=\qang+135] (a41) 
 to [out=\qang-45,  in=180+\alpB+\sang] (b2)
[postaction=decorate, decoration={markings, 
 mark= at position 0.27 with \strarrow, mark= at position 0.75 with \strarrow }];

\foreach \n/\m/\r in {1/2/0.5, 2/3/0.5, 3/4/0.5, 4/1/0.5}
  \draw [quivarrow] ($(q\n) ! 0.15 ! (q\m)$) to ($(q\n) ! 0.85 ! (q\m)$);
\foreach \n in {1,2,3,4}
 \draw [\quivcolor] (q\n) node {\tiny $\bullet$};

\draw [boundary] (0,0) circle (\brad);
\end{scope}
\end{tikzpicture}
\caption{(a) downstream wedge of an arrow, (b) wedges round a face}
\label{f:ds-wedge}
\end{figure}
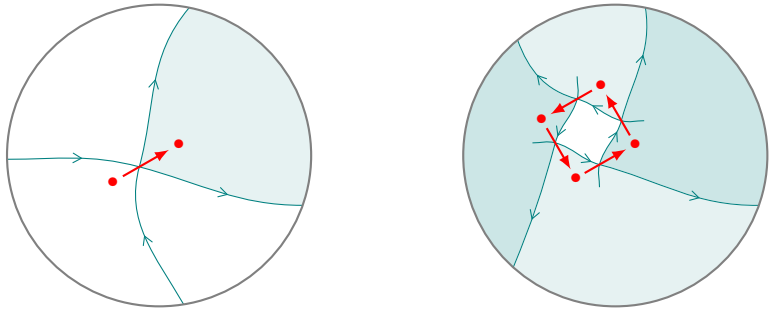

On the other hand, we will show directly that $\MSmat_j$ is a matching, 
by observing that the downstream wedges of the 
arrows in a face of $Q$ partition the vertices $Q_0$, 
as illustrated in Figure~\ref{f:ds-wedge}(b).
This is a special case of a more general wedge-covering property that we now explain.

Recall that, by the Jordan curve theorem, the complement of a simple closed curve in the disc has one component not intersecting the boundary, which we call the \emph{inside} (the other components being the \emph{outside}). Furthermore, the curve is the boundary of its inside.

\newcommand{\strapol}{\mathcal{P}}

\begin{definition} \label{d:strand-poly}
In a Postnikov diagram $D$, a \emph{strand polygon} $\strapol$ is an oriented simple closed curve consisting of a collection of contiguous segments, either of the boundary of the disc or of strands oriented consistently with $\strapol$. We further require that $\strapol$ turns towards its inside at each \emph{vertex}, 
i.e.\ the point at which one segment ends and the next begins.
See Figure~\ref{f:strandpoly} for examples and Figure~\ref{f:notstrandpoly} for non-examples.
Note that edges of the strand polygon may cross other strands in the diagram.

Each vertex $v$ of the polygon has a \emph{tendril} $f_v$, defined as follows. If the preceding edge $e_v$ is a strand segment, then $f_v$ is the continuation of the strand from $e_v$ until it ends on the boundary of the disc. If $e_v$ is a boundary segment, then $f_v$ is the point $v$ by definition.

Each vertex $v$ of the polygon determines a \emph{(downstream) wedge}, which is the subset of the disc bounded by the tendril $f_v$, the edge $e_w$ following $v$, the tendril $f_w$, and the boundary segment from the endpoint of $f_v$ to that of $f_w$ in the direction (clockwise or anticlockwise) of the orientation of $\strapol$.
This construction is illustrated in Figure~\ref{fig:wedge}.
\end{definition}

\begin{figure} [h]
\begin{tikzpicture}[scale=1.5]
\pgfmathsetmacro{\eps}{0.2}
\begin{scope}[shift={(0,0)}]
\pgfmathsetmacro{\brad}{0.7}
\foreach \n/\a in {1/20, 2/90, 3/170, 4/240, 5/330}
  \coordinate (b\n) at (\a:\brad);
\foreach \t/\h/\outang/\inang in {1/2/170/-70, 2/3/240/30, 3/4/-40/100, 4/5/30/180,
 5/1/110/-110}
{ \draw [strand, thick, densely dotted] (b\t)--++(\outang+180:\eps);
  \draw [strand, thick, densely dotted] (b\h)--++(\inang+180:\eps);
   \draw [strand, thick] (b\t) to [out=\outang, in=\inang] (b\h)
  [postaction=decorate, decoration={markings, 
 mark= at position 0.55 with \strarrow}]; 
} 
\end{scope}
\begin{scope}[shift={(3,0)}]
\pgfmathsetmacro{\brad}{0.5}
\pgfmathsetmacro{\bbrad}{0.9}
\foreach \n/\a/\rad in {1/20/\bbrad, 2/80/\brad, 3/180/\brad, 4/240/\brad, 5/-60/\bbrad}
  \coordinate (b\n) at (\a:\rad);
  
\foreach \t/\h/\outang/\inang in {2/3/240/30, 3/4/-40/100}
{ \draw [strand, thick, densely dotted] (b\t)--++(\outang+180:\eps);
  \draw [strand, thick, densely dotted] (b\h)--++(\inang+180:\eps);
   \draw [strand, thick] (b\h) to [out=\inang, in=\outang] (b\t)
  [postaction=decorate, decoration={markings, 
 mark= at position 0.55 with \strarrow}]; 
} 
\foreach \t/\h/\outang/\inang in {4/5/10/140}
{ \draw [strand, thick, densely dotted] (b\t)--++(\outang+180:\eps);
   \draw [strand, thick] (b\h) to [out=\inang, in=\outang] (b\t)
  [postaction=decorate, decoration={markings, 
 mark= at position 0.55 with \strarrow}]; 
} 
\foreach \t/\h/\outang/\inang in {1/2/190/-50}
{ \draw [strand, thick, densely dotted] (b\h)--++(\inang+180:\eps);
   \draw [strand, thick] (b\h) to [out=\inang, in=\outang] (b\t)
  [postaction=decorate, decoration={markings, 
 mark= at position 0.55 with \strarrow}]; 
} 

\draw [boundary, thick, densely dotted] (b1) arc (20:35:\bbrad); 
\draw [boundary, thick, densely dotted] (b5) arc (-60:-75:\bbrad);
\draw [boundary] (b1) arc (20:-60:\bbrad);

\end{scope}
\end{tikzpicture}
\caption{Strand polygons}
\label{f:strandpoly}
\end{figure}
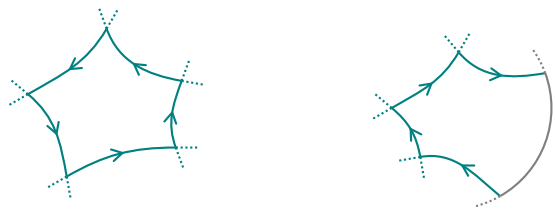

\begin{figure} [h]
\begin{tikzpicture}[scale=1.5]
\pgfmathsetmacro{\brad}{0.7}
\pgfmathsetmacro{\eps}{0.2}
\begin{scope}[shift={(0,0)}]
\foreach \n/\a in {1/20, 2/90, 3/170, 4/240, 5/330}
  \coordinate (b\n) at (\a:\brad);
\foreach \t/\h/\outang/\inang in {2/1/-70/170, 3/2/30/240, 3/4/-40/100, 4/5/30/180, 5/1/110/-110}
{ \draw [strand, thick, densely dotted] (b\t)--++(\outang+180:\eps);
  \draw [strand, thick, densely dotted] (b\h)--++(\inang+180:\eps);
   \draw [strand, thick] (b\t) to [out=\outang, in=\inang] (b\h)
  [postaction=decorate, decoration={markings, 
 mark= at position 0.55 with \strarrow}]; 
} 
\end{scope}
\begin{scope}[shift={(3,0)}]
\foreach \n/\a in {1/20, 3/170, 4/240, 5/330}
  \coordinate (b\n) at (\a:\brad);
  \coordinate (b2) at (90:0.3*\brad);
\foreach \t/\h/\outang/\inang in {1/2/170/50, 2/3/140/30, 3/4/-40/100, 4/5/30/180,
 5/1/110/-110}
{ \draw [strand, thick, densely dotted] (b\t)--++(\outang+180:\eps);
  \draw [strand, thick, densely dotted] (b\h)--++(\inang+180:\eps);
   \draw [strand, thick] (b\t) to [out=\outang, in=\inang] (b\h)
  [postaction=decorate, decoration={markings, 
 mark= at position 0.55 with \strarrow}]; 
} 
\end{scope}
\end{tikzpicture}
\caption{Not strand polygons}
\label{f:notstrandpoly}
\end{figure}
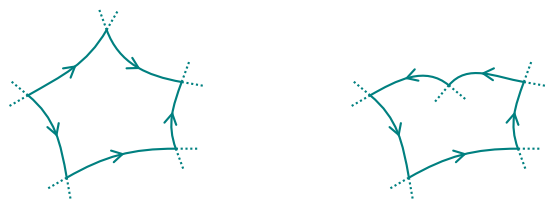
\begin{figure}[h]
\begin{tikzpicture}[scale=2.7]
\pgfmathsetmacro{\brad}{1.0}
\pgfmathsetmacro{\qang}{30}
\pgfmathsetmacro{\qrad}{0.45}
\pgfmathsetmacro{\sang}{20}
\pgfmathsetmacro{\alpA}{\bstart-\seventh}
\pgfmathsetmacro{\alpB}{\bstart-3*\seventh+5}
\pgfmathsetmacro{\alpC}{\bstart-5*\seventh-5}
\pgfmathsetmacro{\alpD}{\bstart-7*\seventh}
\foreach \n/\a in {1/\alpA, 2/\alpB, 3/\alpC, 4/\alpD}
  \coordinate (b\n) at (\a:\brad);
\coordinate (q1) at (\qang:0.15);
\foreach \n/\m/\a in {2/1/90, 3/2/180, 4/3/270}
\coordinate (q\n) at ($(q\m)+(\qang+\a:\qrad)$);
\foreach \n/\m/\r in {1/2/0.4, 2/3/0.4, 3/4/0.4, 4/1/0.4}
\coordinate (a\n\m) at ($(q\n) ! \r ! (q\m)$);
\foreach \n/\a in {12/-30, 23/60, 34/150, 41/-120}
\coordinate (c\n) at ($(a\n)+(\a+\qang:0.15)$);

\draw [strand, white, fill=teal!20] (b1) arc (\alpA:\alpB:\brad)
(b2) to [in=\qang-45,  out=180+\alpB+\sang]
(a41) to [out=\qang+45, in=\qang-135] (a12) to [out=\qang+45,  in=180+\alpA+\sang] (b1);

\draw [strand] (c41) to [out=\qang+60, in=\qang-135] (a41) to [out=\qang+45, in=\qang-135] (a12)
to [out=\qang+45,  in=180+\alpA+\sang] (b1)
[postaction=decorate, decoration={markings,
        mark= at position 0.32 with \strarrow, mark= at position 0.75 with \strarrow }];
\draw [strand] (c12) to [out=\qang+150, in=\qang-45] (a12) to [out=\qang+135, in=\qang-45] (a23)
to [out=\qang+135,  in=180+\alpD-\sang] (b4)
[postaction=decorate, decoration={markings,
        mark= at position 0.35 with \strarrow, mark= at position 0.75 with \strarrow }];
\draw [strand] (c23) to [out=\qang+240, in=\qang+45] (a23) to [out=\qang-135, in=\qang+45] (a34)
to [out=\qang-135,  in=180+\alpC+\sang] (b3)
[postaction=decorate, decoration={markings,
        mark= at position 0.32 with \strarrow, mark= at position 0.75 with \strarrow }];
\draw [strand] (c34) to [out=\qang-30, in=\qang+135] (a34) to [out=\qang-45, in=\qang+135] (a41)
to [out=\qang-45,  in=180+\alpB+\sang] (b2)
[postaction=decorate, decoration={markings,
        mark= at position 0.27 with \strarrow, mark= at position 0.75 with \strarrow }];

\draw [boundary] (0,0) circle (\brad);
\newcommand{\size}{\small}

\draw (a41) circle(0.015) [fill=black] node [below right = -1.8pt] {\size $v$};
\draw( a12) circle(0.015) [fill=black] node [above right = -2pt] {\size $w$};

\draw (45:0.1) node {\size $e_w$};
\draw (67:0.68) node {\size $f_w$};
\draw (-35:0.62) node {\size $f_v$};
\draw (-163:0.25) node {\size $e_v$};

\end{tikzpicture}
\caption{A strand polygon and its tendrils. The wedge at $v$ is shaded.}
\label{fig:wedge}
\end{figure}
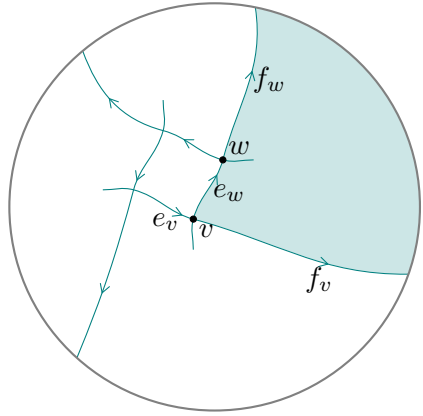

Note that whenever $v$ is on the boundary, $f_v$ is just the single point $v$ (by definition or by construction). The turning condition implies that non-trivial tendrils start by moving into the outside of $\strapol$.

Each wedge is well-defined (and wedge-shaped) because of condition (b2) in the definition of a Postnikov diagram, which implies that the strand segments $f_v$ and $e_w\cup f_w$ intersect only at $v$. 
If $v$ is on the boundary, then its wedge is trivial if the next edge of $\strapol$ is a boundary segment, and 
otherwise is just one side of the strand on which the next edge lies.

Note that the boundary of an oriented region of $D$, corresponding to a face $F$ of  $Q$,
is an example of a strand polygon. The inside of the polygon is the oriented region and
the downstream wedges of its vertices are those of the arrows in $\bdry F$
(see Figure~\ref{f:ds-wedge}(b)).

\begin{lemma}
\label{l:no-return}
Let $\strapol$ be a strand polygon in $D$. Then the tendrils of $\strapol$ meet $\strapol$ only at their starting vertices.
\end{lemma}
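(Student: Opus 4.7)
The plan is to argue by contradiction. Suppose that the tendril $f_v$ meets $\strapol$ at some point $U\ne v$, and let $U$ be the first such point encountered along $f_v$ starting from $v$. Write $s$ for the strand carrying both $e_v$ and its continuation $f_v$. Because $\strapol$ turns \emph{inward} at $v$ while $s$ continues tangentially, $f_v$ leaves $v$ into the outside of $\strapol$; hence $f_v[v,U]$ lies in the closure of the outside of $\strapol$ and meets $\strapol$ only at its two endpoints.

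The first step is to reduce to the case that $U$ lies in the interior of a strand edge $e\subset\strapol$ carried by a strand $s'\ne s$, and that $f_v$ crosses $s'$ transversely at $U$. Indeed, if $U$ sat on an edge $e\subset\strapol$ that is also a piece of $s$, then by axiom (b1) the arcs $e$ and $f_v$ would be overlapping subarcs of the same simple curve $s$; tracing this overlap back, the starting vertex of $e$ would then have to occur on $f_v$ strictly before $U$, contradicting the minimality of $U$. The possibilities that $U$ is itself a vertex of $\strapol$ or that it lies on a boundary segment of $\strapol$ are handled by minor variants of the same case analysis, using that strand endpoints on the disc boundary and turning points of $\strapol$ both admit local descriptions consistent with this framework.

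The heart of the argument is to produce a second crossing of $s$ with $s'$ whose orientation, together with the crossing at $U$, contradicts axiom (b2). The model case is when $s'$ is the strand of the edge $e_w$ immediately following $v$: then $s$ and $s'$ already cross at $v$ (this is precisely the instance of (b2) used by the authors to establish that wedges are well defined), and $s$ proceeds $v\to U$ along $f_v$, while $s'$ proceeds $v\to U$ in $\strapol$'s positive direction along $e_w$. Since both strands go in the same order, this directly violates (b2). For the general case I would build the simple closed curve $\Gamma=f_v[v,U]\cup\sigma^+$, where $\sigma^+$ is the arc of $\strapol$ from $v$ to $U$ in $\strapol$'s positive direction; the strand $s'$ must arrive at $U$ along $e$ from the starting vertex of $e$, and the backward tendril of $s'$ at this starting vertex must itself emerge into the outside of $\strapol$ by the turning condition. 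A Jordan curve argument inside the outside component of $\strapol$ then forces this backward tendril of $s'$ to cross $f_v[v,U]$ at some second point, and the orientations at the two resulting crossings of $s$ and $s'$ contradict (b2) exactly as in the adjacent case.

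The main obstacle I anticipate is making this ``forced second crossing'' topologically rigorous when $U$ lies far from $v$ along $\strapol$: the backward tendril of $s'$ may wander extensively through the outside of $\strapol$ before meeting $f_v$, and the precise Jordan-curve argument in the outside region (an annular region when $\strapol$ has no boundary segments, but a more complicated planar region when $\strapol$ uses boundary segments of the disc) needs care. I expect that axiom (a3) may also need to be invoked in controlling the alternation of crossings along $s'$.
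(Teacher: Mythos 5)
Your treatment of the case where the second meeting $U$ lies on $e_w$, and the reduction ruling out a second meeting on a piece of $s$ itself, are correct and correspond to the paper's initial observation. But the general case contains a genuine gap, and your own caveat about the ``forced second crossing'' is an accurate self-diagnosis of where it lies.

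You follow the part of $s'$ lying before the vertex $v'$ at the start of the edge $e$ containing $U$, and want to argue via the Jordan curve theorem that it must cross $f_v[v,U]$ (at which point (b2) would indeed be violated, since both $s$ and $s'$ would run from the new crossing to $U$). But the Jordan curve theorem only forces this arc to exit the region $R$ bounded by $f_v[v,U]\cup\sigma^+$. It does not force the exit to be through $f_v[v,U]$: the arc can perfectly well exit through a more distant segment of $\sigma^+$, carried by a strand that $s'$ has not previously met, in which case no axiom is violated and the argument stalls. The inward-turning at $v'$ and axioms (b1), (b2) control the behaviour of this arc only near $v'$, near $e_{v'}$, and near $f_v[v,U]$; they say nothing about exits through the rest of $\sigma^+$. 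Appealing to (a3), as you suggest, does not repair this: the alternation along a strand constrains local crossing data, not which side of $\Gamma$ an arc eventually leaves through.

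The paper's proof closes exactly this gap with an inductive mechanism that your proposal lacks. Rather than chasing the backward part of the strand carrying $U$, it chases the forward tendrils $f_u$ and $f_w$ at the vertices $u,w$ adjacent to $v$, observes that one of $u,w$ lies strictly inside the arc of $\strapol$ cut off by the returning $f_v$, and shows (using (b2) together with the crossing already known at $u$ or $v$) that this tendril can exit the cut-off region only through a strictly smaller sub-arc of $\strapol$. Iterating gives a descent on the number of enclosed polygon vertices, terminating in a (b1) violation. Some such recursion is essential: the obstacle you anticipate is not a technicality but the absence of this idea.
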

\begin{proof}
Fix a vertex $v$ of $\strapol$ and let $e_v$, from $u$ to $v$, and $e_w$, from $v$ to $w$, be the edges incident with $v$. Let $s_v$ be the strand containing $e_v$ and $f_v$. We may assume that $f_v$ is non-trivial, i.e.\ that $v$ is not on the boundary.

We first observe that $f_v$ only intersects $e_v$ and $e_w$ at its start $v$. A second intersection of $f_v$ with $e_v$ would imply either a self-intersection of $s_v$, contradicting (b1), or that $s_v$ is a closed loop, or that $u$ lies on the boundary and $s_v$ is a lollipop with both endpoints at $u$. But lollipops have no crossings by Proposition~\ref{p:no-big-lollipops}, and $s_v$ has a crossing at $v$. Similarly, $f_v$ cannot have a second intersection with $e_w$, since this would either contradict condition (b2) because $s_v$ already intersects $e_w$ at $v$, or imply that $s_v$ contributes both edges $e_v$ and $e_w$, and so has a self-intersection at $v$. 

If $f_v$ meets $\strapol$ again, let $\alpha$ be the piece of $f_v$ from $v$ to its second meeting with $\strapol$ (which may not be a vertex of $\strapol$) and let $\gamma$ be the path in $\strapol$ completing $\alpha$ to a simple closed curve in such a way that the inside $R$ of $\alpha\cup\gamma$ is entirely outside of $\strapol$. Note that the interior of $\gamma$ never intersects the boundary of the disc, because it always has $R$ on one side and the inside of $\strapol$ on the other. On the other hand, our initial observation that $f_v$ meets $e_v$ and $e_w$ only at their intersection $v$ implies that the interior of $\gamma$ must contain either $u$ or $w$.

If $u$ is in the interior of $\gamma$, then in particular it is not on the boundary. Thus we may consider the non-trivial tendril $f_u$, contained in the strand $s_u$ crossing $s_v$ at $u$. Since $f_u$ starts by entering $R$, it must cross $\alpha\cup\gamma$ to reach the boundary. Since $s_u$ crosses $s_v$ at $u$, it cannot then cross $\alpha\subset s_v$ or $e_u\subset s_v$ without violating (b2), and so $f_u$ must exit $R$ by crossing $\gamma$ before $e_v$. 

Applying the same argument to $f_u$, we construct curves $\alpha'$ and $\gamma'$ bounding $R'\subset R$. The curve $\gamma'$ must be contained in $\gamma$, but it ends at $u$ and so has at least one fewer vertex in its interior. As $\gamma$ contained only finitely many vertices, by iterating this procedure we eventually arrive at $\bar{\gamma}$ with no interior vertices. But then the corresponding $\bar{\alpha}$ starts at a vertex $\bar{v}$ of the polygon and ends on the preceding edge, violating (b1).

The case that $w$ is in the interior of $\gamma$ is similar, using instead the tendril $f_w$. This tendril cannot intersect $f_v$, because the underlying strands met at $v$, and cannot intersect $e_w$ since this would be a self-intersection. Thus $f_w$ has the same pathological behaviour as $f_v$, but cuts off fewer vertices of $\strapol$, leading inductively to a contradiction.
\end{proof}

\begin{proposition}
\label{pro:wedge-covering}
Let $\strapol$ be a strand polygon in a Postnikov diagram $D$. Then the tendrils of $\strapol$ are disjoint. In particular, the tendrils end on the boundary in the same cyclic order as they start on the polygon, and the outside of $\strapol$ is partitioned by the downstream wedges of its vertices.
\end{proposition}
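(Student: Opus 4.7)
The plan is to prove the three assertions in order: (A) the tendrils of $\strapol$ are pairwise disjoint; (B) their endpoints lie on the disc boundary in the same cyclic order as their starting vertices on $\strapol$; and (C) the downstream wedges partition the outside of $\strapol$. By Lemma~\ref{l:no-return}, each non-trivial tendril is an embedded arc from a vertex of $\strapol$ to the disc boundary, meeting $\strapol$ only at its start. Given (A), assertion (B) will follow because pairwise disjoint arcs joining the $\strapol$-side of the outside to the disc boundary automatically preserve cyclic order, and (C) will then follow because consecutive tendrils, together with the polygon edge and boundary arc between them, bound the wedge at each vertex.

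To prove (A), suppose for contradiction that $f_v$ and $f_{v'}$ meet with $v\neq v'$; both must be interior vertices, since boundary vertices have trivial tendrils. I would first rule out $s_v=s_{v'}$: if these strands coincide with $v$ preceding $v'$ along the strand, then $f_v$ would contain $v'\in\strapol$, contradicting Lemma~\ref{l:no-return}. So $s_v\neq s_{v'}$. Let $p$ be the first intersection of $f_v$ with $f_{v'}$ along $f_v$ starting from $v$, set $\alpha$ to be the arc of $f_v$ from $v$ to $p$, let $p'$ be the first point on $\alpha$ that $f_{v'}$ meets starting from $v'$, and let $\beta$ be the arc of $f_{v'}$ from $v'$ to $p'$. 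Setting $\alpha'$ to be the initial sub-arc of $\alpha$ from $v$ to $p'$, the arcs $\alpha'$ and $\beta$ meet only at $p'$. I then select the appropriate arc $\gamma$ of $\strapol$ from $v$ to $v'$ so that $C=\alpha'\cup\beta\cup\gamma$ is a simple closed curve bounding a region $R$ lying in the outside of $\strapol$ and disjoint from the disc boundary.

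The core step is a descent argument analogous to the one in the proof of Lemma~\ref{l:no-return}. If $\gamma$ has an interior vertex $u$ in the interior of the disc, then $f_u$ enters the outside of $\strapol$ at $u$, going either into $R$ or into its complement. In the former case, $f_u$ cannot re-meet $\strapol$ (Lemma~\ref{l:no-return}) and must reach the disc boundary (which lies outside $R$), so it must exit $R$ through $\alpha'$ or $\beta$; the first such exit point produces a strictly smaller simple closed curve bounding a region $R'\subsetneq R$ with strictly fewer interior vertices along its polygon arc. Iterating reduces to the base case in which $v$ and $v'$ are adjacent along $\strapol$. In that case, the two strands $s_v$ and $s_{v'}$ cross at $v$ (they meet the connecting polygon edge) and again at $p'$, but tracing orientations along the connecting edge and then along $f_v$, $f_{v'}$ shows that both strands run from $v$ to $p'$, contradicting axiom~(b2).

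The main obstacle is the descent setup: one must argue that $\gamma$ can be chosen so that $R$ avoids the disc boundary (thereby forcing any tendril $f_u$ entering $R$ to exit through $\alpha'\cup\beta$), and handle both the case where $f_u$ enters $R$ and where it enters its complement in the outside (by a symmetric argument on the other side of the arc). The subtlety is that when $\strapol$ has boundary segments, the outside may have several components, so one has to verify that $\alpha'\cup\beta$ together with $\gamma$ always cuts off a bounded region whose closure meets the disc boundary only along $\gamma$; the correct $\gamma$ is the one producing such a region inside the single component of the outside of $\strapol$ containing $\alpha'\cup\beta$. Once this is in place, the base-case orientation check applies~(b2) in a routine way, and (B) and (C) follow by the topological observations already described.
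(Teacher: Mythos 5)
Your proposal follows essentially the same strategy as the paper: build a simple arc from the two intersecting tendrils, close it up along a subarc $\gamma$ of $\strapol$ to bound a region $R$ in the outside of $\strapol$, then descend on the number of interior vertices of $\gamma$, with the base case (adjacent vertices) resolved by applying (b2) to the double crossing of the two strands. The paper presents the adjacent case as a preliminary observation rather than as an explicit base case, and uses a slightly less fastidious construction of the connecting arc (taking only the first crossing along $f_v$ rather than truncating again at $p'$), but these are cosmetic differences.

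The one substantive oddity in your writeup is the case split ``$f_u$ enters $R$ or its complement'', which you then propose to handle ``by a symmetric argument on the other side.'' This split is spurious. Since $u$ lies in the interior of $\gamma\subset\strapol$ and $R$ is by construction the region of $D^2\setminus(\alpha'\cup\beta\cup\gamma)$ lying on the outside-of-$\strapol$ side of $\gamma$, the outside of $\strapol$ near $u$ \emph{is} locally $R$; a non-trivial tendril always begins by moving into the outside of $\strapol$, so $f_u$ necessarily enters $R$ and never its complement. Your stated ``subtlety'' about the outside of $\strapol$ possibly being disconnected is real, but it is already disposed of by the requirement that $R$ be entirely outside $\strapol$ (which forces $R$ into the single outside component containing $\alpha'\cup\beta$); there is no second case to argue symmetrically. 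This does not break the proof, since a non-occurring case handled vacuously costs nothing, but you should recognise that the geometry here is more rigid than you allow for. The preliminary exclusion of $s_v=s_{v'}$ is also unnecessary (it is automatic from (b1)/(b2) together with Lemma~\ref{l:no-return}), though harmless.
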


\begin{proof}

Consider an edge $e_v$, from $u$ to $v$. We first observe that the tendrils $f_v$ and $f_u$ do not cross. If $e_v$ is a boundary segment then $f_u$ and $f_v$ are distinct single points and there is nothing to prove, so we may assume that $e_v$ lies on a strand $s_v$. If the edge ending at $u$ is a boundary segment, then $f_u$ is the single point $u$, which is not on $f_v$ by Lemma~\ref{l:no-return}. Otherwise, $u$ lies on a strand $s_u$, which crosses $s_v$ at $u$, and so $f_v$ cannot intersect $f_u$ without violating (b2).

So assume there is some $v'\ne v$ for which $f_v$ and $f_{v'}$ cross, and let $\alpha$ be the path which follows $f_v$ until its first crossing with $f_{v'}$ and then follows $f_{v'}$ backwards until reaching $\strapol$. By Lemma~\ref{l:no-return}, $\alpha$ is a simple curve from $v$ to $v'$, intersecting $\strapol$ only at these points. Let $\gamma$ be the curve in $\strapol$ such that the inside $R$ of $\alpha\cup\gamma$ is entirely outside of $\strapol$. By the preceding paragraph, $v$ and $v'$ are not the two ends of a single edge, and so there is at least one vertex $v''$ of $\strapol$ in the interior of $\gamma$.

We may now argue similarly to Lemma~\ref{l:no-return}. The vertex $v''$ is not on the boundary, so the segment $f_{v''}$ begins by entering $R$, but it must leave $R$ before terminating. It cannot do so through $\strapol$ by Lemma~\ref{l:no-return}, so it must meet $\alpha$, either on $f_v$ or $f_{v'}$. Thus we may replace either $f_v$ or $f_{v'}$ by $f_{v''}$ and run the argument again. This replaces $\gamma$ by a curve $\gamma'$ containing fewer vertices, and hence leads inductively to a contradiction.

Thus we have proved the disjointness, and the remaining two statements follow directly from this.
\end{proof}

\begin{corollary}
For each $j\in Q_0$, the subset $\MSmat_j\subseteq Q_1$ is a perfect matching.
\end{corollary}
\begin{proof}
Apply Proposition~\ref{pro:wedge-covering}, taking $\strapol$ to be the boundary of the oriented region of $D$ corresponding to $j$.
\end{proof}

This recovers part of \cite[Thm.~5.3]{MuSp}, as promised. The more general covering property from Proposition~\ref{pro:wedge-covering} also enables us to prove the main objective of this section.

\begin{theorem}
\label{t:MSmatch}
Let $Q=Q(D)$ for $D$ a Postnikov diagram. For each $j\in Q_0$, we have $\alteta(\MSmat_j)=p_j$,
where $\MSmat_j$ is the Muller--Speyer matching \eqref{eq:MSmat}.
\end{theorem}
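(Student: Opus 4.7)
My strategy is to verify, for each $k\in Q_0$, that the coefficient of $p_k$ in $\alteta(\MSmat_j)$ equals $\delta_{jk}$. Substituting $\mu=\MSmat_j$ into the formula \eqref{eq:matcls-mu} and using that $\gamma\in\MSmat_j$ iff $j\in\DSwedge(\gamma)$, this coefficient unpacks to
\[
1 \;-\; r(k) \;+\; \#\{\gamma\in Q_1 : h\gamma=k,\; j\in\DSwedge(\gamma)\} \;+\; \#\{\gamma\in Q_1\ \text{internal} : t\gamma=k,\; j\in\DSwedge(\gamma)\},
\]
where $r(k)$ denotes the in-degree of $k$. It therefore suffices to show that the two set-cardinalities above sum to $r(k)-1+\delta_{jk}$.

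The key input is Proposition~\ref{pro:wedge-covering}, applied to each strand polygon $\strapol_F$ arising as the boundary of an oriented region $F\in Q_2$ of $D$: since $j$ is a point in the alternating region $R_j$, it always lies outside $\strapol_F$, so the proposition supplies a unique arrow $\alpha^F_j\in\bdry F$ with $j\in\DSwedge(\alpha^F_j)$. I would then analyse, for each face $F$ incident with $k$, whether $\alpha^F_j$ is the arrow incoming to $k$ in $\bdry F$, the arrow outgoing from $k$ in $\bdry F$, or some arrow of $\bdry F$ not at $k$. Summing this data over the faces incident with $k$, and using that each internal arrow belongs to precisely one black and one white face while each boundary arrow belongs to a single face (Definition~\ref{d:dimermodel}), the face-level counts produced by wedge-covering can be reorganised into the arrow-level counts appearing in the coefficient formula above.

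The hardest part of the argument will be isolating the $\delta_{jk}$ correction. Geometrically it records that among the faces around $k$ there is exactly one ``missing'' face whose distinguished arrow $\alpha^F_j$ fails to be incident with $k$, except when $j=k$: in that case the vertex $k$ itself lies inside $R_k$, and this is precisely what supplies the extra $+1$. Making this rigorous will probably require traversing the cyclic (or linear, when $k$ is on the boundary) incidence graph of $Q$ at $k$ and tracking how the distinguished arrows $\alpha^F_j$ of neighbouring faces vary with $F$, or alternatively constructing an auxiliary strand polygon enclosing $R_k$ and comparing its wedge partition to those of the face polygons $\strapol_F$ for $F$ incident with $k$. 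Once this identity is established, combining it with \eqref{eq:matcls-value} and Corollary~\ref{cor:rank1-rigid} yields the identification $N_{\MSmat_j}\isom P_j$ of the Muller--Speyer perfect matching module with the indecomposable projective $P_j$, as promised by Corollary~\ref{c:proj-pm-mods}.
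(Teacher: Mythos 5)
Your setup is exactly right, and you have correctly reduced the problem to the combinatorial identity: for each $k\in Q_0$, the quantity
\[
  \#\{\gamma\in Q_1 : h\gamma=k,\; j\in\DSwedge(\gamma)\}
  + \#\{\text{internal }\gamma\in Q_1 : t\gamma=k,\; j\in\DSwedge(\gamma)\}
\]
must equal $r(k)-1+\delta_{jk}$, which the paper phrases in the equivalent form \eqref{eq:count}. The case $j=k$ is immediate (as the paper also notes), and you are also right that Proposition~\ref{pro:wedge-covering} is the engine of the proof. The gap is in how you propose to deploy it.

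Your primary strategy---applying wedge-covering separately to each face polygon $\strapol_F$ with $F$ incident to $k$, obtaining one distinguished arrow $\alpha^F_j$ per face, and then ``reorganising'' these into the arrow-level counts---does not obviously close. The distinguished arrow $\alpha^F_j$ of a face $F$ incident with $k$ need not be incident with $k$ at all, and when $F$ is black (anticlockwise) the faces at $k$ contribute nothing at all to the two cardinalities above, so you would need a subtle cancellation across faces that is not supplied by the face-by-face data. There is also no clean relation between the number of faces at $k$ and the target quantity $r(k)-1+\delta_{jk}$: an internal vertex of in-degree $m$ is incident with $2m$ faces, and the bookkeeping you would need to go from ``one distinguished arrow per face'' to ``$m-1$ distinguished arrows at $k$'' is precisely what the paper avoids by a different choice of polygon.

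Your second suggestion is much closer but is missing the crucial detail. The boundary of the alternating region $R_k$ is not itself a strand polygon, since the strands bounding $R_k$ alternate in orientation. What actually works (and is the paper's key insight) is to take $\strapol_k^\white$ to be the boundary of the union of $R_k$ with all the adjacent clockwise oriented regions, i.e.\ the white faces of $Q(D)$ at $k$. One checks that this union has a well-defined boundary that is a simple closed curve, that it turns inward at each corner (using that every point of the disc lies in at most one clockwise region), and hence is a strand polygon containing $R_k$ in its inside. A single application of Proposition~\ref{pro:wedge-covering} to $\strapol_k^\white$ then partitions the vertices $j\neq k$ among the downstream wedges of this one polygon. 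The remaining work is to group these wedges into sets $W_\gamma$ indexed by arrows $\gamma$ with $h\gamma=k$, noting that the wedge of $\strapol_k^\white$ at a vertex not touching $R_k$ coincides with the downstream wedge of the dual arrow, and to verify case by case that each arrow $\gamma$ with $h\gamma=k$ (together with its successor $\gamma'$ in a white face, when that exists) contributes $1$ to \eqref{eq:count} exactly when $j\in W_\gamma$ and $0$ otherwise. This replaces the hard ``reorganisation'' step of your face-by-face approach by a direct, once-and-for-all partition, and I would not expect the face-by-face version to go through without essentially rediscovering this global polygon. Your final remark---that the identity combined with \eqref{eq:matcls-value} and Corollary~\ref{cor:rank1-rigid} yields $N_{\MSmat_j}\isom P_j$---is correct and is how the paper deduces Corollary~\ref{c:MSmatch}.
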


\begin{proof}
We need to calculate the coefficient of $p_i$
in the formula \eqref{eq:matcls-mu} for $\alteta(\MSmat_j)$
and show that this coefficient is $1$ when $i=j$ and $0$ otherwise. Since the first sum in \eqref{eq:matcls-mu} contributes $1$ for each $p_i$, 
what we need to show is
\begin{equation}\label{eq:count}
\#\{  \gamma\not\in\MSmat_j : h\gamma= i\}
- \#\{  \text{int } \gamma\in\MSmat_j : t\gamma= i\}
= \begin{cases} 1 & \text{if $i\neq j$}\\ 0 & \text{if  $i=j$} \end{cases}
\end{equation}
Since the matching $\MSmat_j$ contains all arrows $\gamma$ with $h\gamma= j$
and no arrows with $t\gamma= j$, the case $i=j$ is immediate.

For the case $i\neq j$ we consider the union of the alternating region $R_i$ corresponding to the vertex $i\in Q_0$ with the clockwise (oriented) regions adjacent to $R_i$, corresponding to the clockwise faces in $Q_2$ that have $i$ as a vertex. The boundary $\strapol_i^\white$ of this union is made up of the clockwise edges and boundary edges of $R_i$ together with the edges of each adjacent clockwise region, except the (necessarily unique) edge shared with $R_i$. These bounding edges are all distinct, as are the points at which they meet; the main ingredient here is that any point in the disc is incident with at most one clockwise region.

Hence $\strapol_i^\white$ is a simple closed curve, and it is even a strand polygon as follows. We observe that all vertices of $\strapol_i^\white$ apart from those at the ends of a boundary edge are corners of clockwise regions, and $\strapol_i^\white$ turns towards the region at these points. Moreover, the boundary edges of $\strapol_i^\white$ are edges of $R_i$, which is inside $\strapol_i^\white$.
See Figure~\ref{fig:strapol-i-white} for examples.

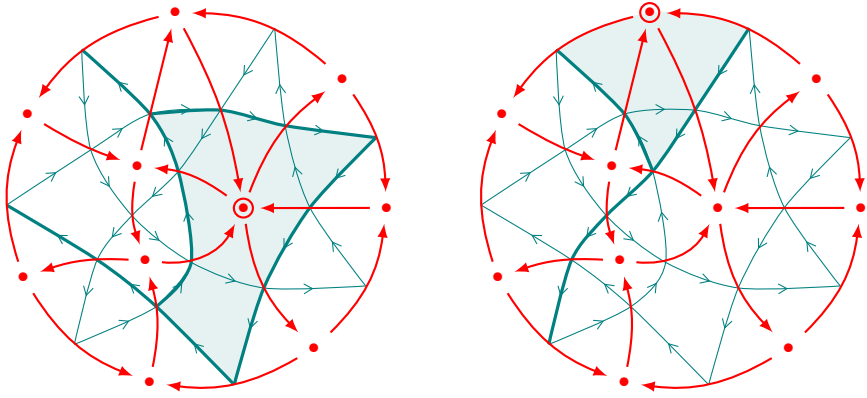
\begin{figure} [h]
	\pgfmathsetmacro{\dotcircrad}{0.05}
	\begin{tikzpicture}[scale=2.5,yscale=-1, baseline=(BB.base)]
		\coordinate (BB) at (0,0);
		\foreach \n/\m/\a in {1/4/0, 2/3/0, 3/2/5, 4/1/10, 5/7/0, 6/6/-3, 7/5/0}
		{ \coordinate (b\n) at (\bstart-\seventh*\n+\a:1.0);}
		
		\foreach \n/\m in {8/1, 9/2, 10/3, 11/4, 14/5, 15/6, 16/7}
		{\coordinate (b\n) at ($0.65*(b\m)$);}
		
		\coordinate (b13) at ($(b15) - (b16) + (b8)$);
		\coordinate (b12) at ($(b14) - (b15) + (b13)$);
		
		\foreach \n/\x\y in {13/-0.03/-0.03, 12/-0.22/0.0, 14/-0.07/-0.03, 11/0.05/0.02, 16/-0.02/0.02}
		{\coordinate (b\n)  at ($(b\n) + (\x,\y)$); } 

		\foreach \e/\f/\t in {2/9/0.5, 4/11/0.5, 5/14/0.5, 7/16/0.5,
			8/9/0.5, 9/10/0.5, 10/11/0.5,11/12/0.5, 12/13/0.45, 8/13/0.6,
			14/15/0.5, 15/16/0.6, 12/14/0.45, 13/15/0.4, 8/16/0.6}
		{\coordinate (a\e-\f) at ($(b\e) ! \t ! (b\f)$); }
		
		\draw [strand, white, fill=teal!10]
		(b1) to (a8-16) to [bend left = 8] (a8-13) to [bend left = 5] (a12-13) to [bend left = 5] (a12-14)
		to (a11-12) to (a10-11) to (b3) to (a9-10) to (a8-9) to (b1);

		\draw [strand] plot[smooth]
		coordinates {(b1) (a8-16) (a15-16) (b6)}
		[postaction=decorate, decoration={markings,
			mark= at position 0.2 with \strarrow,
			mark= at position 0.5 with \strarrow,
			mark= at position 0.8 with \strarrow }];
		
		\draw [strand] plot[smooth]
		coordinates {(b6) (a14-15) (a12-14)(a11-12) (a10-11) (b3)}
		[postaction=decorate, decoration={markings,
			mark= at position 0.15 with \strarrow, mark= at position 0.35 with \strarrow,
			mark= at position 0.53 with \strarrow, mark= at position 0.7 with \strarrow,
			mark= at position 0.87 with \strarrow }];
		
		\draw [strand] plot[smooth]
		coordinates {(b3) (a9-10) (a8-9) (b1)}
		[postaction=decorate, decoration={markings,
			mark= at position 0.2 with \strarrow,
			mark= at position 0.5 with \strarrow,
			mark= at position 0.8 with \strarrow }];
		
		\draw [strand] plot[smooth]
		coordinates {(b2) (a9-10) (a10-11) (b4)}
		[postaction=decorate, decoration={markings,
			mark= at position 0.2 with \strarrow,
			mark= at position 0.5 with \strarrow,
			mark= at position 0.8 with \strarrow }];
		
		\draw [strand] plot[smooth]
		coordinates {(b4) (a11-12) (a12-13) (a13-15) (a15-16) (b7)}
		[postaction=decorate, decoration={markings,
			mark= at position 0.15 with \strarrow, mark= at position 0.35 with \strarrow,
			mark= at position 0.55 with \strarrow, mark= at position 0.7 with \strarrow,
			mark= at position 0.87 with \strarrow }];
		
		\draw [strand] plot[smooth]
		coordinates {(b7) (a8-16) (a8-13) (a12-13) (a12-14) (b5)}
		[postaction=decorate, decoration={markings,
			mark= at position 0.15 with \strarrow, mark= at position 0.315 with \strarrow,
			mark= at position 0.5 with \strarrow, mark= at position 0.7 with \strarrow,
			mark= at position 0.88 with \strarrow }];
		
		\draw [strand] plot[smooth]
		coordinates {(b5) (a14-15) (a13-15) (a8-13) (a8-9) (b2)}
		[postaction=decorate, decoration={markings,
			mark= at position 0.13 with \strarrow, mark= at position 0.33 with \strarrow,
			mark= at position 0.5 with \strarrow, mark= at position 0.7 with \strarrow,
			mark= at position 0.88 with \strarrow }];
		
		\draw [strand, very thick] plot[smooth]
		coordinates {(b3) (a9-10) (a8-9) (b1)};
		\draw [strand, very thick] plot[smooth]
		coordinates {(b1) (a8-16) (a15-16) (b6)};
		\draw [strand, very thick] plot[smooth]
		coordinates {(a8-16) (a8-13) (a12-13) (a12-14) (b5)}; \draw [strand, very thick] plot[smooth]
		coordinates {(a12-14)(a11-12) (a10-11) (b3)}; 

		\foreach \n/\m\a in {1/124/0, 2/234/-1, 3/345/1, 4/456/10, 5/256/5, 6/267/0, 7/127/0}
		{ \draw [\quivcolor] (\bstart+\seventh/2-\seventh*\n+\a:1) node (q\m) {\tiny $\bullet$}; }
		\foreach \m/\a/\r in {247/\bstart/0.42, 245/10/0.25, 257/210/0.36}
		{ \draw [\quivcolor] (\a:\r) node (q\m) {\tiny $\bullet$}; }
		\draw [\quivcolor, thick] (q245) circle (\dotcircrad);
\foreach \t/\h/\a in {234/124/-20, 234/345/19, 456/345/-16, 456/256/22, 256/267/21, 127/267/-20,
 127/124/19, 124/247/13, 247/127/11, 245/234/18, 247/245/34, 257/247/14, 245/257/15,
 267/257/6, 257/256/0, 256/245/-9, 345/245/0, 245/456/-16}
{ \draw [quivarrow]  (q\t) edge [bend left=\a] (q\h); }
	\end{tikzpicture}
	\qquad
	\begin{tikzpicture}[scale=2.5,yscale=-1, baseline=(BB.base)]
		\coordinate (BB) at (0,0);
		\foreach \n/\m/\a in {1/4/0, 2/3/0, 3/2/5, 4/1/10, 5/7/0, 6/6/-3, 7/5/0}
		{ \coordinate (b\n) at (\bstart-\seventh*\n+\a:1.0);}
		
		\foreach \n/\m in {8/1, 9/2, 10/3, 11/4, 14/5, 15/6, 16/7}
		{\coordinate (b\n) at ($0.65*(b\m)$);}
		
		\coordinate (b13) at ($(b15) - (b16) + (b8)$);
		\coordinate (b12) at ($(b14) - (b15) + (b13)$);
		
		\foreach \n/\x\y in {13/-0.03/-0.03, 12/-0.22/0.0, 14/-0.07/-0.03, 11/0.05/0.02, 16/-0.02/0.02}
		{\coordinate (b\n)  at ($(b\n) + (\x,\y)$); } 

		\foreach \e/\f/\t in {2/9/0.5, 4/11/0.5, 5/14/0.5, 7/16/0.5,
			8/9/0.5, 9/10/0.5, 10/11/0.5,11/12/0.5, 12/13/0.45, 8/13/0.6,
			14/15/0.5, 15/16/0.6, 12/14/0.45, 13/15/0.4, 8/16/0.6}
		{\coordinate (a\e-\f) at ($(b\e) ! \t ! (b\f)$); }
		
		\draw [strand, white, fill=teal!10] (b4) to [bend left = 28] (b5)
		to (a12-14) to (a12-13) to (a11-12) to (b4);
		
		\draw [strand] plot[smooth]
		coordinates {(b1) (a8-16) (a15-16) (b6)}
		[postaction=decorate, decoration={markings,
			mark= at position 0.2 with \strarrow,
			mark= at position 0.5 with \strarrow,
			mark= at position 0.8 with \strarrow }];
		
		\draw [strand] plot[smooth]
		coordinates {(b6) (a14-15) (a12-14)(a11-12) (a10-11) (b3)}
		[postaction=decorate, decoration={markings,
			mark= at position 0.15 with \strarrow, mark= at position 0.35 with \strarrow,
			mark= at position 0.53 with \strarrow, mark= at position 0.7 with \strarrow,
			mark= at position 0.87 with \strarrow }];
		
		\draw [strand] plot[smooth]
		coordinates {(b3) (a9-10) (a8-9) (b1)}
		[postaction=decorate, decoration={markings,
			mark= at position 0.2 with \strarrow,
			mark= at position 0.5 with \strarrow,
			mark= at position 0.8 with \strarrow }];
		
		\draw [strand] plot[smooth]
		coordinates {(b2) (a9-10) (a10-11) (b4)}
		[postaction=decorate, decoration={markings,
			mark= at position 0.2 with \strarrow,
			mark= at position 0.5 with \strarrow,
			mark= at position 0.8 with \strarrow }];
		
		\draw [strand] plot[smooth]
		coordinates {(b4) (a11-12) (a12-13) (a13-15) (a15-16) (b7)}
		[postaction=decorate, decoration={markings,
			mark= at position 0.15 with \strarrow, mark= at position 0.35 with \strarrow,
			mark= at position 0.55 with \strarrow, mark= at position 0.7 with \strarrow,
			mark= at position 0.87 with \strarrow }];
		
		\draw [strand] plot[smooth]
		coordinates {(b7) (a8-16) (a8-13) (a12-13) (a12-14) (b5)}
		[postaction=decorate, decoration={markings,
			mark= at position 0.15 with \strarrow, mark= at position 0.315 with \strarrow,
			mark= at position 0.5 with \strarrow, mark= at position 0.7 with \strarrow,
			mark= at position 0.88 with \strarrow }];
		
		\draw [strand] plot[smooth]
		coordinates {(b5) (a14-15) (a13-15) (a8-13) (a8-9) (b2)}
		[postaction=decorate, decoration={markings,
			mark= at position 0.13 with \strarrow, mark= at position 0.33 with \strarrow,
			mark= at position 0.5 with \strarrow, mark= at position 0.7 with \strarrow,
			mark= at position 0.88 with \strarrow }];
		
		\draw [strand, very thick] plot[smooth]
		coordinates { (a12-13) (a12-14) (b5)}; \draw [strand, very thick] plot[smooth]
		coordinates {(b4) (a11-12) (a12-13) (a13-15) (a15-16) (b7)};

		\foreach \n/\m\a in {1/124/0, 2/234/-1, 3/345/1, 4/456/10, 5/256/5, 6/267/0, 7/127/0}
		{ \draw [\quivcolor] (\bstart+\seventh/2-\seventh*\n+\a:1) node (q\m) {\tiny $\bullet$}; }
		\foreach \m/\a/\r in {247/\bstart/0.42, 245/10/0.25, 257/210/0.36}
		{ \draw [\quivcolor] (\a:\r) node (q\m) {\tiny $\bullet$}; }
		\draw [\quivcolor, thick] (q256) circle (\dotcircrad);
\foreach \t/\h/\a in {234/124/-20, 234/345/19, 456/345/-16, 456/256/22, 256/267/21, 127/267/-20,
 127/124/19, 124/247/13, 247/127/11, 245/234/18, 247/245/34, 257/247/14, 245/257/15,
 267/257/6, 257/256/0, 256/245/-9, 345/245/0, 245/456/-16}
{ \draw [quivarrow]  (q\t) edge [bend left=\a] (q\h); }
	\end{tikzpicture}
	\caption{Two examples of the strand polygon $\strapol_i^\white$ and its tendrils;
		the vertex $i$ is circled in each case.} \label{fig:strapol-i-white}
\end{figure}

Note that every quiver vertex $j\neq i$ is outside of $\strapol_i^\white$, and hence by Proposition~\ref{pro:wedge-covering} is contained in a unique downstream wedge of this polygon. Consider a vertex of $\strapol_i^\white$ not incident with $R_i$, i.e.\ a vertex of one of the clockwise regions added to $R_i$ to obtain $\strapol_i^\white$. Then the wedge of $\strapol_i^\white$ at this vertex is the wedge of the corresponding arrow, which is not incident with $i$ but lies in a clockwise face incident with $i$.
On the other hand, the remaining vertices of $\strapol_i^\white$ either start a boundary edge, in which case the wedge is trivial, or end a boundary edge, in which case the wedge is one side of the strand starting at this vertex.

Consider an arrow $\gamma$ with $h\gamma=i$. If $\gamma$ does not lie in a clockwise face, then $\gamma$ is a boundary arrow at which a boundary edge of $\strapol_i^\white$ ends, and we let $W_\gamma$ be the wedge of $\strapol_i^\white$ at its vertex on $\gamma$, which is the complement of the wedge of $\gamma$. Otherwise, $\gamma$ lies in a clockwise face $F$ in which the next arrow $\gamma'$ has $t\gamma'=i$. If $\gamma'$ is internal, let $W_\gamma$ be the union of wedges of the vertices of $\strapol_i^\white$ on $\bdry F$, whereas if $\gamma'$ is a boundary arrow, let $W_\gamma$ be the union of these wedges together with the wedge of the vertex of $\strapol_i^\white$ on $\gamma'$ (which is just the wedge of $\gamma'$ in this case). Note that every arrow incident with $i$ is either one of the arrows $\gamma$ or $\gamma'$ considered above, or is a boundary arrow with tail at $i$, and thus irrelevant to the calculation \eqref{eq:count}. Note further that every wedge of $\strapol_i^\white$, and hence every quiver vertex $j\ne i$, is contained in $W_\gamma$ for a unique arrow $\gamma$ with $h\gamma=i$.

Suppose $j\in W_\gamma$. If $\gamma$ is not in a clockwise face, this means that $j$ is not in the downstream wedge of $\gamma$, and so $\gamma\notin\MSmat_j$ counts $1$ on the left-hand side of \eqref{eq:count}. Otherwise, $\gamma$ is followed by $\gamma'$ in a clockwise face $F$. If $\gamma'$ is internal, then $j\in W_\gamma$ means that $j$ is in the wedge of some arrow in $F$ different from $\gamma$ and $\gamma'$. Then $\gamma\notin\MSmat_j$ counts $1$ in \eqref{eq:count}, whereas $\gamma'\notin\MSmat_j$ counts $0$. If $\gamma'$ is on the boundary, then its wedge is also contained in $W_\gamma$, so we could have $\gamma'\in\MSmat_j$, but it contributes $0$ in \eqref{eq:count} anyway. Thus whenever $j\in W_\gamma$, the total contribution of $\gamma$ and $\gamma'$ (if it exists) to \eqref{eq:count} is $1$.

Now suppose $j\notin W_\gamma$. If $\gamma$ is not in a clockwise face, this means that $\gamma\in\MSmat_j$ counts $0$ in \eqref{eq:count}. If $\gamma$ is followed by a boundary arrow $\gamma'$ in a clockwise face $F$, this means that $\gamma\in\MSmat_j$, since the wedges of all other arrows of $F$ are contained in $W_\gamma$. Thus $\gamma$ counts $0$ in \eqref{eq:count}, as does $\gamma'$ since it is on the boundary. On the other hand, if $\gamma$ is followed by an internal arrow $\gamma'$, then $W_\gamma$ consists of the wedges of arrows in $F$ different from $\gamma$ and $\gamma'$, so either $\gamma\in\MSmat_j$ or $\gamma'\in\MSmat_j$. If $\gamma\in\MSmat_j$ it counts $0$ in \eqref{eq:count}, as does $\gamma'\notin\MSmat_j$. If $\gamma'\in\MSmat_j$ it counts $-1$ in \eqref{eq:count}, while $\gamma\notin\MSmat_j$ counts $1$. In any case, the total contribution of $\gamma$ and $\gamma'$ is $0$.

Summing up, we see that the total contribution in \eqref{eq:count} of all arrows incident with $i\ne j$ is $1$, as required.
\end{proof}

Note that we could equally well have used the strand polygon $\strapol_i^\black$, 
bounding the union of $R_i$ with its adjacent anticlockwise regions, in place of $\strapol_i^\white$ in the preceding proof. As already observed, Theorem~\ref{t:MSmatch} leads directly to the following results.

\begin{corollary}
\label{c:eta=MSinv}
We have $\MSmap^{-1}=\alteta$.
\end{corollary}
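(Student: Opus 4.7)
The statement that $\MSmap^{-1} = \alteta$ follows almost immediately from Theorem~\ref{t:MSmatch} together with the fact, cited from Muller--Speyer as \cite[Cor.~5.6]{MuSp}, that $\MSmap$ is an isomorphism. My plan is to combine these two inputs via a short elementary argument.

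First, I would observe that Theorem~\ref{t:MSmatch} is precisely the statement that $\alteta \circ \MSmap = \id_{\ZZ^{Q_0}}$, since by definition $\MSmap(p_j) = \MSmat_j$ and the theorem asserts $\alteta(\MSmat_j) = p_j$ for all $j \in Q_0$. This already identifies $\alteta$ as a left inverse of $\MSmap$.

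Second, I would invoke the fact that $\MSmap$ is an isomorphism (this is \cite[Cor.~5.6]{MuSp}, used for instance in Remark~\ref{rem:eta-in-MS}). Composing the relation $\alteta \circ \MSmap = \id$ on the right with $\MSmap^{-1}$ then yields $\alteta = \MSmap^{-1}$, as required.

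As an alternative route which avoids appealing to \cite[Cor.~5.6]{MuSp} directly, one can note that the exact sequence \eqref{eq:restr-cochain} shows that $\matlat$ is free abelian of rank $|Q_0|$, so $\MSmap\colon \ZZ^{Q_0} \to \matlat$ is a map between free abelian groups of the same finite rank. Since it has a left inverse $\alteta$, it is injective, and for the surjectivity one can argue directly that the $\MSmat_j$ span $\matlat$; alternatively, when $D$ is connected, the isomorphism statement in Lemma~\ref{lem:matcls-isom} for $\matcls = \alteta$ immediately forces $\MSmap$ to be an isomorphism. Either way the conclusion is the same and there is essentially nothing hard here — the entire substance of the corollary is contained in Theorem~\ref{t:MSmatch}.
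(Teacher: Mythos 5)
Your primary argument is correct: Theorem~\ref{t:MSmatch} gives $\alteta\circ\MSmap=\id_{\ZZ^{Q_0}}$, and since you cite Muller--Speyer's \cite[Cor.~5.6]{MuSp} to know $\MSmap$ is an isomorphism, you may conclude $\alteta=\MSmap^{-1}$. The paper actually takes the other of your two routes as its main proof: it observes that $\alteta\circ\MSmap=\id$ is already sufficient ``because they are maps between lattices of the same rank,'' and only mentions the appeal to \cite[Cor.~5.6]{MuSp} as an afterthought. The advantage of the paper's route is that it avoids re-importing Muller--Speyer's isomorphism result: a split monomorphism between free abelian groups of the same finite rank is automatically an isomorphism (its image is a full-rank direct summand of a free group, whose complement is therefore a rank-zero free group, i.e.\ trivial), so surjectivity is not something to be argued separately. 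This is worth noting because in your alternative route you still write ``for the surjectivity one can argue directly that the $\MSmat_j$ span $\matlat$'' — that extra step is unnecessary, and indeed the paper's proof of Corollary~\ref{c:MSmatch} and the discussion around Lemma~\ref{lem:matcls-isom} then go the other direction, \emph{deriving} that the $\MSmat_j$ form a basis as a consequence rather than using it as an input. Your appeal to Lemma~\ref{lem:matcls-isom} is also fine when $D$ is connected, but note that Corollary~\ref{c:eta=MSinv} as stated does not assume connectedness, so the self-contained rank argument (or the citation of \cite[Cor.~5.6]{MuSp}) is preferable in general.
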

\begin{proof}
Theorem~\ref{t:MSmatch} proves that $\alteta\compo\MSmap=\id$, which is sufficient because they are maps between lattices of the same rank.
\end{proof}

Alternatively, having shown that $\alteta\circ\MSmap=\id$, we may reach the same conclusion by using that $\MSmap$ (or $\alteta$, if $D$ is connected) is known to be an isomorphism.

\begin{corollary}
\label{c:MSmatch}
If $D$ is connected, then the indecomposable projective $A_D$-module $P_j=Ae_j$ is isomorphic to the matching module $N_{\MSmat_j}$.
\end{corollary}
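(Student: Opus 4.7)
The plan is to deduce Corollary~\ref{c:MSmatch} immediately from Theorem~\ref{t:MSmatch} together with the rank-one rigidity result of Corollary~\ref{cor:rank1-rigid}. Both $P_j$ and $N_{\MSmat_j}$ are rank one modules in $\CM(A_D)$: for $P_j=A_De_j$ this is because $A_D$ is thin (Proposition~\ref{p:thin}), so every component $e_iA_De_j$ is free of rank one over $Z$; for $N_{\MSmat_j}$ it is immediate from Definition~\ref{d:n-mu}. By Corollary~\ref{cor:rank1-rigid}, to conclude $P_j\isom N_{\MSmat_j}$ it therefore suffices to show that they have the same class in $\Kgp(\proj A_D)$.

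Since $D$ is connected, Lemma~\ref{lem:matcls-isom} applies and the map $\matcls\colon\matlat\to\Kgp(\proj A_D)$ coincides with $\alteta$ under the identification $p_i\leftrightarrow[P_i]$. On the one hand, applying \eqref{eq:matcls-value} to the perfect matching $\MSmat_j$ gives $[N_{\MSmat_j}]=\matcls(\MSmat_j)$. On the other hand, Theorem~\ref{t:MSmatch} yields $\alteta(\MSmat_j)=p_j$, which under the same identification reads $\matcls(\MSmat_j)=[P_j]$. Combining these two identities gives $[N_{\MSmat_j}]=[P_j]$ in $\Kgp(\proj A_D)$, and rank-one rigidity then forces the desired isomorphism.

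There is no real obstacle here, as all of the substantive work has already been carried out: the existence of the projective resolution $\res{\mu}$ yielding the formula for $[N_\mu]$ (Theorem~\ref{thm:match-res} and \eqref{eq:matcls-value}), the wedge-covering arguments showing $\alteta(\MSmat_j)=p_j$ (Theorem~\ref{t:MSmatch}), and the rigidity statement extracted from injectivity of $\matcls$ (Corollary~\ref{cor:rank1-rigid}). The corollary is essentially a one-line consequence, and the only thing worth emphasising in the write-up is the role of connectedness of $D$, which ensures both that $A_D$ is thin (so that $P_j$ is genuinely rank one and $\matcls$ agrees with $\alteta$) and that Corollary~\ref{c:proj-pm-mods} applies, confirming a posteriori that $P_j$ was indeed a perfect matching module to begin with.
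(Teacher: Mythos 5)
Your proposal is correct and follows essentially the same route as the paper: combine $[N_{\MSmat_j}]=\matcls(\MSmat_j)$ from \eqref{eq:matcls-value} with $\matcls(\MSmat_j)=[P_j]$ from Theorem~\ref{t:MSmatch}, then invoke Corollary~\ref{cor:rank1-rigid}. The paper states this more tersely, while you usefully spell out that connectedness guarantees thinness and hence that $P_j$ has rank one, but the argument is the same.
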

\begin{proof}
Combining \eqref{eq:matcls-value} with Theorem~\ref{t:MSmatch} gives $[N_{\MSmat_j}]=\matcls(\MSmat_j)=[P_j]$. The result then follows from Corollary~\ref{cor:rank1-rigid}.
\end{proof}

Muller--Speyer also consider \cite[\S5.6]{MuSp} the matching $\MSmat_j\dual$ defined analogously to $\MSmat_j$ but using the upstream wedge of an arrow in place of its downstream wedge. Theorem~\ref{t:MSmatch} also allows us to identify the perfect matching module $N_{\MSmat_j\dual}$.

\begin{corollary}
\label{c:MSmatch-op}
If $D$ is connected, then $(e_jA)\dual:=\Hom_Z(e_jA,Z)$ is isomorphic to the matching module $N_{\MSmat_j\dual}$ for each $j\in Q_0$.
\end{corollary}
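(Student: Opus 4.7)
The plan is to deduce this from Corollary~\ref{c:MSmatch} applied to the opposite diagram $D\op$, combined with a $Z$-duality argument.

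First, I would invoke Remark~\ref{r:opposite}, which identifies $A_{D\op}=A_D\op$ and $Q(D\op)=Q(D)\op$ via a bijection of vertices and arrows that reverses arrow directions. Under this identification, the indecomposable projective left $A_{D\op}$-module $A_{D\op}e_j$ agrees, as a vector space with idempotent decomposition, with the indecomposable projective right $A_D$-module $e_jA_D$, the left $A_{D\op}$-action corresponding to the right $A_D$-action via $a\op \cdot m = m\cdot a$. Since $D\op$ is connected whenever $D$ is, Corollary~\ref{c:MSmatch} applies to give an isomorphism $A_{D\op}e_j\isom N_{\MSmat_j^{D\op}}$ of left $A_{D\op}$-modules, where the superscript indicates that the matching is constructed inside $D\op$.

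Next, I would verify that, under the arrow-reversal bijection $Q_1(D\op)\isoto Q_1(D)$, the Muller--Speyer matching $\MSmat_j^{D\op}$ coincides with the upstream matching $\MSmat_j\dual$ of $D$. This should be essentially immediate from the definitions, since reversing every strand orientation interchanges ``downstream'' and ``upstream'' in the construction of the wedges of Definition~\ref{d:strand-poly}, so the downstream wedge of the reversed arrow in $D\op$ coincides with the upstream wedge of the original arrow in $D$. Reinterpreting the isomorphism from the previous paragraph as one of right $A_D$-modules then yields $e_jA_D\isom N_{\MSmat_j\dual}$ as right $A_D$-modules, where $N_{\MSmat_j\dual}$ is equipped with its natural right $A_D$-module structure transported from the left $A_D\op$-action.

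It then remains to apply $Z$-duality and recognise the result. The functor $\Hom_Z(-,Z)$, which is an involutive equivalence on the subcategory of finitely generated free $Z$-modules, exchanges left and right $A_D$-actions, and hence gives $(e_jA_D)\dual\isom (N_{\MSmat_j\dual})\dual$ as left $A_D$-modules. I would finish with a direct calculation showing that $(N_\mu)\dual\isom N_\mu$ as left $A_D$-modules for any perfect matching $\mu$: each vertex component is canonically $Z$ (via $\Hom_Z(Z,Z)=Z$), and using the formula $(\alpha\cdot f)(m)=f(m\alpha)$ for the dualised action, an arrow $\alpha\in Q_1(D)$ acts on both sides as multiplication by $t$ when $\alpha\in\mu$ and as the identity otherwise. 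The main work is in the bookkeeping of left-versus-right conventions when passing between $D$ and $D\op$; no further combinatorial input is required beyond the wedge-reversal observation.
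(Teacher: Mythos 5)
Your proposal is correct and takes essentially the same route as the paper's own proof: apply Corollary~\ref{c:MSmatch} to $D\op$ (using $A_{D\op}=A_D\op$ from Remark~\ref{r:opposite}), identify $\MSmat_j^{D\op}$ with $\MSmat_j\dual$ by noting that reversing all strands swaps upstream and downstream wedges, and conclude via the observation that $({}_{A\op}N_\mu)\dual\isom {}_AN_\mu$ for any perfect matching $\mu$. The paper phrases the last two steps more tersely (and cites the Muller--Speyer downstream wedge picture directly rather than the strand-polygon wedges of Definition~\ref{d:strand-poly}, though these agree when the polygon bounds an oriented region), but the structure and key observations are identical.
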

\begin{proof}
Consider the opposite diagram $D\op$ (Definition~\ref{d:opposite}), for which $Q_{D\op}=Q\op$ and $A_{D\op}=A\op$ (Remark~\ref{r:opposite}). We write $\MSmat_j\op$ for the Muller--Speyer matching of $Q\op$ associated to vertex $j$, to distinguish this from the Muller--Speyer matching of $Q$ for this vertex. Applying Corollary~\ref{c:MSmatch} to $D\op$ shows that the $A\op$-module $e_jA=A\op e_j$ is isomorphic to $N_{\MSmat_j\op}$.

Since $Q$ and $Q\op$ have the same set of arrows, we may also view $\MSmat_j\op$ as a matching of $Q$, where it coincides with $\MSmat_j\dual$. Moreover, the set of arrows of $Q\op$ acting as $t$ on a rank one $A\op$-module $N$ agrees with the set of arrows of $Q$ acting as $t$ on the rank one $A$-module $N\dual$ (that is, $({}_{A\op}N_\mu)\dual\isom {}_AN_\mu$ for any perfect matching $\mu$), and so we conclude that $(e_jA)\dual\isoto N_{\MSmat_j\dual}$ as required.
\end{proof}

\section{Labelling} \label{sec:labels}

Let $D$ be a Postnikov diagram with quiver $Q=Q(D)$. Following, among others, \cite[\S3]{Sco} and \cite[\S4]{MuSp}, we associate a \emph{label} $I_j\subset\C_1$, to each vertex $j\in Q_0$. To define this label, note that each strand in $D$ divides the disc into two parts: the left-hand side and right-hand side, relative to the orientation of the strand.

\begin{definition}
\label{d:label}
For $j\in Q_0$, define the \emph{(left) source label} $I_j\subset\C_1$ to consist of those $i\in\C_1$ such that the strand of $D$ starting at $i$ has $j$ on its left-hand side.
\end{definition}

\begin{figure}[h]
\begin{tikzpicture}[scale=3,baseline=(bb.base),yscale=-1]

\path (0,0) node (bb) {};

\foreach \n/\m/\a in {1/4/0, 2/3/0, 3/2/5, 4/1/10, 5/7/0, 6/6/-3, 7/5/0}
{ \coordinate (b\n) at (\bstart-\seventh*\n+\a:1.0);
  \draw (\bstart-\seventh*\n+\a:1.1) node {$\m$}; }

\foreach \n/\m in {8/1, 9/2, 10/3, 11/4, 14/5, 15/6, 16/7}
  {\coordinate (b\n) at ($0.65*(b\m)$);}

\coordinate (b13) at ($(b15) - (b16) + (b8)$);
\coordinate (b12) at ($(b14) - (b15) + (b13)$);

\foreach \n/\x\y in {13/-0.03/-0.03, 12/-0.22/0.0, 14/-0.07/-0.03, 11/0.05/0.02, 16/-0.02/0.02}
  {\coordinate (b\n)  at ($(b\n) + (\x,\y)$); } 

\foreach \e/\f/\t in {2/9/0.5, 4/11/0.5, 5/14/0.5, 7/16/0.5, 
 8/9/0.5, 9/10/0.5, 10/11/0.5,11/12/0.5, 12/13/0.45, 8/13/0.6, 
 14/15/0.5, 15/16/0.6, 12/14/0.45, 13/15/0.4, 8/16/0.6}
{\coordinate (a\e-\f) at ($(b\e) ! \t ! (b\f)$); }

\draw [strand] plot[smooth]
coordinates {(b1) (a8-16) (a15-16) (b6)}
[postaction=decorate, decoration={markings,
 mark= at position 0.2 with \strarrow,
 mark= at position 0.5 with \strarrow, 
 mark= at position 0.8 with \strarrow }];
 
\draw [strand] plot[smooth]
coordinates {(b6) (a14-15) (a12-14)(a11-12) (a10-11) (b3)}
[postaction=decorate, decoration={markings,
 mark= at position 0.15 with \strarrow, mark= at position 0.35 with \strarrow,
 mark= at position 0.53 with \strarrow, mark= at position 0.7 with \strarrow,
 mark= at position 0.87 with \strarrow }];
 
\draw [strand] plot[smooth]
coordinates {(b3) (a9-10) (a8-9) (b1)}
[postaction=decorate, decoration={markings,
 mark= at position 0.2 with \strarrow,
 mark= at position 0.5 with \strarrow, 
 mark= at position 0.8 with \strarrow }];

\draw [strand] plot[smooth]
coordinates {(b2) (a9-10) (a10-11) (b4)}
 [postaction=decorate, decoration={markings,
 mark= at position 0.2 with \strarrow,
 mark= at position 0.5 with \strarrow, 
 mark= at position 0.8 with \strarrow }];

\draw [strand] plot[smooth]
coordinates {(b4) (a11-12) (a12-13) (a13-15) (a15-16) (b7)}
[postaction=decorate, decoration={markings,
 mark= at position 0.15 with \strarrow, mark= at position 0.35 with \strarrow,
 mark= at position 0.55 with \strarrow, mark= at position 0.7 with \strarrow,
 mark= at position 0.87 with \strarrow }];

\draw [strand] plot[smooth]
coordinates {(b7) (a8-16) (a8-13) (a12-13) (a12-14) (b5)}
[postaction=decorate, decoration={markings,
 mark= at position 0.15 with \strarrow, mark= at position 0.315 with \strarrow,
 mark= at position 0.5 with \strarrow, mark= at position 0.7 with \strarrow,
 mark= at position 0.88 with \strarrow }];

\draw [strand] plot[smooth]
coordinates {(b5) (a14-15) (a13-15) (a8-13) (a8-9) (b2)}
[postaction=decorate, decoration={markings,
 mark= at position 0.13 with \strarrow, mark= at position 0.33 with \strarrow,
 mark= at position 0.5 with \strarrow, mark= at position 0.7 with \strarrow,
 mark= at position 0.88 with \strarrow }];

\foreach \n/\m/\l/\a in {1/124/134/0, 2/234/123/-1, 3/345/127/1, 4/456/167/10, 5/256/367/5, 6/267/356/0, 7/127/345/0}
{ \draw [\quivcolor] (\bstart+\seventh/2-\seventh*\n+\a:1) node (q\m) {\scriptsize $\l$}; }

\foreach \m/\l/\a/\r in {247/135/\bstart/0.42, 245/137/10/0.25, 257/357/210/0.36}
{ \draw [\quivcolor] (\a:\r) node (q\m) {\scriptsize $\l$}; }

\foreach \t/\h/\a in {234/124/-19, 234/345/19, 456/345/-16, 456/256/21, 256/267/21, 127/267/-20,
 127/124/19, 124/247/13, 247/127/11, 245/234/18, 247/245/34, 257/247/14, 245/257/15,
 267/257/6, 257/256/0, 256/245/-9, 345/245/0, 245/456/-16}
{ \draw [quivarrow]  (q\t) edge [bend left=\a] (q\h); }

\end{tikzpicture}
 \caption{The source labels $I_j$ for a Postnikov diagram of type $(3,7)$.}
 \label{f:source-labels}
\end{figure}
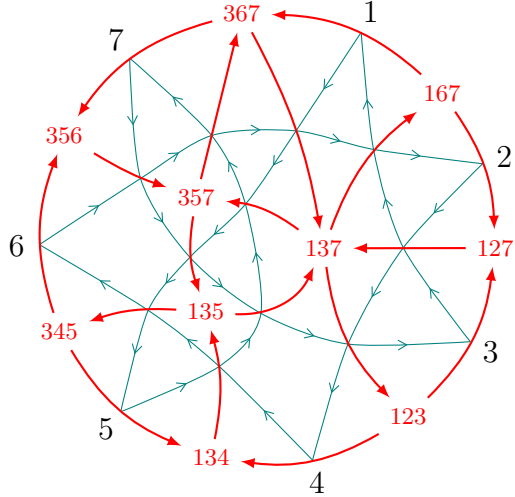

Figure~\ref{f:source-labels} shows the labels $I_j$, drawn in place of the quiver vertices, in our running example (cf. Figure~\ref{f:quiver37}).
The significance of these labels comes from cluster algebras, which we will discuss further in Section~\ref{sec:newMS}: each Postnikov diagram determines an initial seed for a cluster algebra structure on the corresponding (open) positroid variety \cite{GL,SSBW}, for which the initial cluster variables are restrictions of the Pl\"ucker coordinates $\Pluck{I_j}$ for $j\in Q_0$.

We are now able to interpret these labels algebraically, as follows. Write $A=A_D$ and $B=eAe$, and let $\rho\colon\CM(B)\to\CM(\jksalg)$ be the restriction functor from Proposition~\ref{p:restriction}.

\begin{theorem}
\label{t:labels}
Let $D$ be a connected Postnikov diagram. For each $j\in Q_0$, the indecomposable projective $A$-module $Ae_j$ satisfies
\begin{equation}
\label{eq:boundary-projectives}
\rho(eAe_j)\isom M_{I_j},
\end{equation}
so that $\rho(eA)=\bigoplus_{j\in Q_0}M_{I_j}$. In particular, a Postnikov diagram $D$ with $n$ strands has type $(k,n)$ if and only if each $I_j$ has cardinality $k$.
\end{theorem}
\begin{proof}
Since $Ae_j\isom N_{\MSmat_j}$ by Corollary~\ref{c:MSmatch}, it follows from Proposition~\ref{p:res-pm-mods} that
\[\rho(eAe_j)\isom M_{\bdry\MSmat_j}.\]
But by \cite[Thm.~5.3]{MuSp}, the boundary value $\bdry\MSmat_j$ is precisely the set $I_j$. In particular, if $D$ has type $(k,n)$ then $I_j$ has cardinality $k$ by Proposition~\ref{p:bdry-value-card}.
\end{proof}

It is shown in \cite{Sco} that when $D$ is a $(k,n)$-diagram, meaning that the associated permutation has $\pi_D(i)=i+k\pmod{n}$ and that the seed attached to $D$ generates a cluster algebra structure on the Grassmannian $\grass{k}{n}$, then each label $I_j$ has cardinality $k$. It thus follows from Theorem~\ref{t:labels} that $(k,n)$-diagrams have type $(k,n)$, as promised in Definition~\ref{d:type}.

\begin{corollary}
\label{c:BKM}
For any connected Postnikov diagram $D$, there is an isomorphism
\[A_D\isoto\End_\jksalg\Bigl(\bigoplus_{j\in Q_0}M_{I_j}\Bigr)\op.\]
\end{corollary}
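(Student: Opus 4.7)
The plan is to chain together three already-proved results. By Proposition~\ref{p:double-cen}(i), the natural map
\[A_D \isoto \End_B(T)\op\]
is an isomorphism, where $T = eA$ and $B = eAe$. Since $T \in \CM(B)$ (as noted in Section~\ref{sec:ind-res}, because $A$ is thin by Proposition~\ref{p:thin} and so $eAe_j$ is free and finitely generated over $Z$ for each $j$), we may apply the restriction functor $\rho\colon\CM(B)\to\CM(\jksalg)$ of Proposition~\ref{p:restriction}, which is fully faithful. This gives an isomorphism
\[\End_B(T) \isoto \End_\jksalg(\rho T).\]
Finally, Proposition~\ref{p:labels} identifies $\rho T = \rho(eA) = \bigoplus_{j\in Q_0} M_{I_j}$, and composing the three isomorphisms above yields the desired result.

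So the entire proof is a three-line computation. The genuinely new content needed for Corollary~\ref{c:BKM} has already been established: Proposition~\ref{p:double-cen}(i) is quoted from \cite{Pre,Pre3}, Proposition~\ref{p:restriction} relies on Claim~\ref{cm:C-to-B} (proved using perfect matching modules), and Proposition~\ref{p:labels} is the combinatorial-to-categorical dictionary that ultimately rests on Corollary~\ref{c:MSmatch}, i.e.\ on the identification of indecomposable projectives with Muller--Speyer perfect matching modules.

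There is no real obstacle at this stage; the work of building up to this corollary has been the entire thrust of the paper so far. If anything, the only thing worth being careful about is ensuring that the endomorphism algebras are being computed on the correct sides, so that the opposite algebra appears correctly. Since both the map $A_D \to \End_B(T)\op$ and the isomorphism induced by $\rho$ are given by how elements act, and $\rho$ is a fully faithful functor (not merely an equivalence on Hom-spaces up to twisting), the compatibility is automatic and the $\op$ passes through without modification.
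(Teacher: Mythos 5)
Your proposal is correct and follows the same route as the paper's proof: apply Proposition~\ref{p:double-cen}(i) to get $A_D\isoto\End_B(eA)\op$, use the full faithfulness of $\rho$ from Proposition~\ref{p:restriction} to transfer the endomorphism algebra to $\CM(\jksalg)$, and invoke Proposition~\ref{p:labels} to identify $\rho(eAe_j)\isom M_{I_j}$. The only cosmetic difference is that the paper decomposes $eA$ as $\bigoplus_{j\in Q_0}eAe_j$ before applying $\rho$, whereas you apply $\rho$ to $T=eA$ directly; this is immaterial.
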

\begin{proof}
Writing $A=A_D$, Proposition~\ref{p:double-cen}(i) provides an isomorphism
\[A\isoto\End_B(eA)\op=\End_B\Bigl(\bigoplus_{j\in Q_0} eAe_j\Bigr)\op.\]
Since $\rho\colon\CM(B)\to\CM(\jksalg)$ is fully faithful by Proposition~\ref{p:restriction}, and $\rho(eAe_j)\isom M_{I_j}$ by Theorem~\ref{t:labels}, we get a further isomorphism
\[\End_B\Bigl(\bigoplus_{j\in Q_0} eAe_j\Bigr)\op\isoto\End_\jksalg\Bigl(\bigoplus_{j\in Q_0}M_{I_j}\Bigr)\op,\]
as required.
\end{proof}

\begin{remark}
\label{r:target-labels}
Several sources, including \cite{MuSp,OPS}, also consider the \emph{target labels} $I_j\dual$ consisting of those $i\in\C_1$ such that the strand ending at $i$ has $j$ on its left-hand side (see Fig.~\ref{f:target-labels}). The analogous statement to Theorem~\ref{t:labels} for these labels is that
\begin{equation}
\label{eq:boundary-injectives}
\rho((e_jAe)\dual)\isom M_{I_j\dual},
\end{equation}
where $(-)\dual=\Hom_Z(-,Z)$---this follows from Corollary~\ref{c:MSmatch-op} together with the analogue of \cite[Thm.~5.3]{MuSp} for the upstream wedge matchings $\MSmat_j\dual$, showing that $\bdry\MSmat_j\dual=I_j\dual$.

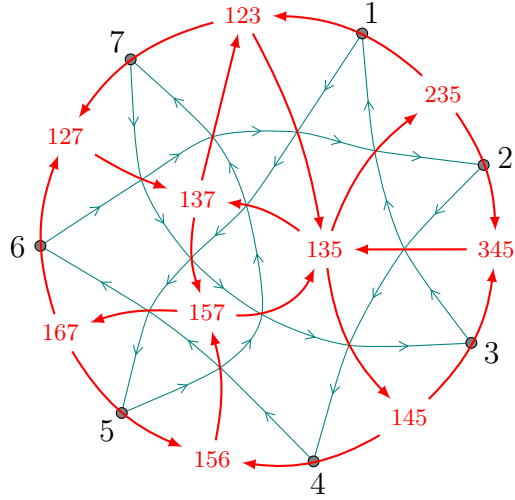
\begin{figure}[h]
\begin{tikzpicture}[scale=3,baseline=(bb.base),yscale=-1]

\path (0,0) node (bb) {};

\foreach \n/\m/\a in {1/4/0, 2/3/0, 3/2/5, 4/1/10, 5/7/0, 6/6/-3, 7/5/0}
{ \coordinate (b\n) at (\bstart-\seventh*\n+\a:1.0);
  \draw (\bstart-\seventh*\n+\a:1.1) node {$\m$}; }

\foreach \n/\m in {8/1, 9/2, 10/3, 11/4, 14/5, 15/6, 16/7}
  {\coordinate (b\n) at ($0.65*(b\m)$);}

\coordinate (b13) at ($(b15) - (b16) + (b8)$);
\coordinate (b12) at ($(b14) - (b15) + (b13)$);

\foreach \n/\x\y in {13/-0.03/-0.03, 12/-0.22/0.0, 14/-0.07/-0.03, 11/0.05/0.02, 16/-0.02/0.02}
  {\coordinate (b\n)  at ($(b\n) + (\x,\y)$); } 

\foreach \e/\f/\t in {2/9/0.5, 4/11/0.5, 5/14/0.5, 7/16/0.5, 
 8/9/0.5, 9/10/0.5, 10/11/0.5,11/12/0.5, 12/13/0.45, 8/13/0.6, 
 14/15/0.5, 15/16/0.6, 12/14/0.45, 13/15/0.4, 8/16/0.6}
{\coordinate (a\e-\f) at ($(b\e) ! \t ! (b\f)$); }

\draw [strand] plot[smooth]
coordinates {(b1) (a8-16) (a15-16) (b6)}
[postaction=decorate, decoration={markings,
 mark= at position 0.2 with \strarrow,
 mark= at position 0.5 with \strarrow, 
 mark= at position 0.8 with \strarrow }];
 
\draw [strand] plot[smooth]
coordinates {(b6) (a14-15) (a12-14)(a11-12) (a10-11) (b3)}
[postaction=decorate, decoration={markings,
 mark= at position 0.15 with \strarrow, mark= at position 0.35 with \strarrow,
 mark= at position 0.53 with \strarrow, mark= at position 0.7 with \strarrow,
 mark= at position 0.87 with \strarrow }];
 
\draw [strand] plot[smooth]
coordinates {(b3) (a9-10) (a8-9) (b1)}
[postaction=decorate, decoration={markings,
 mark= at position 0.2 with \strarrow,
 mark= at position 0.5 with \strarrow, 
 mark= at position 0.8 with \strarrow }];

\draw [strand] plot[smooth]
coordinates {(b2) (a9-10) (a10-11) (b4)}
 [postaction=decorate, decoration={markings,
 mark= at position 0.2 with \strarrow,
 mark= at position 0.5 with \strarrow, 
 mark= at position 0.8 with \strarrow }];

\draw [strand] plot[smooth]
coordinates {(b4) (a11-12) (a12-13) (a13-15) (a15-16) (b7)}
[postaction=decorate, decoration={markings,
 mark= at position 0.15 with \strarrow, mark= at position 0.35 with \strarrow,
 mark= at position 0.55 with \strarrow, mark= at position 0.7 with \strarrow,
 mark= at position 0.87 with \strarrow }];

\draw [strand] plot[smooth]
coordinates {(b7) (a8-16) (a8-13) (a12-13) (a12-14) (b5)}
[postaction=decorate, decoration={markings,
 mark= at position 0.15 with \strarrow, mark= at position 0.315 with \strarrow,
 mark= at position 0.5 with \strarrow, mark= at position 0.7 with \strarrow,
 mark= at position 0.88 with \strarrow }];

\draw [strand] plot[smooth]
coordinates {(b5) (a14-15) (a13-15) (a8-13) (a8-9) (b2)}
[postaction=decorate, decoration={markings,
 mark= at position 0.13 with \strarrow, mark= at position 0.33 with \strarrow,
 mark= at position 0.5 with \strarrow, mark= at position 0.7 with \strarrow,
 mark= at position 0.88 with \strarrow }];

\foreach \n/\m/\l/\a in {1/124/156/0, 2/234/145/-1, 3/345/345/1, 4/456/235/10, 5/256/123/5, 6/267/127/0, 7/127/167/0}
{ \draw [\quivcolor] (\bstart+\seventh/2-\seventh*\n+\a:1) node (q\m) {\scriptsize $\l$}; }

\foreach \m/\l/\a/\r in {247/157/\bstart/0.42, 245/135/10/0.25, 257/137/210/0.36}
{ \draw [\quivcolor] (\a:\r) node (q\m) {\scriptsize $\l$}; }

\foreach \t/\h/\a in {234/124/-19, 234/345/19, 456/345/-16, 456/256/21, 256/267/21, 127/267/-20,
 127/124/19, 124/247/13, 247/127/11, 245/234/18, 247/245/34, 257/247/14, 245/257/15,
 267/257/6, 257/256/0, 256/245/-9, 345/245/0, 245/456/-16}
{ \draw [quivarrow]  (q\t) edge [bend left=\a] (q\h); }

\end{tikzpicture}
 \caption{The target labels $I_j\dual$ for a Postnikov diagram of type $(3,7)$.}
 \label{f:target-labels}
\end{figure}

Applying Proposition~\ref{p:double-cen} to $D\op$, with dimer algebra $A\op$, yields
\[A\op\isoto\End_{B\op}(Ae)\op\isoto\End_B((Ae)\dual),\]
where the second isomorphism uses that $B$ and $Ae$ are free and finitely generated over $Z$, so that the duality $(-)\dual$ is an equivalence $\CM(B\op)\isoto\CM(B)\op$.
Then the same argument as for Corollary~\ref{c:BKM} shows that there is an isomorphism
\[A_D\isoto\End_\jksalg\Bigl(\bigoplus_{j\in Q_0}M_{I_j\dual}\Bigr)\op.\]
\end{remark}

\begin{remark}
\label{r:BKM-comparison}
When $D$ is a $(k,n)$-diagram, \cite[Thm.~10.3]{BKM} also exhibits an isomorphism of $A_D$ with an endomorphism algebra; using our notation and conventions (see Remark~\ref{r:JKS-comparison}), the isomorphism is
\[A_D\isoto\End_{\widetilde{\jksalg}}\Bigl(\bigoplus_{j\in Q_0}M_{I_j\comp}\Bigr).\]
Here $\widetilde{\jksalg}=\Pi/(y^{n-k}-x^k)$, i.e.\ it is the algebra of Definition~\ref{d:preproj-Bkn} but with parameters $(n-k,n)$, noting that $I_j\comp$ has cardinality $n-k$, and unlike in Corollary~\ref{c:BKM} we do not take the opposite of the endomorphism algebra.

This isomorphism is, however, equivalent to the isomorphism of Remark~\ref{r:target-labels} for the $(n-k,n)$-diagram $D\op$. Each vertex $j$ of $Q(D\op)$ has target label $I_j\comp$, where $I_j$ is the source label of $j$ in $D$. Thus Remark~\ref{r:target-labels} tells us that
\[A_{D\op}=A_D\op\isoto\End_{\widetilde{\jksalg}}\Bigl(\bigoplus_{j\in Q_0}M_{I_j\comp}\Bigr)\op,\]
which is equivalent to \cite[Thm.~10.3]{BKM} by taking opposite algebras.

Similarly, one obtains a fourth isomorphism
\[A_D\isoto\End_{\widetilde{\jksalg}}\Bigl(\bigoplus_{j\in Q_0}M_{(I_j\dual)\comp}\Bigr).\]
of $A_D$ with an endomorphism algebra.
\end{remark}

An important special case of Theorem~\ref{t:labels} is when
$j\in Q_0$ is a boundary vertex, so that $eAe_j=Be_j$,
which is an indecomposable projective for the boundary algebra $B=eAe$.
Thus $\rho(Be_j)=M_{I_j}$ and the labels $I_j$ for the $n$ boundary vertices
$j\in Q^{\bdry}_0$
are precisely the \emph{source necklace} associated to the Postnikov diagram~$D$
or just to the strand permutation
(cf.\ \cite[Prop.~4.3]{MuSp}, where it is called the reverse necklace).
Similarly, the \emph{target necklace} (or just necklace in \cite{MuSp}) consists of the labels
$I_j\dual$ for $j\in Q^{\bdry}_0$ and we have $\rho(e_jB\dual)=M_{I_j\dual}$ by \eqref{eq:boundary-injectives},
noting that $e_jB\dual$ are the indecomposable injective objects in $\CM(B)$.

Given a functor $\rho\colon\mathcal{B}\to\mathcal{C}$, recall that its \emph{essential image} is defined to be the full subcategory of $\mathcal{C}$ on objects isomorphic to $\rho(X)$ for some object $X\in\mathcal{B}$.

\begin{proposition}
\label{pro:positroid}
Consider the restriction functor $\rho\colon\CM(B)\to\CM(C)$. For each $k$-subset $J$, the $C$-module $M_J$ is in the essential image of $\rho$ if and only if $J\in\posid$, where $\posid$ is the positroid associated to $D$.
\end{proposition}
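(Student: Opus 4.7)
The plan is to reduce the statement, via perfect matching modules, to the standard combinatorial description of the positroid $\posid$ as the collection of boundary values of perfect matchings of $\plabic(D)$ (equivalently, of $Q(D)$).

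First I would handle the easy ``if'' direction. Suppose $J = \bdry\mu$ for some perfect matching $\mu$ of $Q(D)$. The module $eN_\mu$ lies in $\CM(B)$, since $B = eAe$ and $N_\mu \in \CM(A)$, and Proposition~\ref{p:res-pm-mods} gives $\rho(eN_\mu) = M_{\bdry\mu} = M_J$.

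For the ``only if'' direction, suppose $M \in \CM(B)$ with $\rho(M) \isom M_J$. Because rank is preserved under restriction (as noted following Lemma~\ref{l:cm-lift}), we have $\rk(M) = \rk(M_J) = 1$. Using Lemma~\ref{l:cm-lift}, lift $M$ to $FM \in \CM(A)$; this lift satisfies $eFM = M$, so $\rk(FM) = \rk(M) = 1$. Corollary~\ref{c:proj-pm-mods} then supplies a perfect matching $\mu$ of $Q(D)$ with $FM \isom N_\mu$. Applying Proposition~\ref{p:res-pm-mods} yields
\[
M_J \isom \rho(M) = \rho(eFM) \isom \rho(eN_\mu) = M_{\bdry\mu},
\]
and since the assignment $I \mapsto M_I$ is injective on $k$-subsets by \cite[Prop.~5.2]{JKS}, we conclude $J = \bdry\mu$.

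It remains to identify the set $\{\bdry\mu : \mu \text{ a perfect matching of } Q(D)\}$ with the positroid $\posid$ of $D$. This is the standard characterization of a positroid in terms of its (reduced) plabic graph: a $k$-subset $J \in \binom{n}{k}$ lies in $\posid$ precisely when the plabic graph $\plabic(D)$ admits a perfect matching whose boundary is $J$, in our sense. This is due to Postnikov and is recorded (in compatible language) in \cite{MuSp}; combined with the bijection of Remark~\ref{r:pm-graph} between perfect matchings of $Q(D)$ and of $\plabic(D)$, together with the translation of boundary conventions in Definition~\ref{d:bdry-value}, this concludes the argument. The main point requiring care is the bookkeeping of conventions (source versus target labelling, colour standardisation) between the various sources and our setup, but no new combinatorial input is needed beyond what is already available.
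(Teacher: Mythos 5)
Your proof is correct and follows essentially the same route as the paper: both directions rely on Proposition~\ref{p:res-pm-mods}, the lift $FM$ together with Corollary~\ref{c:proj-pm-mods} to produce a perfect matching, and the combinatorial fact (\cite[Thm.~3.1]{MuSp}) that $\posid$ consists exactly of boundary values of perfect matchings of $\plabic(D)$. The paper cites the precise reference \cite[Thm.~3.1]{MuSp} where you gesture at it more loosely, but the argument is the same.
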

\begin{proof}
By \cite[Thm.~3.1]{MuSp}, the positroid $\posid$ consists of boundary values of perfect matchings on the graph $\plabic(D)$, or, from our point of view, on the quiver $Q(D)$. For any such perfect matching $\mu$ we have $M_{\bdry\mu}\isom\rho(eN_\mu)$ in the essential image of $\rho$ by Proposition~\ref{p:res-pm-mods}. Conversely, assume $M_J\isom\rho(M)$. Then $M$, and hence $FM$, has rank $1$, and so by Corollary~\ref{c:proj-pm-mods} there is a perfect matching $\mu$ of $D$ such that $FM\isom N_\mu$. Then $M_{\bdry\mu}\isom\rho(eN_\mu)\isom\rho(eFM)=\rho(M)\isom M_J$, and so $J=\bdry\mu$.
\end{proof}

An alternative point of view is to note that $\jksalg\subset B\subset \jksalg[t^{-1}]$ 
by the proof of Proposition~\ref{p:restriction}. 
Any $\jksalg$-module $M$ is naturally a subspace of the $\jksalg[t^{-1}]$-module $M[t^{-1}]$, and asking that $M$ is in the essential image of $\rho$ is equivalent to asking if this subspace is a $B$-submodule; if it is, we simply say that $M$ is a $B$-module. 
Using the combinatorics of profiles \cite[\S6]{JKS}, one can show that the combinatorial condition
(see \cite[\S2.1]{MuSp} or \cite[\S5]{Oh11})
that determines whether $J$ is in the positroid, in terms of the source (or reverse \cite{MuSp}, or upper \cite{Oh11}) necklace,
is precisely the condition that $M_J$ is a $B$-module.

\section{The Marsh--Scott formula} \label{sec:newMS}

Let $D$ be a Postnikov diagram, and consider the associated quiver $Q(D)$. 
Being a quiver with frozen vertices (those corresponding to boundary regions of $D$), 
we may associate to it a cluster algebra with frozen variables. 
We may choose whether to adopt the convention that frozen variables are invertible, 
in which case we call the resulting cluster algebra $\openclustalg{D}$, or that they are not, 
in which case we obtain the cluster algebra $\clustalg{D}$. 
In either case, the cluster algebra is defined as a subalgebra of the field of rational functions in the initial cluster variables $x_j$ for $j\in Q_0$.

If $D$ has type $(k,n)$, recall from Section~\ref{sec:labels} that each quiver vertex $j\in Q_0$
determines a $k$-subset $I_j\subset\C_1$, 
and hence a Pl\"ucker coordinate~$\Pluck{I_j}$ on the Grassmannian~$\grass{k}{n}$. 
This yields a natural specialisation map 
\begin{equation}\label{eq:specialise}
\CC(x_j:j\in Q_0)\to\CC(\grass{k}{n})\colon x_j\mapsto\Pluck{I_j},
\end{equation}
taking rational functions in the $x_j$ to rational functions on the $(k,n)$-Grassmannian. 
When $D$ is a $(k,n)$-diagram, this specialisation map restricts to an isomorphism 
$\clustalg{D}\isoto\CC[\grass{k}{n}]$, yielding Scott's cluster structure \cite{Sco} on $\CC[\grass{k}{n}]$. 
For a more general Postnikov diagram $D$, the specialisation restricts to an isomorphism of 
$\openclustalg{D}$ with the (homogeneous) coordinate ring of the open positroid variety corresponding to the 
permutation $\pi_D$ \cite{GL}, yielding the source-labelled cluster structure on this variety.

The Grassmannian $\grass{k}{n}$ carries a birational automorphism called the Marsh--Scott twist \cite[\S2]{MaSc}, or simply the twist, which we denote by $x\mapsto\twist{x}$. By \cite[Prop.~8.10]{MaSc}, if $x$ is a cluster variable in Scott's cluster structure, then $\twist{x}$ is a product of a cluster variable with a monomial in frozen variables; indeed, the twist is even a quasi-cluster automorphism in the sense of Fraser \cite[Rem.~6.6]{Fraser}. Related twist automorphisms exist for open positroid varieties \cite{MuSp}, but in the general case they relate cluster variables in two different cluster algebra structures on the coordinate ring.
It has been shown recently \cite[Thm.~7.2]{Pre4}, \cite[Thm.~B]{CLSBW} that these more general twist maps are quasi-cluster equivalences between the two relevant cluster structures.

The Marsh--Scott formula, introduced in \cite{MaSc} for a uniform Postnikov diagram,
is a certain dimer partition function which was used in \cite{MaSc}
to compute the twisted Pl\"ucker coordinates $\twist{\Pluck{I}}$.
However, as a formula in the associated cluster algebra, it makes sense for
a general $\white$-standardised Postnikov diagram $D$ of type $(k,n)$
and can be written as follows, for any $k$-subset $I\subset\C_1$:
\begin{equation}
\label{eq:MS1}
  \MSform[^\white]{I} = x^{-\wt(D)}\sum_{\mu:\pmbdry{\mu}=I}x^{\wt[\white](\mu)}, 
\end{equation}
where $\wt[\white](\mu)$ is as in \eqref{eq:wtms-mu} and 
\begin{equation}
\label{eq:wtD}
  \wt(D)=\sum_{\substack{ j\in Q_0 \\ \vmut }}  [P_j].
\end{equation}
Thus the formula associates to each $k$-subset $I\subset\C_1$ a formal Laurent polynomial in $\CC[\Kgp(\proj{A})]$, or equivalently a Laurent polynomial 
in the initial cluster variables $x_j:=x^{[P_j]}$ for $j\in Q_0$.
When $I$ is not in the associated positroid $\posid(D)$, so is the boundary value of no matchings,
the formula gives $\MSform[^\white]{I} =0$.
Note, for comparison, that \eqref{eq:MS1} is written in terms of the quiver~$Q(D)$, 
whereas \cite{MaSc} writes an equivalent formula in terms of the bipartite graph~$\plabic(D)$.

To apply their formula, Marsh--Scott need to evaluate it in $\CC[\grass{k}{n}]$ 
using the specialisation \eqref{eq:specialise}, and then prove the following.

\begin{theorem}[{\cite[Thm.~1.1]{MaSc}}]
\label{thm:MaSc}
Let $D$ be a $\white$-standardised $(k,n)$-diagram, let $I\subset\C_1$ be a $k$-subset
and $\twPluck{I}\in\CC[\grass{k}{n}]$ the associated twisted Pl\"ucker coordinate.
Then
\[ \twPluck{I} = \MSform[^\white]{I}|_{x_j\mapsto\Pluck{I_j}}.\]
\end{theorem}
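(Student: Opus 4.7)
The plan is to factor the proof into three stages, following the road map outlined in the introduction: first, re-express the combinatorial formula \eqref{eq:MS1} as a sum of classes of $A$-modules; second, recognise it as the Caldero--Chapoton/Fu--Keller cluster character of a suitable syzygy; and third, use the Jensen--King--Su categorification to identify the specialisation $x_j\mapsto\Pluck{I_j}$ with $\twPluck{I}$. For the first stage I would invoke formula \eqref{eq:Nmu-class-white} of Proposition~\ref{prop:Nmu-class}: for any matching $\mu$ with $\bdry\mu=I$ it gives $\wt[\white](\mu)-\wt(D)=[P_I^\white]-[N_\mu]$, where $P_I^\white:=\bigoplus_{i\in I}P_{h\alpha_i}$, so that \eqref{eq:MS1} becomes
\[\MSform[^\white]{I}=x^{[P_I^\white]}\sum_{\mu:\bdry\mu=I}x^{-[N_\mu]}.\]
Propositions~\ref{p:matchings-to-modules} and~\ref{p:F'-is-min} then identify the indexing set of the sum with the lattice $\{N\leq FM_I:\smallF M_I\leq N\leq FM_I\}$.

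For the second stage I would construct a (non-minimal) projective cover $\canprojcov[\white] M_I\twoheadrightarrow M_I$ in $\CM(B)$ whose summands are the indecomposable projectives $Be_{h\alpha_i}$ for $i\in I$. By Proposition~\ref{p:double-cen}(iii) we have $F(Be_j)=Ae_j$ for boundary $j$, so $F\canprojcov[\white] M_I\isom P_I^\white$. Writing $\cansyz[\white] M_I$ for the kernel of this cover, the short exact sequences \eqref{eq:FPi} and \eqref{eq:GOm}, together with the bijection $N\mapsto N/\smallF M_I$ between sandwiched submodules of $FM_I$ and submodules of $G\cansyz[\white] M_I$, transform the formula into
\[\MSform[^\white]{I}=x^{[F\cansyz[\white] M_I]}\sum_{E\leq G\cansyz[\white] M_I}x^{-[E]},\]
which is precisely the Fu--Keller cluster character $\clucha{T_D}{\cansyz[\white] M_I}$ relative to the cluster-tilting object $T_D\in\GP(B)$.

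For the third stage I would apply the specialisation $x_j\mapsto\Pluck{I_j}$. Since $D$ is a $(k,n)$-diagram we have $B=\jksalg$, and by Proposition~\ref{p:labels} together with Remark~\ref{r:JKS-comparison} the specialisation coincides with evaluation via the Jensen--King--Su cluster character $\jkscluschar$, namely $x^{[Ae_j]}\mapsto\jkscluschar(M_{I_j})=\Pluck{I_j}$. Standard compatibility of cluster characters with such a change of variables then yields $\cluschar{T_D}(\cansyz[\white] M_I)|_{x_j\mapsto\Pluck{I_j}}=\jksclucha{\cansyz[\white] M_I}$, reducing the theorem to proving $\jksclucha{\cansyz[\white] M_I}=\twPluck{I}$. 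The main obstacle is this last step: one must show that the syzygy $\cansyz[\white]$ categorifies the Marsh--Scott twist on rank-one modules in $\CM(\jksalg)$. I anticipate this will require a dedicated homological lemma (in the spirit of the Jensen syzygy lemma acknowledged in the introduction) computing $\cansyz[\white] M_I$ explicitly enough to match it against the unique rank-one module whose image under $\jkscluschar$ equals $\twPluck{I}$. A subsidiary technical point is to verify that $\bigoplus_{i\in I}Be_{h\alpha_i}$ genuinely surjects onto $M_I$ in $\CM(B)$ rather than merely having the correct class in $\Kgp(\proj B)$; this reduces to a combinatorial compatibility between $I$ and the source labels $I_{h\alpha_i}$ for $i\in I$.
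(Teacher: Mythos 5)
The statement you are trying to prove is not established in the paper at all: it is Marsh--Scott's own Theorem~1.1, recalled here with a bracketed citation to \cite{MaSc}, and used as an input. So the comparison to make is between your proposal and Marsh--Scott's original argument, which is a direct combinatorial-algebraic computation with Pl\"ucker coordinates and the twist on the Grassmannian, not a categorification argument.

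More importantly, your proposal is circular. Stages one and two correctly retrace the paper's own transformation of the Marsh--Scott formula (Proposition~\ref{prop:MS2}, Theorem~\ref{thm:MS3}, Theorem~\ref{thm:MS=CC}), arriving at
\[
\MSform[^\white]{I}|_{x_j\mapsto\Pluck{I_j}} \;=\; \clucha{T}{\cansyz[\white] M_I}|_{x_j\mapsto\Pluck{I_j}}.
\]
But your third stage then asserts that it ``remains'' to show $\jksclucha{\cansyz[\white]M_I}=\twPluck{I}$, i.e.\ precisely equation \eqref{eq:CC=twPluck}. In the paper, \eqref{eq:CC=twPluck} is \emph{derived} by combining Theorem~\ref{thm:MS=CC} with Theorem~\ref{thm:MaSc} --- the latter being exactly the statement you are trying to prove. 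You have not reduced the problem; you have merely restated it in categorical language. As Remark~\ref{r:GLS} points out, \cite[Thm.~6]{GLS} would give the identity up to a Laurent monomial in frozen variables, but pinning down the exact monomial (equivalently, verifying that $\canprojcov[\white]M_I$ is the right choice of non-minimal projective cover) is achieved in the paper only by appealing to Theorem~\ref{thm:MaSc} itself. To close the gap you would instead need an independent computation of the twisted Pl\"ucker coordinate $\twPluck{I}$ --- which is what Marsh and Scott do directly, by a route disjoint from the cluster-category machinery used here.
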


In the remainder of the paper, we give a categorical interpretation of this result by relating 
the Marsh--Scott formula to the more general cluster character formula of Fu--Keller~\cite{FK},
which computes cluster monomials from (reachable) rigid objects in a Frobenius cluster category. 
Almost all our results will apply for all connected Postnikov diagrams, 
except in Section~\ref{sec:MS=CC},
when we come to use Theorem~\ref{thm:MaSc} to interpret $\MSform[^\white]{I}$ 
as a twisted Pl\"ucker coordinate.

To that end, assume that $D$ is connected, of type $(k,n)$ and $\white$-standardised.
Thus the boundary arrows of $Q(D)$ are $\alpha_i$ for $i\in\C_1$, and all of these arrows are oriented clockwise. Given any $I\subset\C_1$, we can define 
\begin{equation}
\label{eq:PI-white}
 P_I^\white = \bigoplus_{i\in I} P_{h\alpha_i} = \bigoplus_{i\in I} A e_{h\alpha_i}.
\end{equation}
This leads immediately to the following way to rewrite \eqref{eq:MS1}.

\begin{proposition} \label{prop:MS2}
\begin{equation} \label{eq:MS2}
  \MSform[^\white]{I} = x^{[P_I^\white]}\sum_{\mu:\pmbdry{\mu}=I}x^{-[N_\mu]}.
\end{equation}
\end{proposition}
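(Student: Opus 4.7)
The plan is to derive \eqref{eq:MS2} from \eqref{eq:MS1} term-by-term, by showing that for every perfect matching $\mu$ with $\pmbdry{\mu}=I$, the exponent satisfies
\[ -\wt(D)+\wt[\white](\mu) = [P_I^\white]-[N_\mu] \]
in $\Kgp(\proj A)$. Once this identity is established, both sums run over the same indexing set and the two versions of the Marsh--Scott formula coincide on each summand.

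First I would unpack the two sides using the explicit formulas already available in the excerpt. The right-hand side uses \eqref{eq:PI-white}, which gives $[P_I^\white]=\sum_{i\in I}[P_{h\alpha_i}]$, and then Proposition~\ref{prop:Nmu-class}, specifically formula \eqref{eq:Nmu-class-white}, which gives
\[ [N_\mu]=\sum_{\substack{j\in Q_0\\\vmut}}[P_j]+\sum_{i\in\bdry\mu}[P_{h\alpha_i}]-\wt[\white](\mu). \]
The left-hand side uses the definition \eqref{eq:wtD} of $\wt(D)$. Substituting these into the desired identity and using the hypothesis $\bdry\mu=I$, the two sums $\sum_{i\in I}[P_{h\alpha_i}]$ and $\sum_{i\in\bdry\mu}[P_{h\alpha_i}]$ cancel, as do the two copies of $\sum_{\vmut}[P_j]$, and the identity reduces to a tautology.

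There is essentially no obstacle here: once Proposition~\ref{prop:Nmu-class} is in place, the statement is a bookkeeping rearrangement. The only subtlety worth flagging explicitly in the write-up is that the identity is applied inside the monomial $x^{(-)}$, i.e.\ in the group ring $\CC[\Kgp(\proj A)]$, so the equality of exponents genuinely translates into equality of Laurent monomials; and that $D$ being $\white$-standardised is exactly the hypothesis under which formula \eqref{eq:Nmu-class-white} (rather than the $\black$-standardised version \eqref{eq:Nmu-class-black}) is the relevant one. The proof is therefore a two- or three-line calculation citing \eqref{eq:wtD}, \eqref{eq:PI-white}, and \eqref{eq:Nmu-class-white}.
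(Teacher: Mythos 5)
Your proof is correct and follows exactly the same route as the paper: rearrange the formula \eqref{eq:Nmu-class-white} from Proposition~\ref{prop:Nmu-class} to obtain $\wt[\white](\mu)-\wt(D)=[P_{\bdry\mu}^\white]-[N_\mu]$, then substitute into \eqref{eq:MS1} term by term using $\bdry\mu=I$. The remarks about working in the group ring and about the $\white$-standardised hypothesis are sound but implicit in the paper's one-line argument.
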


\begin{proof}
We use Proposition~\ref{prop:Nmu-class}, noting that
\eqref{eq:wtD} is the first term on the right-hand side of \eqref{eq:Nmu-class-white},
while the second term is $[P_{\bdry\mu}^\white]$.
Hence we can rearrange \eqref{eq:Nmu-class-white} as
\[
  \wt[\white](\mu) - \wt(D) = [P_{\bdry\mu}^\white] - [N_\mu],
\]
to transform \eqref{eq:MS1} into \eqref{eq:MS2}.
\end{proof}

We now want to rewrite \eqref{eq:MS2} in a module-theoretic way, that is, as a function of
the rank 1 module $M\in\CM(B)$ such that $\rho(M)\isom M_I\in\CM(\jksalg)$,
as in Definition~\ref{d:rank1mod}.
Note that such an $M$ will exist provided $\{\mu:\pmbdry{\mu}=I\}$ is non-empty, i.e.\ provided $I$ is an element of the positroid associated to $D$,
by Proposition~\ref{pro:positroid}.
In that case, $M$ and $I$ are equivalent data because $\rho\colon\CM(B)\to \CM(\jksalg)$
is fully faithful, by Proposition~\ref{p:restriction}, while the module $\rho(M)\in\CM(C)$ has rank $1$ 
and every $C$-module of rank $1$ is isomorphic to $M_I$ for a unique $I\subset\C_1$,
by \cite[Prop.~5.2]{JKS}.

As a result we can consider, as a function of such a rank 1 $M\in\CM(B)$ with $\rho(M)\isom M_I$,
the projective $B$-module
\begin{equation} \label{eq:PM-white}
  \canprojcov[\white] M= eP_I^\white = \bigoplus_{i\in I} B e_{h\alpha_i}
\end{equation}
and thereby realise our goal of rewriting \eqref{eq:MS2} module-theoretically.

\begin{theorem} \label{thm:MS3}
Let $M\in \CM(B)$ with $\rk(M)=1$, and let $I\in\C_1$ be the $k$-subset such that $\rho(M)\isom M_I\in\CM(\jksalg)$. Then
\begin{equation} \label{eq:MS3}
  \MSform[^\white]{I} = x^{[F\canprojcov[\white] M]}\sum_{\substack{N\leq F M \\eN=M}} x^{-[N]}.
\end{equation}
\end{theorem}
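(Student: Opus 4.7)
The plan is to transform \eqref{eq:MS2} from Proposition~\ref{prop:MS2} into \eqref{eq:MS3} by reinterpreting each piece module theoretically.

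First I would verify that $[F\canprojcov[\white] M] = [P_I^\white]$. By definition $\canprojcov[\white] M = \bigoplus_{i\in I}Be_{h\alpha_i}$, and Proposition~\ref{p:double-cen}(iii) gives an isomorphism $Ae \isoto \Hom_B(T,B)$ of $A$-modules, so that $F(Be_j) = \Hom_B(T,B)e_j \isom Ae_j = P_j$ for each boundary vertex $j$. Since each $h\alpha_i$ is a boundary vertex, applying $F$ summand by summand yields
\[
  F\canprojcov[\white] M \isom \bigoplus_{i\in I} Ae_{h\alpha_i} = P_I^\white,
\]
and in particular the two projective classes agree in $\Kgp(\proj A)$.

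Next I would rewrite the indexing set of the sum using Proposition~\ref{p:matchings-to-modules}, which provides a bijection
\[
  \theta\colon \{N\leq FM : eN=M\}\to\{\mu : eN_\mu\isom M\},
\]
characterised by $\theta(N)=\mu$ when $N\isom N_\mu$; in particular $[N] = [N_\mu]$ whenever $\theta(N)=\mu$. By Proposition~\ref{p:res-pm-mods} we have $\rho(eN_\mu) \isom M_{\pmbdry{\mu}}$, and by hypothesis $\rho(M)\isom M_I$. Since $\rho$ is fully faithful (Proposition~\ref{p:restriction}) and distinct $k$-subsets give non-isomorphic rank-one $\jksalg$-modules \cite[Prop.~5.2]{JKS}, the condition $eN_\mu\isom M$ is equivalent to $\pmbdry{\mu}=I$. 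Hence the codomain of $\theta$ is precisely $\{\mu : \pmbdry{\mu}=I\}$.

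Combining these two observations,
\[
  x^{[F\canprojcov[\white] M]}\sum_{\substack{N\leq FM \\ eN=M}} x^{-[N]}
  = x^{[P_I^\white]}\sum_{\mu:\pmbdry{\mu}=I} x^{-[N_\mu]},
\]
which equals $\MSform[^\white]{I}$ by Proposition~\ref{prop:MS2}. The argument is essentially bookkeeping assembled from already-established results, so there is no serious obstacle; the only point requiring care is ensuring the identifications of $F$ applied to boundary projectives really produce the desired summands of $P_I^\white$, which Proposition~\ref{p:double-cen}(iii) supplies directly.
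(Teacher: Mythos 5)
Your proposal is correct and follows essentially the same route as the paper: identify $F\canprojcov[\white]M\isom P_I^\white$ via Proposition~\ref{p:double-cen}(iii), reindex the sum using the bijection from Proposition~\ref{p:matchings-to-modules} together with the identification $\{\mu : eN_\mu\isom M\}=\{\mu:\pmbdry{\mu}=I\}$ (which the paper packages in Remark~\ref{r:combinatorics} via Corollary~\ref{c:bdy-val-mod} and \cite[Prop.~5.2]{JKS}), and then invoke Proposition~\ref{prop:MS2}. The only difference is that you spell out the codomain rewriting directly where the paper cites the remark.
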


\begin{proof}
Combining Proposition~\ref{p:matchings-to-modules} and Remark~\ref{r:combinatorics}, 
we have a bijection 
\[
 \theta\colon \{N\leq F M : eN=M \} \to \{\mu : \pmbdry{\mu}=I\} 
\] 
such that $\theta(N)=\mu$ when $N\isom N_\mu$.
On the other hand, the natural map $P_I^\white\to F\canprojcov[\white]{M}$ is an isomorphism by Proposition~\ref{p:double-cen}(iii), since $P_I^\white\in \add(Ae)$.
\end{proof}

We may also observe that $\canprojcov[\white] M$ 
has a more special relationship to $M$.

\begin{lemma}
\label{lem:canprojcov}
For $M$ as in Theorem~\ref{thm:MS3}, 
there is a (non-minimal) projective cover $\canprojcov[\white] M \to M$.
\end{lemma}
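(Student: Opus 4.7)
The plan is to construct the desired surjection via the $(e, F)$-adjunction, then reduce surjectivity to a combinatorial claim about minimal paths in $Q(D)$.

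First, by adjunction, $\Hom_B(Be_{h\alpha_i}, M) = \Hom_A(Ae_{h\alpha_i}, FM) = e_{h\alpha_i}FM$. Since $\rk(FM) = \rk(M) = 1$, Corollary~\ref{c:proj-pm-mods} gives $FM \isom N_\mu$ for some perfect matching $\mu$, which then satisfies $\bdry\mu = I$ by Proposition~\ref{p:res-pm-mods} together with the fact that rank-one $\jksalg$-modules are determined by their $k$-subset. Each $e_{h\alpha_i}N_\mu$ is a free rank-one $Z$-module; its distinguished generator $1 \in Z$ determines a canonical morphism $\phi_i\colon Be_{h\alpha_i} \to M$, and summing over $i \in I$ produces the candidate map $\phi\colon \canprojcov[\white]M \to M$.

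Next I would verify that $\phi$ is surjective. Since each $e_jM$ is a free $Z$-module of rank one for $j\in\C_0$, surjectivity amounts to showing that the image of $\phi$ contains a $Z$-generator of $e_jM$ for every such $j$. As $A$ is thin (Proposition~\ref{p:thin}), the space $e_jAe_{h\alpha_i}$ is generated by a distinguished path $p_{ij}\colon h\alpha_i \to j$, and under the identifications above $\phi_i$ sends the generator of $e_jBe_{h\alpha_i}$ to $t^{\deg_\mu(p_{ij})}$ times a generator of $e_jM$. Thus $\phi$ is surjective if and only if, for every $j\in\C_0$,
\[
\min_{i\in I}\deg_\mu(p_{ij}) = 0,
\]
equivalently, some $h\alpha_i$ with $i \in I = \bdry\mu$ lies in $S(\mu, j, 0)$ in the notation of Section~\ref{sec:proj-res}.

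The main obstacle is this combinatorial claim. In the $\white$-standardised setting, the boundary arrows are the clockwise $\alpha_i$, and such an arrow lies in $\mu$ precisely when $i \in \bdry\mu = I$, so the statement reduces to the purely combinatorial fact that every boundary vertex $j$ admits a $\mu$-degree-zero path from the head of some $\mu$-matched boundary arrow. One can prove this by rerouting a minimal $\mu$-degree path ending at $j$ around faces via the dimer relations $p^+_\alpha = p^-_\alpha$, in the same spirit as the proof of Proposition~\ref{prop:rotate}: each rerouting step replaces a $\mu$-matched arrow in the path by a $\mu$-degree-zero sub-path around its face, strictly decreasing the total $\mu$-degree. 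This process terminates at an unmatched path whose source vertex is necessarily the head of a matched boundary arrow, supplying the required $i\in I$ and thereby completing the proof.
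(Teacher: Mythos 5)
Your construction of the map $\pi\colon\canprojcov[\white]M\to M$ is the same as the paper's---the adjunction $\Hom_B(Be_{h\alpha_i},M)\isom e_{h\alpha_i}FM$ just re-packages the choice of a component map that is a $Z$-module isomorphism over $h\alpha_i$---and your reduction of surjectivity to the claim that $\min_{i\in I}\deg_\mu(p_{ij})=0$ for every boundary vertex $j$ is valid. The paper instead proves surjectivity by passing to $C$-modules via $\rho$ and checking the map is surjective on tops, using the explicit combinatorics of $M_I$; your route is genuinely different and also works in principle.

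However, your argument for the combinatorial claim has a real gap. A ``minimal $\mu$-degree path ending at $j$'' already has $\mu$-degree zero (take the empty path at $j$), so there is nothing to reroute; presumably you mean a path from a fixed $h\alpha_{i_0}$. But the F-term relation $p^+_\beta=p^-_\beta$ preserves $\mu$-degree (both sides are $\mu$-degree zero), and a matched arrow $\beta\colon t\beta\to h\beta$ cannot be ``replaced by a $\mu$-degree-zero sub-path around its face'', because $p^\pm_\beta$ goes $h\beta\to t\beta$, the wrong way. The only moves that lower $\mu$-degree---dropping an initial matched arrow, or cancelling a whole face boundary against the central $t$---either move the source vertex to an arbitrary location or keep it fixed, and you give no reason why the process should land on $h\alpha_i$ for $i\in I$; indeed, an internal vertex can perfectly well be a source of $(N_\mu)_0$. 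What does work is to walk along the \emph{boundary} cycle anticlockwise from $j$, using Lemma~\ref{lem:degree-change}: if the boundary arrow $\alpha$ with head at the current vertex $v$ is unmatched, then $\deg_\mu(p_{t\alpha})\leq\deg_\mu(p_v)=0$, so you may step to $t\alpha$; if it is matched then $v=h\alpha_i$ with $i\in I=\bdry\mu$ and $\deg_\mu(p_v)=0$, as required. If no boundary arrow were matched we would have $I=\emptyset$, contradicting $k\geq 1$. This is the sort of argument you are reaching for, but it stays on the boundary rather than rerouting through the interior.
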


\begin{proof}
Since $Be_{h\alpha_i}$ is projective with top at $h\alpha_i$ and the fibre $e_{h\alpha_i}M$ is a free rank one $Z$-module, there is a map $\pi_i\colon Be_{h\alpha_i}\to M$ (unique up to rescaling by $Z^\times$) such that the restriction $e_{h\alpha_i}Be_{h\alpha_i}\to e_{h\alpha_i}M$ to fibres over $h\alpha_i$ is surjective. Let $\pi\colon\canprojcov[\white]{M}\to M$ be the map with components given by the $\pi_i$.

Now consider $\rho(\pi)\colon\rho(\canprojcov[\white]{M})\to\rho(M)\isom M_I$. As a map of vector spaces, this is identical to $\pi$, so it suffices to show that $\rho(\pi)$ is surjective. Note that, since the canonical map $C\to B$ is injective by Proposition~\ref{p:restriction}, the top of any $B$-module $N$ is a quotient of the top of the $C$-module $\rho(N)$. Thus $\top\rho(\canprojcov[\white]{M})$ has all the vertices $e_{h\alpha_i}$, for $i\in I$, in its support, and $\top\rho(M)=\top M_I$ is supported on a subset of these vertices by construction. Since $\rho(\pi)$ maps any $Z$-module generator of $e_{h\alpha_i}Be_{h\alpha_i}$ to a $Z$-module generator of $e_{h\alpha_i}M$, it induces a surjective map $\top\rho(\canprojcov[\white]{M})\to\top{\rho(M)}$ and so is surjective as required.
\end{proof}

The use of $\white$-standardised diagrams in this section reflects the choices made in \cite{MaSc}, but we can equally well work with $\black$-standardised diagrams throughout. 
In this context, given a $k$-subset $I\subset\C_1$, we define
\begin{equation}\label{eq:MS1-black}
\MSform[^\black]{I}=x^{-\wt(D)}\sum_{\mu:\bdry\mu=I}x^{\wt[\black](\mu)},
\end{equation}
and given additionally $M\in\CM(B)$ with $\rho(M)\cong M_I$ we define
\[P_I^\black=\bigoplus_{i\not\in I}P_{h\beta_i},\quad \canprojcov[\black]M=eP_I^\black.
\]
By analogous arguments to those for $\MSform[^\white]{I}$, one may show that
\begin{equation}
\label{eq:MS23-black}
\MSform[^\black]{I}=x^{[P_I^\black]}\sum_{\mu:\pmbdry{\mu}=I}x^{-[N_\mu]}
=x^{[F\canprojcov[\black]M]}\sum_{\substack{N\leq F M \\eN=M}} x^{-[N]}
\end{equation}
and that there is a projective cover $\canprojcov[\black]M\to M$.

Given a diagram $D$ of type $(k,n)$ and a $k$-subset $I$, we may either choose a $\white$-standardisation of $D$ and compute $\MSform[^\white]{I}$, or choose a $\black$-standardisation of $D$ and compute $\MSform[^\black]{I}$. 
Comparing \eqref{eq:MS3} and \eqref{eq:MS23-black}, we see that
\[\MSform[^\black]{I}=x^{[F\canprojcov[\black]{M}]-[F\canprojcov[\white]{M}]}\MSform[^\white]{I}.
\]
Since $\canprojcov[\black]{M}$ and $\canprojcov[\white]{M}$ are projective $B$-modules, it follows that $F\canprojcov[\black]{M}$ and $F\canprojcov[\white]{M}$ are projective $A$-modules with top supported on the boundary vertices, and so $x^{[F\canprojcov[\black]{M}]-[F\canprojcov[\white]{M}]}$ is a Laurent monomial in frozen variables. Thus the conclusions of Marsh--Scott's Theorem~\ref{thm:MaSc} also hold for $\black$-standardised diagrams with $\MSform[^\black]{I}$ in place of $\MSform[^\white]{I}$.

If $D$ is a $\white$-standardised Postnikov diagram of type $(k,n)$, then by the observations of Remark~\ref{r:opposite} its opposite diagram $D\op$ is $\black$-standardised of type $(n-k,n)$. Since $Q(D\op)=Q(D)\op$ has the same set $Q_0$ of vertices as $Q(D)$, and $A_{D\op}=A_D\op$, there is a canonical isomorphism $\Kgp(\proj{A_D})\isoto\Kgp(\proj{A_{D\op}})$ given by $[A_De_i]\mapsto[A_{D\op}e_i]$ for each $i\in Q_0$, which we will treat as an identification, exploiting that the basis of projectives in each Grothendieck group yields an isomorphism with the lattice $\ZZ^{Q_0}$. Thus we may identify the spaces of polynomials with exponents in the two lattices, and view Marsh--Scott formulae computed with respect to $D$ and $D\op$ as taking values in the same Laurent polynomial ring. This allows us to make another comparison of the formulae \eqref{eq:MS1} and \eqref{eq:MS1-black}.

\begin{proposition}
\label{p:MSblack-vs-MSwhite}
Let $D$ be a $\white$-standardised Postnikov diagram of type $(k,n)$ and $I\subset\C_1$ a $k$-subset. Then
\[\MSform[^\white_D]{I}=\MSform[^\black_{D\op}]{I\comp},\]
where each Marsh--Scott formula is calculated using the diagram indicated in the subscript.
\end{proposition}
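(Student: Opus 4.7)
The plan is to match the two sides of the claimed equality term by term under the natural identifications between data on $D$ and $D\op$. By Remark~\ref{r:opposite}, $Q(D\op)=Q(D)\op$ has the same vertices and arrows as $Q(D)$, with arrows reversed and with the roles of black and white faces swapped, and $A_{D\op}=A_D\op$. In particular, the same subset $\mu\subset Q_1$ is a perfect matching of $Q(D)$ if and only if it is a perfect matching of $Q(D\op)$, and the identification $[A_{D\op}e_j]\leftrightarrow[A_De_j]$ on indecomposable projectives places the two Marsh--Scott formulas in the same Laurent polynomial ring, as already done in the paragraph preceding the proposition.

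First I would verify that the assignment $\mu\mapsto\mu$ restricts to a bijection between $\{\mu:\bdry_D\mu=I\}$ and $\{\mu:\bdry_{D\op}\mu=I\comp\}$. In the $\white$-standardised $D$, the boundary arrow labelled by $i\in\C_1$ is the clockwise arrow $\alpha_i$, whereas in the $\black$-standardised $D\op$ the same underlying arrow becomes the anticlockwise $\beta_i^\op$. By Definition~\ref{d:bdry-value}, $\bdry_D\mu=\{i:\alpha_i\in\mu\}$ while $\bdry_{D\op}\mu=\{i:\beta_i^\op\notin\mu\}=\C_1\setminus\bdry_D\mu$, which gives the required complementation.

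Next I would match the exponents term by term in \eqref{eq:MS1} and \eqref{eq:MS1-black}. The prefactor identity $\wt(D)=\wt(D\op)$ is immediate from \eqref{eq:wtD}, as this weight depends only on the common set of internal vertices. The per-matching identity $\wt[\white]_D(\mu)=\wt[\black]_{D\op}(\mu)$ is the crucial step and would be established using the alternative expressions \eqref{eq:wtms-white} (applied to $\white$-standardised $D$) and \eqref{eq:wtms-black} (applied to $\black$-standardised $D\op$): each writes the weight as a sum $\sum_{\gamma\in\mu,\,\amut}\wtms(\gamma)$ whose summands are built from the truncated black (resp.\ white) face cycle through $\gamma$. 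Since the internal arrows of $Q(D)$ and $Q(D\op)$ coincide, and since the black/white face swap identifies $\bl'_0(\gamma,D)$ with $\wh'_0(\gamma,D\op)$, we obtain $\wtms[\white]_D(\gamma)=\wtms[\black]_{D\op}(\gamma)$ for every internal $\gamma$, and summing yields the desired identity.

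The main subtlety lies in this final weight equality: the naive comparison using the original definition \eqref{eq:wt-mu}, which sums $\wt_\mu(\gamma)$ over all internal arrows, does not yield the identity directly, since the head/tail roles swap under arrow reversal. Passing to the face-based alternatives \eqref{eq:wtms-white} and \eqref{eq:wtms-black} bypasses this because the contributions $\wtms(\gamma)$ are intrinsic to the two faces adjacent to the matching arrow $\gamma$, whose vertex sets are manifestly independent of arrow orientations.
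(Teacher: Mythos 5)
Your proof is correct and follows essentially the same route as the paper's: identify the underlying sets of arrows, vertices, faces and matchings of $Q(D)$ and $Q(D\op)$; observe that passing to the opposite complements the boundary value; and then establish the weight equality $\wt[\white]_D(\mu)=\wt[\black]_{D\op}(\mu)$ by invoking the face-based formulas \eqref{eq:wtms-white} and \eqref{eq:wtms-black} together with the fact that black and white faces swap under $D\mapsto D\op$. Your closing remark about the naive per-arrow comparison is a fair observation about where the face-based reformulation earns its keep, though the discrepancy you point to cancels once one groups the arrow contributions cycle by cycle (the sums $\sum_{\gamma\in\omega}[P_{h\gamma}]$ and $\sum_{\gamma\in\omega}[P_{t\gamma}]$ over a cycle $\omega$ both equal $\sum_{j\in\omega}[P_j]$), so the face-based expressions are a convenience rather than a necessity.
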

\begin{proof}
Note that the right-hand side of the claimed formula makes sense, since $D\op$ is a $\black$-standardised diagram of type $(n-k,n)$. As already observed, we have $Q(D\op)=Q(D)\op$, and so $Q(D)$ and $Q(D\op)$ have the same set of arrows, and the same set of perfect matchings. Each perfect matching thus has two boundary values, depending on whether it is viewed as a matching of $Q(D)$ or of $Q(D\op)$, but since a boundary arrow is clockwise in $Q(D)$ if and only if it is anticlockwise in $Q(D\op)$, it follows directly from the definition that these two boundary values are complementary to each other. In particular, the set of perfect matchings of $Q(D)$ with boundary value $I$ is equal to the set of perfect matchings of $Q(D\op)$ with boundary value $I\comp$.

Given a perfect matching $\mu$, we can compute $\wt[\white](\mu)$ viewing $\mu$ as a perfect matching of the $\white$-standardised quiver $Q(D)$, or $\wt[\black](\mu)$ viewing $\mu$ as a perfect matching of the $\black$-standardised quiver $Q(D\op)$, where these weights are defined in \eqref{eq:mswts}. Since $Q(D)$ and $Q(D\op)$ have the same set of faces, but a face is white in $Q(D)$ if and only if it is black in $Q(D\op)$, these two calculations are the same (recalling that we identify $\Kgp(\proj{A_D})$ with $\Kgp(\proj{A_{D\op}})$ using the common set of quiver vertices), which completes the proof.
\end{proof}

\section{The Caldero--Chapoton formula} \label{sec:newCC}

\newcommand{\FCC}{\mathcal{E}} 

Let $D$ be a connected Postnikov diagram of type $(k,n)$ with dimer algebra $A=A_D$ 
and boundary algebra $B=eAe$, and let $T=eA\in\CM(B)$. 
As mentioned in Section~\ref{sec:ind-res}, it follows from \cite[Thm.~3.7]{Pre3} and the general theory from \cite{Pre} 
that $T$ is a cluster-tilting object in the category $\GP(B)$ of Gorenstein projective $B$-modules, 
that this category is a stably $2$-Calabi--Yau Frobenius category, and moreover that $\gldim A\leq 3$.

Thus we may consider the Caldero--Chapoton cluster character formula,
as described by Fu--Keller \cite{FK} in the context of Frobenius categories. 
For each $X\in\GP(B)$, we define $\clucha{T}{X}$ by the formula
\begin{equation}\label{eq:CC}
  \clucha{T}{X} = \cluva^{[FX]} \sum_{E\leq GX} \cluva^{-[E]},
\end{equation}
giving a sum of formal Laurent monomials $x^v$ for $v \in \Kgp(\proj A)$. 
Note that it may be that, for some $v\in \Kgp(\proj A)$, the set
\[
\Gr_v(GX)=\{E \leq GX : [E]=v\}
\]
is infinite and, in this case, we count this set by its Euler characteristic,
i.e. in the sum in \eqref{eq:CC} the coefficient of $x^{-v}$ is $\chi (\Gr_v(GX))$.
By \cite[Thm.~3.3]{FK}, the function $X\mapsto\clucha{T}{X}$ is a cluster character on $\GP(B)$ 
in the sense of \cite[Def.~3.1]{FK}.

\begin{remark}
Here we have used some of the homological properties of $A$, such as the fact that $\gldim{A}\leq 3$, 
to simplify the exponents in the cluster character formula in \cite{FK}; 
an explanation of this may be found in \cite[\S3]{GP} (see also \cite[Rem.~3.5]{FK}),
together with the observation that we can relax the requirement in \cite{FK}
that the Frobenius category is $\Hom$-finite.

The cluster-tilting object $T=eA$ has a natural decomposition into indecomposable summands $T_j=eAe_j$ for $j\in Q_0$, 
yielding a basis $[P_j]=[F T_j]$ for $\Kgp(\proj{A})$. 
We may use this basis to write the formal monomials above as actual monomials in the variables $x_j:=x^{[P_j]}=\clucha{T}{T_j}$.
This is how the formula is written in \cite{GP}, using the Euler pairing to compute the coefficient of each indecomposable 
projective in the expression for an arbitrary K-theory class in this basis.
\end{remark}

Note that the formula $\clucha{T}{X}$ makes sense for any $X\in\CM(B)$ (or even for any $X\in\fgmod{B}$) 
although it is only the restriction of the function $X\mapsto\clucha{T}{X}$ to objects of the stably $2$-Calabi--Yau Frobenius 
category $\GP(B)$ which need have the properties of a cluster character as described in \cite{FK}.

\begin{proposition}
\label{p:CC-syz}
Let $M\in\CM(B)$, and consider any (exact) syzygy sequence
\[0\lra{} \syz M\lra{} PM\lra{} M\lra{} 0,
\]
where the map $PM\to M$ is a (possibly non-minimal) projective cover. Then
\[\clucha{T}{\syz M}
=x^{[FPM]} \sum_{\substack{N\leq F M \\eN=M}} x^{-[N]}.
\]
\end{proposition}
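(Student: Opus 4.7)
The plan is to start from the Caldero--Chapoton formula \eqref{eq:CC} applied to $\syz M$, namely
\[
\clucha{T}{\syz M} = x^{[F\syz M]} \sum_{E\leq G\syz M} x^{-[E]},
\]
and transform the two factors using the results of Section~\ref{sec:ind-res}. The principal input is Corollary~\ref{cor:smallF}, which identifies $G\syz M$ with the quotient $FM/F'M$ via the exact sequences \eqref{eq:FPi} and \eqref{eq:GOm}, so that the indexing set $\{E \leq G\syz M\}$ may be replaced by $\{E \leq FM/F'M\}$.

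Next I would rewrite the sum in terms of submodules of $FM$ rather than submodules of the quotient. The lattice isomorphism $\{N : F'M \leq N \leq FM\} \to \{E \leq FM/F'M\}$ given by $N \mapsto N/F'M$ provides the desired reindexing, and by Proposition~\ref{p:F'-is-min} its domain coincides with $\{N \leq FM : eN = M\}$. For each such $N$, additivity of $[-]$ on the short exact sequence $0 \to F'M \to N \to N/F'M \to 0$ gives $[N/F'M] = [N] - [F'M]$, so that
\[
\sum_{E \leq G\syz M} x^{-[E]} = x^{[F'M]} \sum_{\substack{N\leq F M \\ eN=M}} x^{-[N]}.
\]
(When the set of submodules is infinite, the same bijection identifies the relevant submodule Grassmannians at the level of varieties—translation by $F'M$ is an isomorphism—so the Euler characteristic weighting used in \eqref{eq:CC} transforms correctly.)

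Finally, I would compute $[F\syz M] + [F'M]$ using the exact sequence \eqref{eq:FPi}, namely $0 \to F\syz M \to FPM \to F'M \to 0$, which gives $[F\syz M] = [FPM] - [F'M]$. Substituting these two identities into the Caldero--Chapoton formula yields
\[
\clucha{T}{\syz M} = x^{[FPM] - [F'M]} \cdot x^{[F'M]} \sum_{\substack{N\leq F M \\ eN=M}} x^{-[N]} = x^{[FPM]} \sum_{\substack{N\leq F M \\ eN=M}} x^{-[N]},
\]
as required. The main technical point to verify carefully is the scheme-theoretic upgrade of the lattice bijection $N \mapsto N/F'M$, so that the identity of formal sums really does hold term-by-term in $\K_0(\proj A)$; beyond that, the argument is a bookkeeping exercise with the two exact sequences extracted from the long exact sequence \eqref{eq:les-syz}.
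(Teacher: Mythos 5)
Your proposal is correct and follows essentially the same route as the paper's own proof: both use Proposition~\ref{p:F'-is-min} to rewrite the indexing set, the identification $G\syz M = FM/F'M$ from \eqref{eq:GOm}, the translation bijection $N \mapsto N/F'M$, and the class identity $[F\syz M] = [FPM] - [F'M]$ from \eqref{eq:FPi}. Your explicit remark that the bijection is a translation isomorphism at the level of submodule Grassmannians, so that the Euler characteristic weights in \eqref{eq:CC} transform correctly, is a worthwhile point that the paper leaves implicit, but it is not a difference in method.
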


\begin{proof}
Proposition~\ref{p:F'-is-min} tells us that 
\[ \{N\leq FM : eN=M\}  = \{N : \smallF M \leq N\leq F M\},\]
so, using the definition of $\clucha{T}{\syz M}$ from \eqref{eq:CC}, what we wish to prove is that
\begin{equation}
\label{eq:MS5}
x^{[F\syz M]}\sum_{E\leq G\syz M}x^{-[E]}=x^{[FPM]}\sum_{N : \smallF M \leq N\leq F M}x^{-[N]}.
\end{equation}
From the short exact sequence \eqref{eq:GOm}, we know that $G\syz M=FM/F'M$
and so there is a bijection between $\{N : \smallF M\leq N\leq FM\}$ 
and $\{E \leq G\syz M\}$ given by setting $E = N/F'M$.
Combining this with \eqref{eq:FPi}, we obtain
\[
[N] - [E] = [F'M] = [FPM] - [F\syz M]
\]
when $E$ and $N$ are related by this bijection, and thus
\[
 [FPM] - [N]  = [F\syz M] - [E].
\]
as required for \eqref{eq:MS5}.
\end{proof}

Now, for any rank 1 module $M\in \CM(B)$, let $\cansyz[\white]M$ be the syzygy computed as the kernel 
of the projective cover $\canprojcov[\white]M \to M$ from Lemma~\ref{lem:canprojcov}.
The main result of this section is then the following.

\begin{theorem}
\label{thm:MS=CC}
Let $D$ be a connected Postnikov diagram, with dimer algebra $A$ and boundary algebra $B$.
Let $M\in\CM(B)$ be a rank $1$ module, with $\rho(M)\isom M_I$. 
Then we have
\[
\MSform[^\white]{I}=\clucha{T}{\cansyz[\white] M},
\]
where the left-hand side is the Marsh--Scott formula, as in \eqref{eq:MS1}, with respect to a $\white$-standardisation of $D$, 
and the right-hand side is the cluster character \eqref{eq:CC}, with respect to the cluster-tilting object $T=eA$.
\end{theorem}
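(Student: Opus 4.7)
The plan is to observe that the theorem follows by directly combining three earlier results, so the real work has already been done. The key is to recognise that $\cansyz[\white] M$ has been constructed precisely so that it fits into an exact sequence
\[ 0 \lra{} \cansyz[\white] M \lra{} \canprojcov[\white] M \lra{} M \lra{} 0, \]
where $\canprojcov[\white] M \to M$ is the (non-minimal) projective cover provided by Lemma~\ref{lem:canprojcov}. This is exactly the type of syzygy sequence to which Proposition~\ref{p:CC-syz} applies, so specialising that proposition with $PM = \canprojcov[\white] M$ and $\syz M = \cansyz[\white] M$ yields
\[ \clucha{T}{\cansyz[\white] M} = x^{[F\canprojcov[\white] M]} \sum_{\substack{N\leq FM \\ eN=M}} x^{-[N]}. \]

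First I would verify that Lemma~\ref{lem:canprojcov} does what is needed, i.e.\ that $\canprojcov[\white] M \to M$ is genuinely surjective (not merely a map to $M$) so that its kernel is indeed a syzygy of $M$, and that the hypothesis $\rk(M) = 1$ required both there and in Theorem~\ref{thm:MS3} is inherited from the assumption $\rho(M)\isom M_I$ together with the fact that $M_I$ has rank $1$ by construction (Definition~\ref{d:rank1mod}) and that $\rk(M) = \rk(\rho(M))$ as noted after Lemma~\ref{l:cm-lift}. Then I would invoke Theorem~\ref{thm:MS3} to rewrite $\MSform[^\white]{I}$ as
\[ \MSform[^\white]{I} = x^{[F\canprojcov[\white] M]} \sum_{\substack{N\leq FM \\ eN=M}} x^{-[N]}, \]
and compare the two expressions to conclude $\MSform[^\white]{I} = \clucha{T}{\cansyz[\white] M}$.

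Since every ingredient is already in place, there is no real obstacle: the content of the theorem lies in the earlier setup, particularly in the careful definition of $\canprojcov[\white] M$ as the projective cover built out of the summands $Be_{h\alpha_i}$ for $i \in I$, so that both sides of the equation are governed by the same poset $\{N : F'M \leq N \leq FM\}$ of submodules with $eN = M$ (cf.~Remark~\ref{r:combinatorics}). The closest thing to a subtlety is making sure the K-theory class $[F\canprojcov[\white] M]$ appearing as the prefactor is the same on both sides; this is automatic because both derivations use the same projective cover, but it is worth pointing out that Proposition~\ref{p:double-cen}(iii) silently identifies $F\canprojcov[\white] M$ with $P^\white_I$ in the Marsh--Scott side of the comparison.
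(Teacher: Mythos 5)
Your proposal matches the paper's proof exactly: apply Proposition~\ref{p:CC-syz} with $PM=\canprojcov[\white]{M}$ so that $\syz M=\cansyz[\white]{M}$, then compare with Theorem~\ref{thm:MS3}. The extra checks you flag (surjectivity of the cover from Lemma~\ref{lem:canprojcov}, that $\rk(M)=1$ follows from $\rho(M)\isom M_I$, and the identification $F\canprojcov[\white]M\isom P^\white_I$ via Proposition~\ref{p:double-cen}(iii)) are sound and indeed already embedded in the cited results.
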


\begin{proof}
Applying Proposition~\ref{p:CC-syz} in the case that $PM=\canprojcov[\white]{M}$, so that $\syz M=\cansyz[\white]{M}$, we see that
\[\clucha{T}{\cansyz[\white] M}= x^{[F\canprojcov[\white] M]}\sum_{\substack{N\leq F M \\eN=M}} x^{-[N]}.
\]
Then the right-hand side coincides with $\MSform[^\white]{I}$ by Theorem~\ref{thm:MS3}.
\end{proof}

In stating Proposition~\ref{p:CC-syz} and Theorem~\ref{thm:MS=CC}, we used the fact that the formula \eqref{eq:CC} for $\clucha{T}{X}$
makes sense for $X\in\CM(B)$, even though this function is only strictly a cluster character on $\GP(B)$.
However, it turns out that this caveat is not needed, because of the following lemma, the proof of which was pointed out to us by Bernt Tore Jensen.

\begin{lemma}\label{lem:syzygy}
For $M\in\CM(B)$, any syzygy $\syz M$ is in $\GP(B)$.
\end{lemma}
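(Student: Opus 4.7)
The plan is to verify directly that $\syz M$ satisfies the defining Ext-vanishing property for Gorenstein projective modules, namely $\Ext^i_B(\syz M, B) = 0$ for all $i \geq 1$. First, I would observe that $\syz M$ automatically lies in $\CM(B)$: since $\projcov M$ is projective and $B$ is free and finitely generated over $Z$ (Proposition~\ref{p:thin}), the module $\projcov M$ is free over $Z$, and $\syz M$, being a $B$-submodule of $\projcov M$, is itself $Z$-free because $Z$ is a principal ideal domain.

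Next, I would apply $\Hom_B(-, B)$ to the short exact sequence $0 \to \syz M \to \projcov M \to M \to 0$. Since $\projcov M$ is projective, $\Ext^i_B(\projcov M, B) = 0$ for all $i \geq 1$, so the associated long exact sequence collapses to give isomorphisms
\[
\Ext^i_B(\syz M, B) \isom \Ext^{i+1}_B(M, B) \quad \text{for all } i \geq 1.
\]
Thus proving $\syz M \in \GP(B)$ reduces to establishing the vanishing $\Ext^j_B(M, B) = 0$ for all $j \geq 2$ whenever $M \in \CM(B)$.

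The substantive step is this last Ext vanishing. My approach would be to exploit the Iwanaga--Gorenstein structure of $B$ that is already implicit in the Frobenius cluster category set-up used in the proof of Proposition~\ref{p:double-cen}. The fact that $\GP(B)$ is a stably $2$-Calabi--Yau Frobenius category, with the projective-injective objects being precisely the projective $B$-modules, forces $B$ to have injective dimension at most $1$ on each side as a module over itself. This yields $\Ext^j_B(-, B) = 0$ for all $j \geq 2$ on the whole module category $\fgmod B$, which is more than enough. The Cohen--Macaulay hypothesis on $M$ is then used only to guarantee that $\syz M$ itself belongs to $\CM(B)$, so that its membership in $\GP(B) \subset \CM(B)$ is a genuine strengthening.

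The main obstacle will be pinning down the injective dimension bound in a clean, directly-citable form. If it is not available as stated from \cite{Pre3} or from earlier in the paper, I would instead argue the vanishing by transferring the computation to the algebra $A$: one can use the adjunction between $F = \Hom_B(\cto,-)$ and $e$, together with Proposition~\ref{p:double-cen}, to relate $\Ext^j_B(M, B)$ to an Ext computation over $A$ and then invoke the global dimension bound $\gldim A \leq 3$ combined with the internal $3$-Calabi--Yau property. Alternatively, one could dualise via $(-)\dual = \Hom_Z(-, Z)$, which is an equivalence between $\CM(B)$ and $\CM(B\op)$, and use the symmetric role of $\cto$ on the two sides.
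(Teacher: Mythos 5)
Your reduction to $\Ext^j_B(M,B)=0$ for $j\geq2$ and $M\in\CM(B)$ is correct, and your observation that $\syz M\in\CM(B)$ is fine. But the key claim that carries the rest of the argument is false: the stably $2$-Calabi--Yau Frobenius structure on $\GP(B)$, with the projective $B$-modules as the projective-injective objects, does \emph{not} force $B$ to have injective dimension at most $1$ over itself. For any Iwanaga--Gorenstein algebra $B$ of Gorenstein dimension $d$, the category $\GP(B)$ is Frobenius with projective-injective objects exactly $\proj B$, regardless of the value of $d$; the injective dimension of $B$ as a module over itself equals $d$. In the present setting, the paper proves (Corollary~\ref{c:gor-dim}, whose proof in fact relies on this Lemma) only that $B$ has Gorenstein dimension at most $2$, and the remark immediately afterwards states that the Gorenstein dimension is expected to be exactly $2$ except when $D$ is a $(k,n)$-diagram, so that $\GP(B)$ is typically a \emph{proper} subcategory of $\CM(B)$. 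Your claimed bound $\operatorname{inj.dim}B\leq1$ would force $\GP(B)=\CM(B)$, contradicting both the remark and the non-trivial content of this Lemma.

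Your fallback proposals (transfer the Ext computation to $A$ via the $F\dashv e$ adjunction and $\gldim A\leq3$, or dualise to $\CM(B\op)$) are not carried out, and it is not clear they close the gap without essentially reconstructing the argument the paper actually uses. The paper's proof exploits a different mechanism: it reduces (via closure of $\CM(B)$ under syzygies) to $\Ext^1_B(\syz M,B)=0$, embeds $\Ext^1_B(-,-)\hookrightarrow\Ext^1_C(-,-)$ via the exact, fully faithful restriction $\rho\colon\CM(B)\to\CM(C)$ (Proposition~\ref{p:restriction}), invokes the stably $2$-Calabi--Yau property of $\CM(C)$ (\emph{not} of $\GP(B)$) to symmetrise the arguments, and finishes by applying $\Hom_C(B,-)$ to the syzygy sequence and using that $p_*$ is surjective by full faithfulness together with rigidity of $B$ in $\CM(C)$. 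You will need some version of this restriction-to-$C$ argument; the injective-dimension shortcut does not exist.
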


\begin{proof}
We have to prove that $\Ext_B^k(\syz M,B)=0$ for all $M\in \CM(B)$ and all $k\geq 1$.
However, since $\Ext_B^{k+1}(\syz M,B)=\Ext_B^k(\syz^2 M,B)$ and $\CM(B)$ is closed under syzygies,
it suffices to prove that $\Ext_B^1(\syz M,B)=0$, for all $M\in \CM(B)$.

The restriction functor $\rho\colon\CM(B)\to\CM(C)$ is exact and fully faithful by Proposition~\ref{p:restriction} and so, dropping $\rho$ from the notation, we have $\Ext_B^1(M_1,M_2)\subset\Ext_C^1(M_1,M_2)$ for all $M_1,M_2\in \CM(B)$.
We also have $\Ext_C^1(M_1,M_2)=\Ext_C^1(M_2,M_1)^*$, since $\CM(C)$ is stably 2-Calabi--Yau.
Thus it suffices to prove that $\Ext_C^1(B,\syz M)=0$.

Now consider the syzygy sequence  $0\lra{} \syz M \lra{} PM \lra{p} M \lra{} 0$ as a sequence in $\CM(C)$, 
where $p$ is a $B$-projective cover.
Then part of the long exact sequence for $\Hom_C(B,-)$ is
\[
 \Hom_C(B,PM) \lra{p_*} \Hom_C(B,M) \lra{} \Ext_C^1(B,\syz M) \lra{} \Ext_C^1(B,PM)
\]
and $p_*$ is surjective, using again that $\rho$ is fully faithful. Because $B$ is rigid, $\Ext_C^1(B,PM)=0$, and hence we obtain the required result.
\end{proof}

\begin{corollary}
\label{c:gor-dim}
Let $D$ be a connected Postnikov diagram with boundary algebra $B$. Then $B$ is Iwanaga--Gorenstein with Gorenstein dimension at most $2$.
\end{corollary}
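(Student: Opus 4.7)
The plan is to deduce the result from Lemma~\ref{lem:syzygy} by dimension shifting. I will show that every finitely generated $B$-module has Gorenstein projective dimension at most $2$, which will imply $\operatorname{injdim}(B_B) \leq 2$; applying the same argument to the opposite Postnikov diagram $D\op$---whose boundary algebra is $B\op$, by Remark~\ref{r:opposite}---will then yield $\operatorname{injdim}({}_BB) \leq 2$, establishing the Iwanaga--Gorenstein property with Gorenstein dimension at most $2$.

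The key preliminary observation is that any finitely generated projective $B$-module is free as a $Z$-module: by Proposition~\ref{p:thin}, $B$ is free and finitely generated over the PID $Z$, so any finitely generated projective $B$-module is a $Z$-direct summand of some $B^n$, and hence is $Z$-free. Consequently any $Z$-submodule of such a module is again $Z$-free, and so lies in $\CM(B)$. Given any finitely generated $B$-module $N$, I will pick a projective presentation $P_1 \to P_0 \to N \to 0$; the first syzygy $K_0 = \ker(P_0 \to N)$ then lies in $\CM(B)$ by the preceding observation, and Lemma~\ref{lem:syzygy} applied to $K_0$ shows that the second syzygy $K_1 = \ker(P_1 \to K_0)$ lies in $\GP(B)$. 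This yields an exact sequence $0 \to K_1 \to P_1 \to P_0 \to N \to 0$ with $K_1$ Gorenstein projective. Since $\Ext^i_B(X, B) = 0$ for $i \geq 1$ whenever $X \in \GP(B)$, a dimension shift gives $\Ext^i_B(N, B) = \Ext^{i-2}_B(K_1, B) = 0$ for all $i \geq 3$, whence $\operatorname{injdim}(B_B) \leq 2$.

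The main work has already been carried out in Lemma~\ref{lem:syzygy}; once that is in hand, the corollary is essentially a formal consequence, so I do not foresee any serious obstacle. The only mild subtlety is handling the left-module side, which is resolved cleanly by invoking the opposite diagram $D\op$ rather than reproving Lemma~\ref{lem:syzygy} for $B\op$ directly.
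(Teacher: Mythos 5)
Your proof is correct and takes essentially the same route as the paper: first syzygy lands in $\CM(B)$ because $Z$ is a PID, the second is Gorenstein projective by Lemma~\ref{lem:syzygy}, and dimension shifting then kills $\Ext^i_B(-,B)$ for $i\geq 3$, with the opposite diagram $D\op$ supplying the other-sided injective dimension bound. (Minor remark: Lemma~\ref{lem:syzygy} is phrased for syzygies taken along projective covers, whereas you take an arbitrary projective presentation, but since the two syzygies differ only by a projective summand and $\GP(B)$ is closed under summands and contains projectives, this makes no difference.)
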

\begin{proof}
The algebra $B$ is Noetherian since it is free and finitely generated over $Z$ by Proposition~\ref{p:thin}. Let $M\in\fgmod{B}$ and choose first and second syzygies $\syz M$ and $\syz^2M$. Then $\syz M\in\CM(B)$ since $Z$ is a PID, and so
\[\Ext^3_B(M,B)=\Ext^1_B(\Omega^2M,B)=0\]
by Lemma~\ref{lem:syzygy}. Thus $B$ has injective dimension at most $2$ on the left. Since $B\op$ is the boundary algebra of the connected Postnikov diagram $D\op$, we may apply the same argument to $B\op$ to see that $B$ has injective dimension at most $2$ on the right.
\end{proof}

\begin{remark}
Corollary~\ref{c:gor-dim} improves on the upper bound of $3$ for the Gorenstein dimension of $B$ coming from the general results of \cite{Pre}, applied to connected Postnikov diagrams via \cite[Thm.~3.7]{Pre3}. When $D$ is a $(k,n)$-diagram, so $B\isom C$, its Gorenstein dimension is $1$ by \cite{JKS}; this is the reason why $\CM(C)=\GP(C)$ in this case. We expect that in all other cases the Gorenstein dimension is exactly $2$, and so $\GP(B)$ is a proper subcategory of $\CM(B)$.
\end{remark}

\section{The Marsh--Scott twist}
\label{sec:MS=CC}

When $D$ is a $(k,n)$-diagram, the situation is simpler. 
The canonical map $C\to B$ is an isomorphism, so $\rho\colon\CM(B)\to\CM(C)$ is an equivalence.
Suppressing this equivalence in the notation, it follows from Theorem~\ref{t:labels} 
that the indecomposable summand $T_j=eAe_j$ of the cluster-tilting object $T$ 
is isomorphic to the rank $1$ $C$-module $M_{I_j}$.
Moreover, $\CM(C)=\GP(C)$ is a stably $2$-Calabi--Yau Frobenius category, 
on which $\cluschar{T}$ is an honest cluster character.

\begin{proposition}
\label{p:cc-pluck}
For any $k$-subset $I\subset\C_1$, we have
\begin{equation}
\label{eq:CC=Pluck}
 \clucha{T}{M_I}|_{x_j\mapsto\Pluck{I_j}}=\Pluck{I}.
\end{equation} 
\end{proposition}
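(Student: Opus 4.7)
The plan is to prove the identity by comparing $\clucha{T}{-}$, after the specialisation $x_j\mapsto\Pluck{I_j}$, with the Jensen--King--Su cluster character $\jkscluschar\colon\CM(\jksalg)\to\CC[\grass{k}{n}]$ recalled in Remark~\ref{r:JKS-comparison}, which is characterised by $\jkscluschar(M_I)=\Pluck{I}$ for every $k$-subset $I\subset\C_1$. Once these two functions on $\CM(\jksalg)$ are shown to coincide, the desired formula follows by evaluating at $X=M_I$.

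The first step is to verify that the two functions agree on every indecomposable summand $T_j=eAe_j$ of the cluster-tilting object $T=eA$. By Proposition~\ref{p:labels} we have $T_j\isom M_{I_j}$ in $\CM(\jksalg)$, identifying $\CM(B)$ with $\CM(\jksalg)$ via the restriction equivalence $\rho$, which is available because $B\isom \jksalg$ for $(k,n)$-diagrams. The Fu--Keller formula \eqref{eq:CC} gives $\clucha{T}{T_j}=x^{[P_j]}=x_j$, which becomes $\Pluck{I_j}=\jkscluschar(M_{I_j})=\jkscluschar(T_j)$ after specialisation, exactly as required.

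The main step is then to upgrade this pointwise agreement on the summands of $T$ to an agreement on all objects of $\CM(\jksalg)$. The cleanest way to do this is to observe that $\jkscluschar$ can itself be computed by the Fu--Keller formula \eqref{eq:CC} with respect to the cluster-tilting object $T$, since this formula is intrinsic to the stably $2$-Calabi--Yau Frobenius category $\GP(B)=\CM(\jksalg)$ and the choice of cluster-tilting object, with the symbols $x_j$ serving simply as placeholders for the initial values $\jkscluschar(T_j)$. Substituting $x_j\mapsto \Pluck{I_j}=\jkscluschar(T_j)$ then converts the expression $\clucha{T}{X}$ term-by-term into the Fu--Keller expression for $\jkscluschar(X)$, and so the specialised $\clucha{T}{X}$ equals $\jkscluschar(X)$ for every $X\in\CM(\jksalg)$. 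The main obstacle is therefore in justifying this ``change of variable'' interpretation of $\jkscluschar$, i.e.\ that any cluster character on a stably $2$-Calabi--Yau Frobenius category is determined by its values on the summands of a cluster-tilting object by means of the formula \eqref{eq:CC}. This is a standard consequence of \cite[Thm.~3.3]{FK}, whose proof in our setting requires no new input beyond the identification $T_j\isom M_{I_j}$ provided by Proposition~\ref{p:labels}.
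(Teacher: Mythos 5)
You correctly identify the overall strategy (compare $\clucha{T}{-}|_{x_j\mapsto\Pluck{I_j}}$ with the Jensen--King--Su cluster character $\jkscluschar$ and check agreement on the summands $T_j$ of $T$), and your first two paragraphs match the paper's argument. However, the main step contains a genuine gap. Your claim that any cluster character on a stably $2$-Calabi--Yau Frobenius category is \emph{determined} by its values on $\add T$ via the Fu--Keller formula \eqref{eq:CC} is not a consequence of \cite[Thm.~3.3]{FK}: that theorem asserts that the formula \eqref{eq:CC} \emph{defines} a cluster character, not that every cluster character takes this form, nor that a cluster character is recoverable from its restriction to $\add T$ by term-by-term substitution. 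In fact, the axioms of a cluster character only constrain its values through the multiplication formula \cite[Def.~3.1(3)]{FK}, which propagates along exchange triangles; this determines the character on rigid indecomposable objects \emph{reachable} from $T$ by iterated mutation, but says nothing about arbitrary objects of $\CM(\jksalg)$. Two cluster characters can perfectly well agree on $T_j$ (and on everything reachable from $T$) yet disagree elsewhere.

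The paper closes this gap by arguing precisely along reachability: having matched the two cluster characters on $T_j$, it invokes the multiplication formula to extend agreement to all rigid indecomposables appearing in cluster-tilting objects reachable from $T$ by mutation, and then cites \cite[Thm.~9.5]{JKS} for the fact that every rank-one module $M_I$ lies in this reachable class. The reference to reachability, and in particular to \cite[Thm.~9.5]{JKS}, is the essential ingredient absent from your proposal; without it the step from ``agree on $T_j$'' to ``agree on $M_I$'' is unjustified.
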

\begin{proof}
In \cite{JKS} (see also Remark~\ref{r:JKS-comparison}), Jensen, King and Su exhibit 
a cluster character $\jkscluschar\colon\CM(C)\to\CC[\grass{k}{n}]$ such that 
$\jksclucha{M_I}=\Pluck{I}$ for all $I$.
In particular, since $T_j\isom M_{I_j}$ by Theorem~\ref{t:labels}, 
we have $\jksclucha{T_j}=\Pluck{I_j}$.

On the other hand, the map $X\mapsto\clucha{T}{X}|_{x_j\mapsto\Pluck{I_j}}$ is again a cluster character, because this class of functions is closed under postcomposition with arbitrary maps of rings. By Proposition~\ref{p:double-cen}, for each $j\in Q_0$ we have $GT_j=0$ and
\[\clucha{T}{T_j}=x^{[FT_j]}=x^{[P_j]}=x_j,\]
so that $\clucha{T}{T_j}|_{x_j\mapsto\Pluck{I_j}}=\Pluck{I_j}$.  

Thus the two cluster characters $\cluschar{T}$ (after the substitution $x_j\mapsto\Pluck{I_j}$) 
and $\jkscluschar$ agree on the indecomposable summands $T_j$ of $T$, and hence by the multiplication formula \cite[Def.~3.1(3)]{FK} they agree on all rigid indecomposable objects reachable from $T$, i.e.\ appearing as a summand of some cluster-tilting object obtained from $T$ by a sequence of mutations. 
As a consequence of \cite[Thm.~9.5]{JKS}, this class of objects includes $M_I$ 
for all $k$-subsets $I\subset\C_1$, and so
\[\clucha{T}{M_I}|_{x_j\mapsto\Pluck{I_j}}=\jksclucha{M_I}=\Pluck{I}\]
as required.
\end{proof}

Furthermore, when $D$ is a $(k,n)$-diagram we may interpret the Marsh--Scott formula 
as a twisted Pl\"ucker coordinate via Theorem~\ref{thm:MaSc}, and so Theorem~\ref{thm:MS=CC} becomes
\begin{equation}
\label{eq:CC=twPluck}
\clucha{T}{\cansyz[\white] M_I}|_{x_j\mapsto\Pluck{I_j}}=\twPluck{I}.
\end{equation}
Comparing \eqref{eq:CC=Pluck} and \eqref{eq:CC=twPluck}, we see that the operation $\cansyz[\white]$ on $\CM(C)$ 
can be considered a categorification of the Marsh--Scott twist.

\begin{remark}
\label{r:GLS}
Note that the stable $2$-Calabi--Yau property of $\CM(\jksalg)$ means that, as functors on the stable category, $\syz\isom\tau^{-1}$ is the inverse Auslander--Reiten translation. 
In particular, when $I$ is not an interval, so that $M_I$ is not itself projective, 
any syzygy $\syz M_I$ is indecomposable in the stable category $\underline{\CM}(\jksalg)$ 
and so has a single non-projective indecomposable summand in $\CM(\jksalg)$. 
In view of \eqref{eq:CC=twPluck}, this corresponds to the fact \cite[Prop.~8.10]{MaSc} 
that $\twPluck{I}$ is a product of a single mutable cluster variable with a monomial in frozen variables. 

The fact that $\clucha{T}{\cansyz[\white] M_I}|_{x_j\mapsto\Pluck{I_j}}$ and $\twPluck{I}$ coincide after setting frozen variables to $1$ follows from a result of Gei\ss--Leclerc--Schr\"oer \cite[Thm.~6]{GLS}. Our choice of projective cover $\canprojcov[\white] M_I$ is designed to ensure that the frozen variables appearing in $\clucha{T}{\cansyz[\white] M_I}|_{x_j\mapsto\Pluck{I_j}}$ coincide precisely with those appearing in Marsh--Scott's twisted Pl\"ucker coordinate $\twPluck{I}$.

Defining instead $\cansyz[\black] M_I$ to be the kernel of the projective cover 
$\canprojcov[\black]M_I\to M_I$, we may show in an exactly analogous way that
\[\MSform[^\black]{I}=\clucha{T}{\cansyz[\black] M_I},
\]
so that $\cansyz[\black]$ again categorifies a birational twist automorphism, 
differing from the Marsh--Scott twist by multiplication by a Laurent monomial in frozen variables.
\end{remark}

\section{The Muller--Speyer twist} \label{sec:MuSp=CC}
\newcommand{\net}{\mathsf{net}}
\newcommand{\clu}{\mathsf{clu}}

Muller and Speyer \cite{MuSp} describe twist automorphisms for open positroid varieties in general. 
These maps involve inverting frozen variables and so are not defined on the closed positroid varieties, in contrast to the Marsh--Scott twist for the Grassmannian, i.e.\ the closed uniform positroid variety. 
Indeed, even in the uniform case, Muller--Speyer's twist differs from Marsh--Scott's by multiplication by a Laurent monomial in frozen variables, which can have a non-trivial denominator. Our methods also give categorifications of these more general twists.

A key ingredient in Muller--Speyer's construction is the map $\MSmap\colon\ZZ^{Q_0}\to\matlat$ as defined in \eqref{eq:MSmap}, where $\matlat$ is the matching lattice as introduced in Remark~\ref{rem:eta-def}.
This map is an isomorphism, with inverse $\alteta$ as defined in \eqref{eq:matcls-alt}, by \cite[Prop.~5.5]{MuSp} or Corollary~\ref{c:eta=MSinv}. 

Let $\posid$ be the positroid associated to $D$
and let $\openposidvar=\openposidvar(\posid)$ be the corresponding open positroid variety. In \cite[Sec.~6]{MuSp}, Muller--Speyer define a twist automorphism 
$\MuSptw\colon\CC\bigl[\openposidcone\bigr]\to\CC\bigl[\openposidcone\bigr]$,
where $\CC\bigl[\openposidcone \bigr]$ is the homogeneous coordinate ring, 
i.e.\ the coordinate ring of the cone on $\openposidvar$, and show the following (in the current notation).

\begin{theorem}[{\cite[Prop.~7.10]{MuSp}}]\label{thm:MuSptw}
For any $I\in\posid$, we have
\begin{equation}\label{eq:MuSptw}
 \MuSptw(\Pluck{I})
  =\sum_{\mu:\bdry\mu=I}x^{-\alteta(\mu)}|_{x_j\mapsto\Pluck{I_j}}.
\end{equation}
\end{theorem}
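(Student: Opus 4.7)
The plan is to translate the combinatorial sum on the right-hand side of \eqref{eq:MuSptw} into a categorical expression using perfect matching modules and the Caldero--Chapoton cluster character, and then match this with the Muller--Speyer twist. First, by Corollary~\ref{c:eta=MSinv} together with equation~\eqref{eq:matcls-value}, under the canonical identification $\ZZ^{Q_0}\isom\Kgp(\proj A)$ sending $p_j\mapsto[P_j]$, the map $\alteta$ restricted to $\matlat$ agrees with $\matcls$, so $\alteta(\mu)=[N_\mu]$ for every perfect matching $\mu$. Consequently the right-hand side of \eqref{eq:MuSptw} rewrites as
\[
\sum_{\mu:\bdry\mu=I}x^{-[N_\mu]}\Bigr|_{x_j\mapsto\Pluck{I_j}}.
\]

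Next, since $I\in\posid$, Proposition~\ref{pro:positroid} supplies some $M\in\CM(B)$ with $\rho(M)\isom M_I$, which is necessarily of rank $1$. Proposition~\ref{p:matchings-to-modules} provides a bijection $\theta$ from $\{\mu:\bdry\mu=I\}$ to $\{N\leq FM:eN=M\}$, respecting classes via $[N_\mu]=[\theta^{-1}(\mu)]$, so the sum becomes $\sum_{N\leq FM,\,eN=M}x^{-[N]}$. Using Proposition~\ref{p:F'-is-min} to identify this indexing set with $\{N:F'M\leq N\leq FM\}$ and repeating the argument in the proof of Proposition~\ref{p:CC-syz} (which exploits the short exact sequences~\eqref{eq:FPi} and~\eqref{eq:GOm}), we conclude that for any projective cover $PM\to M$ in $\CM(B)$ with kernel $\syz M$,
\[
\sum_{N\leq FM,\,eN=M}x^{-[N]}=x^{-[F(PM)]}\clucha{T}{\syz M}.
\]
This reduces the theorem to exhibiting a projective cover $PM\to M$ for which
\[
\MuSptw(\Pluck{I})=x^{-[F(PM)]}\clucha{T}{\syz M}\Bigr|_{x_j\mapsto\Pluck{I_j}}.
\]

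The main obstacle is this final identification: connecting our module-theoretic expression to Muller--Speyer's combinatorially defined twist. In the uniform (Grassmannian) case a route opens up through Theorem~\ref{thm:MS=CC} together with Theorem~\ref{thm:MaSc}: taking $PM=\canprojcov[\white]M$, the cluster character equals $\MSform[^\white]{I}$, whose specialisation is the Marsh--Scott twisted Pl\"ucker $\twPluck{I}$, and one then needs to check that $\MuSptw(\Pluck{I})$ differs from $\twPluck{I}$ by precisely the monomial $x^{-[P_I^\white]}$ in the frozen variables. For the general positroid case, the cleanest strategy is to characterise $\MuSptw$ via its action on enough Pl\"ucker coordinates indexed by the source necklace (equivalently, by the indecomposable summands $T_j$ of the cluster-tilting object), using that both sides define cluster quasi-automorphisms in the sense of Fraser and agree on the frozen variables by direct calculation. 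Alternatively, one may invoke Muller--Speyer's definition of $\MuSptw$ from \cite{MuSp} and use our reformulation to give an alternative proof of \cite[Prop.~7.10]{MuSp}, with the homological interpretation via $\clucha{T}$ replacing their monomial-map arguments on the bipartite graph side.
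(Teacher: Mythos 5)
Your proposal takes a genuinely different tack from the paper, but it does not actually close the argument: the final step, which you yourself flag as "the main obstacle," is left unproven. The paper's proof is short precisely because it does not try to re-derive Muller--Speyer's twist from scratch; instead it cites \cite[Thm.~7.1]{MuSp}, which gives a commutative square relating the network torus embedding $\net\colon\Pluck{I}\mapsto\sum_{\mu:\bdry\mu=I}x^\mu$, the cluster torus embedding $\clu$ (the inverse of the specialisation $x_j\mapsto\Pluck{I_j}$ from \eqref{eq:specialise}, well-defined by \cite[Thm.~3.5]{GL}), and $\MuSptw$. The content supplied by the present paper is the identification of the transition map between the two tori as $\CC[-\alteta]$, using $\alteta=\MSmap^{-1}$ from Corollary~\ref{c:eta=MSinv}. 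That is all that is needed; the identity \eqref{eq:MuSptw} then just reads off from the diagram.

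By contrast, your strategy performs the translation $\alteta(\mu)=[N_\mu]$ and then runs through Propositions~\ref{pro:positroid}, \ref{p:matchings-to-modules}, \ref{p:F'-is-min}, and \ref{p:CC-syz} to rewrite the right-hand side as $x^{-[F(PM)]}\clucha{T}{\syz M}\bigr|_{x_j\mapsto\Pluck{I_j}}$. These steps are correct, but they amount to re-deriving (part of) Theorem~\ref{t:MuSp=CC}, which in the paper is a \emph{consequence} of Theorem~\ref{thm:MuSptw}, not a route to proving it. Having done so, you still face the task of showing that this homological expression equals $\MuSptw(\Pluck{I})$ --- which is exactly what the theorem asserts. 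The three strategies you sketch for finishing are not carried out: the first only covers the uniform case and additionally requires a nontrivial comparison of the Marsh--Scott and Muller--Speyer twists that you do not perform; the second requires establishing that both sides are cluster quasi-automorphisms and agree on a suitable seed, again without details; the third is circular, since invoking Muller--Speyer's definition and "our reformulation" is essentially the statement to be proved. In short, your work correctly reformulates the combinatorial side but leaves the actual proof open, whereas the paper side-steps this by quoting \cite[Thm.~7.1]{MuSp} directly and only contributing the identification of $\alteta$ with $\MSmap^{-1}$.
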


\noindent Note: here and later in this section, 
we abuse notation by writing $\Pluck{I}$ for the restriction of this Pl\"ucker coordinate to $\openposidvar\subset\grass{k}{n}$. The right-hand side of \eqref{eq:MuSptw} is a formal Laurent polynomial in $\CC\bigl[\ZZ^{Q_0}\bigr]$, or equivalently a Laurent polynomial in variables $x_j=x^{p_j}$, where $p_j$ is a standard basis vector as in Section~\ref{sec:MuSp}.

\begin{proof}
By \cite[Thm.~7.1]{MuSp}, we have the following commutative diagram, in which the horizontal maps are isomorphisms.
\[\begin{tikzcd}[column sep=40pt]
\CC\bigl[\matlat\bigr] \arrow{r}{\CC[-\alteta]} &
\CC\bigl[\ZZ^{Q_0}\bigr]
\\
\CC\bigl[\openposidcone\bigr]\arrow{u}{\net}\arrow{r}{\MuSptw} &
\CC\bigl[\openposidcone\bigr]\arrow{u}[swap]{\clu}
\end{tikzcd}
\]
This is the right-hand square in \cite[Thm.~7.1]{MuSp}, rewritten in terms of maps between coordinate rings. Now \eqref{eq:MuSptw} follows by tracing $\Pluck{I}\in\CC\bigl[\openposidcone\bigr]$ through this diagram.

To further aid comparison with \cite{MuSp}, we recall the ingredients in the diagram. Firstly, $\CC\bigl[\matlat\bigr]$ is the coordinate ring of the torus 
(written $\Gm^{|Q_1|}/\Gm^{|Q_2|-1}$ in \cite{MuSp}) 
whose character lattice is $\matlat$.
In other words, it is the ring of formal Laurent polynomials with exponents in $\matlat$.
Similarly, $\CC\bigl[\ZZ^{Q_0}\bigr]$ is the coordinate ring of the torus $\Gm^{Q_0}$.

The map $\CC[-\alteta]$ is the isomorphism of torus coordinate rings induced by the 
map $-\alteta\colon \matlat\to\ZZ^{Q_0}$ of their character lattices,
which is the inverse of $-\MSmap$ by Corollary~\ref{c:eta=MSinv}.
The map $\net$ is given by dimer
partition functions (see \cite[\S3.2]{MuSp})
\[
 \Pluck{I} \mapsto \sum_{\mu:\bdry\mu=I}x^{\mu}
\]
and corresponds to (a lift of) the embedding of the network torus
by the boundary measurement map of \cite{postnikov}.

The map $\clu$ corresponds to the embedding of the cluster torus in $\openposidcone$. More precisely, it is obtained by composing the inverse of the map $\openclustalg{D}\to\CC\bigl[\openposidcone\bigr]$ induced by the substitution $x_j\mapsto\Pluck{I_j}$ from \eqref{eq:specialise}, which is a well-defined isomorphism by \cite[Thm.~3.5]{GL}, with the inclusion $\openclustalg{D}\subset\CC\bigl[\ZZ^{Q_0}\bigr]$.
\end{proof}

Now assume $D$ is connected. 
As in Section~\ref{sec:MuSp}, we identify $\ZZ^{Q_0}$ with $\Kgp(\proj{A})$ by $p_j\mapsto[P_j]$, which identifies $\alteta(\mu)$ with $\matcls(\mu)=[N_\mu]$; see \eqref{eq:matcls-value}. In this case we may, just as in Theorem~\ref{thm:MS3}, rewrite \eqref{eq:MuSptw} as
\begin{equation} \label{eq:MuSptw2}
\MuSptw(\Pluck{I})
  =\sum_{\mu:\bdry\mu=I}x^ {-[N_\mu]}|_{x_j\mapsto\Pluck{I_j}}
 =\sum_{\substack{N\leq FM\\eN=M}} x^{-[N]}|_{x_j\mapsto\Pluck{I_j}},
\end{equation}
for (the unique) $M\in\CM(B)$ with $\rho M\isom M_I$, which exists since $I\in\posid$.

Recall from \eqref{eq:CC} the Fu--Keller cluster character $\Phi_T\colon\GP(B)\to\CC[\Kgp(\proj{A})]$, where $B=B_D$ is the boundary algebra of $D$ and $T=eA_D\in\GP(B)$ is the initial cluster-tilting object.

\begin{theorem}
\label{t:MuSp=CC}
Let $M\in\CM(B)$ such that $\rho M\isom M_I\in\CM(C)$. Let $PM$ be any (possibly non-minimal) projective cover of $M$, fitting into a short exact sequence
\[0\to\syz M\to PM\to M\to 0.\]
Then
\begin{equation}
\label{eq:MuSp=CC}
\MuSptw(\Pluck{I})=\left.\frac{\clucha{T}{\syz M}}{\clucha{T}{PM}}\right|_{x_j\mapsto\Pluck{I_j}}.
\end{equation}
\end{theorem}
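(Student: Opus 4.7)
The plan is to start from the reformulation \eqref{eq:MuSptw2} of Muller--Speyer's Theorem~\ref{thm:MuSptw} and manipulate the sum on the right-hand side using the tools of Section~\ref{sec:newCC}. Explicitly, Proposition~\ref{p:CC-syz} applied to the given projective cover $PM\to M$ identifies the sum as
\[\sum_{\substack{N\leq FM\\eN=M}} x^{-[N]} = x^{-[FPM]}\clucha{T}{\syz M},\]
noting that $\syz M\in\GP(B)$ by Lemma~\ref{lem:syzygy}, so that $\clucha{T}{\syz M}$ is the value of the Fu--Keller cluster character on a genuine object of the stably $2$-Calabi--Yau Frobenius category $\GP(B)$.

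The next step is to recognise the monomial prefactor $x^{[FPM]}$ as the cluster character evaluated on $PM$ itself. Since $PM$ is a projective $B$-module, it lies in $\add(B)$ and hence, by Proposition~\ref{p:double-cen}(iv), $G(PM)=\Ext^1_B(T,PM)=0$. The Caldero--Chapoton formula \eqref{eq:CC} therefore collapses to $\clucha{T}{PM}=x^{[FPM]}$, since the only submodule of the zero module contributes a trivial summand. Combining these identities gives
\[\sum_{\substack{N\leq FM\\eN=M}} x^{-[N]} = \frac{\clucha{T}{\syz M}}{\clucha{T}{PM}},\]
and substituting into \eqref{eq:MuSptw2} and applying the specialisation $x_j\mapsto\Pluck{I_j}$ delivers \eqref{eq:MuSp=CC}.

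The only subtlety worth flagging is consistency of the right-hand side under the choice of (possibly non-minimal) projective cover. Any two projective covers $PM$ and $P'M$ of $M$ differ, up to isomorphism, by a projective summand $Q$, and the corresponding syzygies differ by that same $Q$. Since the Fu--Keller cluster character is multiplicative on direct sums (\cite[Def.~3.1(2)]{FK}), the factor $\clucha{T}{Q}$ then cancels in the ratio $\clucha{T}{\syz M}/\clucha{T}{PM}$, confirming that \eqref{eq:MuSp=CC} is well-posed. There is no serious obstacle here: the argument is essentially a bookkeeping exercise built out of Proposition~\ref{p:CC-syz}, Proposition~\ref{p:double-cen}(iv) and Lemma~\ref{lem:syzygy}, with the genuinely twist-theoretic content already packaged into \eqref{eq:MuSptw2}.
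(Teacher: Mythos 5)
Your proof is correct and follows essentially the same route as the paper's: start from \eqref{eq:MuSptw2}, apply Proposition~\ref{p:CC-syz} to rewrite the sum as $x^{-[FPM]}\clucha{T}{\syz M}$, and identify $x^{[FPM]}=\clucha{T}{PM}$ because $PM$ is projective so $GPM=0$. Your extra remark on well-posedness under changing the projective cover (really an appeal to Schanuel's lemma plus multiplicativity of the cluster character) is a nice sanity check that the paper leaves implicit.
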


\begin{proof}
By Proposition~\ref{p:CC-syz},
\begin{equation}
\label{eq:CC-syz-match}
\clucha{T}{\syz M}=x^{[FPM]}\sum_{\substack{N\leq FM\\eN=M}} x^{-[N]}.
\end{equation}
Since $PM$ is projective, it is in $\add{T}$, so $GPM=0$ and $\clucha{T}{PM}=x^{[FPM]}$. 
Thus we obtain \eqref{eq:MuSp=CC} by rearranging \eqref{eq:CC-syz-match} and using \eqref{eq:MuSptw2}.
\end{proof}

\begin{remark}
Theorem~\ref{t:MuSp=CC} is the analogue for positroid varieties of Gei\ss--Leclerc--Schr\"oer's result \cite[Thm.~6]{GLS} for unipotent cells in Kac--Moody groups. Indeed, the uniform open positroid variety in $\Gr_k^n$ is an example of such a cell (cf.~Remark~\ref{r:GLS}).
\end{remark}

\end{document}